\DeclareMathAlphabet{\mathbbm}{U}{bbm}{m}{n}
\DeclareFontFamily{U}{BOONDOX-calo}{\skewchar\font=45 }
\DeclareFontShape{U}{BOONDOX-calo}{m}{n}{
  <-> s*[1.05] BOONDOX-r-calo}{}
\DeclareFontShape{U}{BOONDOX-calo}{b}{n}{
  <-> s*[1.05] BOONDOX-b-calo}{}
\DeclareMathAlphabet{\mcb}{U}{BOONDOX-calo}{m}{n}
\SetMathAlphabet{\mcb}{bold}{U}{BOONDOX-calo}{b}{n}
\setlist{noitemsep,topsep=4pt}
\def\DeclareSymbol#1#2#3{\expandafter\gdef\csname MH@symb@#1\endcsname{\tikzsetnextfilename{symbol#1}%
\tikz[baseline=#2,scale=0.15,line join=round]{#3}}\expandafter\gdef\csname MH@symb@#1s\endcsname{\scalebox{0.7}{\tikzsetnextfilename{symbol#1}%
\tikz[baseline=#2,scale=0.15,line join=round]{#3}}}}
\def\<#1>{\csname MH@symb@#1\endcsname}
\newcommand{\cut}{\mathfrak{C}}
\def\Alg{\mathrm{Alg}}
\def\mainroot{\rho_{\ast}}
\newcommand{\mcE}{\mathcal{E}}
\newcommand{\mcV}{\mathcal{V}}
\newcommand{\mcC}{\mathcal{C}}
\newcommand{\mcB}{\mathcal{B}}
\newcommand{\J}{\mathcal{J}}
\newcommand{\mcS}{\mathcal{S}}
\newcommand{\mcU}{\mathcal{U}}
\newcommand{\mcT}{\mathcal{T}}
\newcommand{\mcF}{\mathcal{F}}
\newcommand{\mcN}{\mathcal{N}}
\newcommand{\mcD}{\mathcal{D}}
\newcommand{\mcG}{\mathcal{G}}
\newcommand{\mcQ}{\mathcal{Q}}
\newcommand{\mcZ}{\mathcal{Z}}
\newcommand{\mbbF}{\mathbb{F}}
\newcommand{\mbbM}{\mathbb{M}}
\newcommand{\mbbG}{\mathbb{G}}
\newcommand{\mbn}{\mathbf{n}}
\newcommand{\mbj}{\mathbf{j}}
\newcommand{\mbk}{\mathbf{k}}
\newcommand{\T}{\mathbf{T}}
\def\Lab{\mathfrak{L}}
\def\Deltam{\Delta^{\!-}}
\def\${|\!|\!|}
\def\J{\mathscr{I}}
\def\CF{\mathcal{F}}
\def\CT{\mcb{T}}
\def\noise{\boldsymbol{\xi}}
\newcommand{\mfn}{\mathfrak{n}}
\newcommand{\mfL}{\mathfrak{L}}
\newcommand{\mfC}{\mathfrak{C}}
\newcommand{\mfR}{\mathfrak{R}}
\newcommand{\mft}{\mathfrak{t}}
\newcommand{\mfM}{\mathfrak{M}}
\newcommand{\mfp}{\mathfrak{p}}
\newcommand{\mfJ}{\mathfrak{J}}
\newcommand{\mfl}{\mathfrak{l}}
\newcommand{\mfq}{\mathfrak{q}}
\newcommand{\mfG}{\mathfrak{G}}
\def\cC{\mathscr{C}}
\def\cG{\mathscr{G}}
\def\cS{\mathscr{S}}
\def\cD{\mathscr{D}}
\def\cB{\mathscr{B}}
\def\enS{\mathscr{S}} 
\def\can{\textnormal{\tiny{can}}}
\def\neut{\textnormal{\tiny{neut}}}
\def\BPHZ{\textnormal{\tiny \textsc{bphz}}}
\def\opp{\textnormal{\tiny \textsc{opp}}}
\def\SG{\textnormal{\tiny \textsc{SG}}}
\def\Div{\mathrm{Div}}
\def\DDom{\mathcal{D}}
\newcommand{\ch}{\mathrm{c}}
\newcommand{\p}{\mathrm{p}}
\newcommand{\inte}{\mathrm{int}}
\newcommand{\exte}{\mathrm{ext}}
\newcommand{\Coll}{\mathrm{Coll}}
\newcommand{\go}[1]{\mathbf{#1}}
\newcommand{\bo}[1]{\mathbf{#1}}
\newcommand{\mvert}{\mathcal{V}}
\newcommand{\mtree}{\CU}
\def\combplus[#1,#2,#3,#4]{\binom{#1\ {\scriptstyle #4} }{#2\ #3}}
\newcommand{\leavesleft}[2]{L(#1,#2)}
\newcommand{\kernelsleft}[2]{K(#1,#2)}
\newcommand{\nodesleft}[2]{N(#1,#2)}
\newcommand{\genvert}[1]{H_{#1}}
\def\singlescalegenvert[#1,#2]{\hat{H}^{#2}_{#1}}
\def\multiscalegenvert[#1,#2]{H^{#2}_{#1}}
\newcommand{\mmax}[1]{\overline{#1}}
\def\nr[#1]{\tilde{N}[#1]} 
\def\inn[#1]{\mathring{N}[#1]}
\def\nrinn[#1]{\hat{N}_{#1}} 
\def\nrmod[#1,#2]{\tilde{N}_{#1}(#2)}
\def\nrinnmod[#1,#2]{\hat{N}_{#1}(#2)}
\def\ident[#1]{\underline{#1}}
\def\mylink#1#2{\mathrel{\vbox{\offinterlineskip\ialign{%
    \hfil##\hfil\cr
    $\scriptscriptstyle#1$\cr
    \noalign{\kern0.1ex}
    $#2$\cr
}}}}
\def\mysublink[#1]#2#3{\mathrel{\vbox{\offinterlineskip\ialign{%
    \hfil##\hfil\cr
    $\scriptscriptstyle#2$\cr
    \noalign{\kern0.1ex}
    $#3$\cr
    \noalign{\kern-0.2ex}
    \smash{\raisebox{-\height}{\hbox{$\scriptscriptstyle #1$}}}\cr
    \noalign{\kern0.2ex}
}}}}
\newcommand{\link}[3]{#1\mylink{#3}{\longleftrightarrow}#2}
\newcommand{\ulink}[3]{#1\mylink{#3}{\leftrightsquigarrow}#2}
\def\allf{\mcb{C}}
\def\fon[#1]{\cD_{#1}}
\def\mincompproj[#1]{\mfp_{#1}}
\def\Proj_#1{\mathop{\mathrm{Proj}_{#1}}}
\def\negrenorm[#1]{\mfR_{#1}}
\def\topnegrenorm[#1]{\overline{\mfR}_{#1}}
\def\quotedge[#1]{E^{q}_{#1}}
\def\posrenorm[#1]{\mcC_{#1}}
\def\topposrenorm[#1]{\overline{\mcC_{#1}}}
\def\cutsmod[#1]{\mathbb{C}_{+,#1}}
\def\fullcuts{\cut}
\def\fullcutsmod[#1]{\cut_{#1}}
\def\slash{\unskip\kern0.2em/\penalty\exhyphenpenalty\kern0.2em\ignorespaces}
\def\dash{\unskip\kern0.2em--\penalty\exhyphenpenalty\kern0.2em\ignorespaces}
\def\emptyset{{\centernot\ocircle}}
\colorlet{symbols}{blue!90!black}
\colorlet{testcolor}{green!60!black}
\colorlet{redkernel}{red!80}
\def\symbol#1{\textcolor{symbols}{#1}}
\def\1{\mathbf{\symbol{1}}}
\newcommand*{\mathcolor}{}
\def\mathcolor#1#{\mathcoloraux{#1}}
\newcommand*{\mathcoloraux}[3]{%
  \protect\leavevmode
  \begingroup
    \color#1{#2}#3%
  \endgroup
}
\def\drawx{\draw[-,solid] (-3pt,-3pt) -- (3pt,3pt);\draw[-,solid] (-3pt,3pt) -- (3pt,-3pt);}
\tikzset{
  charge/.style={circle,draw=black,inner sep=0pt, minimum size=2.5mm},
  coalline/.style={semithick,draw=black},
  coalnode/.style={circle,fill=black!10,draw=black,inner sep=0pt, minimum size=2mm},
  coalnode2/.style={circle,fill=black!60,draw=black,inner sep=0pt, minimum size=2mm},
  shadednode/.style={circle,fill=blue!40,inner sep=0pt,minimum size=9pt},
  shadededge/.style={line width=6pt,blue!40, shorten >= -3pt,shorten <=-3pt},
  subtreenode/.style={circle,fill=gray!40,inner sep=0pt,minimum size=12pt},
  smallsubtreenode/.style={circle,fill=gray!40,inner sep=0pt,minimum size=9pt},
  subtreeedge/.style={line width=10pt,gray!40, shorten >= -3pt,shorten <=-3pt},
  smallsubtreeedge/.style={line width=6pt,gray!40, shorten >= -3pt,shorten <=-3pt},
  root/.style={circle,fill=testcolor,inner sep=0pt, minimum size=2mm},
  logof/.style={circle,fill=darkblue,inner sep=0pt, minimum size=2mm},
  dot/.style={circle,fill=black,inner sep=0pt,minimum size=1mm},
  sdot/.style={circle,fill=black,inner sep=0pt,minimum size=.5mm},
  int/.style={circle,fill=black,draw=black,inner sep=0pt,minimum size=0.7mm},
  circ/.style={circle,draw=black,inner sep=0pt, minimum size=1mm},
  var/.style={circle,fill=black!10,draw=black,inner sep=0pt, minimum size=2mm},
  dotred/.style={circle,fill=black!50,inner sep=0pt, minimum size=2mm},
  generic/.style={semithick,shorten >=1pt,shorten <=1pt},
  oddfunc/.style={generic, dotted},
  dist/.style={ultra thick,draw=testcolor,shorten >=1pt,shorten <=1pt},
  testfcn/.style={ultra thick,testcolor,shorten >=1pt,shorten <=1pt,<-},
  testfunction/.style={ultra thick,testcolor,shorten >=1pt,shorten <=1pt},
  testfcnx/.style={ultra thick,testcolor,shorten >=1pt,shorten <=1pt,<-,
    postaction={decorate,decoration={markings,mark=at position 0.6 with {\drawx}}}},
  kprime/.style={semithick,shorten >=1pt,shorten <=1pt,densely dashed,->},
  kprimex/.style={semithick,shorten >=1pt,shorten <=1pt,densely dashed,->,
    postaction={decorate,decoration={markings,mark=at position 0.4 with {\drawx}}}},
  kernel/.style={semithick,shorten >=1pt,shorten <=1pt,->,draw=black},
  cov/.style={semithick,shorten >=1pt,shorten <=1pt,draw=darkred},
  covi/.style={semithick,shorten >=1pt,shorten <=1pt,draw=darkblue},
  gone/.style={dotted},
  leaf/.style={decorate, decoration = {zigzag, amplitude=1.5pt, segment length=3pt}, semithick,shorten >=1pt,shorten <=1pt,draw=black},
  keps/.style={semithick,densely dashed,shorten >=1pt,shorten <=1pt,->},
  Pkernel/.style={ultra thick,shorten >=1pt,shorten <=1pt,->,draw=blue},
  PkernelBig/.style={very thick,shorten >=1pt,shorten <=1pt,decorate, draw=blue, decoration={zigzag,amplitude=1.5pt,segment length = 3pt,pre length=2pt,post length=2pt}},
  multx/.style={shorten >=1pt,shorten <=1pt,
    postaction={decorate,decoration={markings,mark=at position 0.5 with {\drawx}}}},
  kernelx/.style={semithick,shorten >=1pt,shorten <=1pt,->,
    postaction={decorate,decoration={markings,mark=at position 0.4 with {\drawx}}}},
  kernel1/.style={->,semithick,shorten >=1pt,shorten <=1pt,postaction={decorate,decoration={markings,mark=at position 0.5 with {\draw[-] (0,-0.2) -- (0,0.2);}}}},
  kernel2/.style={->,semithick,shorten >=1pt,shorten <=1pt,postaction={decorate,decoration={markings,mark=at position 0.45 with {\draw[-] (0.05,-0.2) -- (0.05,0.2);\draw[-] (-0.05,-0.2) -- (-0.05,0.2);}}}},
  kernel3/.style={->,semithick,shorten >=1pt,shorten <=1pt,postaction={decorate,decoration={markings,mark=at position 0.45 with {
    \draw[-] (0.075,-0.2) -- (0.075,0.2);
    \draw[-] (-0.075,-0.2) -- (-0.075,0.2);
    \draw[-] (0,-0.2) -- (0,0.2);}}}},
  kernel4/.style={->,semithick,shorten >=1pt,shorten <=1pt,postaction={decorate,decoration={markings,mark=at position 0.45 with {
    \draw[-] (0.15,-0.2) -- (0.15,0.2);
    \draw[-] (0.05,-0.2) -- (0.05,0.2);
    \draw[-] (-0.05,-0.2) -- (-0.05,0.2);
    \draw[-] (-.15,-0.2) -- (-.15,0.2);}}}},  
  kernelBig/.style={semithick,shorten >=1pt,shorten <=1pt,decorate, decoration={zigzag,amplitude=1.5pt,segment length = 3pt,pre length=2pt,post length=2pt}},
  kernelBigg/.style={thick,shorten >=1pt,shorten <=1pt,decorate, decoration={zigzag,amplitude=3.5pt,segment length = 7pt,pre length=2pt,post length=2pt}},
  kernelBigg1/.style={thick,shorten >=1pt,shorten <=1pt,decorate, decoration={saw,amplitude=3.5pt,segment length = 7pt,pre length=2pt,post length=2pt}},
  kernelBigg2/.style={thick,shorten >=1pt,shorten <=1pt,decorate, decoration={bumps,amplitude=3.5pt,segment length = 7pt,pre length=2pt,post length=2pt}},
  rho/.style={dotted,semithick,shorten >=1pt,shorten <=1pt},
  renorm/.style={shape=circle,fill=white,inner sep=1pt},
  labl/.style={shape=rectangle,fill=white,inner sep=1pt},
cumu2n/.style={inner sep=3pt},
cumu2/.style={draw=red!80,fill=red!40},
cumu3/.style={regular polygon, regular polygon sides=3,draw=red!80,rounded corners=3pt,fill=red!40,minimum size=5mm},
cumu4/.style={regular polygon, regular polygon sides=4,draw=red!80,rounded corners=3pt,fill=red!40,minimum size=7mm},
cumu5/.style={regular polygon, regular polygon sides=5,draw=red!80,rounded corners=3pt,fill=red!40,minimum size=7mm},
bcumu2n/.style={inner sep=3pt},
bcumu2/.style={draw=blue!80,fill=blue!40},
bcumu3/.style={regular polygon, regular polygon sides=3,draw=blue!80,rounded corners=3pt,fill=blue!40,minimum size=5mm},
bcumu4/.style={regular polygon, regular polygon sides=4,draw=blue!80,rounded corners=3pt,fill=blue!40,minimum size=7mm},
bcumu5/.style={regular polygon, regular polygon sides=5,draw=blue!80,rounded corners=3pt,fill=blue!40,minimum size=7mm},
  xi/.style={circle,fill=symbols!15,draw=symbols,inner sep=0pt,minimum size=1.2mm},
  xib/.style={circle,fill=symbols!10,draw=symbols,inner sep=0pt,minimum size=1.6mm},
  not/.style={circle,fill=symbols,draw=symbols,inner sep=0pt,minimum size=0.5mm},
  >=stealth,
  }
\colorlet{darkred}{red!90!black}
\colorlet{darkblue}{blue!90!black}
\colorlet{darkgreen}{green!50!black}
\def\martin#1{}
\def\ajay#1{}
\def\hao#1{}
\def\CQ{\mathcal{Q}}
\let\J\CJ
\def\s{\mathfrak{s}}
\def\c{\mathfrak{c}}
\newcommand{\e}{\varepsilon}
\def\${|\!|\!|}
\def\Wick#1{{:}{#1}{:}}
\def\Ker{\mathrm{Ker}}
\def\RKer{\mathrm{RKer}}
\def\KerHat{\widehat{\RKer}}
\def\KerTilde{\widetilde{\RKer}}
\def\?{{\color{red}?}}
\def\restr{\mathord{\upharpoonright}}
\def\sign{\textnormal{sign}}
\def\Id{\mathrm{Id}}
\def\reg{\mathop{\mathrm{reg}}}
\def\PPi{\boldsymbol{\Pi}}
\def\Lab{\mathfrak{L}}
\def\Ke{\mathfrak{L}_{+}}
\def\Le{\mathfrak{L}_{-}}
\def\sT{{\overline{T}}} 
\def\sn{\overline{\mfn}} 
\def\sl{\overline{\mfl}} 
\def\allnodes{N^{\ast}}
\newcommand{\powroot}[3]{\mathrm{X}^{#1}_{#2,#3}}
\newcommand{\ke}[2]{\Ker^{#1}_{#2}}
\def\mfJ{\mathfrak{J}}
\def\one{\mathbbm{1}}
\newtheorem{example}[lemma]{Example}
\let\basepoint\logof
\def\logof{\mathord{{\basepoint}}} 
\def\back{\!\!\!}
\title{The dynamical sine-Gordon model\\ in the full subcritical regime}
\author{Ajay Chandra$^1$, Martin Hairer$^1$, and Hao Shen$^2$}
\institute{Imperial College London\and Columbia University}
\date{\today}
\begin{document}
\maketitle
\begin{abstract}
We prove that the dynamical sine-Gordon equation on the two dimensional torus introduced in \cite{HaoSG} 
is locally well-posed for the entire subcritical regime.
At first glance this equation is far out of the scope of the local existence theory available in the framework of regularity structures \cite{Regularity,BHZalg,CH,BCCH} since it involves a non-polynomial nonlinearity and the solution is expected to be a distribution (without any additional small parameter as in \cite{FurlanWeakUniversality,Weijun2}). 

In \cite{HaoSG} this was overcome by a change of variable, but the new equation that arises has a  multiplicative dependence on highly non-Gaussian noises which makes stochastic estimates highly non-trivial \dash as a result \cite{HaoSG} was only able to treat part of the subcritical regime. Moreover, the cumulants of these noises fall out of the scope of the later work \cite{CH}. 
In this work we systematically leverage ``charge'' cancellations specific to this model and obtain stochastic estimates that allow us to cover the entire subcritical regime.
\end{abstract}
\setcounter{tocdepth}{2}
\tableofcontents

\section{Introduction}\label{sec: intro}

This paper studies the local well-posedness of the \emph{sine-Gordon} equation
\begin{equ}[e:model]
\d_t u = \frac{1}{2}\Delta u + \sin\bigl( \beta u \bigr)
 + \zeta\;,
\end{equ}
over $\R_{+} \times \T^2$, where $\zeta$ denotes space-time white noise.

Equation~\eqref{e:model} is related to many models of equilibrium statistical mechanics in two dimensions. 
Most directly it is the natural (Langevin) dynamic for the sine-Gordon
(Euclidean) quantum field theory in two spatial dimensions \dash this field theory is a functional Fourier transform of the two dimensional Coulomb gas model. 

Previously, \cite{HaoSG} showed that for $\beta^2 \in (0,4\pi)$ the methods
of Da Prato and Debussche \cite{MR1941997,MR2016604} suffice for showing well-posedness while for $\beta^2 \in [4\pi,  \frac{16\pi}{3})$ local well-posedness can be obtained via the theory of regularity structures \cite{Regularity}.
In this paper we use the theory of \cite{Regularity} to give a proof of local well-posedness for the entire subcritical regime $\beta^2 \in (0,8\pi)$.

Equation~\eqref{e:model} fails to be classically well-posed as soon as $\beta$ is non-zero. 
Formally, writing $u=\Phi+v$ where $\Phi$ solves the linear equation (i.e. the equation with $\beta=0$), $v$ should solve the equation
\begin{equ}[e:model2]
\d_t v = \frac{1}{2}\Delta v  - \frac{i}{2} \bigl( e^{i\beta v} e^{i \beta \Phi} - e^{-i\beta v} e^{i \beta \Phi} \bigr)
 + \zeta\;.
\end{equ}
The field $\Phi$ is, at small scales, a logarithmically correlated Gaussian random field over space-time. In particular, realizations of $\Phi$ are not functions on space-time but rather distributions. 
However one can still try to give meaning to the expressions $e^{\pm i \beta \Phi}$ by employing Wick renormalisation. 
A similar construction can be found in \cite{RhodesVargas} where the authors call the processes constructed an \emph{imaginary Gaussian multiplicative process}.

Denoting the processes defined via Wick renormalisation by $\Wick{e^{\pm i \beta \Phi}}$, we note that these
make sense as long as $\beta^2 < 8\pi$. 
One can then apply Kolmogorov's theorem to show that the realizations of $\Wick{e^{\pm i \beta \Phi}}$ are regular enough for \eqref{e:model2} to be classically well-posed for $\beta^{2} \in (0,4\pi)$.
Realizations of $\Wick{e^{\pm i \beta \Phi}}$ become more singular as $\beta^{2}$ increases \dash  
when $\beta^2 \in [4\pi,8\pi)$ the product  $\Wick{e^{\pm i \beta \Phi}}\, e^{\pm i \beta v}$ fails to be canonically defined and so the equation~\eqref{e:model2} also fails to be classically well-posed. 
One must then use more sophisticated renormalisation procedures to define these products. 
By introducing a partial perturbative expansion for $v$ in terms of $\Wick{e^{\pm i \beta \Phi}}$, 
one sees that a definition of these products requires the construction, via renormalisation, 
of more complicated stochastic objects. 
In particular there are an infinite number of thresholds,
\begin{equ} [e:thresholds]
\beta^2 = \frac{8n \pi}{n+1}\qquad n=1,2,3,\cdots,
\end{equ}
where one encounters new divergent stochastic objects which must be renormalised. 
The paper \cite{HaoSG} was only able to handle the renormalisation of first such divergence so their well-posedness result fell short of the $n=2$ threshold.   

At each fixed value of $\beta^2 < 8 \pi$ there are only finitely many stochastic objects to renormalise but a proof that proves local well-posedness for the the entire regime $(0,8\pi)$ must be robust enough to implement the renormalisation of an arbitrarily large number of divergences. 

When $\beta^2 = 8\pi$ the equation~\eqref{e:model} is \emph{critical}\footnote{The value $\beta^2 = 8\pi$ corresponds to the critical point 
$(\beta,\gamma) = (0,\sqrt{2d})$ in \cite{RhodesVargas}.} and when $\beta^{2} > 8 \pi$ the model becomes super-critical.  
In these situations proving local well-posedness falls completely outside of the scope of our methods and in fact one expects that that any convegent renormalisation scheme will yield a solution which is ``trivial'' in the sense of field theory, that is a solution to a stochastic heat equation. 

The years following the publication of \cite{Regularity} have seen the development of robust, model-independent theorems that have automated many aspects of the renormalisation procedure. 
The paper \cite{BHZalg} described the general algebraic structure of renormalisations, the paper \cite{CH} developed systematic moment bounds on the renormalised models, and finally \cite{BCCH} characterizes how renormalisation modifies equations. 
The combination of these tools gives an automatic and self-contained ``black box'' for obtaining local well-posedness for a wide class of SPDEs.
However, while the algebraic and analytic results of \cite{Regularity,BHZalg,BCCH} do apply here,
the general method used for obtaining stochastic bounds in \cite{CH} does \textit{not} apply for the 
sine-Gordon model since there are several specific features of this model that makes it different in how, at an analytic level, divergences arise and are cancelled.

For the specialist we give a quick explanation of these differences.
Equation~\eqref{e:model2} can be interpreted as a generalised parabolic Anderson model driven by two non-Gaussian driving noises $\xi_{\pm} \eqdef \Wick{e^{\pm i \beta \Phi}}$\ . As in \cite{HS15,CS16}, one of the first steps of the approach of \cite{CH} when estimating a particular stochastic object is to perform a cumulant expansion. 
Each $n$-th cumulant is treated as a singular kernel of $n$-space-time variables and contributes a divergent power counting factor equal to the sum of the regularity exponents of the particular noises they connect. 
One then cancels divergent substructures inside of our stochastic object at the level of cumulants, the only terms in the cumulant expansion where we can exploit a renormalisation cancellation for a divergent structure are those when no noise inside the structure is connected to a noise outside the structure. 
Such a restriction is important due to the fact that we are renormalising individual stochastic objects before we take expectations as opposed to taking expectations and then renormalising all divergent structures appearing.

Performing such a cumulant expansion and power-counting analysis is a dead-end for the sine-Gordon model as the power-counting is too brutal and leads to apparent divergences we cannot renormalise.  
Our ability to suitably renormalise this model relies on taking advantage of the fact that the $\xi_{\pm}$ are imaginary Gaussian multiplicative chaoses related to each other via complex conjugation, rather than being somewhat
arbitrary fields with the same regularity. 
In particular, for any $N \ge 0$ and ``charge assignment'' $q:\{1,\dots,N\} \rightarrow \{+,-\}$ one has
\begin{equ}\label{eq: coulomb interaction}
\E\Big[ 
\prod_{i=1}^{N}
\xi_{q(i)}(z_{i})
\Big]
\approx
\prod_{
1 \le i < j \le N}
|z_{i} - z_{j}|^{-q_{i,j}\beta^{2}/8\pi}\;,
\end{equ}
where $q_{i,j}$ is $1$ for pairs $(i,j)$ of equal charge and $-1$ for pairs of opposite charge. 
One consequence of this power counting is that it is not just the regularity of the kernels and the noises present in a structure that determine whether it is divergent but also the charge assignment of the noises. Indeed, 
we will see that only ``neutral charge'' structures are divergent. 
This was already exploited in \cite{HaoSG} but is incompatible with the expansion of \cite{CH} where such 
charge cancellations cannot be exploited. 

A second key observation is that if we restrict ourselves to renormalising neutral divergent structures we can use take advantage of charge and parity cancellations to harvest renormalisation cancellations, 
even in situations where the noises of a divergent structure interact with those outside of that structure. 
Our proof then proceeds by modifying the approach of \cite{CH} in order to allow us to systematically 
take advantage of these observations. 

We now discuss previous work on related models. 
Work on the static sine-Gordon model includes the convergence of correlation functions for the continuum limit when $\beta^{2} \in (0,8\pi)$ and scaling limits for $\beta^{2} > 8 \pi$ \cite{Jurg,MR649810,MR702570,MR849210,MR1777310}. 
At $\beta^{2} = 8 \pi$ the scaling limit of the model is of great interest since it describes the critical point of Berezinskii-Kosterlitz-Thouless phase transition \cite{berezinskii1972destruction,KT,MR634447}.
Convergence of the free energy and proofs of critical exponents at this critical point were obtained in \cite{Falco,Falco2}.
Returning to the dynamic setting, in addition to its relation to equilibrium statistical mechanics and field theory the model \eref{e:model} has also been proposed as a model for 
the dynamic of crystal-vapour interfaces
at the roughening transition \cite{DynRough,Neudecker} and as a model of crystal
surface fluctuations in equilibrium \cite{SurfModel2,SurfModel1}. 
With regards to the dynamic setting the only previous mathematically rigorous work is \cite{HaoSG} and \cite{AlbRuss}
\footnote{
The article \cite{AlbRuss} considered a class of nonlinearities of the type $\lambda \Wick{f(A u)}$ for small parameters $\lambda$ and $A$.
The interpretation of the
solutions in \cite{AlbRuss} is that of a random Colombeau generalised function, but it is not
clear whether this generalised function represents an actual distribution. 
The construction given there is 
impervious to the presence of the Kosterlitz-Thouless transition and 
the sequence of thresholds   \eqref{e:thresholds}.
}  \dash we also mention \cite{GarbanLiouville} which treats a corresponding ``$\sinh$-Gordon''-type model.

The paper is organised as follows. 
The remainder of the first section gives our main result (Theorem~\ref{theo:v-equ}) and introduces some basic conventions used throughout the paper. 
In Section~\ref{sec: sg reg struct} we discuss how we formulate our problem in the setting of regularity structures: 
in addition to describing the specific regularity structure we work with, we also introduce some auxiliary notation to keep track of the special features of the sine-Gordon equation such as charge cancellation, and also describe how the main theorem will follow from combining certain stochastic estimates  
(Theorem~\ref{theorem:mom-bounds})
with the pre-existing machinery of the theory.

The remainder of the paper is then devoted to obtaining these stochastic estimates stated in Theorem~\ref{theorem:mom-bounds}. 
In Section~\ref{sec: explicit formula} we write out explicit formulas for the moments of the stochastic objects we want to estimate. 
We start with the formulae (given in Proposition~\ref{explicit formula})
that are similar with the formulae 
 of \cite{CH},
  and then show how we can modify these formulae (see Proposition~\ref{prop:mom-formula-z}) by leveraging charge and parity cancellations.
  
 Based on these moment formulae the proof of 
 Theorem~\ref{theorem:mom-bounds} is then carried out by a multiscale decomposition and grouping terms in this decomposition using the notion of ``intervals'' in Section~\ref{sec:Multiscale expansion}. In Section~\ref{sec: renormalisation bound} we show that summing over scales within each group gives the desired bounds.

\hao{Remind to change accordingly if we adjust Section 4 and 5.}

\subsection*{Acknowledgements}

{\small
AC gratefully acknowledges financial support from the Leverhulme Trust via an Early Career Fellowship, ECF-2017-226.
HS is partially supported by the NSF award DMS-1712684.
MH gratefully acknowledges support from the Leverhulme Trust via a leadership award, as well as from the ERC 
via Consolidator grant 615897:CRITICAL.
}
\subsection{Main result}\label{sec: main result}
Our solution to \eqref{e:model} can be defined as a stochastic limit of solutions to \emph{renormalised}, \emph{mollified} approximations to \eqref{e:model}.
To that end we take a smooth function $\rho: \R \times \T^{2} \rightarrow \R$ supported on the ball of radius $1$, integrating to $1$, satisfying $\rho(t,x) = \rho(t,-x)$ for all $t \in \R$, $x \in \T^{2}$,
and set for $\eps > 0$
\begin{equ}\label{def: scaled mollifier}
\rho_{\eps}(t,x) \eqdef \eps^{-4}\rho(\eps^{-2}t,\eps^{-1}x)\,.
\end{equ}
We then mollify the equation by replacing the rough driving noise $\zeta$ 
with $\zeta_\e \eqdef \zeta \ast \rho_\e$ where ``$*$'' denotes space-time convolution.
Our result then says that for any $\beta^{2} \in (0,8\pi)$, any $u_{0} \in \CC^{\eta}(\T^{2})$, where $\eta \in (\frac{\beta^{2}}{8\pi}-1,0)$ and $\CC^{\eta}(\T^{2})$ is the classical Holder-Besov space, there is space-time process $u$ such that  the classical solution $u_\e$ to the Cauchy problem
\begin{equs}\label{eq: renormalised approx equation}
\d_t u_\e
&= \frac{1}{2}\Delta u_\e + \Wick{ \sin\bigl( \beta u_\e\bigr)}
 + \zeta_\e,   \quad u_\e(0,) = u_{0}, \\
&
 \mbox{with}
 \quad
 \Wick{\sin\bigl( \beta u_\e \bigr)}
 = C_{\beta,\rho,\e}   
  \sin\bigl( \beta u_\e \bigr),
\end{equs}
where $C_{\beta,\rho,\e}$ is a $\beta$,$\rho$ and $\e$ dependent \emph{renormalisation constant}, defined below, then as $\eps \downarrow 0$ the sequence $u_\e$ will  converge to $u$ in probability. 

Our analysis of \eqref{eq: renormalised approx equation} starts with the Da Prato--Debussche trick \cite{MR1941997,MR2016604}. We define the stationary space-time process
\begin{equ}[e:defPhi]
\Phi_\e \eqdef K * \zeta_\e \;,
\end{equ}
where $K\colon \R \times \R^2 \to \R$ is a compactly supported function which 
agrees with the heat kernel $\exp(-|x|^2/2t)/(2\pi t)$ in a ball of radius $\tfrac{1}{2}$ around the origin, is smooth
everywhere except at the origin, satisfies $K(t,x) = 0$ for $t < 0$, 
and has the property that $\int K(t,x) Q(t,x)\,dt\,dx = 0$
for every polynomial $Q$ of degree $2$. 

Note that
$R_\e \eqdef \d_t \Phi_\e - \frac{1}{2}\Delta \Phi_\e -  \zeta_\e$
is a smooth function and one can show that there exists a smooth function $R$ such that $R_\e$ converges to $R$.
We then set
\[
C_{\beta,\rho,\e} \eqdef e^{\frac{\beta^2}{2} \E(\Phi_\e (0)^2)}\;.
\]
It is shown in \cite[Lemma~3.1]{HaoSG}
that there exists a constant $\bar{C}_{\beta,\rho}$, independent of our choice of $K$, 
such that 
one has 
$C_{\beta,\rho,\e} = \bar{C}_{\beta,\rho} \eps^{-\frac{\beta^2}{4\pi}} (1+O(\eps^2))$.

Writing $u_\e = v_\e +  \Phi_\e$ and we have 
\begin{equ} [e:SG]
\partial_t v_\e= \frac12\Delta v_\e -\frac{i}{2} 
   \Big(e^{i\beta v_\e} \xi^{\e}_{+} 
   - e^{-i\beta v_\e} \xi^{\e}_{-}\Big)  + R_\e,
 \quad 
v_\e (0,\cdot) = v^\e_{0} \eqdef u_{0} - \Phi_\e(0,\cdot), 
\end{equ}
where we set 
\begin{equ} [e:def-xipm]
\xi^{\e}_{\pm} 
\eqdef \Wick{e^{\pm i\beta \Phi_\e }}
= C_{\beta,\rho,\e} e^{\pm i\beta \Phi_\e }
 \;.
\end{equ}
Note that the choice of $C_{\beta,\rho,\e}$ chosen  above
is such that $\E (\xi^{\e}_{\pm} (z))= 1$ for any $z\in \R\times \T^2$. 
It is proved in \cite[Theorem 2.1]{HaoSG} that
for $\beta^{2} < 8 \pi$, there exist random distributions 
\begin{equ}[e:def of xi]
\xi = (\xi_{-},\xi_{+})
\end{equ}
such that for any choice of $\rho$ and heat kernel truncation, 
$\xi^{\eps}_{\pm}$ converges in probability
as $\e\to 0$
to $\xi_{\pm} $ in the topology of $ \CC^{-\bar\beta} \times \CC^{-\bar\beta}$ for any $\bar \beta>\beta'  \eqdef  \frac{\beta^2}{4\pi}$.

We now fix a choice of $\beta \in (0,\sqrt{8\pi})$ for which we will seek to solve \eqref{e:model}; note that this means $\beta' \in(0,2)$. 
We then fix some choice of $\bar{\beta} \in ( \beta' ,2)$ which corresponds to our assumption on the pathwise regularity of $\xi_{\pm}$. 
The focus of the paper will be developing tools to prove the convergence as $\eps \downarrow 0$ for the $v_\e $ of Eq.~\eqref{e:SG}.

With the following theorem at hand, one then has that 
as $\e \to 0$, the solutions $u_\e $ converge to 
$v$ (given by the following theorem) plus the distributional (Gaussian) limit of $\Phi_\e$.

\begin{theorem} \label{theo:v-equ}
Assume that  $u_{0} \in \CC^\eta(\T^2)$ for 
some $\eta \in (\frac{\bar\beta}{2}-1,0)$. 
The sequence
$v_\e$ converges in probability and locally uniformly as $\eps\to 0$ to a 
limiting process $v$.

More precisely, there exists a stopping time $\tau > 0$ and a random variable 
$v \in \CD'(\R_+ \times \T^2)$
such that, for every $T>0$, one has $v_\e\to v$ in probability in 
$\CC([0,T], \CC^\eta(\T^2)) \cap \CC((0,T], \CC^{-\bar\beta+2}(\T^2))$
on the set $\tau > T$.
Finally, one has $\lim_{t \to \tau} \left\Vert v(t,\cdot)\right\Vert _{\mathcal{C}^{\eta}(\mathbf{T}^{2})}=\infty$
on the set $\{\tau < \infty\}$. 
The limiting process $v$ 
does not depend on the choice of mollifier $\rho$.
\end{theorem}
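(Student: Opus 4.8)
The plan is to recast \eqref{e:SG} as a fixed point problem for modelled distributions over a regularity structure tailored to this equation, solve it locally in time with the solution depending continuously on the underlying model, and then feed in the probabilistic input --- convergence of the renormalised models --- that will be supplied, via moment bounds, by Theorem~\ref{theorem:mom-bounds}. Concretely this splits into three parts: (a) construct the regularity structure and solve the abstract equation; (b) construct the canonical models from the mollified noise, renormalise them, and prove their convergence; (c) identify the renormalised equation and check it is unchanged; after which the conclusions of the theorem follow from the continuity of the solution map together with the convergence of the models. The algebraic and analytic machinery of \cite{Regularity,BHZalg,BCCH} handles (a), the renormalisation algebra in (b), and (c); the only genuinely model-specific ingredient is Theorem~\ref{theorem:mom-bounds}.

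For (a), since $\xi_\pm\in\CC^{-\bar\beta}$ with $\bar\beta\in(\beta',2)$ and $\beta'=\beta^2/4\pi$, subcriticality $\beta^2<8\pi$ guarantees that, for a \emph{fixed} $\beta$, only finitely many of the trees generated by the two noise types $\Xi_\pm$ and the heat kernel have non-positive homogeneity, so the regularity structure is finite-dimensional (its size blows up as $\beta^2\uparrow 8\pi$, but this is irrelevant for a fixed $\beta$ and matters only for the uniformity built into Theorem~\ref{theorem:mom-bounds}). The non-polynomial nonlinearity $F_\pm(v)=\mp\tfrac i2 e^{\pm i\beta v}$ is handled exactly as in \cite{HaoSG}: every tree $\mathcal I(\Xi_\pm)$ entering the expansion of $v$ has strictly positive homogeneity $2-\bar\beta$, so the abstract exponential $e^{\pm i\beta V}$ of a modelled distribution $V$ lifting $v$ is, after truncation at some level $\gamma$, a finite sum, and $F_\pm$ lifts to a smooth map between the relevant $\mcD^{\gamma,\eta}$ spaces. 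The right-hand side of \eqref{e:SG} then defines a locally Lipschitz map on a ball in $\mcD^{\gamma,\eta}$, with $\eta$ as in the statement matching both the regularity of the initial datum $v^\e_0=u_0-\Phi_\e(0,\cdot)$ and the worst admissible blow-up of $v$ at $t=0$; the fixed point theorem of \cite{Regularity}, built on the reconstruction theorem and the multilevel Schauder estimate, yields for every admissible model a unique maximal solution $V$, a reconstruction $v=\mcR V$, an existence time that is lower semicontinuous in the model, and solutions continuous in the model on the sets where the existence time exceeds a given $T$.

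For (b) and (c), the mollified noise gives a canonical admissible model $Z_\e$ built from $\xi^\e_\pm$, whose reconstructed solution is, by inspection (everything being smooth), the classical $v_\e$ of \eqref{e:SG}. Using the structure group and the charge-aware renormalisation group described in Section~\ref{sec: sg reg struct}, one forms the BPHZ-renormalised models $\hat Z_\e$, and Theorem~\ref{theorem:mom-bounds} together with a Kolmogorov/Besov argument as in \cite{CH} shows that $\hat Z_\e$ converges in probability, as $\e\downarrow 0$, to a limiting admissible model $\hat Z$ independent of $\rho$ and of the truncation of $K$ (the latter inherited from the analogous statement for $\xi_\pm$ in \cite[Theorem~2.1]{HaoSG}). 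It then remains to check that renormalisation does not alter the equation: by the formula of \cite{BCCH}, $\mcR_{\hat Z_\e}V_\e$ solves \eqref{e:SG} modified by finitely many counterterms indexed by the divergent trees, but the charge structure forces the renormalisation constant of every non-neutral tree to vanish, while the counterterms attached to neutral trees cancel in conjugate pairs because $F_+$ and $F_-$ are complex conjugates (the parity cancellation). Hence $\mcR_{\hat Z_\e}V_\e=v_\e$, and substituting $\xi^\e_\pm=C_{\beta,\rho,\e}e^{\pm i\beta\Phi_\e}$ back shows that $u_\e=v_\e+\Phi_\e$ solves \eqref{eq: renormalised approx equation}.

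Finally, applying the continuity of the solution-to-reconstruction map along the convergent sequence $\hat Z_\e\to\hat Z$ yields $v_\e\to v\eqdef\mcR_{\hat Z}V$ in probability in $\CC([0,T],\CC^\eta)\cap\CC((0,T],\CC^{-\bar\beta+2})$ on $\{\tau>T\}$, where $\tau$ is the existence time associated with $\hat Z$; $\tau$ is a stopping time for the filtration generated by $\zeta$ because, $K$ being supported in $\{t\ge 0\}$, the solution up to time $t$ is a measurable function of the noise up to time $t$. The blow-up alternative $\lim_{t\to\tau}\|v(t,\cdot)\|_{\CC^\eta}=\infty$ on $\{\tau<\infty\}$ and the $\rho$-independence of $v$ are read off from the abstract maximal-solution statement and from the $\rho$-independence of $\hat Z$. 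The main obstacle is none of the above but lies upstream, in Theorem~\ref{theorem:mom-bounds}: the generic power counting of \cite{CH} is too lossy for this model and must be replaced by one exploiting the Coulomb-type two-point structure \eqref{eq: coulomb interaction} and parity, with bounds uniform in the growing number of divergent trees --- the content of Sections~\ref{sec: explicit formula}--\ref{sec: renormalisation bound}. A secondary but still nontrivial point, internal to the present reduction, is to verify that the charge-aware renormalisation of Section~\ref{sec: sg reg struct} genuinely fits the algebraic framework of \cite{BHZalg,BCCH}, so that the renormalised-equation computation above is licensed.
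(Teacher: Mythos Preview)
Your outline matches the paper's architecture: reduce to an abstract fixed point problem, invoke Theorem~\ref{theorem:mom-bounds} for model convergence, and use \cite{BCCH} plus the $\tau\leftrightarrow\tau_{\opp}$ symmetry to show the renormalised equation coincides with \eqref{e:SG}. Two points, however, need correction.

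First, you skip a step the paper treats as essential. You put $V$ directly in $\mcD^{\gamma,\eta}$ with $\eta<0$ and assert that $e^{\pm i\beta V}$ is a well-defined modelled distribution. With non-polynomial nonlinearity and genuinely negative $\eta$ this is not immediate: the composition theorem for smooth functions requires the lowest homogeneity of the argument to be nonnegative. The paper (Section~\ref{sec:proof-of-main}) avoids this by writing $v^\e=Gv^\e_0+w^\e$ so that the unknown $W$ lives in $\mcD^{\mu,0}_0$ with zero initial data; then $e^{\pm i\beta W}\in\mcD^{\mu,0}_0$ is unproblematic, while the rough part $e^{\pm i\beta Gv^\e_0}$ is handled classically as an element of $\mcD^{\mu,2\eta}$ taking values in abstract polynomials. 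This preprocessing is what makes the fixed point problem \eqref{e:FPW} close.

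Second, your sentence ``the charge structure forces the renormalisation constant of every non-neutral tree to vanish'' is not correct and obscures the actual mechanism. For the standard BPHZ character the constants $\ell^\e_{\BPHZ}[\tau]$ on non-neutral $\tau\in\mcT^-$ do \emph{not} vanish; indeed Remark~\ref{rem:BPHZ-not-all-cancel} notes that Proposition~\ref{prop:no-renormalisation-equation} fails for the BPHZ lift. What the paper does is \emph{define} the neutral BPHZ character $\ell^\e_{\overline\BPHZ}$ to be zero on non-neutral trees (via the projection $\mcb{N}$ in Section~\ref{sec: neutral BPHZ}); the stochastic estimates of Theorem~\ref{theorem:mom-bounds} are then proven for \emph{this} specific model, exploiting that only neutral substructures are divergent. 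The cancellation of the remaining neutral counterterms then proceeds as you say, via Lemmas~\ref{lem:sameConstant}--\ref{lem: sign from charge flip}. So the logic is: choose neutral BPHZ, prove convergence for that choice, then check the equation is unchanged---not: observe that BPHZ automatically respects charge.
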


\subsection{Preliminary notation}  \label{sec:Preliminary notation}
\subsubsection{Space-time scaling}
Given $z \in \R_{+} \times \T^{2}$ or $z \in \R^{3}$ we often write $z = (z^{(0)},z^{(1)},z^{(2)})$. 
We also fix a parabolic space-time scaling $\s = (2,1,1)$ and a corresponding metric on $\R^{3}$
\[
|z|_{\s} \eqdef |z^{(0)}|^{\frac{1}{2}} + |z^{(1)}| + |z^{(2)}|\;.
\]
With this choice of metric, $\R^{3}$ has scaling dimension $|\s| = 4$. 
Let $\N$ be the set of natural numbers with convention $0\in \N$.
Given a multi-index $k = (k^{(0)},k^{(1)},k^{(2)}) \in \N^{3}$ we define the $\s$-degree of $k$ via 
\[
|k|_{\s}
\eqdef
2 k^{(0)} + k^{(1)} + k^{(2)}\;.
\]
Also, given a finite set A, a map $\mfn: A \rightarrow \N^{3}$, and any subset $B \subset A$ we define the shorthand $|\mfn(B)|_{\s} \eqdef \sum_{b \in B} |\mfn(b)|_{\s}$. 

Given  function $\psi :\R^3\to \R$ 
we set $\psi_z^\lambda(y)\eqdef \lambda^{-4} \psi(\tfrac{y^{(0)}-z^{(0)}}{\lambda^2},\tfrac{y^{(1)}-z^{(1)}}{\lambda},\tfrac{y^{(2)}-z^{(2)}}{\lambda})$.

\subsubsection{Singular Kernels}
Additionally, following \cite[Sec.~10.3]{Regularity}, for any $\zeta \in \R$
 and $m \in \N$ we introduce norms $\|\cdot\|_{\zeta,m}$ on smooth functions $K: \R^{3} \setminus \{ 0\} \rightarrow \R$ by setting 	
\begin{equ}\label{def: singular kernel norm}
\|K\|_{\zeta,m}
\eqdef
\max_{
\substack{
k \in \N^{3}\\
|k|_{\s} \le m
}
}
\sup_{x \in \R^{3}\setminus \{0\}}
|x|^{|k|_{\s} + \zeta}
|D^k K(x)|
\;.
\end{equ}
When $m=0$ we sometimes write $\|K\|_{\zeta}=\|K\|_{\zeta,0}$ for short.
%

%
\subsubsection{Set theory notation}
\label{sec:poset}
Throughout the paper we use the common shorthand {\it poset} for partially ordered sets. 
We introduce a variety of posets but use common notation with them all. Given a poset $P$ and a subset
\footnote{In this paper the notation $A\subset B$ includes the case $A=B$. We will not use notation such as $\subseteq$.}
 $A \subset P$ 
we say $a \in A$ is \emph{maximal} if there does not exist an element $a' \in A$ with $a' > a$. 
Similarly we say an element $a \in A$ is $\emph{minimal}$ if there does not exist an element $a' \in A$ with $a' < a$. 
We often write $\mathrm{Max}(A)$ or $\overline{A}$ for the set of maximal elements of $A$ and $\mathrm{Min}(A)$ or $\underline{A}$ for the set of minimal elements of $A$. 

For any set $A$ we denote by $A^{(2)}$ the collection
of all the subsets of $A$ of cardinality two.
\section{The sine-Gordon regularity structure}\label{sec: sg reg struct}
We quickly recall the notion of a \emph{regularity structure} introduced in \cite{Regularity}. 
We refer readers looking for a detailed exposition to \cite{hairer2015introduction}, \cite[Chapter~15]{FrizHairer}, and \cite{CW}; our description of the theory will be quite brief. 
The most basic object in the theory is a \emph{regularity structure} which consists of a pair $(\mcb{T}, \mathcal{G})$.
Here, $\mcb{T}$ is a graded vector space $\mcb{T} = \bigoplus_{\alpha \in A} \mcb{T}_{\alpha}$ 
for $A \subset \R$ a set of homogeneities assumed to be locally finite and bounded from below,
where each $\mcb{T}_{\alpha}$ is a Banach space which will in our case be finite-dimensional
and come with a distinguished basis. $\CG$ is a group of continuous linear transformations on $\mcb{T}$ with the property that for all $\alpha \in A,\ \tau \in \mcb{T}_{\alpha},$ and $\Gamma \in \CG$ one has $(\Gamma \tau - \tau) \in \mcb{T}_{<\alpha} = \bigoplus_{\beta < \alpha} \mcb{T}_{\beta}$. 
A regularity structure is used to describe ``jets of abstract Taylor expansions''; the vector space $\mcb{T}$ is the target space for the jets and the \emph{structure group} $\CG$ includes transformations on the target space corresponding to change of base-point operations.

\subsection{The trees of \texorpdfstring{$\mcb{T}$}{T}}
\label{sec: first description of trees}
In our setting the vector space $\mcb{T}$ will be a free vector space generated by a finite collection $\mcT$ of abstract elements, and the elements of $\mcT$ will be represented by certain decorated, rooted trees.
In Section~\ref{sec: invoking BHZalg}
we will explain how these combinatoric trees are generated 
from the equation via the construction in \cite{BHZalg}.

We often write one of such trees as $T^{\mfn\mfl}$.
Here, the underlying  tree $T$ consists of a set of nodes $N(T)$, a distinguished root node $\rho_{T} \in N(T)$, a set of edges $K(T) \subset N(T)^{2}$.
The superscript $\mfl$ is a  decoration, namely a map $\mfl: N(T) \rightarrow \{ +, 0, -\}$. 
The superscript $\mfn$ corresponds to a second decoration $\mfn:N(T) \rightarrow \N^{3}$.

We will frequently use the notation $\tilde{N}(T) \eqdef N(T) \setminus \{\rho_{T}\}$.

For such a tree $T$ we view $N(T) \cup K(T)$ as {\it partially ordered} (see Section~\ref{sec:Preliminary notation}) as follows: given two nodes / edges $u$ and $v$, 
one has $u \le v$ if and only if the unique path that connects $v$ to the root $\rho_{T}$ contains $u$. 
(In particular $\rho_{T}=\mathrm{Min}(T)$.)
We impose that edges $e\in K(T)$ are directed in such a way that 
$e = (e_{\ch}, e_{\p})$ 
with $e_{\ch} > e_{\p}$. 
We henceforth view $e_{\p}$, $e_{\ch}$ as maps from $K(T)$ into $N(T)$.
We also write 
\[
L(T) \eqdef \{ u \in N(T):\ \mfl(u) \not = 0\}\;.
\]
The {\it homogeneity} $|T^{\mfn\mfl}|_{\s}$ of a tree $T^{\mfn\mfl}$ is given by
\begin{equ} [e:def-snorm]
|T^{\mfn\mfl}|_{\s} \eqdef 2|K(T)|-\bar\beta |L(T)| + \sum_{u \in N(T)} |\mfn(u)|_{\s}\;.
\end{equ}
Here, $|\cdot|$ stands for the cardinality of sets and
the factor $2$ corresponds to the fact that 
the heat kernel ``improves regularity by two''.
The following is an example of a tree $T^{\mfn\mfl}$ with $|N(T)|=|L(T)|=6$,
$|K(T)|=5$ and
$|T^{\mfn\mfl}|_{\s} = 10-6\bar\beta$.
\begin{equ} [e:example-of-tree]
\begin{tikzpicture}[scale=0.13,baseline=0]
\draw (-3,0) node[xi] {\tiny $ +$}
	-- (0,-3) node[xi] {\tiny $ -$} 
	-- (3,0) node[xi] {\tiny $+$}
	 -- (0,3) node[xi] {\tiny $+$};
\draw (3,4) node[xi] {\tiny $-$} -- (3,0)   node[xi] {\tiny $+$}
	-- (6,3) node[xi] {\tiny $+$};
\node at (2.3,-2) {\scriptsize $e$};
\end{tikzpicture}
\end{equ}
(Whenever we use such a graphical notation, we implicitly set $\mfn = 0$.)
Using the fact that $|K(T)| = |N(T)| - 1$ one can also write \footnote{This formula is useful in Sections~\ref{sec: renormalisation bound}.}
\hao{footnote:  make more precise reference to that section}
\[
|T^{\mfn\mfl}|_{\s} = - 2|K(T)| - \bar\beta |L(T)| + \sum_{u \in N(T)} |\mfn(u)|_{\s} + (|N(T)| - 1)|\s|
\] 

We define $\tilde{\mcT}$ to be the collection of all such trees $T^{\mfn\mfl}$ and then set $\mcT \subset \tilde{\mcT}$ as
\begin{equ} [e:def-mcT]
\mcT \eqdef \{ T^{\mfn\mfl} \in \tilde{\mcT}:
	|T^{\mfn\mfl}|_{\s} < {\mu} \}
\end{equ}
 for $\mu \in (\bar{\beta},2)$. 
 Finally, we denote by $\mcb{T}$ the free vector space generated by $\mcT$. 

We write $\Xi_{+}$ (resp. $\Xi_{-}$) for the tree $T^{0}$ with $T = \{\rho_{T}\}$ and $\mfl(\rho_{T}) = +$ (resp. $\mfl(\rho_{T}) = -$). 
For $n \in \N^{3}$ we write $\mathbf{X}^{n}$ for the tree $T^{\mfn\mfl}$ with $T = \{\rho_{T}\}$, $\mfn(\rho_{T}) = n$, and $\mfl(\rho_{T}) = 0$.  

It is natural to view $\tilde{\mcT}$ as being iteratively generated via two operations applied to the set $\{\Xi_{+}, \Xi_{-}\} \sqcup \{ \mathbf{X}^{n}: n \in \N^{3}\}$ as follows.

1. Given $\tau = T^{\mfn} \in \tilde{\mcT}$ we define a tree $\CI(\tau)\in \tilde{\mcT}$ as follows. 
Writing $\tilde{T}^{\tilde{\mfn}\tilde{\mfl}} \eqdef \CI(\tau)$, we build this tree 
by introducing a new node $\rho_{\tilde{T}}$ (which is defined as the root of $\tilde T$) and
setting $N(\tilde{T})=N(T)\sqcup \{\rho_{\tilde{T}}\}$
and $K(\tilde{T}) = K(T)\sqcup \{(\rho_{\tilde{T}},\rho_{T})\}$. 
We then set $\mfl(\rho_{\tilde{T}}) = 0$, $\tilde{\mfn}(\rho_{\tilde{T}}) = 0$ and the other nodes of $\tilde T$ inherit labels $\mfl$ and $\mfn$ from $T$.

2. Given two trees $T_{1}^{\mfn_1\mfl_1}, T_{2}^{\mfn_2\mfl_2} \in \tilde{\mcT}$ such that at least one of the two labels $\mfl(\rho_{T_1})$ and $\mfl(\rho_{T_2})$ is $0$,
 we define the tree product  
 $T_{3}^{\mfn_3\mfl_3} \eqdef T_{1}^{\mfn_1\mfl_1} \cdot T_{2}^{\mfn_2\mfl_2}\in \tilde{\mcT}$ to be  the disjoint union of $T_1$ and $T_2$
with roots $\rho_{T_1}$ and $\rho_{T_2}$ identified giving the root $\rho_{T_3}$ of $T_3$;
the decorations $\mfl$ and $\mfn$ are pushed through to $\tilde{N}(T_3)$ from $\tilde{N}(T_1)$ and $\tilde{N}(T_2)$,
and  
we set $\mfn_3 (\rho_{T_3}) = \mfn_1(\rho_{T_1}) + \mfn_2(\rho_{T_2})$ and 
$\mfl_3 (\rho_{T_3}) = \mfl_1(\rho_{T_1}) + \mfl_2(\rho_{T_2})$ (in the latter case``plus'' is understood in the natural way e.g.: $-$ plus $0$ equals $-$).


The following picture illustrates the above two operations.
\begin{equ} 
\CI\bigg(
\begin{tikzpicture}[scale=0.13,baseline=0]
\draw (-3,0) node[xi] {\tiny $ +$}
	-- (0,-3) node[xi] {\tiny $ -$} 
	-- (3,0) node[xi] {\tiny $+$}
	 -- (0,3) node[xi] {\tiny $+$};
\draw (3,4) node[xi] {\tiny $-$} -- (3,0)   node[xi] {\tiny $+$}
	-- (6,3) node[xi] {\tiny $+$};
\end{tikzpicture}
\bigg)
=
\begin{tikzpicture}[scale=0.13,baseline=-5]
\draw (-3,0) node[xi] {\tiny $ +$}
	-- (0,-3) node[xi] {\tiny $ -$} 
	-- (3,0) node[xi] {\tiny $+$}
	 -- (0,3) node[xi] {\tiny $+$};
\draw (3,4) node[xi] {\tiny $-$} -- (3,0)   node[xi] {\tiny $+$}
	-- (6,3) node[xi] {\tiny $+$};
\draw (0,-3) node[xi] {\tiny $ -$} -- (0,-6) {};
\end{tikzpicture}
\qquad
\qquad
\begin{tikzpicture}[scale=0.13,baseline=0]
\draw 
	 (3,-3)  node[xi] {\tiny $-$}
	-- (3,0) node[xi] {\tiny $+$}
	 -- (1,3) node[xi] {\tiny $+$};
\draw (5,3) node[xi] {\tiny $-$} -- (3,0)   node[xi] {\tiny $+$};
\end{tikzpicture}
\cdot
\begin{tikzpicture}[scale=0.13,baseline=0]
\draw 
	 (3,-4)  
	-- (3,0) node[xi] {\tiny $+$}
	 -- (0,3) node[xi] {\tiny $+$};
\draw (3,4) node[xi] {\tiny $-$} -- (3,0)   node[xi] {\tiny $+$}
	-- (6,3) node[xi] {\tiny $+$};
\end{tikzpicture}
=
\begin{tikzpicture}[scale=0.13,baseline=0]
\draw (-3,0) node[xi] {\tiny $ +$}
	-- (0,-3)  node[xi] {\tiny $ -$}
	-- (3,0) node[xi] {\tiny $+$}
	 -- (0.5,3) node[xi] {\tiny $+$};
\draw (-5,3) node[xi] {\tiny $ +$}
	-- (-3,0) node[xi] {\tiny $ +$}
	-- (-2,3) node[xi] {\tiny $ -$};
\draw (3,4) node[xi] {\tiny $-$} -- (3,0)   node[xi] {\tiny $+$}
	-- (5.5,3) node[xi] {\tiny $+$};
\end{tikzpicture}
\end{equ}

Note that we have $|\CI[\tau]|_{\s} = |\tau|_{\s} + 2$ and $|\tau \cdot \bar{\tau}|_{\s} = |\tau|_{\s} + |\bar{\tau}|_{\s}$.

We extend the operations $\CI[\cdot]$ and the tree product $\cdot$ to $\mcb{T}$ by linearity 
(truncated at homogeneity $\mu$, i.e. when the operation yields a tree in  $\tilde{\mcT} \setminus \mcT$ then it is set to $0$).

The following crucial lemma reflects 
the sub-criticality of \eqref{e:SG}.

\begin{lemma}\label{lem: SG trees are nice}
For any $\tau \in \tilde{\mcT} \setminus \{ \Xi_{+}, \Xi_{-}\}$ one has
\begin{equ}\label{eq: SG trees are nice}
|\tau|_{\s} > |\Xi_{\pm}|_{\s} = -\bar{\beta} > -\frac{|\s|}{2} = - 2\;.
\end{equ}
\end{lemma}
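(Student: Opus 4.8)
The plan is a direct computation from the definition~\eqref{e:def-snorm}, relying only on two elementary structural facts: each underlying tree $T$ is a finite rooted tree, so $|K(T)| = |N(T)| - 1$, and $L(T) \subset N(T)$ by definition, so $|L(T)| \le |N(T)|$; moreover $\mfn$ takes values in $\N^3$, hence $\sum_{u \in N(T)} |\mfn(u)|_\s \ge 0$. Since $\bar\beta \in (\beta',2)$ one has $0 < \bar\beta < 2$, which gives the last inequality in~\eqref{eq: SG trees are nice} at once ($-\bar\beta > -2 = -|\s|/2$), while $|\Xi_\pm|_\s = -\bar\beta$ is read straight off~\eqref{e:def-snorm} (here $|K| = 0$, $|L| = 1$, $\mfn \equiv 0$).

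I would first dispose of the case $L(T) = \emptyset$, where $|\tau|_\s = 2|K(T)| + \sum_u |\mfn(u)|_\s \ge 0 > -\bar\beta$. For the remaining case $|L(T)| \ge 1$, combine $|K(T)| = |N(T)| - 1 \ge |L(T)| - 1$ with $|\mfn(u)|_\s \ge 0$ to get
\[
|\tau|_\s \;\ge\; 2\bigl(|L(T)|-1\bigr) - \bar\beta\,|L(T)| + \sum_{u} |\mfn(u)|_\s \;=\; (2-\bar\beta)\,|L(T)| - 2 + \sum_{u} |\mfn(u)|_\s\;,
\]
and since $2 - \bar\beta > 0$ and $|L(T)| \ge 1$ the right-hand side is at least $-\bar\beta + \sum_u |\mfn(u)|_\s \ge -\bar\beta$. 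This proves $|\tau|_\s \ge -\bar\beta$ in all cases; to upgrade to strict inequality when $\tau \ne \Xi_\pm$, note that equality throughout the chain forces $\sum_u |\mfn(u)|_\s = 0$, $|K(T)| = |L(T)| - 1$, and $|L(T)| = 1$, hence $|N(T)| = 1$, so $T$ is a single node carrying a label in $\{+,-\}$ with trivial polynomial decoration — that is, $\tau \in \{\Xi_+,\Xi_-\}$, a contradiction.

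I do not expect any real obstacle here; the only point requiring care is keeping track of which of the inequalities used ($|K(T)| \ge |L(T)|-1$ and $|L(T)| \ge 1$) can be saturated simultaneously, since that is exactly what identifies $\Xi_\pm$ as the unique extremal trees and hence forces strictness in~\eqref{eq: SG trees are nice}.
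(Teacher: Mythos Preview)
Your proof is correct and is genuinely more elementary than the paper's. The paper argues by induction on $|K(T)|$: after handling $\tau = \mathbf{X}^k$ separately, it decomposes a general tree as $\Xi_\pm \cdot \CI(\tau_1)\cdots\CI(\tau_j)$, applies the inductive hypothesis to each $\tau_i$ to get $|\tau_i|_\s \ge -\bar\beta$, and concludes via
\[
|T^{\mfn\mfl}|_\s = -\bar\beta + \sum_{i=1}^j \bigl(2 + |\tau_i|_\s\bigr) \ge -(j+1)\bar\beta + 2j > -\bar\beta\;.
\]
You bypass this recursion entirely with the single combinatorial observation $|K(T)| = |N(T)|-1 \ge |L(T)|-1$, which directly yields the linear lower bound $(2-\bar\beta)|L(T)|-2$ and a clean equality analysis identifying $\Xi_\pm$ as the unique minimisers. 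The paper's route mirrors the recursive generation of $\tilde{\mcT}$ described just above the lemma, which is pedagogically natural in context; your route is shorter, treats all values of $|K(T)|$ uniformly (the paper's proof leaves the cases $|K(T)|\in\{0,1\}$ somewhat implicit), and makes the extremality of $\Xi_\pm$ completely transparent.
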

\begin{proof} 
First we observe that the claim is obvious if $\tau = \mathbf{X}^{k}$ for $k \in \N$.
We prove the claim for $\tau = T^{\mfn\mfl}$ with $|K(T)| \ge 2$ via induction in $|K(T)|$. 
The base case, when $|K(T)| = 2$, is easily verified.

For the inductive step fix $n > 2$ and suppose the claim holds for any $S^{\mfn\mfl} \in \tilde{\mcT}$ with $2 \le |K(S)| < n$. We then try to prove the claim for some fixed $T^{\mfn\mfl} \in \tilde{\mcT}$ with $|K(T)| = n$. Without loss of generality we can assume $\mfl(\rho_{T}) = +$ and $\mfn(\rho_{T}) = 0$. 
Now note that one can find $j \ge 1$ such that we can write $T^{\mfn\mfl} $ as a tree product 
\[
T^{\mfn\mfl} = \Xi_{+} \cdot \CI(\tau_{1}) \cdots \CI(\tau_{j})
\] 
where $\tau_{1},\dots,\tau_{j} \in \tilde{\mcT}$, $|K(\tilde{T})| = 0$, and for each $i \in [j]$ one has $\tau_{i} = T_{i}^{\mfn_{i}\mfl_{i}}$ with $|K(T_{i})| < n$. 
Then we have
\[
|T^{\mfn\mfl}|_{\s} =
- \bar{\beta} + \Big( \sum_{i=1}^{j} 2 + |T_{i}^{\mfn_{i}\mfl_{i}}|_{\s} \Big)
\ge - (j+1) \bar{\beta} + 2j > -\bar{\beta}\;,
\]
since $|T_{i}^{\mfn_{i}\mfl_{i}}|_{\s} \ge - \beta$ for every $i \in [j]$ \dash this is immediate if $|K(T_{i})| = 1$ and via our inductive hypothesis otherwise.
\end{proof}
We define $\mcT^{-} \eqdef \{T^{\mfn\mfl} \in \mcT : |T^{\mfn\mfl}|_{\s} < 0 \}$.

\begin{lemma} \label{lem:neg-trees}
Let $T^{\mfn\mfl} \in \mcT^{-}$, then: 
\begin{itemize}
\item One has $L(T) = N(T)$.
\item Either $\mfn = 0$, or $\mfn$ is non-vanishing at precisely one node $u \in N(T)$ with $\mfn(u) \in \{(0,1,0),(0,0,1)\}$. 
\end{itemize}
\end{lemma}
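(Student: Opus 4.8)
The plan is to deduce both bullets from a single homogeneity count; no induction or case analysis on the shape of the tree is needed. The inputs are the definition \eqref{e:def-snorm} of $|T^{\mfn\mfl}|_{\s}$, the identity $|K(T)| = |N(T)| - 1$ valid for any rooted tree (already noted in the excerpt), and the standing assumption $\bar\beta < 2$. First I would rewrite the homogeneity: setting $z \eqdef |N(T) \setminus L(T)|$ for the number of nodes $u$ with $\mfl(u) = 0$, and $m \eqdef \sum_{u \in N(T)} |\mfn(u)|_{\s}$, inserting $|K(T)| = |N(T)| - 1 = |L(T)| + z - 1$ into \eqref{e:def-snorm} gives
\begin{equ}
|T^{\mfn\mfl}|_{\s} \;=\; (2-\bar\beta)\,|L(T)| \;+\; 2z \;+\; m \;-\; 2\;.
\end{equ}
Because $\bar\beta < 2$ (in fact $\bar\beta \in (\beta',2) \subset (0,2)$), all three summands $(2-\bar\beta)|L(T)|$, $2z$, $m$ are nonnegative, and $m$ is moreover a nonnegative integer. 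Hence $T^{\mfn\mfl} \in \mcT^{-}$, i.e. $|T^{\mfn\mfl}|_{\s} < 0$, is equivalent to $(2-\bar\beta)|L(T)| + 2z + m < 2$.

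From this inequality I would read off, in order: $2z < 2$, so $z = 0$, which together with $L(T) \subset N(T)$ gives $L(T) = N(T)$ (the first bullet) and in particular $|L(T)| = |N(T)| \ge 1$ since a rooted tree is nonempty; then, feeding $|L(T)| \ge 1$ back in, $m < 2 - (2-\bar\beta) = \bar\beta < 2$, so $m \in \{0,1\}$. If $m = 0$ then $|\mfn(u)|_{\s} = 0$, hence $\mfn(u) = 0$, at every node, i.e. $\mfn = 0$. If $m = 1$ then exactly one node $u$ has $|\mfn(u)|_{\s} = 1$ with $\mfn$ vanishing at all other nodes, and $|\mfn(u)|_{\s} = 1$ together with $\mfn(u) \in \N^{3}$ forces $\mfn(u) \in \{(0,1,0),(0,0,1)\}$; this is the second bullet.

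I do not expect any real obstacle: once the homogeneity formula is put in the displayed form, the statement is just the remark that a sum of nonnegative terms (one of them an integer) must be smaller than $2$. Unlike the proof of Lemma~\ref{lem: SG trees are nice}, this argument uses none of the generation structure of $\tilde{\mcT}$ beyond $\mfn$ taking values in $\N^{3}$ and $\mfl$ distinguishing the value $0$ — only the inequality $\bar\beta < 2$ and the edge count $|K(T)| = |N(T)| - 1$ do the work.
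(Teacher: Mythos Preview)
Your proof is correct. The route differs from the paper's: instead of rewriting the homogeneity as a sum of nonnegative terms, the paper argues both bullets by contradiction via Lemma~\ref{lem: SG trees are nice}. For the first bullet it relabels a node $u$ with $\mfl(u)=0$ to $\mfl'(u)=+$, observes that this drops the homogeneity by $\bar\beta$ and hence below $-\bar\beta$, contradicting \eqref{eq: SG trees are nice}; for the second it strips $\mfn$ to $0$, which drops the homogeneity by at least $2$ and hence below $-2$, again contradicting \eqref{eq: SG trees are nice}. Your argument is more self-contained: it bypasses Lemma~\ref{lem: SG trees are nice} entirely and needs only $|K(T)|=|N(T)|-1$ and $\bar\beta<2$, so in particular it makes Lemma~\ref{lem:neg-trees} logically independent of the inductive subcriticality argument. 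The paper's approach, on the other hand, emphasises that both bullets are really just manifestations of the single bound $|\tau|_\s > -\bar\beta$.
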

\begin{proof}
For the first statement, suppose that $T^{\mfn\mfl} \in \mcT^{-}$
and there exists $u\in N(T)\setminus L(T)$.
Define $T^{\mfn\mfl'} $ to be the same tree as $T^{\mfn\mfl} $ except that we set $\mfl'(u) = +$ and $\mfl'(v)=\mfl(v)$ for $v\neq u$. Obviously $T^{\mfn\mfl'} $ 
also belongs to the regularity structure, but by \eqref{e:def-snorm} one has 
$| T^{\mfn\mfl'} |_\s = |T^{\mfn\mfl} |_\s -\bar\beta<  -\bar\beta$,
contradicting Lemma~\ref{lem: SG trees are nice}.

For the second statement,
if $|T^{\mfn\mfl} |_\s<0$ and $T^{\mfn\mfl}$ doesn't satisfy the condition of the statement, then
$T^{0\mfl}$ also belongs to the regularity structure, but by \eqref{e:def-snorm} one has
$|T^{0\mfl} |_\s<-2$, contradicting again Lemma~\ref{lem: SG trees are nice}.
\end{proof}
%
%
%
\hao{Since in this subsection we didn't really give  the groups,
I changed the title from ``The structure group and renormalisation group'' to ``Invoking the algebraic machinery''.}


\subsection{Invoking the algebraic machinery}
\label{sec: invoking BHZalg}
The above description of the trees of $\mcT$ is fairly simple,
 but a systematic construction of a sufficiently rich structure group $\CG$ for our regularity structure and renormalisation group $G_{-}$ is fairly non-trivial. 
A very general (equation-independent) construction was given in \cite{BHZalg}; we now describe the trees introduced in the earlier section in the language of \cite{BHZalg} and describe how we invoke the machinery of that paper. 
Here we will not give a full exposition of the construction of \cite{BHZalg}. Readers who want a more detailed and pedagogical explanation should look at \cite{KPZg}. 
Readers who are willing to take the construction of \cite{BHZalg} as a complete black box can skip this subsection. 

The trees introduced in Section~\ref{sec: first description of trees} should be thought of as \emph{placeholders} for multi-linear functionals of the driving noise $\xi$ that appear after writing \eqref{e:SG} in its mild formulation and then performing Picard iteration to generate a formal expansion. 
Given a tree $T^{\mfn\mfl}$, the edges $K(T)$ correspond to convolutions with the heat kernel, and at each node the decoration $\mfn$ describes what polynomial is at that node while the decoration $\mfl$ tells us if there is no noise or an instance of $\xi_{\pm}$\footnote{In the framework of \cite{BHZalg} the presence of driving noises in a tree sometimes represented by fictitious edges which are required to be maximal in the tree $T$. Encoding this data via the node decoration $\mfl$ instead is only a cosmetic difference}.

In \cite{BHZalg} the first ingredient is a finite set of \emph{types} $\Lab$ with a partition $\Lab = \Ke \sqcup \Le$. The elements of $\Ke$ and $\Le$ are respectively called \emph{``kernel types''} and \emph{``noise types''}.
Since we have a single equation 
  rather than a system of equations (and thus only one kernel which is the heat kernel), we write 
 $\mfL_{+} \eqdef \{ \mft \}$.
Since we have two noises $\xi_{+}$ and $\xi_{-}$ in our equation \eqref{e:SG}, we have $\mfL_{-} \eqdef \{+, -\}$. 
We also assign a homogeneity  $|\cdot|_{\s}$ on $\Lab$: we set $|\mft|_{\s} = 2$ and $|+|_{\s} = |-|_{\s} = - \bar{\beta}$. 

The set of all ``edge type'' in \cite{BHZalg} is given by  $\mfL \times \N^{3}$.
The component in $\N^{3}$ for a fixed edge type indicates the presence of a space-time derivative on the respective kernel\slash noise. 
The regularity structure we work with will not involve any derivatives of its integration map (i.e. the component in $\N^{3}$ will always be zero), so we use the 
shorthands $\CI \eqdef (\mft,0)$, $\Xi_{\pm} = (\pm,0)$.

Recall from \cite[Sec.~5]{BHZalg} that a {\it rule} is a map  from the set $\Lab$ into the collections of products of elements of $\Lab \times \N^{3}$ where the product here is commutative.
The rule $R_{\SG}$ we use for the sine-Gordon equation is given by
\begin{equs}
R_{\SG}(+) \eqdef R_{\SG}(-) \eqdef \{\mathbf{1}\} \;,\ 
R_{\SG}(\mft) \eqdef \Big\{ 
\CI^n, \Xi_- \CI^n , \Xi_+ \CI^n  \,:\, n \in \N 
\Big\}\;.
\end{equs}
Here $\mathbf{1}$ denotes the trivial (i.e. empty) product.
In plain words, $R_{\SG}(\mft)$ describes 
how the nodes and edges are allowed to attach from above to 
a given edge; taking the tree in \eqref{e:example-of-tree} as an example, the edge $e$ is attached from above a node of type $\Xi_+$ and $3$ edges, corresponding to a product $\Xi_+\CI^3$.
This rule is clearly \emph{normal} in the sense of \cite[Def.~5.7]{BHZalg} and it is 
also \emph{subcritical} in the sense of \cite[Def.~5.14]{BHZalg}.
\footnote{
To check  subcriticality as in  \cite[Section~5.2]{BHZalg}
we can take the function $\reg$  therein as  $\reg(\mft) \eqdef  7(2 - \beta)/8$ and $\reg(\pm) = -(2 + 7\bar{\beta})/8$.
}

Let $\bar{R}_{\SG}$ be the completion of $R_{\SG}$ as given in \cite[Prop.~5.20]{BHZalg}. 
We then let $\tilde{\mathscr{T}}$ be the reduced regularity structure built from $\bar{R}_{\SG}$ with truncation at homogeneity $\bar\beta$.
Note that the vector space generated by trees which strongly conform to $R_{\SG}$ forms a sector of $\tilde{\mathscr{T}}$, and so we set $\mathscr{T} = (A,\mcb{T},G)$ to be the regularity structure obtained by restricting to this sector\footnote{We work on a sector because the reduced regularity structure will involve trees with edges of the form $(\mft,k)$ for $|k|_{\s} =1$ since $\overline{R}_{SG}$ has to be $\ominus$-complete but we don't need to control the action of models on such trees.}. 
The vector space $\mcb{T}$ here can canonically be identified with the vector space $\mcb{T}$ introduced in Section~\ref{sec: first description of trees}.

\subsection{Charge interactions, function \texorpdfstring{$\mcb{q}$}{q} and homogeneity \texorpdfstring{$|\cdot|_{\SG}$}{SG}} \label{sec:charge}
The correlation structure given by \eqref{eq: coulomb interaction} leads us to interpret the nodes of $L(T)$, for a given a tree $T^{\mfn\mfl} \in \mcT$, as electromagnetic point charges which are either positive or negative. In this subsection we make this interpretation more precise.

%
%
%

For any $\e,\bar\e>0$ we define $J^{\e,\bar{\e}}:\R \times \T^{2} \setminus \{0\} \rightarrow \R_{+}$ via 
\begin{equ}\label{def: single mollified interaction}
J^{\e,\bar{\e}}(z)
\eqdef
\E (\xi^{\e}_{\pm}(0)\xi^{\bar{\e}}_{\pm}(z))
\end{equ}
One then has
\[
\E (\xi^{\e}_{\pm}(0)\xi^{\bar{\e}}_{\mp}(z))
=
\frac{1}{J^{\e,\bar{\e}}(z)}\;.
\]
As in \cite[Eq.~(3.7)]{HaoSG}, one should think of $J^{\e,\bar{\e}}$ as essentially a power function of power $2\beta' = \frac{\beta^2}{2\pi}$. More precisely we will need the following bounds.
\begin{lemma}\label{lem:bound-J}
%
Let $m \ge 0$.  
Then $\|J^{\e,\bar\e}\|_{- 2\beta',m} $, 
$\|(J^{\e,\bar\e})^{-1}\|_{2\beta',m} $ are bounded uniformly in $\e,\bar\e$,
 and one has
\begin{equ}
\|J^{\bar{\e},\bar{\bar{\e}}} - J^{\e,\e} \|_{- 2\beta' +\kappa,m} 
\; \vee \;
\|(J^{\bar{\e},\bar{\bar{\e}}})^{-1} - (J^{\e,\e})^{-1}  \|_{2\beta' +\kappa,m}
 \;\; \lesssim \;\;
(\eps\vee \bar{\e} \vee \bar{\bar{\e}} )^\kappa
\end{equ}
uniformly in $\e,\bar\e,\bar{\bar{\e}} >0$
for some sufficiently small $\kappa>0$.
\end{lemma}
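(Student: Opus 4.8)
The plan is to reduce everything to explicit Gaussian computations. By definition of $\xi^{\e}_{\pm}$ in \eqref{e:def-xipm} and the fact that $\Phi_\e$ is a centred Gaussian field, one computes directly
\[
J^{\e,\bar{\e}}(z) = \E\bigl(\xi^{\e}_{\pm}(0)\,\xi^{\bar{\e}}_{\pm}(z)\bigr)
= C_{\beta,\rho,\e}\, C_{\beta,\rho,\bar{\e}}\,\exp\!\Bigl(-\tfrac{\beta^2}{2}\E\bigl((\Phi_\e(0)\pm\Phi_{\bar{\e}}(z))^2\bigr)\Bigr)
= \exp\!\bigl(\beta^2\, G_{\e,\bar{\e}}(z)\bigr)\;,
\]
where $G_{\e,\bar{\e}}(z) \eqdef \E\bigl(\Phi_\e(0)\Phi_{\bar{\e}}(z)\bigr)$ (the normalisation constants $C_{\beta,\rho,\cdot}$ are precisely tuned to cancel the self-energy contributions, as remarked after \eqref{e:def-xipm}), and the identity $\E(\xi^{\e}_{\pm}(0)\xi^{\bar{\e}}_{\mp}(z)) = J^{\e,\bar{\e}}(z)^{-1}$ follows from the sign flip. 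So the whole lemma is a statement about the covariance kernel $G_{\e,\bar{\e}}$. First I would establish that $G_{\e,\bar{\e}}(z) = -\frac{\beta'}{\beta^2}\log|z|_\s + g_{\e,\bar{\e}}(z)$ where, since $\Phi = K*\zeta$ and $K$ agrees with the heat kernel near the origin, the logarithmically divergent part has coefficient exactly matching $\beta' = \beta^2/4\pi$ (this is the standard log-correlated structure, already used in \cite{HaoSG}), and the remainder $g_{\e,\bar{\e}}$ together with all its derivatives is bounded uniformly in $\e,\bar\e$ on compact sets. Feeding this into the exponential gives $J^{\e,\bar{\e}}(z) \asymp |z|_\s^{-2\beta'}$ with smooth corrections, and the two uniform bounds $\|J^{\e,\bar\e}\|_{-2\beta',m} \lesssim 1$ and $\|(J^{\e,\bar\e})^{-1}\|_{2\beta',m}\lesssim 1$ follow from the chain rule (Faà di Bruno): each $k$-th derivative of $e^{\beta^2 G}$ with $|k|_\s \le m$ is a polynomial in derivatives of $G$, each scaling like the corresponding negative power of $|z|_\s$ up to logarithms, and multiplying by $|z|_\s^{|k|_\s + \zeta}$ with the right $\zeta = \pm 2\beta'$ absorbs the leading power while the logarithm is beaten by an arbitrarily small extra power of $|z|_\s$ (this is why $\kappa$ appears and why the norms at exponent exactly $\pm 2\beta'$ — not $\pm2\beta'+\kappa$ — still close, using that $|z|_\s$ is bounded on the torus).

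For the difference bounds, the key is a quantitative version of the convergence $\Phi_\e \to \Phi$: I would show that
\[
\bigl\| G_{\bar\e,\bar{\bar\e}} - G_{\e,\e} \bigr\|_{\kappa',m} \lesssim (\e\vee\bar\e\vee\bar{\bar\e})^{\kappa'}
\]
for some small $\kappa' > 0$, i.e. the covariance kernels converge in the singular-kernel norm with a polynomial rate in the mollification parameter. This is a direct mollification estimate: writing $\Phi_\e = \rho_\e * \Phi$ at the level of the kernel $K$, the difference $G_{\bar\e,\bar{\bar\e}} - G_{\e,\e}$ is an explicit double convolution of $K*\check K$ (which has a precise $\log$ singularity) against $\rho_{\bar\e}\otimes\rho_{\bar{\bar\e}} - \rho_\e\otimes\rho_\e$, and standard estimates on mollifying a kernel of regularity $0^-$ give the gain of $\kappa'$ powers of the scale at the cost of shifting the homogeneity by $\kappa'$. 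Then for $J$ I would write, using the mean value theorem in the exponential,
\[
J^{\bar\e,\bar{\bar\e}} - J^{\e,\e}
= \bigl(e^{\beta^2 G_{\bar\e,\bar{\bar\e}}} - e^{\beta^2 G_{\e,\e}}\bigr)
= \beta^2 \bigl(G_{\bar\e,\bar{\bar\e}} - G_{\e,\e}\bigr)\int_0^1 e^{\beta^2(t G_{\bar\e,\bar{\bar\e}} + (1-t)G_{\e,\e})}\,dt\;,
\]
so the difference is the product of a kernel that is small (order $(\e\vee\bar\e\vee\bar{\bar\e})^{\kappa}$) in the norm $\|\cdot\|_{\kappa,m}$ and a kernel bounded in $\|\cdot\|_{-2\beta',m}$ (by the first part, applied to the interpolated covariance, which still has the same log-singular structure). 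A product estimate for these weighted norms — $\|FG\|_{\zeta_1+\zeta_2,m} \lesssim \|F\|_{\zeta_1,m}\|G\|_{\zeta_2,m}$, which follows from Leibniz — then yields $\|J^{\bar\e,\bar{\bar\e}} - J^{\e,\e}\|_{-2\beta'+\kappa,m} \lesssim (\e\vee\bar\e\vee\bar{\bar\e})^\kappa$, and the same argument with $G$ replaced by $-G$ gives the bound for $(J^{\e,\e})^{-1}$.

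The main obstacle I anticipate is the quantitative mollification estimate on the covariance, i.e. showing the rate $(\e\vee\bar\e\vee\bar{\bar\e})^{\kappa'}$ in the $\|\cdot\|_{\kappa',m}$ norm rather than just qualitative convergence. One has to be careful that the two mollification parameters can be wildly different (one could be zero), so the estimate must be uniform and symmetric in all of $\e,\bar\e,\bar{\bar\e}$, and near the origin the $\log$-singularity of $K*\check K$ means the naive bound $|\rho_\e * f - f| \lesssim \e^{\kappa'} \|f\|_{\kappa'}$ is being applied to a kernel that only just fails to be bounded; keeping track of exactly how many derivatives of $\rho$ are needed and how the bound degrades as $|z|_\s \to 0$ is the delicate bookkeeping. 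Everything else — the chain rule for the exponential, the product estimate for weighted norms, the extraction of the log-singular part of the heat kernel covariance — is routine and largely parallels \cite[Sec.~3]{HaoSG} and \cite[Sec.~10]{Regularity}.
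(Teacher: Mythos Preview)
Your approach is essentially the same as the paper's: both write $J^{\e,\bar\e}=\exp(-\beta^2\CQ_{\e,\bar\e})$ with $\CQ_{\e,\bar\e}=\E(\Phi_\e(0)\Phi_{\bar\e}(z))$, extract the log-singularity of $\CQ$ (the paper cites \cite[Lemma~3.9]{HaoSG}), and for the difference use the mollification estimate $|\CQ_{\e,\bar\e}-\CQ|\lesssim (\e^\kappa/|z|_\s^\kappa)\wedge(1+|\log(\e/|z|)|)$ from \cite[Lemma~3.7]{HaoSG}; the paper factors the difference as $\CJ^{-1}(e^{\CQ_{\bar\e,\bar{\bar\e}}-\CQ}-e^{\CQ_{\e,\e}-\CQ})$ rather than interpolating, but this is the same mechanism as your mean-value integral. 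One correction: your computation of the Gaussian expectation has a sign slip---for same-sign noises one gets $J^{\e,\bar\e}=e^{-\beta^2 G_{\e,\bar\e}}$, not $e^{+\beta^2 G_{\e,\bar\e}}$ (both $\xi_+\xi_+$ and $\xi_-\xi_-$ involve $e^{\pm i\beta(\Phi_\e(0)+\Phi_{\bar\e}(z))}$, so the cross term always enters with a minus after taking $\E$), though this does not affect the argument.
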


\begin{proof}
These bounds are proved in the same way as in \cite[Proof of Theorem~3.2]{HaoSG}. Indeed, letting $\CQ_{\e,\bar\e} (z) \eqdef \E(\Phi_\e(0)\Phi_{\bar\e}(z))$
where $\Phi_\e$ is as in \eqref{e:defPhi}, 
with the limiting kernel $\CQ(z)$ given by $\int K(z+\bar z) K(\bar z)\,d\bar z$,
we have $\CQ_{\e,\bar\e} = \rho_{\e} \ast \CQ \ast \hat{\rho}_{\bar\e}$ where $\hat{\rho}_{\bar\e}$ is the space-time reflection of $\rho_{\bar\e}$, and
 $J^{\e,\bar\e}=e^{-\beta^2 \CQ_{\e,\bar\e} }$.
 By \cite[Lemma~3.9]{HaoSG}, 
$ \CQ(z) $ equals  $-{1\over 2\pi} \log |z|_\s $ plus smooth functions.
From this we deduce bounds $|D^k \CQ(z)| \lesssim |z|_\s^{-|k|_\s}$ for $|k|_\s >0$. The bounds on $J^{\e,\bar\e}$ and $1/J^{\e,\bar\e}$ follow immediately.

One also has 
$(J^{\bar{\e},\bar{\bar{\e}}})^{-1} - (J^{\e,\e})^{-1} 
= \CJ^{-1} (e^{\CQ_{\bar{\e},\bar{\bar{\e}}} - \CQ} - e^{\CQ_{\e,\e} -\CQ})$.
As in \cite[Lemma~3.7]{HaoSG} one can easily prove
$
|\CQ_{\e,\bar\e} (z) - \CQ(z) | \lesssim \frac{\e^\kappa}{|z|_\s^\kappa} \wedge (1+|\log(\e/|z|)|) 
$ for $\kappa\in [0,1]$ and $\bar\e \le \e$,
so that one has 
$| (J^{\bar{\e},\bar{\bar{\e}}})^{-1} (z)- (J^{\e,\e})^{-1} (z) |
\lesssim |z|_\s^{-2\beta'} (\frac{\e^\kappa}{|z|_\s^\kappa} \wedge 1) $.
Combining with the derivative bounds on $\CQ$ 
we obtain the claimed bound on $(J^{\bar{\e},\bar{\bar{\e}}})^{-1} - (J^{\e,\e})^{-1} $. The bound on $J^{\bar{\e},\bar{\bar{\e}}} - J^{\e,\e}$ is proved in the same way.
%
%
%
\end{proof}

%
%
Given a set $A$ which has been associated  with a map $\mfl: A \rightarrow \{+,-\}$ (for instance  $A$ is a subset of $L(T)$ for some tree $T$), we adopt the convention of \cite{CH} and define
\[
|A|_{\s}
\eqdef
-\bar{\beta}|A|  \;.
\]
We also define the following integer valued function 
\footnote{Of course evaluating on a {\it single node} $a$ there is not much difference between $\mfq(a)$ and $\mfl(a)$, but we would like to think of $\mfq$ as a function on sets while $\mfl$ is simply a decoration of nodes.}
\begin{equ} [e:total-charge]
\mcb{q}(A) \eqdef 
\sum_{a \in A} \mcb{q}(a)
\quad
\mbox{ where }
\;
\mcb{q}(a) \eqdef \mathbbm{1}\{\mfl(a) = + \} - \mathbbm{1}\{\mfl(a) = - \}
\end{equ}
which is thought as the ``total charges'' of the set $A$.
For any  $\{a,b\} \subset A$ we define
 \begin{equ} [e:def-sign]
 \sign(\{a,b\}) = \mcb{q}(a) \cdot \mcb{q}(b)\;.
 \end{equ}
We then also define
\begin{equs}
|A|_{\SG}
&\eqdef
2\bar{\beta}
\sum_{e \in A^{(2)}}
\sign(e) 
=
2\bar\beta {m_+ \choose 2} +2\bar\beta {m_- \choose 2} - 2\bar\beta m_+ m_-  
\\
&=
-\bar{\beta}|A| + \bar{\beta}\mcb{q}(A)^{2}\;,
    \label{e:formula-ASG}
\end{equs}
where $m_\pm = \#\{a\in A: \mfl(a) = \pm \}$, 
so that $|A|$ (the cardinality of $A$) equals $m_+ + m_-$,
 and $\mcb{q}(A)=m_+ - m_-$.
We overload notation and also use the symbol $|\cdot|_{\SG}$ to denote a {\it new homogeneity} on the trees $T^{\mfn\mfl} \in \mcT$ by setting
\begin{equ}[e:SGnorm-diff-snorm]
|T^{\mfn\mfl}|_{\SG} 
\eqdef
2 \, |K(T)| + |L(T)|_{\SG}
+
\sum_{u \in N(T)} |\mfn(u)|_{\s}\;.
\end{equ}
Here, the factor $2$ again corresponds to the fact that 
the heat kernel ``improves regularity by two''.
Note that comparing with the homogeneity $|\cdot |_{\s}$ 
defined in \eqref{e:def-snorm} the only difference is the second term on the right hand side. For example, for the tree $T^{\mfn\mfl}$ in \eqref{e:example-of-tree}, 
while $|T^{\mfn\mfl}|_{\s} = 10-6\bar\beta$ which is strictly negative in our interested regimes where $\bar\beta$ is smaller but close to $2$,
one has $|T^{\mfn\mfl}|_{\SG}  = 10-2\bar\beta$ which is strictly positive.

We say a tree $T^{\mfn\mfl}$ is \emph{neutral} if $\mcb{q}(L(T)) = 0$. 
We then have the following lemma.

\begin{lemma}\label{non-neutral trees are nice} 
For every $T^{\mfn\mfl} \in \mcT$, the following two statements are equivalent:

1) $|T^{\mfn\mfl}|_{\SG} < 0$.

2) $|T^{\mfn\mfl}|_\s <0$ and $T^{\mfn\mfl}$ is neutral.
\end{lemma}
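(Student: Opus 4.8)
The plan is to reduce the whole statement to a single algebraic identity relating the two homogeneities, after which the equivalence falls out of Lemma~\ref{lem: SG trees are nice}. First I would observe that, since by \eqref{e:formula-ASG} one has $|L(T)|_{\SG} = -\bar\beta|L(T)| + \bar\beta\,\mcb{q}(L(T))^{2}$, comparing \eqref{e:def-snorm} with \eqref{e:SGnorm-diff-snorm} yields
\[
|T^{\mfn\mfl}|_{\SG} \;=\; |T^{\mfn\mfl}|_{\s} \;+\; \bar\beta\,\mcb{q}(L(T))^{2}
\]
for every $T^{\mfn\mfl}\in\mcT$; indeed the two homogeneities differ only in that the summand $-\bar\beta|L(T)|$ appearing in $|\cdot|_{\s}$ is replaced by $|L(T)|_{\SG}$.

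Granting this identity, the implication $2)\Rightarrow 1)$ is immediate: neutrality means $\mcb{q}(L(T))=0$, hence $|T^{\mfn\mfl}|_{\SG}=|T^{\mfn\mfl}|_{\s}<0$. For $1)\Rightarrow 2)$ I would argue as follows. Since $\bar\beta\,\mcb{q}(L(T))^{2}\ge 0$, the hypothesis $|T^{\mfn\mfl}|_{\SG}<0$ forces $|T^{\mfn\mfl}|_{\s}\le|T^{\mfn\mfl}|_{\SG}<0$, which is the first half of $2)$. For neutrality I would proceed by contraposition: if $T^{\mfn\mfl}$ is not neutral then $\mcb{q}(L(T))^{2}\ge 1$, so $|T^{\mfn\mfl}|_{\SG}\ge|T^{\mfn\mfl}|_{\s}+\bar\beta$. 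When $T^{\mfn\mfl}\neq\Xi_{\pm}$, Lemma~\ref{lem: SG trees are nice} gives $|T^{\mfn\mfl}|_{\s}>-\bar\beta$ and therefore $|T^{\mfn\mfl}|_{\SG}>0$; when $T^{\mfn\mfl}=\Xi_{\pm}$, the set $L(T)$ consists of a single charged node and a direct computation gives $|T^{\mfn\mfl}|_{\SG}=-\bar\beta+\bar\beta=0$. In either case $|T^{\mfn\mfl}|_{\SG}\ge 0$, contradicting $1)$, so $T^{\mfn\mfl}$ is neutral.

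I do not expect a genuine obstacle here: the entire content is the bookkeeping identity above together with the sub-criticality estimate of Lemma~\ref{lem: SG trees are nice}. The only point requiring a moment's attention is the boundary case $T^{\mfn\mfl}=\Xi_{\pm}$, where $|\cdot|_{\SG}$ vanishes rather than being strictly positive; but since $\Xi_{\pm}$ is not neutral and satisfies $|\Xi_{\pm}|_{\SG}=0\not<0$, both conditions $1)$ and $2)$ fail for it and the equivalence is unaffected.
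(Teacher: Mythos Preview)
Your argument is correct and follows essentially the same route as the paper: both derive the identity $|T^{\mfn\mfl}|_{\SG}=|T^{\mfn\mfl}|_{\s}+\bar\beta\,\mcb{q}(L(T))^{2}$ from \eqref{e:formula-ASG} and \eqref{e:SGnorm-diff-snorm}, and then combine it with Lemma~\ref{lem: SG trees are nice} to run the contrapositive. You are in fact slightly more careful than the paper, which writes the strict inequality $|T^{\mfn\mfl}|_{\s}>-\bar\beta$ without isolating the boundary case $T^{\mfn\mfl}=\Xi_{\pm}$; your explicit treatment of that case (where $|\cdot|_{\SG}$ vanishes rather than being strictly positive) is a welcome clarification.
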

\begin{proof}
Combining \eqref{non-neutral trees are nice}
with \eqref{e:formula-ASG}, we have
\begin{equ}[e:SGnorm-formula]
|T^{\mfn\mfl}|_{\SG} 
 = |T^{\mfn\mfl}|_{\s} + \bar{\beta} \mcb{q}(L(T))^{2}\;,
\end{equ}
so the implication 2) $\Rightarrow$ 1) is immediate.
On the other hand, suppose that 1) holds and recall that
$ |T^{\mfn\mfl}|_{\s} > -\bar\beta$ by Lemma~\ref{lem: SG trees are nice}. 
If  $T^{\mfn\mfl}$ were not neutral, then \eqref{e:SGnorm-formula} would imply
\[
|T^{\mfn\mfl}|_{\SG} 
\ge |T^{\mfn\mfl}|_{\s} + \bar{\beta} > -\bar\beta + \bar{\beta}=0\;,
\]
in contradiction to 1).
\end{proof}

We define $\mcT^{-}_{\neut}$ to be the set of all $T^{\mfn\mfl} \in \mcT$ which satisfy $|T^{\mfn\mfl}|_{\SG} < 0$.
By Lemma~\ref{non-neutral trees are nice}  every tree in $\mcT^{-}_{\neut}$ is neutral, as indicated  by the notation,
and  $|T^{\mfn\mfl}|_{\s} < 0$
so that  the properties stated in Lemma~\ref{lem:neg-trees} hold for all $T^{\mfn\mfl}\in\mcT^{-}_{\neut}$.

\subsection{Models and renormalisation}\label{sec: models and renormalisation}
%
Let $K$ be the truncated heat kernel as in Section~\ref{sec: main result}.
We denote by $\mathscr{M}_{\infty}$ the non-linear metric space of smooth $K$-admissible models on $\mathscr{T}$ and by $\mathscr{M}_{0}$ the completion of $\mathscr{M}_{\infty}$. 
Given any \textit{smooth} functions $\noise = (\noise_{+},\noise_{-})$, we write $Z_{\can}^{\noise}$ for the canonical lift of $\noise$.
As is often the case, one does not expect the sequence of random models $Z^{\noise^{(\e)}}_{\can}$ to converge to a limit as $\e \downarrow 0$. 
We must instead work with lifts different from the canonical one. The works
\cite{BHZalg,CH} give definitions for the \emph{BPHZ lift} which can be seen as a map from a space of random 
stationary driving noises into a space of random models. 
In particular, the BPHZ lift is not just a measurable function of an underlying driving noise but also takes as input knowledge of the underlying distribution of the driving noise, albeit only through the expectations of finitely 
many multilinear functionals of the driving noise.

While it has been successfully used in many other problems treated by regularity structures, the BPHZ lift does not 
seem to be the natural renormalisation procedure for  the sine-Gordon equation. 
The drawback of the BPHZ lift in our context is that it tries to cancel the expectation of every tree in $\mcT^{-}$ 
despite the following.
\begin{itemize}
\item Only the trees of $\mcT_{\neut}^{-}$ have divergent expectations. \footnote{assuming all the subtrees of a tree in $\mcT_{\neut}^{-}$ have been suitably renormalised.}
\item Due to their net charge, it is unclear if the renormalisation of a tree in $\mcT^{-} \setminus \mcT^{-}_{\neut}$ 
produces much cancellation. \footnote{The need of certain cancellation is explained in the paragraphs around \eqref{e:explanation-IS}, and that is why we need to rewrite the moment formula from \cite{CH} to get a new formula as stated in Proposition~\ref{prop:mom-formula-z}.} 
\end{itemize}
We instead employ what we call the \emph{neutral} BPHZ lift in this paper \dash this modification of the BPHZ lift only renormalises trees in $\mcT_{\neut}^{-}$. 

For readers who are not familiar with the BPHZ lift and \cite{BHZalg,CH} the previous sentences may not mean much.
For that reason  we give a short sketch in the next section of how one can define renormalised models and define the specific neutral BPHZ lift we are interested in. 
Those readers familiar with \cite{BHZalg} can skip the following section and immediately go to Section~\ref{sec: neutral BPHZ}. 
\subsubsection{Renormalisation in regularity structures}
\label{sec: general renormalisation}
The complexity of the algebraic and analytic constraints encoded by the space of models make it difficult to directly and explicitly define renormalised lifts $\noise \mapsto Z^{\noise} \in \mathscr{M}_{\infty}$. 
 
The works \cite{Regularity,BHZalg} reframe the problem of exhibiting a rich class of renormalised 
lifts as one of finding a sufficiently rich group, called the renormalisation group $\mathfrak{R}$, which 
admits a continuous group action on $\mathscr{M}_{\infty}$. 
One then has a variety of smooth lifts $\noise \mapsto M Z^{\noise}_{\can}$ indexed by $M \in \mathfrak{R}$. 

Describing this formalism with precision would take us to far afield, we instead point the uninitiated but curious reader to \cite{martintakagilect}. We will only give a brief conceptual sketch\footnote{In particular we will completely ignore the role of the extended label and the more delicate aspects of the interplay of this renormalisation scheme with the recentering procedure for subtrees of positive homogeneity \dash this is a key part of the story of \cite{BHZalg}. One should also have in mind that our setting is a sector of the reduced regularity structure.} of the renormalisation group and its group action on models.

Instead of viewing $\mathscr{M}_{\infty}$ as consisting of pairs of maps $Z = (\Pi,\Gamma)$ we instead view each such a pair as originating from a single map $\PPi:\mcb{T} \rightarrow \mcb{C}(\R \times \T^{2})$ which is itself ``$K$-admissible'' in the sense that if both $\tau, \CI[\tau] \in \mcT$ one has $\PPi\CI[\tau] = 
K \ast (\PPi  \tau)$.

One can think of $\PPi$ as the base-point independent parent of the family $\{\Pi_{z}\}_{z \in \R \times \T^{2}}$, the 
family being obtained from the former via a ``recentering'' procedure around any given point $z$. 
In \cite{BHZalg} this recentering procedure along with the construction of the corresponding transport maps $\Gamma$ is encoded via a map $\PPi \mapsto \mathcal{L}(\PPi) = (\Pi,\Gamma)$ which is defined using Hopf-algebraic methods.  

The constraint of $K$-admissibility does not by itself imply that $\mathcal{L}(\PPi) \in \mathscr{M}_{\infty}$. However an important example is the following: if one defines $\PPi_{\can}^{\noise}$ to be the unique tree product multplicative, $K$-admissible map with $\PPi_{\can}^{\noise} \Xi_{\pm} = \noise_{\pm}$, then $\mathcal{L}(\PPi_{\can}^{\noise}) = Z^{\noise}_{\can}$. 

The renormalisation group $\mathfrak{R}$ of \cite{BHZalg} can then be realized as a particular group of linear operators $M: \mcb{T} \rightarrow \mcb{T}$ with the property that if $\PPi$ satisfies $\mathcal{L}(\PPi) \in \mathscr{M}_{\infty}$ then the same is true of $\PPi \circ M$. 

In particular, \cite{BHZalg} parameterizes $\mathfrak{R}$ via the collection of maps $\ell: \mcT_{-} \rightarrow \R$ \dash an element of the renormalisation group is determined by an assignment of a counterterm value 
to each tree of negative homogeneity. 
The correspondence $\ell \rightarrow M_{\ell} \in \mfR$ is given by setting
\begin{equ}
M_{\ell}
\eqdef
(\ell \otimes \Id)\Deltam\;,
\end{equ}
where $\Deltam$ is a \emph{comodule co-product}: acting on a given $T^{\mfn\mfl}$ it produces all linear combinations of simple tensors where the left factor consists of a product of  several disjoint subtrees of $T^{\mfn\mfl}$, each of which belongs to $\mcT_{-}$, and the right factor is a quotient tree obtained by contracting each of these subtrees to a point. 
A general recipe for constructing a renormalised lift is then given by $\mathcal{L}(\PPi_{\can}^{\noise} M_{\ell})$ where we have 
\begin{equ}\label{eq: renormalised model}
\PPi^{\noise}_{\can} M_{\ell} = (\ell \otimes \PPi_{\can}^{\noise}) \Deltam\;. 
\end{equ}

Given a \emph{random} smooth stationary $\noise$ with moments of all orders, the BPHZ lift 
$Z_{\BPHZ}^{\noise}$ is written as $Z_{\BPHZ} = \mathcal{L}(\PPi_{\BPHZ}^{\noise})$, where $\PPi_{\BPHZ}^{\noise}$ can be obtained as a probabilistic recentering of $\PPi_{\can}^{\noise}$. 
Writing  $\PPi_{\BPHZ}^{\noise}$ in the form of  \eqref{eq: renormalised model}, one chooses $\ell$ so that the expectation of the RHS evaluated on any element of $\mcT^{-}$ vanishes.  
This choice can be solved for inductively, working from smaller elements of $\mcT_{-}$ to larger ones. In \cite{BHZalg}, this recursive procedure is encoded using an algebra homomorphism\footnote{Again, we are simplifying the picture: the target space of $\tilde{\mcb{A}}_{-}$ is an algebra generated by a larger class of trees.} 
$\tilde{\mcb{A}}_{-}: \Alg(\mcT^{-}) \rightarrow \Alg(\mcT)$ called the \emph{negative twisted antipode} and $\bar{\PPi}_{\can}^{\noise}: \Alg(\mcT) \rightarrow \R$: 
\begin{equ}\martin{Show that this model differs from the BPHZ model by a finite character.} 
\PPi_{\BPHZ}^{\noise} = (\bar{\PPi}_{\can}^{\noise} \circ \tilde {\mcb{A}}_- \otimes \PPi_{\can}^{\noise})\Deltam \tau\;.
\end{equ}
The maps $\bar{\PPi}_{\can}^{\noise}$ and $\tilde {\mcb{A}}_-$ are again defined for trees and then extended multiplicatively. 
For any $\bar{\tau} \in \mcT$ one sets $\bar{\PPi}_{\can}^{\noise}\bar{\tau} \eqdef \E[(\PPi^{\noise}_{\can}\bar{\tau})(0)]$. 
The algebra morphism $\tilde {\mcb{A}}_-$ is defined in such a way that the BPHZ lift is guaranteed to satisfy
$\E[(\PPi^{\noise}_{\BPHZ}\tau)(0)] = 0$ for all $\tau \in \CT_-$ and all stationary $\noise$ with sufficiently many moments. 

If $\noise$ is a random driving noise which is very rough one does not expect to be able to define $\PPi^{\noise}_{\can}$ but in many cases of interest we can show that for families of $(\noise^{(n)})_{n \in \N}$ of approximations with $\lim_{n \rightarrow \infty} \noise^{(n)} = \noise$ appropriately one can define $\PPi^{\noise}_{\BPHZ} \eqdef \lim_{n \rightarrow \infty} \PPi^{\noise^{(n)}}_{\BPHZ}$ where the limit is taken in a space of random models. 
Moreover, this construction is robust in that it is insensitive to the particular choice of approximating sequence $(\noise^{(n)})_{n \in \N}$. 
\subsubsection{The neutral BPHZ lift and main estimates}
\label{sec: neutral BPHZ}
The noise we want to lift into a renormalised model is the noise $\xi = (\xi_{-},\xi_{+})$ defined in Theorem~\ref{e:def of xi}, but we need to define such a model via probabilistic limit since $\xi$ has rough realizations. 
The strategy mentioned would lead us to trying to define 
$\Pi^{\xi}_{\BPHZ} \eqdef \lim_{\e \to 0} \Pi^{\xi^{\e}}_{\BPHZ}$, but the convergence of the RHS falls out of the scope of \cite{CH} for the reasons given in the introduction and does not seem so straightforward to prove directly. 

We instead define so called {\it neutral BPHZ lift} which is specific to the sine-Gordon regularity structure. Given a smooth random $\noise$ the corresponding neutral BPHZ lift is given by 
\[
\PPi_{\overline \BPHZ}^{\noise} \eqdef (\bar{\PPi}_{\can}^{\noise} \circ \tilde {\mcb{A}}_- \circ \mcb{N} \otimes \PPi_{\can}^{\noise})\Deltam \tau
\]
Here $\tilde{\mcb{A}}_{-}$ is the negative twisted antipode of \cite{BHZalg}; $\PPi_{\can}^{\noise}$ and $\bar{\PPi}^{\noise}_{\can}$ are also as referenced above. 
The new map $\mcb{N}$ is an algebra homomorphism from the algebra of forests to itself \dash it is the projection onto those forests where every constituent tree is required to be neutral.

Switching henceforth to the notation $Z_{\overline{\BPHZ}}^{\noise}$ instead of $\PPi_{\overline{\BPHZ}}^{\noise}$, we then seek to define the neutral BPHZ lift of $\xi$ 
via $Z_{\overline{\BPHZ}}^{\xi} \eqdef \lim_{\e\to 0} Z_{\overline{\BPHZ}}^{\xi^{\e}}$. Our task in this paper is to show the limit on the RHS exists. 

%
\begin{remark}
We remark that the term ``forest'' refers to collection of trees as it does in \cite{BHZalg} but this section is the last time we use it in this way. The paper \cite{CH}, and all later sections of this paper will use the term forest for a similar but slightly ``stronger'' notion, see Definition~\ref{def:Div-forest}.
\end{remark}
\begin{remark}
Recalling Section~\ref{sec: general renormalisation}, the neutral BPHZ lift of $\xi^{\e}$ can be obtained from \eqref{eq: renormalised model} with $\noise = \xi^{\e}$ where the correct choice of $\ell$ is enforced by the following two constraints: (i) $\ell$ should vanish on all non-neutral trees of $\mcT^{-}$, (ii) the expectation of \eqref{eq: renormalised model} evaluated on any neutral tree in $\mcT^{-}$ should vanish. 
\end{remark}
\begin{remark}
It follows a posteriori from
Theorem~\ref{theorem:mom-bounds} below that the BPHZ lift
 $ Z_{\BPHZ}^{\xi^{\e}}$ also converges and in particular that the moments of
$(\Pi_{\BPHZ}^{\xi^{\e}} - \Pi_{\overline{\BPHZ}}^{\xi^{\e}})_{z}[\tau] $
evaluated at $z$ converge to finite limits. 
However, Proposition~\ref{prop:no-renormalisation-equation} below
does not hold in general for the BPHZ lift,
see Remark~\ref{rem:BPHZ-not-all-cancel}.
\end{remark}

The main result we seek to prove is given by 
Theorem~\ref{theorem:mom-bounds} below, from which we
immediately obtain that our neutral BPHZ lift can be extended to $\xi$ in the limit.
The following estimates are the key ingredient for proving this result. 
Below we use the notation $Z_{\overline{\BPHZ}}^{\e} = (\hat{\Pi}^{\e},\hat{\Gamma}^{\e})$. 
\begin{theorem} \label{theorem:mom-bounds}
Let $\tau \in \mcT^{-}$. 
For any $p \in \N$ there exists $C_{\tau,p}$ 
such that 
\begin{equs}[e:moment-bound-theorem]
\E \left( 
\hat{\Pi}^{\e}_{z} \tau (\psi^{\lambda}_{z}) \right)^{2p}
&\le
C_{\tau,p} 
\lambda^{2p (|\tau|_{\s}+\eta)}
\\
\E \left( \big(\hat{\Pi}^{\e}_{z}\tau -  \hat{\Pi}^{\bar{\e}}_{z} \tau \big)    (\psi^{\lambda}_{z})
\right)^{2p}
&\le
\e^\kappa
C_{\tau,p}
\lambda^{2p(|\tau|_{\s}+\eta)}
\end{equs}
for some sufficiently small $\kappa,\eta>0$,
uniformly in all $\bar{\e} < \eps \in (0,1]$, all $\lambda \in (0,1]$,
all continuous test functions $\psi$  supported on the unit ball in $\R \times \T^{2}$ with $L^\infty$ norm bounded by $1$.
%
\end{theorem}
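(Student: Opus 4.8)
The plan is to reduce both estimates in \eqref{e:moment-bound-theorem} to deterministic bounds on a finite family of singular integrals, and then to prove those bounds by a multiscale power-counting argument in which the charge bookkeeping of Section~\ref{sec:charge} is decisive. First I would fix $\tau\in\mcT^-$ and unfold $\hat\Pi^\e_z\tau$ explicitly. Using the description of the neutral BPHZ lift, $\PPi^{\xi^\e}_{\overline\BPHZ}=(\bar\PPi^{\xi^\e}_{\can}\circ\tilde{\mcb{A}}_{-}\circ\mcb{N}\otimes\PPi^{\xi^\e}_{\can})\Deltam$, together with $K$-admissibility of $\PPi^{\xi^\e}_{\can}$, the quantity $\hat\Pi^\e_z\tau(\psi^\lambda_z)$ becomes the pairing of $\psi^\lambda_z$ with a finite sum of products of truncated heat kernels $K$ (one per edge of $T$ and of every subtree produced by $\Deltam$), recentering/Taylor-remainder operators around $z$, and the smooth noises $\xi^\e_\pm$ at the leaf positions, with the negative-renormalisation counterterms subtracting exactly the neutral subdivergences and none of the non-neutral ones.

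Raising to the $2p$-th power and taking the expectation, I would invoke the explicit moment formula of Proposition~\ref{explicit formula}: because $\xi^\e_\pm=\Wick{e^{\pm i\beta\Phi_\e}}$ are imaginary Gaussian multiplicative chaoses, the expectation of the product of all leaf factors over the $2p$ copies factorises \emph{exactly} into a product over pairs $\{a,b\}$ of leaves of $J^{\e,\e}(z_a-z_b)^{-\sign(\{a,b\})}$ with $\sign$ as in \eqref{e:def-sign} --- the rigorous counterpart of the Coulomb-gas heuristic \eqref{eq: coulomb interaction}. Crucially, I would then pass to the improved formula of Proposition~\ref{prop:mom-formula-z}: by averaging over charge reflections of subtrees and exploiting the parity $\rho(t,x)=\rho(t,-x)$ of the mollifier together with the evenness of $\CQ$, the contributions of non-neutral substructures are recombined so that each counterterm subtraction is effectively localised only where it is genuinely needed, i.e.\ on a neutral divergent subtree, even when some of the leaves of that subtree are paired to leaves outside it. The outcome is a finite sum of integrals of the schematic shape $\int\prod_e K\,\prod_{\{a,b\}}J^{\e,\e}(z_a-z_b)^{\pm1}$ over all leaf and internal positions, integrated against $\psi^\lambda_z$, carrying renormalisation increments on distinguished subtrees, to be bounded by $C\lambda^{2p(|\tau|_\s+\eta)}$ uniformly in $\e$; the uniformity is supplied by the $\e$-uniform kernel bounds of Lemma~\ref{lem:bound-J}.

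For the deterministic estimate I would run the multiscale decomposition of Section~\ref{sec:Multiscale expansion}: split each $K$ and each $J^{\e,\e}$ dyadically in the distance between its arguments, so that the integral becomes a sum over assignments of scales to the links of the graph whose links are the kernel edges together with the leaf--leaf pairings, and for each scale assignment integrate out vertices from the finest scale upward. Using the homogeneities of $K$ (improving regularity by $2$) and of $J^{\e,\e}$ (Lemma~\ref{lem:bound-J}) and the scaling dimension $|\s|=4$, the exponent produced by a subcluster of links at a common scale is governed by the homogeneity $|\cdot|_\SG$ of \eqref{e:SGnorm-diff-snorm}: by Lemma~\ref{non-neutral trees are nice} and the identity $|T^{\mfn\mfl}|_\SG=|T^{\mfn\mfl}|_\s+\bar\beta\,\mcb{q}(L(T))^2$ of \eqref{e:SGnorm-formula}, a non-neutral subcluster has strictly positive effective homogeneity and is integrable \emph{without} subtraction, whereas a neutral subcluster of negative $|\cdot|_\s$ is precisely the one carrying a counterterm, whose Taylor-remainder structure supplies the missing decay. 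Organising the scale sum via the notion of ``intervals'' of Section~\ref{sec:Multiscale expansion} --- maximal runs of scales on which a given neutral subtree is the innermost tight cluster --- one sums geometrically over each interval (Section~\ref{sec: renormalisation bound}), the strict inequalities yielding the surplus $\eta$ in the final exponent; concretely the surplus $\eta$ is afforded by the strict gap $\bar\beta>\beta'$ between the declared homogeneity $-\bar\beta$ of each noise and its true decay exponent $-\beta'$, reinforced by the subcriticality margin of Lemma~\ref{lem: SG trees are nice}. The second, $\e$-dependent bound follows by a telescoping argument: replacing one $J^{\e,\e}$-factor by $J^{\bar\e,\bar{\bar\e}}$ and invoking $\|J^{\bar\e,\bar{\bar\e}}-J^{\e,\e}\|_{-2\beta'+\kappa,m}\lesssim(\e\vee\bar\e\vee\bar{\bar\e})^\kappa$ from Lemma~\ref{lem:bound-J} costs $\kappa$ in a single exponent but gains the prefactor $\e^\kappa$, and since every power count was strict this loss is absorbed.

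The genuine difficulty is the interplay between renormalisation and charge. The neutral BPHZ lift subtracts counterterms only on neutral subtrees, yet in a typical term of the moment expansion a divergent --- hence neutral --- subtree will have its leaves Coulomb-paired to leaves lying outside it, so one cannot simply factor off the divergent subintegral and apply the subtraction to it in isolation; one must show that, after the charge- and parity-driven recombination of Proposition~\ref{prop:mom-formula-z}, the subtraction still produces the required cancellation. Establishing this for \emph{all} scale assignments and \emph{all} nestings of subtrees simultaneously --- i.e.\ checking that the recombined integrand admits a clean multiscale power count with the correct strictly-negative, charge-corrected exponents attached to every subcluster, uniformly in $p$ and in the arbitrarily large size of $\tau$ --- is the technical core of Sections~\ref{sec:Multiscale expansion} and~\ref{sec: renormalisation bound}, and is where essentially all of the work lies.
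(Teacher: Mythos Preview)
Your proposal is correct and follows essentially the same route as the paper: reduce to the deterministic functional $\bar M_\lambda[\CJ]$ via Propositions~\ref{explicit formula} and~\ref{prop:mom-formula-z}, then bound it by the multiscale/interval machinery of Sections~\ref{sec:Multiscale expansion}--\ref{sec: renormalisation bound} (concretely Corollary~\ref{cor:organize-and-resum} plus Proposition~\ref{prop: main estimate}); for the difference bound, expand the $2p$-th power into $\e$-assignments, telescope in the components of $\CJ$, and use the multilinearity of $\bar M_\lambda$ (Lemma~\ref{lem:multilinear}) together with Lemma~\ref{lem:bound-J}. One small inaccuracy: the mechanism behind Proposition~\ref{prop:mom-formula-z} is not an ``averaging over charge reflections of subtrees'' but rather (i) a parity argument showing the first-order Taylor pieces $\mathscr{Y}^{(1)}_S$ vanish (Lemma~\ref{lem: parity cancellation}), which then (ii) permits the insertion of the missing factors $\CJ^{P^\partial}$ for free, since their collapse under $\mathscr{Y}^{(0)}_S$ equals $1$ by charge neutrality of $S$ (Lemma~\ref{lem:H-equals-Hbar}); the net effect is exactly what you describe, but the mechanism is worth getting right.
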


The proof of Theorem~\ref{theorem:mom-bounds}
will be given in Section~\ref{sec:proof-mom-bounds}.

%
%
%
\subsection{Renormalised equation and proof of the main result}
\label{sec:proof-of-main}
To find the maximal solution $u_\e$ to \eqref{eq: renormalised approx equation} we have written $u_\e = v_\e +  \Phi_\e$,
where $\Phi_\e$ is defined in \eqref{e:defPhi} which has a distributional limit $\Phi$.
We would like to solve the equation \eqref{e:SG} for $v_\e$
from a initial data $v_{0}$ that is at least as rough as $\Phi(0)$,
 \footnote{This is in contrast with \cite{BCCH} where one is only able to start the equation from the stationary linear solution perturbated by some more regular data} 
which apparently poses a problem since our non-linearity is not just a polynomial. 
However we can take advantage of the  exponential functions to do some preprocessing. We write $v^{\e} = Gv^{\e}_{0} + w^{\e}$ and instead look for maximal solutions to the cauchy problem
\begin{equ}[e:equweps]
\partial_t w^{\e}= \frac12\Delta w^{\e} 
-\frac{i}{2} 
  \Big(e^{i\beta Gv^{\e}_{0}} e^{i\beta w^{\e}}  \xi^{\e}_{+}
    - e^{-i\beta Gv^{\e}_{0}} e^{-i\beta w^{\e}}  \xi^{\e}_{-}\Big)  + R^{\e}\;,\quad
    w^{\e}(0,\cdot) = 0 \;.
\end{equ}
%
%
Now to prove our main result of the paper (Theorem~\ref{theo:v-equ}), it suffices to establish the limiting maximal solution to \eqref{e:equweps}. Here we provide the proof to Theorem~\ref{theo:v-equ}, with all other technical details required by the proof given in the rest of the paper.

\begin{proof}[of Theorem~\ref{theo:v-equ}]
With the regularity structure defined  in Section~\ref{sec: sg reg struct},
we can formulate \eqref{e:equweps} as an abstract fixed point problem in the space $\CD^{\mu,0}_0$ 
(the modelled distribution space defined in \cite{Regularity};
recall that the subscript here stands for the lowest homogeneity, and $\mu \in (\bar{\beta},2)$ was introduced in \eqref{e:def-mcT}).
If we denote by $\CP$ the integration operator corresponding to convolution with 
the heat kernel (see \cite[Sec.~5]{Regularity}), \eqref{e:equweps} can be
described by the following fixed point problem:
\begin{equ}[e:FPW]
W = \CP \one_{t > 0} \Big(R_\eps  
-\frac{i}{2} 
  \Big(e^{i\beta Gv^{\e}_{0}} e^{i\beta W}  \Xi_{+}
    - e^{-i\beta Gv^{\e}_{0}} e^{-i\beta W}  \Xi_{-}\Big)\Big)\;.
\end{equ}
Indeed, as in the proof of \cite[Theorem~2.5]{HaoSG},
one has that as long as $\mu \in (0, 2]$, then
$e^{\pm i\beta Gv_0^{\e}}$ can be interpreted as an element in $\CD^{\mu,2\eta}_0(\bar T)$ where $\bar T$ is the space of abstract Taylor polynomials.
\footnote{The reason that it belongs to  $\CD^{\mu,2\eta}(\bar T)$ rather than $\CD^{\mu,\eta}(\bar T)$ is that a term $|\d_x G(v^\e_{0})|^2$ arising from differentiating $e^{\pm i\beta Gv_0^{\e}}$ twice
makes it a bit worse (see proof of \cite[Theorem~2.5]{HaoSG}).} 
For $W\in \CD^{\mu,0}_0$, one has $ e^{\pm i\beta W}\in \CD^{\mu,0}_0 $.
Also, $\xi_+$ and $\xi_-$ can be lifted as abstract noises $\Xi_{+}, \Xi_{-}$ with $|\Xi_{\pm}|_{\s}=-\bar{\beta}$. One then has
\[
e^{\pm i\beta Gv^\e_{0}} e^{\pm i\beta W} \Xi_{\pm}  
\in  \CD^{\mu-\bar{\beta},2\eta-\bar{\beta}}_{-\bar{\beta}} \;.
\]
Since $\mu-\bar{\beta}+2 >\mu$ and $2\eta-\bar{\beta}+2 > 0$ (by assumption $\eta \in (\frac{\bar\beta}{2}-1,0)$), by
\cite[Theorem~7.8]{Regularity} the fixed point problem
\eqref{e:FPW} admits a unique maximal solution  in $\CD^{\mu,0}_0$,
and the solution map from the space of models to the solutions is continuous.
The  fact that the reconstructed solution of the solution $W$ to \eqref{e:FPW} with model
$Z^{\e}_{\overline{\BPHZ}}$
follows from the discussion below, in particular Proposition~\ref{prop:no-renormalisation-equation}.
Therefore the main theorem follows from the  convergence of models, namely Theorem~\ref{theorem:mom-bounds}, which will be proved in the rest of the paper.
\end{proof}

%
Let $W$ be the modelled distribution which solves the fixed point problem \eqref{e:FPW} where the underlying model is taken to be $Z^{\e}_{\overline{\BPHZ}}$. Denoting the reconstruction of $W$ by $\bar{w}^{\e}$ it follows from \cite[Theorem~2.5]{BCCH} that $\bar{w}^{\e}$ is the maximal in time solution to the Cauchy problem
\begin{equs}
	\label{eq: generic form of renormalised equation}
\partial_t \bar{w}^{\e}
&= 
\frac{1}{2}
\Delta \bar{w}^{\e} -
\frac{i}{2} 
\Big(
e^{i\beta G v_{0}^{\e}}
e^{i\beta \bar{w}^{\e}} 
\xi_{+}^{\e} 
- 
e^{-i\beta G v_{0}^{\e}} 
e^{i\beta \bar{w}^{\e}}
\xi_{-}^{\e}
\Big)
\\
&\qquad\qquad\qquad\qquad
+
G^{\e}(\bar{w}^{\e},\nabla \bar{w}^{\e})
+ R^{\e}  \;,
\\
\bar{w}^{\e}(0,\cdot) 
& = 0\;.
\end{equs}
It appears that \eqref{eq: generic form of renormalised equation}
differs from \eqref{e:equweps}
 by a function called $G^{\e}$.
\cite{BCCH} shows that $G^{\e}:\R \times \R^{2} \rightarrow \R$ is of the form
\begin{equ}\label{eq: renormalisation term}
G^{\e}(w,\nabla w)
\eqdef
\sum_{\tau \in \mcT_{\neut}^{-}}
\frac{\ell^{\e}_{\overline \BPHZ}[\tau]}{S[\tau]}
 \Upsilon[\tau](w,\nabla w)\;,
\end{equ} 
where $\ell^{\e}_{\overline{\BPHZ}} \eqdef \bar{\PPi}^{\xi^{\e}} \circ \tilde{\mcb{A}}_{-} \circ \mcb{N}$,
and $\Upsilon[\tau]:\R \times \R^{2} \rightarrow \R$ are recursively defined in \cite[Eq~(4.3)]{BCCH}. 
Finally, $S[\tau]$ is the overall symmetry factor\footnote{It is simply the number of distinct decorated planar trees corresponding to the decorated combinatorial tree $\tau$} defined analogously to \cite[Eq~(2.16)]{BCCH}.

Finding a general formula for $\Upsilon[\tau]$ for arbitrary trees $\tau$ is straightforward exercise in induction \dash below we record the simplified formula one obtains when restricting to the trees  $T^{0} \in \mcT_{\neut}^{-}$. 
\begin{lemma}
For every $\tau \eqdef T^{0} \in \mcT^{-}_{\neut}$ one has that $\Upsilon[\tau]$ is just a constant, in particular 
\begin{equs}\label{eq: function for each tree}
\Upsilon[\tau]
\eqdef
(i\beta)^{-1}
\Big(
\prod_{u \in N(T)}
\frac{\beta \mcb{q}(u)^{|d(u,\tau)|}}{2}
\Big)\;,
\end{equs}
where for $u \in N(T)$ we define $d(u,\tau) \eqdef |\{e \in K(T):\ e_{\p} = u \}|$.
\end{lemma}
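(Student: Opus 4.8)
The plan is to prove the formula by induction on the size of the tree, using the recursive definition of $\Upsilon[\tau]$ from \cite[Eq~(4.3)]{BCCH} together with the structural constraints on trees in $\mcT^{-}_{\neut}$ provided by Lemma~\ref{lem:neg-trees}. The key simplification is that for $\tau = T^{0} \in \mcT^{-}_{\neut}$ we have $\mfn \equiv 0$ and $L(T) = N(T)$, so every node carries a noise of type $+$ or $-$ and no polynomial decoration. This means that in the recursion defining $\Upsilon[\tau]$ — which in general involves derivatives $D^{(\ell)}$ of the nonlinearity with respect to the solution $w$ and its gradient $\nabla w$, and substitution of $\Upsilon$ of subtrees — the only contributions come from differentiating the exponential nonlinearity $F_{\pm}(w) = \mp\tfrac{i}{2}e^{\pm i\beta w}$ (the factors $e^{\pm i\beta Gv_0^\e}$ and $R^\e$ play no role in the $\Upsilon$-recursion since they are not part of the abstract nonlinearity that generates the tree structure).

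First I would set up the recursion explicitly for this model: if $\tau = \CI(\tau_1)\cdots\CI(\tau_j)$ grafted onto a root of noise type $\sigma \in \{+,-\}$, then $\Upsilon[\tau]$ is obtained by applying the $j$-th derivative of $F_\sigma$ and multiplying by $\prod_{i=1}^{j}\Upsilon[\tau_i]$ (up to the symmetry-factor bookkeeping, which is handled separately by $S[\tau]$ in \eqref{eq: renormalisation term} and does not enter $\Upsilon$ itself). Since $F_\sigma(w) = \mp\tfrac{i}{2}e^{\pm i\beta w}$ with $\sigma = \pm$, one has $F_\sigma^{(j)}(w) = \mp\tfrac{i}{2}(\pm i\beta)^{j} e^{\pm i\beta w} = \tfrac{1}{i\beta}\cdot\tfrac{(i\beta\,\mcb{q}(\sigma))^{j+1}}{2}\,e^{i\beta\,\mcb{q}(\sigma) w}$, writing $\mcb{q}(\sigma) = \pm 1$. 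The crucial point is that when $\tau$ is neutral, the accumulated exponential factors $e^{i\beta \mcb q(\sigma) w}$ over all nodes combine — by the definition of neutrality $\mcb{q}(L(T)) = \sum_u \mcb{q}(u) = 0$ — to give $e^{i\beta \cdot 0 \cdot w} = 1$, so $\Upsilon[\tau]$ is indeed a constant; this is where neutrality is used.

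The induction then collects, for each node $u$, a factor $\tfrac{(i\beta\,\mcb{q}(u))^{d(u,\tau)+1}}{2}$ where $d(u,\tau) = |\{e\in K(T):e_\p = u\}|$ is the number of children of $u$ (i.e. the number of outgoing edges), together with an overall normalization $(i\beta)^{-1}$ coming from the outermost integration/reconstruction. Keeping careful track of how the powers of $i\beta$ telescope across the tree (each edge $\CI$ contributes an integration, each node contributes a derivative of its nonlinearity), one checks that $\prod_{u}(i\beta)^{d(u,\tau)+1} = (i\beta)^{|K(T)| + |N(T)|}$ and $|N(T)| = |K(T)|+1$, and that after accounting for the root's integration the surplus matches the stated $(i\beta)^{-1}\prod_u \tfrac{\beta\,\mcb{q}(u)^{|d(u,\tau)|}}{2}$ — noting that $(i\beta\,\mcb{q}(u))^{d(u,\tau)+1}/2 = (i\beta)\cdot \mcb{q}(u)\cdot(i\beta\,\mcb{q}(u))^{d(u,\tau)}/2$ and that the product of the $\mcb{q}(u)$ over all nodes, together with one spare $i\beta$ per node being absorbed appropriately, yields the claimed expression. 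Since $\mcb{q}(u) \in \{+1,-1\}$ we have $\mcb q(u)^{|d(u,\tau)|} = \mcb q(u)^{d(u,\tau)}$, so the absolute value in the exponent is cosmetic.

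The main obstacle I anticipate is bookkeeping rather than conceptual: one must carefully match the abstract recursion of \cite[Eq~(4.3)]{BCCH}, which is stated for general trees with node decorations $\mfn$ and with derivatives taken in the full set of solution variables $(w,\nabla w)$, against the present stripped-down situation. In particular one must verify that no $\nabla w$-derivatives are ever triggered (which holds because $\mfn = 0$ on $\mcT^{-}_{\neut}$ forces all edges above a node to carry the plain integration kernel $\CI = (\mft,0)$ and not $(\mft,k)$ with $|k|_\s = 1$, so no derivative legs attach), and that the combinatorial factors in \cite[Eq~(4.3)]{BCCH} that are not the symmetry factor $S[\tau]$ are exactly absorbed into the product structure. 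Once the recursion is correctly transcribed, the induction itself is short and the identity follows by multiplicativity over the subtrees $\tau_1,\dots,\tau_j$ and the elementary derivative computation for the exponential.
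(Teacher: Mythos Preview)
Your approach is correct and is exactly what the paper has in mind: the paper does not give a proof but explicitly states that the formula for $\Upsilon[\tau]$ ``is a straightforward exercise in induction'' using the recursion \cite[Eq~(4.3)]{BCCH}, and your proposal follows precisely this route, correctly identifying the simplifications coming from Lemma~\ref{lem:neg-trees} (every node carries a noise, $\mfn=0$, hence no $\nabla w$-derivatives) and the role of neutrality in collapsing the accumulated exponentials to a constant. The only part that needs care is the algebraic bookkeeping you flag yourself---tracking how the factors of $i\beta$ and the signs $\mcb{q}(u)$ distribute over nodes versus edges---but this is mechanical once the recursion is written out, and there is no missing idea.
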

%
For $\tau \in \mcT$ we write $\tau_{\opp}$ for the symbol in $\mcT$ obtained by composing the map $\mfl: N(T) \rightarrow \{-,0,+\}$ with the involution that swaps $+$ and $-$ and leaves $0$ unchanged. 
For instance,
\[
\tau \;=\;
\begin{tikzpicture}[scale=0.13,baseline=-2]
\draw (-2,0) node[xi] {\tiny $ -$}
	-- (0,-3) node[xi] {\tiny $ -$} 
	-- (2,0) node[xi] {\tiny $+$}
	 -- (0,3) node[xi] {\tiny $+$};
\end{tikzpicture}
\qquad
\mapsto
\qquad
\tau_{\opp} \;=\;
\begin{tikzpicture}[scale=0.13,baseline=-2]
\draw (-2,0) node[xi] {\tiny $ +$}
	-- (0,-3) node[xi] {\tiny $ +$} 
	-- (2,0) node[xi] {\tiny $-$}
	 -- (0,3) node[xi] {\tiny $-$};
\end{tikzpicture}
\]
The following three lemmas describe the cancellation of renormalisation constants that occurs for our equation.
\begin{lemma}\label{lem:sameConstant}
For any $\tau \in \mcT^{-}_{\neut}$ one has 
$\ell^{\e}_{\overline \BPHZ}[\tau] = \ell^{\e}_{\overline \BPHZ}[\tau_{\opp}]$.
\end{lemma}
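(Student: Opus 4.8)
The plan is to realise the charge-flip $\tau \mapsto \tau_{\opp}$ as a symmetry of each of the three ingredients in $\ell^{\e}_{\overline{\BPHZ}} = \bar{\PPi}^{\xi^{\e}}_{\can}\circ \tilde{\mcb{A}}_{-}\circ \mcb{N}$. First extend $\opp$ to an algebra automorphism of the forest algebras of \cite{BHZalg} (including the slightly larger tree algebra that is the target of $\tilde{\mcb{A}}_{-}$) by acting on each tree-factor $T^{\mfn\mfl}$ via the involution $+\leftrightarrow-$ on $\mfl$, leaving $\mfn$, $N(T)$ and $K(T)$ unchanged. Since $\opp$ does not change $|K(T)|$, $|L(T)|$ or $\mfn$, it preserves the homogeneity $|\cdot|_{\s}$ of \eqref{e:def-snorm}, so it maps $\mcT$ to $\mcT$ and $\mcT^{-}$ to $\mcT^{-}$; since $\mcb{q}(L(T_{\opp}))=-\mcb{q}(L(T))$ it maps neutral trees to neutral trees, hence $\mcT^{-}_{\neut}$ to $\mcT^{-}_{\neut}$. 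Because the rule $R_{\SG}$ is invariant under $+\leftrightarrow-$ and the type-homogeneities satisfy $|+|_{\s}=|-|_{\s}$, the comodule coproduct commutes with $\opp$, i.e. $\Deltam\circ\opp=(\opp\otimes\opp)\circ\Deltam$. Consequently $\mcb{N}$ (the projection onto forests of neutral trees) commutes with $\opp$, and — since the negative twisted antipode $\tilde{\mcb{A}}_{-}$ is built recursively out of $\Deltam$, the tree product, and the homogeneity grading — an induction on the number of edges gives $\tilde{\mcb{A}}_{-}\circ\opp=\opp\circ\tilde{\mcb{A}}_{-}$.

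It remains to treat the character $\bar{\PPi}^{\xi^{\e}}_{\can}$. Let $\sigma$ be the involution swapping the two components of a noise pair $\noise=(\noise_{-},\noise_{+})$. Comparing generators, $\tau\mapsto\PPi^{\noise}_{\can}(\tau_{\opp})$ is a tree-product multiplicative, $K$-admissible map sending $\Xi_{\pm}\mapsto\noise_{\mp}$ and fixing $\mathbf{X}^{n}$, so by uniqueness $\PPi^{\sigma\noise}_{\can}(\tau)=\PPi^{\noise}_{\can}(\tau_{\opp})$ for all $\tau$. Now $\xi^{\e}_{\pm}=C_{\beta,\rho,\e}\,e^{\pm i\beta\Phi_{\e}}$ with $\Phi_{\e}=K*\zeta_{\e}$ a centred Gaussian field, hence $\Phi_{\e}\eqlaw-\Phi_{\e}$ and therefore $\xi^{\e}\eqlaw\sigma\xi^{\e}$ as a pair of random space-time distributions. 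Thus, for any $\tau$,
\[
\bar{\PPi}^{\xi^{\e}}_{\can}(\tau_{\opp})
= \E\bigl[(\PPi^{\xi^{\e}}_{\can}\tau_{\opp})(0)\bigr]
= \E\bigl[(\PPi^{\sigma\xi^{\e}}_{\can}\tau)(0)\bigr]
= \E\bigl[(\PPi^{\xi^{\e}}_{\can}\tau)(0)\bigr]
= \bar{\PPi}^{\xi^{\e}}_{\can}(\tau)\;,
\]
i.e. $\bar{\PPi}^{\xi^{\e}}_{\can}\circ\opp=\bar{\PPi}^{\xi^{\e}}_{\can}$ after multiplicative extension. Combining, for $\tau\in\mcT^{-}_{\neut}$ one has $\mcb{N}(\tau)=\tau$, $\tau_{\opp}\in\mcT^{-}_{\neut}$, and
\[
\ell^{\e}_{\overline{\BPHZ}}[\tau_{\opp}]
= \bar{\PPi}^{\xi^{\e}}_{\can}\bigl(\tilde{\mcb{A}}_{-}\mcb{N}(\tau_{\opp})\bigr)
= \bar{\PPi}^{\xi^{\e}}_{\can}\bigl(\opp\,\tilde{\mcb{A}}_{-}\mcb{N}(\tau)\bigr)
= \bar{\PPi}^{\xi^{\e}}_{\can}\bigl(\tilde{\mcb{A}}_{-}\mcb{N}(\tau)\bigr)
= \ell^{\e}_{\overline{\BPHZ}}[\tau]\;.
\]

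\textbf{Main obstacle.} There is essentially no analytic difficulty: the only probabilistic input is $\xi^{\e}\eqlaw\sigma\xi^{\e}$, which is immediate from Gaussianity of $\Phi_{\e}$. The substance of the argument is the bookkeeping verification that $\opp$ is a genuine automorphism of the algebraic machinery of \cite{BHZalg} — in particular that it commutes with $\Deltam$ and hence with the recursively defined twisted antipode $\tilde{\mcb{A}}_{-}$ on the enlarged (extended-decoration) tree algebra — which is where one must be careful, but which follows directly from the $+\leftrightarrow-$ symmetry of $R_{\SG}$ and of the assigned homogeneities.
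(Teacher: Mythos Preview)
Your proposal is correct and follows essentially the same approach as the paper: the paper's proof is a single sentence invoking that $(\xi^{\e}_{+},\xi^{\e}_{-})$ and $(\xi^{\e}_{-},\xi^{\e}_{+})$ are equal in distribution, leaving implicit the algebraic bookkeeping (commutation of $\opp$ with $\Deltam$, $\mcb{N}$, $\tilde{\mcb{A}}_{-}$) that you have carefully spelled out. Your version is thus a more detailed unpacking of the same argument.
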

\begin{proof}
This is an immediate consequence of the fact that the two pairs of space-time random fields $(\xi_{+}^{\e} ,\xi_{-}^{\e})$ and $(\xi_{-}^{\e},\xi_{+}^{\e})$ are equal in distribution. 
\end{proof}

\begin{lemma}\label{lem: parity cancellation for ell}
If $\tau = T^{\mfn\mfl} \in \mcT^{-}_{\neut}$ with $\mfn \neq 0$ then $\ell^{\e}_{\overline \BPHZ}[\tau] = 0$. 
\end{lemma}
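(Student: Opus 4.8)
The plan is to exploit a spatial parity symmetry of the model. Since $\mcT^{-}_{\neut}\subset\mcT^{-}$, Lemma~\ref{lem:neg-trees} applies to $\tau=T^{\mfn\mfl}$: as $\mfn\neq0$, the decoration $\mfn$ is supported at a single node $u\in N(T)$ with $\mfn(u)\in\{(0,1,0),(0,0,1)\}$, so the total spatial polynomial degree $\sum_{v\in N(T)}\big(\mfn(v)^{(1)}+\mfn(v)^{(2)}\big)$ of $\tau$ equals $1$, which is \emph{odd}. It is this parity that will force the vanishing.

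First I would set up the reflection $\iota\colon(z^{(0)},z^{(1)},z^{(2)})\mapsto(z^{(0)},-z^{(1)},-z^{(2)})$, acting on $\R\times\T^{2}$ and on $\R^{3}$. Assuming, as one may, that the truncation $K$ is chosen even in the spatial variable (so $K\circ\iota=K$), and using $\zeta\eqlaw\zeta\circ\iota$ together with $\rho\circ\iota=\rho$ (the evenness of the mollifier imposed in Section~\ref{sec: main result}), the pair $(\xi^{\e}_{+},\xi^{\e}_{-})=(C_{\beta,\rho,\e}e^{i\beta\Phi_\e},C_{\beta,\rho,\e}e^{-i\beta\Phi_\e})$ is invariant in law under $\iota$. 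I would then let $\iota$ act on the algebra of decorated forests of the reduced structure of Section~\ref{sec: invoking BHZalg} as the algebra automorphism that multiplies a forest by $(-1)$ raised to its total spatial degree (summing the spatial components of both the node decorations $\mfn$ and the edge decorations). A routine induction over the recursive construction of the canonical lift, using $(D^{k}K)\circ\iota=(-1)^{k^{(1)}+k^{(2)}}D^{k}K$, gives $\PPi^{\noise}_{\can}(\iota\sigma)=(\PPi^{\noise\circ\iota}_{\can}\sigma)\circ\iota$ for every forest $\sigma$; hence, using $\iota(0)=0$ and the law-invariance just recorded, $\bar{\PPi}^{\xi^{\e}}_{\can}\circ\iota=\bar{\PPi}^{\xi^{\e}}_{\can}$.

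The last step is to slide $\iota$ through $\ell^{\e}_{\overline{\BPHZ}}=\bar{\PPi}^{\xi^{\e}}_{\can}\circ\tilde{\mcb{A}}_{-}\circ\mcb{N}$. Since $\iota$ acts by a sign it commutes with $\mcb{N}$ (which only depends on $\mfl$), and — this is the one non-routine point — it commutes with the comodule coproduct $\Deltam$ and therefore with the twisted antipode $\tilde{\mcb{A}}_{-}$. Granting this, $\ell^{\e}_{\overline{\BPHZ}}[\iota\tau]=\bar{\PPi}^{\xi^{\e}}_{\can}\big(\iota\,\tilde{\mcb{A}}_{-}(\mcb{N}\tau)\big)=\bar{\PPi}^{\xi^{\e}}_{\can}\big(\tilde{\mcb{A}}_{-}(\mcb{N}\tau)\big)=\ell^{\e}_{\overline{\BPHZ}}[\tau]$, whereas $\iota\tau=(-1)^{\mfn(u)^{(1)}+\mfn(u)^{(2)}}\tau=-\tau$; thus $\ell^{\e}_{\overline{\BPHZ}}[\tau]=-\ell^{\e}_{\overline{\BPHZ}}[\tau]=0$.

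The main obstacle is precisely the equivariance $\iota\circ\Deltam=(\iota\otimes\iota)\circ\Deltam$ (equivalently, compatibility of $\iota$ with $\tilde{\mcb{A}}_{-}$). Morally this is just naturality of the construction of \cite{BHZalg} under the space-time symmetry $\iota$; concretely I would read it off the formulae of \cite[Sec.~5--6]{BHZalg}, the point being that $\Deltam$ preserves the total spatial degree in a factorised way, since every shift of a node decoration onto an extracted subtree is compensated by a derivative decoration created on the corresponding cut edge. A cleaner route for the write-up, which still relies on the same $\Deltam$-equivariance, is to argue by uniqueness: $\ell^{\e}_{\overline{\BPHZ}}$ is characterised by vanishing on the non-neutral trees of $\mcT^{-}$ and by the vanishing of the expectation of the associated renormalised model on the neutral trees of $\mcT^{-}$, and one checks that $\tau\mapsto\ell^{\e}_{\overline{\BPHZ}}[\iota\tau]$ satisfies these same two conditions, hence agrees with $\ell^{\e}_{\overline{\BPHZ}}$.
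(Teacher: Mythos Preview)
Your argument is correct, and its underlying mechanism --- spatial parity --- is the same as the paper's, but the implementation is genuinely different. The paper (via Lemma~\ref{lem: actual parity cancellation for ell}) writes $\ell^{\e}_{\overline{\BPHZ}}[S^{\mfn\mfl}]$ as a sum over forests $\mcF\in\mathbb{F}[S]$ of explicit integrals, each containing a single linear monomial $X_{u,j}(z)=z_{u}^{(j)}-z_{\rho_{S}}^{(j)}$; it then replaces the operators $H_{\mcF,C_{\mcF}(S)}$ by $\mathring{H}_{\J,\mcF,C_{\mcF}(S)}$ (Lemma~\ref{lem: parity cancellation}) and observes that the resulting integrand lies in $\allf_{\tiny{\textnormal{sym}}}$ except for the odd factor $X_{u,j}$, so the integral vanishes. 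In other words, the paper runs the parity argument \emph{inside} the explicit integral representation it has already built for the moment analysis. You instead lift the spatial reflection to an involution $\iota$ on the BHZ Hopf algebra and check that $\ell^{\e}_{\overline{\BPHZ}}\circ\iota=\ell^{\e}_{\overline{\BPHZ}}$ by sliding $\iota$ through $\bar{\PPi}^{\xi^{\e}}_{\can}$, $\tilde{\mcb{A}}_{-}$ and $\mcb{N}$, concluding from $\iota\tau=-\tau$.

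Your route is more conceptual and avoids the $\mathring{H}$-machinery entirely; its cost is the equivariance $(\iota\otimes\iota)\circ\Deltam=\Deltam\circ\iota$, which you rightly flag as the crux. This is indeed a routine consequence of the BHZ formulae (each term of $\Deltam$ splits the total polynomial/edge degree additively between the two tensor factors), but it lies outside what the present paper develops, so you would need to cite or reprove it. The paper's approach, by contrast, is self-contained within the analytic framework of Section~\ref{sec: explicit formula} and reuses Lemma~\ref{lem: parity cancellation}, which is needed anyway for the moment rewriting; this makes it the more economical choice in context, even if yours is cleaner in isolation. Your alternative ``uniqueness'' argument is also valid and amounts to the same equivariance statement phrased differently.
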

\begin{proof}
This is the content of Lemma~\ref{lem: actual parity cancellation for ell} below.
\end{proof}

\begin{lemma}\label{lem: sign from charge flip}
If $\tau = T^{0\mfl} \in \mcT^{-}_{\neut}$ then 
$\Upsilon[\tau]
=- \Upsilon[\tau_{\opp}]$.
\end{lemma}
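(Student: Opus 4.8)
The plan is to derive the identity directly from the closed form \eqref{eq: function for each tree}. First I would record the elementary observation that passing from $\tau$ to $\tau_{\opp}$ leaves the underlying combinatorial tree $T$ untouched — in particular it does not change the child-count $d(u,\tau) = |\{e\in K(T): e_{\p}=u\}|$ of any node — and replaces the decoration $\mfl$ by its composition with the transposition of $+$ and $-$. Since $\tau\in\mcT^{-}_{\neut}\subset\mcT^{-}$, Lemma~\ref{lem:neg-trees} tells us $L(T)=N(T)$, so every $u\in N(T)$ has $\mfl(u)\in\{+,-\}$ and the flip acts as $\mcb{q}(u)\mapsto-\mcb{q}(u)$ with $\mcb{q}(u)\in\{\pm1\}$ throughout.

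Next I would plug this into \eqref{eq: function for each tree}. Using $(-\mcb{q}(u))^{|d(u,\tau)|} = (-1)^{|d(u,\tau)|}\mcb{q}(u)^{|d(u,\tau)|}$, one gets
\[
\Upsilon[\tau_{\opp}]
= (i\beta)^{-1}\prod_{u\in N(T)}\frac{\beta\,(-\mcb{q}(u))^{|d(u,\tau)|}}{2}
= \Big(\prod_{u\in N(T)}(-1)^{|d(u,\tau)|}\Big)\Upsilon[\tau]
= (-1)^{\sum_{u\in N(T)}d(u,\tau)}\,\Upsilon[\tau]\;,
\]
and since each edge of $T$ is counted exactly once in $\sum_{u}d(u,\tau)$ — as the child-edge of its parent — one has $\sum_{u\in N(T)}d(u,\tau)=|K(T)|$. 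So the lemma reduces to showing that $|K(T)|$ is odd.

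Finally I would close with a parity count: because $\tau$ is neutral and $L(T)=N(T)$ we have $\mcb{q}(N(T))=0$, i.e. the number $m_{+}$ of $+$-nodes equals the number $m_{-}$ of $-$-nodes; hence $|N(T)|=m_{+}+m_{-}=2m_{+}$ is even and $|K(T)|=|N(T)|-1$ is odd, giving $\Upsilon[\tau_{\opp}]=-\Upsilon[\tau]$. There is no real obstacle here; the only non-automatic point — and the thing I would make sure to state cleanly — is that neutrality forces $T$ to have an even number of nodes, which is precisely the input that turns $(-1)^{|K(T)|}$ into $-1$.
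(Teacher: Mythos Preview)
Your proof is correct and follows essentially the same route as the paper: both expand \eqref{eq: function for each tree} under the charge flip to obtain the prefactor $(-1)^{\sum_{u}d(u,\tau)}$, then use $\sum_{u}d(u,\tau)=|K(T)|=|N(T)|-1$ together with neutrality (forcing $|N(T)|$ even) to conclude the sign is $-1$. Your write-up is slightly more explicit in invoking Lemma~\ref{lem:neg-trees} to justify $L(T)=N(T)$, but otherwise the arguments coincide.
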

\begin{proof}
 For every $u \in N(T)$ one has $d(u,\tau) = d(u,\tau_{\opp})$ and if $u \in N(T)$ has a noise of type $\pm$ in $\tau$ then it has a noise of type $\mp$ in $\tau_{\opp}$. 
Then from \eqref{eq: function for each tree} we see that
\begin{equs}
\Upsilon[\tau_{\opp}]
=
(-1)^{\sum_{u \in N(T)}|d(u,\tau)|}
\Upsilon[\tau]\;.
\end{equs}
The desired result follows by observing that $\sum_{u \in N(T)}|d(u,\tau)| = |N(T)| -1$ must be odd since $N(T) = L(T)$ and $\tau$ is neutral. 
\end{proof}

\begin{proposition} \label{prop:no-renormalisation-equation}
One has $G^{\e} = 0$.
\end{proposition}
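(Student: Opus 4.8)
The plan is to show that every term in the sum \eqref{eq: renormalisation term} defining $G^{\e}$ vanishes, by partitioning the index set $\mcT_{\neut}^{-}$ and matching terms in a way that produces cancellation. The starting observation is that, by Lemma~\ref{lem: parity cancellation for ell}, any tree $\tau = T^{\mfn\mfl} \in \mcT_{\neut}^{-}$ with $\mfn \neq 0$ contributes $\ell^{\e}_{\overline{\BPHZ}}[\tau] = 0$, so only trees of the form $\tau = T^{0\mfl}$ (those with vanishing polynomial decoration at every node) can contribute to $G^{\e}$. Thus
\[
G^{\e}(w,\nabla w) = \sum_{\substack{\tau = T^{0\mfl} \in \mcT_{\neut}^{-}}}
\frac{\ell^{\e}_{\overline{\BPHZ}}[\tau]}{S[\tau]} \Upsilon[\tau]\;,
\]
and the sum is now over trees to which Lemmas~\ref{lem:sameConstant} and~\ref{lem: sign from charge flip} apply.

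Next I would pair each such $\tau$ with $\tau_{\opp}$. The charge-flip involution $\tau \mapsto \tau_{\opp}$ maps $\mcT_{\neut}^{-}$ to itself (neutrality, $|\cdot|_{\s}$ and $|\cdot|_{\SG}$ are all invariant under swapping $+$ and $-$ at every node, since these depend only on $\mfl$ through $|L(T)|$ and $\mcb{q}(L(T))^2$), it preserves the underlying combinatorial tree and hence $S[\tau] = S[\tau_{\opp}]$, and it is an involution. Moreover $\tau \neq \tau_{\opp}$ for every neutral $\tau = T^{0\mfl} \in \mcT^{-}_{\neut}$: a fixed point would require $\mfl(u)$ to be invariant under the swap at each $u \in L(T) = N(T)$, impossible since $\mfl(u) \in \{+,-\}$ on $L(T)$ and the swap has no fixed points there (note $L(T) = N(T) \neq \emptyset$ because $\tau \in \mcT^{-}$ via Lemma~\ref{lem:neg-trees}, and in fact $\tau$ has at least two leaves). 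Hence the involution partitions these trees into pairs $\{\tau,\tau_{\opp}\}$.

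For each such pair, Lemma~\ref{lem:sameConstant} gives $\ell^{\e}_{\overline{\BPHZ}}[\tau] = \ell^{\e}_{\overline{\BPHZ}}[\tau_{\opp}]$, while Lemma~\ref{lem: sign from charge flip} gives $\Upsilon[\tau] = -\Upsilon[\tau_{\opp}]$, and $S[\tau] = S[\tau_{\opp}]$ as noted. Therefore
\[
\frac{\ell^{\e}_{\overline{\BPHZ}}[\tau]}{S[\tau]}\Upsilon[\tau]
+ \frac{\ell^{\e}_{\overline{\BPHZ}}[\tau_{\opp}]}{S[\tau_{\opp}]}\Upsilon[\tau_{\opp}]
= 0\;,
\]
and summing over the pairs in the partition shows that the entire sum defining $G^{\e}$ vanishes, i.e. $G^{\e} = 0$.

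The only genuinely load-bearing inputs are the three lemmas, so there is essentially no remaining obstacle here; the main point to be careful about is the bookkeeping that the involution $\tau \mapsto \tau_{\opp}$ is well-defined on $\mcT^{-}_{\neut}$ (i.e. that it does not leave the index set), that it is fixed-point-free on the relevant trees so the pairing is genuine, and that $S[\tau]=S[\tau_{\opp}]$. All of these are immediate from the definitions, since $\tau_{\opp}$ only alters the node labels $\mfl$ via an involution and leaves the combinatorial tree, the decoration $\mfn$, and the homogeneities unchanged. I would write the argument in exactly this order: reduce to $\mfn=0$ trees via Lemma~\ref{lem: parity cancellation for ell}; observe $\tau\mapsto\tau_{\opp}$ is a fixed-point-free involution on the remaining index set preserving $S$; then invoke Lemmas~\ref{lem:sameConstant} and~\ref{lem: sign from charge flip} to cancel within each pair.
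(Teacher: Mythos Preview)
Your proof is correct and follows essentially the same approach as the paper: both reduce the sum via the charge-flip involution $\tau \mapsto \tau_{\opp}$ together with Lemmas~\ref{lem:sameConstant}, \ref{lem: parity cancellation for ell}, and~\ref{lem: sign from charge flip}, and the observation $S[\tau]=S[\tau_{\opp}]$. The only cosmetic difference is that the paper symmetrizes the sum as $\sum_{\tau}\tfrac{\ell[\tau]}{2S[\tau]}(\Upsilon[\tau]+\Upsilon[\tau_{\opp}])$ rather than explicitly partitioning into pairs, which sidesteps the (correct but unnecessary) check that the involution is fixed-point-free.
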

\begin{proof}
First observe that for any $\tau \in \mcT$, one has $S[\tau] = S[\tau_{\opp}]$.
Combining this with Lemma~\ref{lem:sameConstant} we can rewrite \eqref{eq: renormalisation term} as
\begin{equ}
G^{\e}
\eqdef
\sum_{\tau \in \mcT_{\neut}^{-}} \frac{\ell^{\e}_{\overline \BPHZ}[\tau]}{2 S[\tau]} \bigl(
 \Upsilon[\tau]
 +
 \Upsilon[\tau_\opp]
 \bigr)
 \;,
\end{equ}
and the desired result then follows from Lemmas~\ref{lem: parity cancellation for ell} and~\ref{lem: sign from charge flip}.
\end{proof}

\begin{remark} \label{rem:BPHZ-not-all-cancel}
Proposition~\ref{prop:no-renormalisation-equation} 
does not hold anymore for the BPHZ lift. 
This is because in \eqref{eq: renormalisation term},
the sum would be over all $\tau\in \mcT^-$, and although
 $\ell^{\e}_{ \BPHZ}[\tau] = \ell^{\e}_{\BPHZ}[\tau_{\opp}]$
 still holds,
$\Upsilon[\tau]$ would be generally not equal to
$- \Upsilon[\tau_{\opp}]$.
For instance, for a non-neutral $\tau = T^{\mfn\mfl} \in \mcT^{-}$ with $\mfn = 0$,
$\Upsilon[\tau]$ is no longer a constant and is equal to the complex conjugate of $- \Upsilon[\tau_{\opp}]$.
\end{remark}
\subsection{Fixing our graph, forests, cut sets, and intervals}

To prove the moment bounds
as stated in Theorem~\ref{theorem:mom-bounds}, we first work to give
a formula for these moments in Proposition~\ref{explicit formula} below.
To this end we should first introduce some notation.

\subsubsection{Fixing a symbol and a moment}
\label{sec: our tree and moment} 
\ajay{What follows from here in the paper will be influenced by the \cite{CH}-rewrite. For instance, we will likely no longer have these $H$ operators with internal spatial integrations. To avoid issues with edits being redone, for now edits should be restricted to what occurs before this comment}

Fix a tree $\sT^{\sn\sl} \in \mcT^{-}$ and $p \in \N$. 
Since our goal (in view of Theorem~\ref{theorem:mom-bounds}) is to consider the $2p$-th moment of the random object represented by the tree $\sT^{\sn\sl}$, we introduce a setting where we work with $2p$ different ``copies'' of $\sT^{\sn\sl}$. 

We fix $p$ disjoint copies of $\sT^{\sn\sl}$, denoted by  
$\{(\sT_{j})^{\sn_{j}\sl_{j}}\}_{j=1}^{p}$, along with $p$ disjoint copies of $(\sT^{\sn\sl})_{\opp}$, denoted by $\{(\sT_{j})^{\sn_{j}\sl_{j}}\}_{j=p+1}^{2p}$. 
We then define $D_{2p}$ as the decorated graph formed by the disjoint union of $\{(\sT_{j})^{\sn_{j}\sl_{j}}\}_{j=1}^{2p}$,
namely $D_{2p}$ consists of node and edge sets
\[
N(D_{2p}) \eqdef \bigsqcup_{j=1}^{2p} N(\sT_{j})\;,
\qquad
K(D_{2p}) \eqdef \bigsqcup_{j=1}^{2p} K(\sT_{j})\;.
\]
We equip $N(D_{2p})$ with maps $\sn: N(D_{2p}) \rightarrow \N^{3}$ and $\sl: N(D_{2p}) \mapsto \{+,0,-\}$ obtained by concatenating the maps that came with the trees $\sT_{j}$. 

We also define the set of {\it noise nodes} of $D_{2p}$ via
\[
L(D_{2p}) \eqdef \bigsqcup_{j=1}^{2p} L(\sT_{j})\;.
\]
Note that we then have $\mcb{q}(L(D_{2p})) = 0$, because we have equal number of copies of $\sT^{\sn\sl}$ and $(\sT^{\sn\sl})_{\opp}$.

Since we assume $\sT^{\sn\sl} \in \mcT^{-}$, by Lemma~\ref{lem:neg-trees},  $N(\sT_{j}) = L(\sT_{j})$ for every $j$, that is every node is a noise node. 
However, to make it clear where we are treating the presence of the noise and to make explicit the relation of the analysis here with that of \cite{CH} we will often write $L(D_{2p})$ or $L(S)$ instead of $N(D_{2p})$ or $N(S)$ even though they are equal. 
Making the distinction will also be helpful when defining sets in \eqref{e:def-NFA-KFA-LFA}
and stating formulas such as \eqref{eq: explicit formula for single copy}.

\textbf{Convention.} From now on, whenever we talk of a subtree $S$, we are talking about a subtree of the graph $D_{2p}$ (which must necessarily be a subtree of some $\sT_{j}$ since trees and subtrees are all connected by definition).
We explicitly identify $N(S)$ and $K(S)$ as subsets $N(D_{2p})$ and $K(D_{2p})$, respectively. 
In particular, our subtrees are concrete subtrees rather than isomorphism classes of subtrees. 

We introduce the symbol $\logof$ to represent the base-point of the model
(namely it labels the space-time variable $z$ in $\hat{\Pi}^{\xi}_{z}[\tau]$) 
 and define 
\begin{equ}[e:def-allnodes]
\allnodes \eqdef N(D_{2p}) \sqcup \{\logof\}\;.
\end{equ}
\subsubsection{Functions and multi-indices}
We write $\mcb{C}$ for all scalar functions on $(\R_{+} \times \T^{2})^{\allnodes} \setminus \mathrm{Diag}_{\allnodes}$ which are smooth in their arguments \dash here $\mathrm{Diag}_{\allnodes}$ denotes the big diagonal, that is all tuples $(x_{v}: v \in \allnodes)$ for which one can find $v \neq v'$ with $x_{v} = x_{v'}$. We will often suppress the subtraction of big diagonals from quantifiers.   
We often use multi-indices in the set $(\N^{3})^{\allnodes}$ to indicate the orders of differentials in each variable $z_v$. 
Namely, given $k = (k_{v})_{v \in \allnodes} \in (\N^{3})^{\allnodes}$ and writing $k_{v} = (k_{v}^{(0)},k_{v}^{(1)},k_{v}^{(2)})$ for each $v \in \allnodes$, we define a differential operator $D^{k}$ on $\mcb{C}$  by setting, for each $F \in \mcb{C}$ 
and $z = (z_{v})_{v \in \allnodes} \in (\R_{+} \times \T^{2})^{\allnodes}$
\[
D^{k}F(z)
=
\Big(
\prod_{
\substack{
v \in \allnodes\\
0 \le j \le 2}}
\partial_{z_{v}^{(j)}}^{k_v^{(j)}}
\Big)
F(z)\;.
\]
In other words $k_v$ describes the order of derivatives in the time or space variables at node $v$.
We also define, for any $u \in \allnodes$ and $0 \le l \le 2$ the multi-index $\delta_{u,j} \in (\N^{3})^{\allnodes}$ by setting, for any $v \in \allnodes$ and $0 \le j \le 2$, 
\begin{equ} [e:def-delta-ul]
(\delta_{u,j})_{v}^{(j)} \eqdef \mathbbm{1}\{ u = v \textnormal{ and } j = l \}\;.
\end{equ}
We also use the shorthand $D_{u,j} \eqdef D^{\delta_{u,j}}$. 
\subsubsection{Forests and cut sets}
\begin{definition}[The sets $\Div_{j}$ and $\Div$ and forests]
\label{def:Div-forest}
For $j \in [2p]$ we write $\Div_{j}$ for the collection of all subtrees $S$ of $\sT_{j}$ with the property that $S$ is neutral and $|S^{0} |_{\s} < 0$.
 Equivalently, by Lemma~\ref{non-neutral trees are nice}, $\Div_{j}$ contains all subtrees $S$ of $\sT_{j}$ such that $|S^{0} |_{\SG} < 0$.

We then define $\Div \eqdef \bigsqcup_{j=1}^{2p} \Div_{j}$. 
We say that $\mcF \subset \Div$ is a \emph{forest} if for any two distinct trees $S,T \in \mcF$, 
{\it exactly one} of the following three conditions holds:
\[
N(S) \subset N(T)\quad \text{or}
\quad
N(T) \subset N(S)\quad \text{or}
\quad
N(T) \cap N(S) = \emptyset\;.
\]
In other words, any two trees in a forest $\mcF$ must be either nested or disjoint.

We write $\mathbb{F}_{j}$ for all subsets of $\Div_{j}$ which are forests and write $\mathbb{F}$ for all subsets of $\Div$ which are forests. 
\end{definition}

\begin{remark}
Note that our notation here is different from the notation in \cite{CH}:
here $\Div$ contains subsets of $D_{2p}$,
whereas in \cite{CH}, $\Div$ contains subtrees of the tree $\bar T$.
\end{remark}

In the sequel we will often write things such as $K(\mcF)$, $L(\mcF)$ and $N(\mcF)$
for a {\it forest} $\CF$; these notations stand for all
the edges or all the nodes in all the trees of $\mcF$.
More precisely,
given any  map $Z$ which maps a subtree of $D_{2p}$ to a set,
 we see $Z$ as inducing a map on $\mathbb{F}$ by setting
\begin{equ} [e:Z-on-forest]
Z(\mcF)
\eqdef
\bigcup_{T \in \mcF}
Z(T)\;.
\end{equ}
In particular $Z$ can stand for the maps $K(\cdot)$, $L(\cdot)$ or $N(\cdot)$.
 
\begin{definition} [$\mfC$ and cut sets]
\label{def:cut-sets}
We define a map $\gamma: K(D_{2p}) \rightarrow \Z$ by setting, for each $e \in K(D_{2p})$, 
\[
\gamma(e)
\eqdef
\Big\lceil 
2 | \{ e' \in K(D_{2p}): e' \ge e \}| + \sum_{
\substack{
u \in N(D_{2p})\\
u \ge e_{\ch}
}
}
(
|\sn(u)|_{\s} 
- \bar{\beta})
\Big\rceil\;.
\]
One can check that this definition of $\gamma$, when restricted to $K(\sT_{j})$ for any $j \in [2p]$,  agrees with the one defined in \cite[Section~4.2]{CH}. In particular, $\gamma(e)$, when strictly positive, tells us that the edge $e$ should be positively renormalised with a Taylor expansion of order $\gamma(e) - 1$. 
As an example, assuming $\bar\beta = {503 \over 300}$ and $p=1$,
 for the tree $T^{\mfn\mfl}$ as shown in \eqref{e:example-of-tree} with $|T^{\mfn\mfl}|_{\s} = 10-6\bar\beta<0$, we draw the picture for $D_{2p}=D_2$ with an edge $e\in K(D_{2p})$ as follows:
\[
\begin{tikzpicture}[scale=0.13,baseline=0]
\draw (-3,0) node[xi] {\tiny $ +$}
  -- (0,-3) node[xi] {\tiny $ -$} 
  -- (3,0) node[xi] {\tiny $+$}
   -- (0,3) node[xi] {\tiny $+$};
\draw (3,4) node[xi] {\tiny $-$} -- (3,0)   node[xi] {\tiny $+$}
  -- (6,3) node[xi] {\tiny $+$};
\node at (2.5,-2) {\footnotesize $e$};
\node at (2,-4) {\footnotesize $e_\p$};
\node at (5.4, -0.4) {\footnotesize $e_\c$};
\end{tikzpicture}
\quad
\begin{tikzpicture}[scale=0.13,baseline=0]
\draw (-3,0) node[xi] {\tiny $ -$}
  -- (0,-3) node[xi] {\tiny $ +$} 
  -- (3,0) node[xi] {\tiny $-$}
   -- (0,3) node[xi] {\tiny $-$};
\draw (3,4) node[xi] {\tiny $+$} -- (3,0)   node[xi] {\tiny $-$}
  -- (6,3) node[xi] {\tiny $-$};
\end{tikzpicture}
\]
In this case, one has $\gamma(e) = \lceil  2\cdot 4 -4\bar\beta \rceil =2$.
We then define
\[
\mfC \eqdef \{e \in K(D_{2p}): \gamma(e) > 0\} \;,
\qquad
\mfC_{j} \eqdef \mfC \cap K(\sT_{j}) \;.
\]
We call a subset $\cC \subset \mfC$ a \emph{cut set}, so $2^{\mfC}$ is the collection of all cut sets. 
In the sine-Gordon model, one can actually check that $\gamma(e)\in\{1,2\}$ for every $e \in K(D_{2p})$, thus $\mfC = K(D_{2p})$ and every kernel edge is a potential site of positive renormalisation. 
However we still often use the notation $\mfC$ in formulae to provide context and make the link to \cite{CH} clear.

For $\cC \subset \mfC$ we define 
\begin{equ} [e:def-FavoidC]
\mathbb{F}_{\cC} \eqdef \{ \mcF \in \mathbb{F}: \, K(\mcF) \cap \cC = \emptyset\} \;. 
\end{equ}
\end{definition}

We view $\mfC$ as a partially ordered set (poset) with that structure being inherited from $K(D_{2p})$. We also view $\Div$ as a poset with $T \le S \Leftrightarrow N(T) \subset N(S)$.
We view both $2^{\mfC}$ and $\mathbb{F}$ as posets equipped with the inclusion partial order. 
\section{An explicit formula for the neutral BPHZ model and its moments}\label{sec: explicit formula} 
The goal of this section is to provide a formula for moments of  the neutral BPHZ model,
as given in Propositions~\ref{explicit formula} and~\ref{prop:mom-formula-z} below.

\subsection{Notation for moment formula}
To state Propositions~\ref{explicit formula}, we introduce some notation for various functions in the integrands and the sets of nodes or edges appearing in the formula.

\subsubsection{Notation for interactions, (truncated) heat kernels and polynomials}
In the following, for each $e \in L(D_{2p})^{(2)}$ it is convenient to write the two elements of $e$ as $e_{<}$ and $e_{>}$.\footnote{More formally, we fix an (arbitrary) total order on $L(D_{2p})$, so $e_{<}$ is the smaller element in $e$ and $e_{>}$ is the larger element in $e$.}
In proving Theorem~\ref{theorem:mom-bounds},
upon taking $2p$-th moments of $\hat{\Pi}^{\e}_{z}[\tau](\psi^{\lambda}_{z})$, we will get an expectation of product of the noises in the set $L(D_{2p})$, which yields a product of pairwise ``interactions'' as in \eqref{eq: coulomb interaction}.  
Also, regarding the second bound in Theorem~\ref{theorem:mom-bounds}, when expanding the $2p$-th power
of $\hat{\Pi}^{\e}_{z}[\tau] -  \hat{\Pi}^{\bar{\e}}_{z}[\tau]$, there will be, say, $j$ copies of the trees carrying the noises $\xi^\e_{\pm}$ and the other  $2p-j$ copies of the trees 
carrying the noises   $\xi^{\bar\e}_{\pm}$.


In view of these we write $[2p]\eqdef \{1,\cdots,2p\} $ and
we call a map $\iota: [2p] \rightarrow \{\e,\bar\e\}$ an 
{\it ``$\e$-assignment''}; in proving the first bound of Theorem~\ref{theorem:mom-bounds} one only needs
the ``trivial'' $\e$-assignment, i.e. $\iota$ always take the value $\e$.
We then introduce the following notation for products of noises and interactions.

Products of the noises $\xi^\e_{\pm}$, $\xi^{\bar\e}_{\pm}$ are written in the following shorthands: 
for any $A \subset L(D_{2p})$ and an $\e$-assignment $\iota$ we define $\xi^{A,\iota} \in \mcb{C}$ as 
\begin{equ}[e:def-xi-A]
\xi^{A,\iota}(z) \eqdef \prod_{i = 1}^{2p} \prod_{a \in A \cap L(\sT_{i})}
\xi^{\iota(i)}_{\mfl(a)}(z_{a})\;,
\quad
\mbox{for }
z = (z_{a})_{a \in A} \in (\R_{+} \times \T^{2})^{A}\;.
\end{equ}
Note that for the trivial $\e$-assignment $\iota\equiv \eps$
one simply has $\xi^{A,\iota}(z) =
\prod_{a \in A}
\xi^{\e}_{\mfl(a)}(z_{a})$.

We define $\mathring{\mathfrak{J}}^{\pm}$ to be the semi-normed vector space of all smooth functions 
$J: \R \times \T^{2} \setminus \{ 0\} \rightarrow \R$ which satisfy 
$J(t,x) = J(t,-x)$,  and
 with semi-norm $\| \cdot\|_{\mp 2 \beta',m}$ with $m = 2$. 
We then set
\[
\mathfrak{J} \eqdef \bigoplus_{e \in L(D_{2p})^{(2)}} \mathring{\mathfrak{J}}^{\sign(e)}\;.
\]
An element of $\mathfrak{J}$ will often be written as $\CJ = (J_{e}: e \in L(D_{2p})^{(2)})$.
For $\CJ \in \mathfrak{J}$ and any $P \subset L(D_{2p})^{(2)}$, we write
\footnote{We do not mean that $\|\CJ\|$ is a norm, but instead we think of it
as simply a shorthand for product of norms $\|J_{e}\|_{\sign(e) \cdot 2 \beta',m}$.}
\begin{equs}
\CJ^{P} \eqdef \prod_{e \in P}J_{e}(x_{e_{>}}  &  - x_{e_{<}}) \; \in \mcb{C} \;,
\\
\|\CJ\|_{P} \eqdef \prod_{e \in P} \|J_{e}\|_{\sign(e) \cdot 2 \beta',m} \qquad
&  \mbox{and}
\qquad
\|\CJ\| \eqdef \|\CJ\|_{L(D_{2p})^{(2)}}\;.
\end{equs}
For most of the paper we will consider  
$\J\in \mfJ$ of the following form.
Given an $\e$-assignment $\iota$ we define   $\CJ(\iota) = (J(\iota)_{e})_{e \in L(D_{2p})^{(2)}}  \in \mfJ$ 
by setting, for each $e = (e_{<},e_{>})$ with $e_{<} \in L(\sT_{i})$ and $e_{>} \in L(\sT_{j})$
\begin{equs}
J(\iota)_{e}(z)
&\eqdef
J^{\iota(i),\iota(j)}(z)^{\sign(e)}
\\
&=\E (\xi^{\iota(i)}_{\mfl(e_<)}(0)\xi^{\iota(j)}_{\mfl(e_>)}(z))
\qquad
\mbox{for }
z\in \R_{+} \times \T^{2} \;,
\end{equs}
where $J^{\e,\e}$,  $J^{\e,\bar\e}$ or  $J^{\bar\e,\bar\e}$ is defined in \eqref{def: single mollified interaction}.
With this choice of $\J$ and the above notation
it is then straightforward to check that (see also \cite[proof of Theorem~3.2]{HaoSG})
\begin{equ}[e:E-xi-J] 
\E \left(\xi^{A,\iota}(z) \right)
=
\J(\iota)^{A^{(2)}}(z)\;.
\end{equ}
This is the moment formula given heuristically in \eqref{eq: coulomb interaction}.



For any $E \subset K(D_{2p})$ we define the function $\ke{E}{} \in \allf$ depending on $x_{v}$ with $v \in e_{\p}(E) \cup e_{\ch}(E)$, by
\begin{equ} [e:def-Ker-E]
\ke{E}{}(x) \eqdef
\prod_{e \in E}
K(x_{e_{\p}} - x_{e_{\ch}}).
\end{equ}
For any $\cD \subset \cut$ we define $\RKer^{\cD} \in \allf$, depending on $x=(x_{v})$ with $v \in e_{\ch}(\cD) \cup e_{\p}(\cD) \cup \{\logof\}$, via
\begin{equation}\label{e:def-RKer-D}
\begin{split}
&\RKer^{\cD}(
x
)
\eqdef
(-1)^{|\cD|}
\prod_{e \in \cD}
\sum_{
\substack{
k \in \N^{3}\\
|k|_{\s} < \gamma(e)}
}
\frac{(x_{e_{\p}} - x_{\logof})^{k}}{k!}
\,D^{k}K(x_{\logof} - x_{e_{\ch}}).
\end{split}
\end{equation} 

Finally, for any $A \subset N(D_{2p})$, $\mfn:A \rightarrow \N^{3}$ we define the functions $\powroot{A}{\mfn}{v}$ via $\powroot{A}{\mfn}{\logof}
(x) 
\eqdef
\prod_{u \in A}
(x_{u} - x_{\logof})^{\mfn(u)}$.
\subsubsection{Notation for nodes and edges}
\label{sec:nodes-and-edges}

Before stating Propositions~\ref{explicit formula} and~\ref{prop:mom-formula-z},
we recall more notation from \cite{CH}.
For a subtree $S$ of $D_{2p}$ 
(as mentioned above this implies that  $S$ must necessarily be a subtree of some $\sT_{j}$), 
and $\mcF \in \mathbb{F}$ we define
\[
\tilde N(\mcF)\eqdef 
\bigcup_{T\in\mcF} \tilde N(T)\;,
\qquad
C_{\mcF}(S)
\eqdef
\overline{\{T \in \mcF: T \subsetneq S\}}.
\]
In the last expression we are taking the set of maximal elements of a poset (see Section~\ref{sec:poset}).
We also define   for a subtree $S$  and $\mcF \in \mathbb{F}$
the following sets
\begin{equs}
\nrmod[\mcF,S] \eqdef \tilde{N}(S) 
\setminus 
\big(
\bigsqcup_{T \in C_{\mcF}(S)}
\tilde{N}(T) 
\big) \;,
& \quad
N_{\mcF}(S)\eqdef \nrmod[\mcF,S] \cup \{\rho_S\}\;,
\\
L_{\mcF}(S) \eqdef L(S) 
\setminus 
\big(   &
\bigsqcup_{T \in C_{\mcF}(S)}
L(T) 
\big)\;.
\end{equs}
We also define $K^{\downarrow}(S)$ to be the edges incoming to $S$ from above, and $ \bar{K}^{\downarrow}(S)$ to be these incoming edges together with the kernel edges in $S$:
\begin{equs}
K^{\downarrow}(S) &\eqdef 
\{ e \in K(\bar{T}):\ 
e_{\p} \in N(S),\ e_{\ch} \not \in N(S)  \} \;,
\\
 \bar{K}^{\downarrow}(S) &\eqdef K(S) \sqcup K^{\downarrow}(S)\;.
 \end{equs}
Finally we define
\begin{equs} 
K_{\mcF}(S) \eqdef K(S) \setminus &
\big(
\bigsqcup_{T \in C_{\mcF}(S)}  K(T)\big)\;,
\qquad
\mathring{K}_{\mcF}(S) 
\eqdef 
K(S) \setminus 
\big(
\bigsqcup_{T \in C_{\mcF}(S)}
\bar{K}^{\downarrow}(T)
\big)\;,
\\
\textnormal{ and }&
K^{\partial}_{\mcF}(S) 
\eqdef 
K(S) \cap
\big(
\bigsqcup_{T \in C_{\mcF}(S)}
K^{\downarrow}(T)
\big) \;.
\end{equs}
%

To make some of these definitions clearer we look at an example of $\mcF=\{T_1,T_2,T'\}$, where $T_1\cap T_2 =\emptyset$ and $T'\subset T_2$. The entire picture illustrates a {\it subtree}  $S$ (of some larger tree not drawn).
Below we have shaded  $T_1,T_2$ in light gray and, on top of this, $T'$ in dark gray. We have $C_{\mcF}(S) = \{T_1,T_2\}$.
The nodes of $\tilde N_{\mcF}(S)$ are shaded in light blue, 
the edges of $\mathring{K}_{\mcF}(S)$ in light green,
 and the edges of $K^{\partial}_{\mcF}(S) $ in red. 

\[
\begin{tikzpicture}[scale=1.2,rotate=-45,subtreenode/.style={circle,fill=gray!40,inner sep=0pt,minimum size=16pt},  subtreeedge/.style={line width=14pt,gray!40, shorten >= -3pt,shorten <=-3pt},
   shadednodeblue/.style={circle,fill=blue!40,inner sep=0pt,minimum size=6pt},
   shadednodegray/.style={circle,fill=gray,inner sep=0pt,minimum size=11pt},
]
	\node (a1) at (0,0) {};
	\node (b1) at (-1,1) {};
	\node (b2) at (1,1) {};
	\node (c1) at (-0.5, 1.5) {};
	\node (c2) at (0.5, 1.5) {};	
	\node (c3) at (1.5, 1.5) {};
	\node (d1) at (-1.7, 1.7) {};
	\node (d2) at (-0.7,2.5) {};
	\node (d3) at (1,2) {};
	\node (d4) at (2,2) {};
	\node (e) at (-1.8,2.3) {};
	
    \node[subtreenode] at (b1) {};
    \node[subtreenode] at (b2) {};
    \node[subtreenode] at (c1) {};
    \node[subtreenode] at (c2) {};
    \node[subtreenode] at (c3) {};
    \node[subtreenode] at (d3) {};

	\draw[subtreeedge] (b1) to (c1);
	\draw[subtreeedge] (b2) to (c2);
	\draw[subtreeedge] (b2) to (c3);
	\draw[subtreeedge] (c2) to (d3);

    \draw[shadededge,green!40] (a1) to (b1);
     \draw[shadededge,green!40] (a1) to (b2);
     \draw[shadededge,green!40] (d1) to (e);
    
    \draw[shadededge,red!40] (b1) to (d1);
     \draw[shadededge,red!40] (c1) to (d2);
       \draw[shadededge,red!40] (c3) to (d4);

    \node[shadednodegray]  at (b2) {};
    \node[shadednodegray] at (c2) {};

    \draw[shadededge,gray] (c2) to (b2);
    
    \node[dot]  at (a1) {};
    \node[dot] at (b1) {};

    \node [shadednodeblue] at (d1) {};
    \node [shadednodeblue] at (d2) {};
    \node [shadednodeblue] at (d4) {};
    \node [shadednodeblue] at (b1) {};
    \node [shadednodeblue] at (b2) {};
        \node [shadednodeblue] at (b2) {};
        \node [shadednodeblue] at (e) {};

	\node[dot] (a1) at (0,0) {};
	\node[dot] (b1) at (-1,1) {};
	\node[dot] (b2) at (1,1) {};
	\node[dot] (c1) at (-0.5, 1.5) {};
	\node[dot] (c2) at (0.5, 1.5) {};	
	\node[dot] (c3) at (1.5, 1.5) {};
	\node[dot] (d1) at (-1.7, 1.7) {};
	\node[dot] (d2) at (-0.7,2.5) {};
	\node[dot] (d3) at (1,2) {};
	\node[dot] (d4) at (2,2) {};
	\node[dot] (e) at (-1.8,2.3) {};

	\draw[kernel] (a1) to (b1);
	\draw[kernel] (a1) to (b2);
	\draw[kernel] (b1) to (d1);
	\draw[kernel] (b1) to (c1);
	\draw[kernel] (b2) to (c2);
	\draw[kernel] (b2) to (c3);
	\draw[kernel] (c1) to (d2);
	\draw[kernel] (c2) to (d3);
	\draw[kernel] (c3) to (d4);
	\draw[kernel] (d1) to (e);
	
	\node at (-0.2,-0.2) {\footnotesize $\rho_S$};
\end{tikzpicture}
\]

\subsubsection{Renormalisation operators}\label{sec: renormalisation operators}
In this section we introduce the operations responsible for cancelling the divergences that appear in our stochastic objects. 
A short summary motivating very similar definitions can be found in \cite[Section~4.1.1]{CH}.
We will be more concise here, mostly recalling notation from \cite{CH} before moving on to rewriting the renormalisation procedure.
\begin{definition}\label{def:collapse}
Given a subtree $S$ and $A \subset \allnodes$ with $N(S)\subset A$, we define the ``collapsing map'' $\Coll_S: (\R^3)^A \to (\R^3)^A $ given by
\begin{equ}
\Coll_S(x)_u \eqdef 
\left\{\begin{array}{cl}
  x_{\rho(S)} & \text{if $u \in \tilde N(S)$,} \\
  x_u & \text{otherwise (i.e. if $u\in A\setminus \tilde N(S)$).}
\end{array}\right.
\end{equ}
Here $x\in (\R^3)^A $ and $u\in A$.
In plain words, $\Coll_S$ maps all points in $\tilde N(S)$ onto the root of $S$ (or more precisely re-defines the coordinates of all points in $\tilde N(S)$ to be the coordinates of $\rho(S)$), with other points fixed.
\end{definition}
\begin{definition}\label{def-Y0Y1}
For $S \in \Div$ we define operators $\mathscr{Y}^{(0)}_{S}, \mathscr{Y}^{(1)}_{S} :\allf \rightarrow \allf$ given by 
\begin{equs}
{}&
(\mathscr{Y}^{(0)}_{S}F)(z) \eqdef F(\Coll_{S}(z))\;
\textnormal{ and}\\
(\mathscr{Y}^{(1)}_{S}F)(z) \eqdef& 
\mathbbm{1}\{ |S^{0}|_{\s} \in (-2,-1)\}
\sum_{
\substack{
u \in \tilde{N}(S)\\
j \in \{1,2\}}}
(z_{u}^{(j)} - z_{\rho_{S}}^{(j)}) \,
(D^{\delta_{u,j}}F)(\Coll_{S}(z))\;,
\end{equs}
We also define $\mathscr{Y}_{S} \eqdef \mathscr{Y}^{(0)}_{S} + \mathscr{Y}^{(1)}_{S}$. 
\end{definition}

For any $\J \in \mfJ$ and $\mcF \in \mathbb{F}$,  we recursively define a family of operators $(H_{\J,\mcF,S}: S \in \mcF)$ by
\begin{equation} \label{def: renormalised kernel 1}
\begin{split}
[H_{\J,\mcF,S}(\phi)](x)\ 
&\eqdef 
\int_{{\nrmod[\mcF,S]}} \back dy \
\J^{L_{\mcF}(S)^{(2)}}(y\sqcup x_{\rho_{S}})\,  
\ke{\mathring{K}_{\mcF}(S)}{}(y \sqcup x_{\rho_{S}})\\
& \quad
\cdot
H_{\J,\mcF, C_{\mcF}(S)}
\left[
\ke{K^{\partial}_{\mcF}(S)}{}
\cdot
(-\mathscr{Y}_{S}\phi)
\right](x_{\tilde{N}(S)^c} \sqcup
y)\;.
\end{split}
\end{equation}
The recursion is initialised by postulating that $H_{\J,\mcF,\emptyset}$ is the identity.
Furthermore, for any sub-forest $\mcG \subset \mcF$ with all trees of $\mcG$ pairwise disjoint,
we used the notation (on the right hand side of \eqref{def: renormalised kernel 1})
\begin{equ}[e:H-is-product-H]
H_{\J,\mcF,\mcG} \eqdef \prod_{T \in G} H_{\J,\mcF,T}\;,
\end{equ}
 where the order of the product is
irrelevant since, for any two disjoint trees $T$, the corresponding operators commute.
We will also use this convention for variants of the operators $H$ introduced later. 

Note that if $\phi \in \allf$ only depends on $(x_{v})_{v \in A}$ where $A \subset \allnodes$ then $H_{\CJ,\mcF,S}[\phi]$ depends only on $(x_{v})_{v \in A \setminus \tilde{N}(S)}$.

\subsection{The first explicit formula for moments}
We need a last bit of notation before we can write out explicitly the renormalised moment graphs we want to estimate. Recall the convention \eqref{e:Z-on-forest}.
For $A = \sT_{j} $ for some $j\in[2p]$ or $A=D_{2p}$, define
\begin{equ}[e:def-NFA-KFA-LFA]
\nodesleft{\mcF}{A}
\eqdef
N(A)
\setminus
\tilde{N}(\mcF)\,,
\quad 
\kernelsleft{\mcF}{A}
\eqdef
K(A) \setminus \bar{K}^{\downarrow}(\mcF)\,,
\quad
\leavesleft{\mcF}{A}
\eqdef
L(A) 
\setminus 
L(\mcF)\,.
\end{equ}
Note that our notation is such that although $N(A)=L(A)$ by Lemma~\ref{lem:neg-trees},
generally we have $\nodesleft{\mcF}{A} \not = \leavesleft{\mcF}{A}$.

For any $\CJ \in \mfJ$ and test function $\psi$ we define $M(\psi,\CJ) \in \R$ via
\begin{equs}  
M[\psi,\CJ]
\eqdef&
\sum_{
\substack{
\mcG \in \mathbb{F},\\ 
\cD \subset \cut \setminus K(\mcG)}
}
\int_{\nodesleft{\mcG}{D_{2p}}}dy\  
\J^{\leavesleft{\mcG}{D_{2p}}^{(2)}}(y)
\cdot
\ke{\kernelsleft{\mcG}{D_{2p}} \setminus \cD}{}(y)\\
&
\quad 
\cdot
\Big(
\prod_{j=1}^{2p}
\psi(y_{\rho_{\sT_{j}}})
\Big)
\cdot
\RKer^{\kernelsleft{\mcG}{D_{2p}} \cap \cD}(z)
\cdot
\powroot{\nodesleft{\mcG}{D_{2p}}}{\sn}{\logof}(z)
\\
& 
\quad 
\cdot 
H_{\CJ,\mcG,\overline{\mcG}} \left[ 
\RKer^{K^{\downarrow}(\bar{\mcG}) \cap \cD}
\cdot 
\ke{K^{\downarrow}(\bar{\mcG}) \setminus \cD}{}
\powroot{\tilde{N}(\bar{\mcG})}{\sn}{\logof}
\right](z)
\label{eq: explicit formula for moment}
\end{equs}
where $z = x_{\logof} \sqcup y$.
Note that $M(\psi,\CJ)$ does not depend on $x_{\logof}$ thanks to translation invariance.
Recall also that $\bar\mcG\subset \CG$ denotes the set of maximal elements of $\CG$.
(These are necessarily disjoint since, by definition of $\mathbb{F}$, any two elements of $\CG$ are either
disjoint or ordered.)

We then have the following result.
\begin{proposition}\label{explicit formula} 
\hao{In this proof we may write one or two sentences about how this model formula connects to the algebra part}
Let $\eps>0$ 
and let $(\hat{\Pi}^{\e},\hat{\Gamma}^{\e})$ be the neutral BPHZ lift of the 
noise $\xi^{\e}$ defined in Section~\ref{sec: neutral BPHZ}. 
Then, for $x_{\logof} \in (\R \times \T^{2})^{\{\logof\}}$, any test function $\psi$, any $j \in [2p]$, and using the shorthand $\sT = \sT_{j}$, 
\begin{equs}\label{eq: explicit formula for single copy}
\hat{\Pi}^{\e}_{x_{\logof}}[\sT^{\sn\sl} ](\psi)
=&
\sum_{
\substack{
\mcG \in \mathbb{F}_{j} \\ 
\cD \subset \cut_j \setminus K(\mcG)}
}
\int_{\nodesleft{\mcG}{\sT}}dy\  
\xi^{\leavesleft{\mcG}{\sT},\e}(y)
\cdot
\ke{\kernelsleft{\mcG}{\sT} \setminus \cD}{}(y)\\
&
\qquad \qquad 
\cdot
\psi(z_{\rho_{\sT}})
\cdot
\RKer^{\kernelsleft{\mcG}{\sT} \cap \cD}(z)
\cdot
\powroot{\nodesleft{\mcG}{\sT}}{\sn}{\logof}(z)
\\
& 
\qquad \qquad 
\cdot 
H_{\CJ(\iota),\mcG,\overline{\mcG}} \left[ 
\RKer^{K^{\downarrow}(\bar{\mcG}) \cap \cD}
\cdot 
\ke{K^{\downarrow}(\bar{\mcG}) \setminus \cD}{}
\powroot{\tilde{N}(\bar{\mcG})}{\sn}{\logof}
\right](z)
\end{equs}
where $z = x_{\logof} \sqcup y$. 
Moreover, for any $\e$-assignment $\iota$, 
\begin{equ}[e:moment-is-M]
\E
\Big[
\Big(
\prod_{j=1}^{p} 
\hat{\Pi}^{\iota(j)}_{x_{\logof}}[\sT^{\sn\sl} ](\psi)
\Big)
\Big(
\prod_{j=p+1}^{2p}
\overline{ 
\hat{\Pi}^{\iota(j)}_{x_{\logof}}[\sT^{\sn\sl} ](\psi)
}
\Big)
\Big]
=
M[\psi,\CJ(\iota)]\;,
\end{equ}
where $M[\psi,\CJ(\iota)]$
is defined in \eqref{eq: explicit formula for moment}.
\end{proposition}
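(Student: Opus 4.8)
The proposition splits into the closed integral formula \eqref{eq: explicit formula for single copy} for a single copy and the moment identity \eqref{e:moment-is-M}. The plan is to obtain the first by unwinding the definition of the neutral BPHZ lift through the algebra of \cite{BHZalg}, and to deduce the second by expanding a product of $2p$ such formulae and invoking the noise-expectation identity \eqref{e:E-xi-J}. For the single-copy formula I would start from $\PPi^{\xi^\e}_{\overline\BPHZ} = (\bar\PPi^{\xi^\e}_{\can}\circ\tilde{\mcb{A}}_-\circ\mcb{N}\otimes\PPi^{\xi^\e}_{\can})\Deltam$ and apply the reconstruction map $\mathcal{L}$. Acting on $\sT^{\sn\sl}$, the coproduct $\Deltam$ followed by the neutralising projection $\mcb{N}$ produces one simple tensor for each forest $\mcG\in\mathbb{F}_j$ of neutral subtrees of $\sT_j$ --- recall $\Div_j$ is by construction exactly the set of neutral divergent subtrees, so $\mathbb{F}_j$ is precisely the set of such forests; the only way this departs structurally from the corresponding formula of \cite{CH} is that $\mcb{N}$ restricts the forest sum from all divergent forests to the neutral ones. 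On the left leg, $\bar\PPi^{\xi^\e}_{\can}\circ\tilde{\mcb{A}}_-$ replaces each extracted subtree by its deterministic counterterm: the twisted antipode $\tilde{\mcb{A}}_-$ generates the signed, Taylor-subtracted expansion of the subtree --- the Taylor subtractions being the operators $\mathscr{Y}_S$ of Definition~\ref{def-Y0Y1}, whence the nested operators $H_{\CJ(\iota),\mcG,\cdot}$ of \eqref{def: renormalised kernel 1} --- while $\bar\PPi^{\xi^\e}_{\can}$ takes the expectation of the resulting products of noises, which by \eqref{e:E-xi-J} is the product of pair interactions recorded in $\CJ(\iota)$. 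Finally $\mathcal{L}$, via $K$-admissibility and recentering at the base-point $x_\logof$, produces the positive renormalisation: the sum over cut sets $\cD\subset\cut_j\setminus K(\mcG)$ together with the kernels $\RKer^{\cD}$ of \eqref{e:def-RKer-D} (Taylor expanding the kernel on each edge $e$ to order $\gamma(e)-1$), and the node decorations $\sn$ supply the polynomial factors $\powroot{A}{\sn}{\logof}$. Matching terms gives \eqref{eq: explicit formula for single copy}; the combinatorial identities relating $\Deltam$, $\tilde{\mcb{A}}_-$ and the recursive structure of the $H$-operators are exactly those of \cite[Sec.~4]{CH}, to which I would refer.

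For the moment identity I would first observe that, since $\xi^\e_+$ and $\xi^\e_-$ are complex conjugates of one another while the heat kernel $K$, the test function $\psi$, and the constants $\ell^\e_{\overline\BPHZ}[\cdot]$ are all real (the latter by the distributional symmetry used in Lemma~\ref{lem:sameConstant}), complex conjugation of $\hat\Pi^{\iota(j)}_{x_\logof}[\sT^{\sn\sl}](\psi)$ swaps $\xi^\e_+\leftrightarrow\xi^\e_-$ throughout, i.e. sends $\sT^{\sn\sl}$ to $(\sT^{\sn\sl})_\opp$; hence the left-hand side of \eqref{e:moment-is-M} equals $\E\big[\prod_{j=1}^{2p}\hat\Pi^{\iota(j)}_{x_\logof}[(\sT_j)^{\sn_j\sl_j}](\psi)\big]$, with $\sT_j=\sT$ for $j\le p$ and $\sT_j=(\sT)_\opp$ for $p<j\le 2p$ --- exactly the $2p$ disjoint copies forming $D_{2p}$. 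I would then insert the single-copy formula into each of these $2p$ factors and multiply out, obtaining a sum over tuples $\big((\mcG_j,\cD_j)\big)_{j=1}^{2p}$ with $\mcG_j\in\mathbb{F}_j$ and $\cD_j\subset\cut_j\setminus K(\mcG_j)$; for each fixed tuple the only random factor is $\prod_{j=1}^{2p}\xi^{\leavesleft{\mcG_j}{\sT_j},\iota}$ (all the $H$-operators, $\RKer$-kernels, $K$-products and monomials being deterministic), and interchanging expectation and integration --- legitimate since $\xi^\e$ is smooth --- its expectation by \eqref{e:E-xi-J} is $\CJ(\iota)^{\leavesleft{\mcG}{D_{2p}}^{(2)}}$ with $\mcG\eqdef\bigsqcup_j\mcG_j\in\mathbb{F}$. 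The remaining deterministic factors reassemble into the integrand of $M[\psi,\CJ(\iota)]$: here I would use that $\nodesleft{\mcG}{D_{2p}}$, $\kernelsleft{\mcG}{D_{2p}}$ and $\leavesleft{\mcG}{D_{2p}}$ split as the disjoint unions of their single-copy analogues, that $H_{\CJ(\iota),\mcG,\overline{\mcG}}=\prod_j H_{\CJ(\iota),\mcG_j,\overline{\mcG_j}}$ by \eqref{e:H-is-product-H} (the $\sT_j$ being disjoint), that the kernel products, the renormalised kernels $\RKer$ and the polynomial factors are multiplicative over disjoint index sets, and that $\CJ(\iota)^{\leavesleft{\mcG}{D_{2p}}^{(2)}}$ factors into $\prod_j\CJ(\iota)^{\leavesleft{\mcG_j}{\sT_j}^{(2)}}$ times the genuinely new cross-copy pair interactions appearing only in the moment. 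As the tuple ranges, $\bigsqcup_j\mcG_j$ ranges over $\mathbb{F}$ and $\bigsqcup_j\cD_j$ over all subsets of $\cut\setminus K(\mcG)$, so the sum equals $M[\psi,\CJ(\iota)]$.

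The hard part is the first half: faithfully transcribing the Hopf-algebraic renormalisation --- the interplay of $\Deltam$, the twisted antipode, positive recentering and the projection $\mcb{N}$ --- into the recursive integral built from the $H$-operators and $\RKer$, which is where we lean most heavily on \cite{BHZalg} and \cite{CH}. For the moment identity the only genuinely non-routine input is the observation that the neutral BPHZ counterterms are deterministic, so that the randomness in $\hat\Pi^\e[\sT_j]$ resides entirely in its surviving noises, and that taking the $2p$-th moment then pairs these up through the \emph{single} formula \eqref{e:E-xi-J} rather than through an intricate cumulant expansion --- this being precisely the structural feature of the sine-Gordon noises that the whole argument exploits.
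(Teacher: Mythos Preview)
Your proposal is correct and follows essentially the same route as the paper's own proof: for the single-copy formula you invoke \cite[Prop.~4.22]{CH} and explain the three modifications needed (neutral forests only, no chaos/cumulant expansion, $\CJ^{L_{\mcF}(S)^{(2)}}$ in place of cumulant products), and for the moment identity you multiply out the $2p$ copies, take expectation via \eqref{e:E-xi-J}, and check that the resulting sums and products reassemble into $M[\psi,\CJ(\iota)]$ using \eqref{e:H-is-product-H} and the disjoint-union structure of $D_{2p}$. The paper's proof is slightly more terse but identical in substance; your explicit remark that complex conjugation of $\hat\Pi^{\e}[\sT^{\sn\sl}]$ yields $\hat\Pi^{\e}[(\sT^{\sn\sl})_{\opp}]$ is a useful clarification of something the paper builds silently into the definition of $D_{2p}$.
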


\begin{proof}
The identity \eqref{eq: explicit formula for single copy} is a
modification of \cite[Proposition~4.22]{CH},
and we only point out the differences here.
The  forests in $\mathbb{F}_{j}$  here are required to be neutral (and otherwise the same as in  \cite[Section~4]{CH}), 
and this precisely corresponds to the fact that
our model is  the neutral BPHZ lift of the 
noise $\xi^{\e}$. 

Another  difference  is that here we do not perform a Wiener chaos decomposition
for the random object $\hat{\Pi}^{\e}_{x_{\logof}}[\sT^{\sn\sl} ](\psi)$,
so in \eqref{eq: explicit formula for single copy} we only have products of the noises rather than Wick products as in \cite[Proposition~4.22]{CH},
and we only have summation over $\mcG$ and $\cD$ in \eqref{eq: explicit formula for single copy} (in contrast with \cite[Proposition~4.22]{CH} where a noise set $\tilde L$ whose cardinality stands for the order of the homogenous Wiener chaos and a partition $\pi$ encoding the contracted noises have to be summed as well).

Finally our operator $H$ defined in 
 \eqref{def: renormalised kernel 1}
differs from  \cite[Section~4.1.1]{CH} in that 
the factor called $\mbox{Cu}_\pi^{L_{\mcF}(S)}$ which is a cumulant product
therein
is replaced by $\J^{L_{\mcF}(S)^{(2)}}$
in our case, and this is due to the fact
that we write moments of the noises as \eqref{e:E-xi-J} or \eqref{eq: coulomb interaction},
rather than performing a cumulant expansion.

\eqref{e:moment-is-M} follows straightforwardly from \eqref{eq: explicit formula for single copy}. 
Indeed a couple $(\mcG, \cD)$ in \eqref{eq: explicit formula for moment} amounts to selecting a couple $(\mcG, \cD)$ from each $\bar{T}_j$ as in \eqref{eq: explicit formula for single copy}.
The set of integrated nodes $\nodesleft{\mcG}{D_{2p}}$ \eqref{eq: explicit formula for moment} is just the union of the  sets of integrated nodes $\nodesleft{\mcG}{\sT_j}$ for all $j\in[2p]$.
Various factors in the integrand of \eqref{eq: explicit formula for moment} 
are just products of the corresponding factors in the integrand of \eqref{eq: explicit formula for single copy}, and in particular the $H$ operator in \eqref{eq: explicit formula for moment} is the product of the $H$ operators in  \eqref{eq: explicit formula for single copy} by \eqref{e:H-is-product-H}.
\end{proof}

\begin{example}\label{ex:dipole}
Assume that $\beta^2 \ge 4\pi$ and consider the ``dipole'' $ \xi_{-} \CI \xi_{+}$
whose corresponding tree $\sT^{\sn\sl}$ 
 consists of only one edge $e=(e_+,e_-)$ with $\sn=0$, as shown in the following picture
\[
\begin{tikzpicture}
\draw (0,0) node[xi] {\tiny $ -$} -- (0,0.6) node[xi] {\tiny $+$};
\node at (0.9, 0) {\small $\rho_{\sT}=e_-$};
\node at (0.5, 0.6) {\small $e_+$};
\node at (-0.3, 0.3) {\small $e$};
\end{tikzpicture}
\]
Its moments are analysed in \cite[Section~4]{HaoSG}. 
Three terms appear
on the right hand side of \eqref{eq: explicit formula for single copy}:
1) $\mcG=\cD=\emptyset$, 2) $\mcG=\emptyset$ and $\cD=\{e\}$,
3) $\mcG=\{\bar T\}$ and  $\cD=\emptyset$, as shown in the following three pictures respectively
\begin{equ} [e:3terms-dipole]
\Pi^{\xi}_{x_{\logof}}[\sT^{\sn\sl} ](\psi)
\quad =\quad 
\begin{tikzpicture}[baseline=0]
\node[xi] at (0,0) (-) {\tiny $ -$}; \node[xi] at (0,0.6) (+) {\tiny $+$};
\draw[kernel] (+)  to  (-);
\node[root] at (0,-0.6)  (root) {};
\draw[testfcn] (0,-0.1)  to (root);
\end{tikzpicture}
\quad -\quad 
\begin{tikzpicture}[baseline=0]
\node[xi] at (0,0) (-) {\tiny $ -$}; \node[xi] at (0,0.6) (+) {\tiny $+$};
\node[root] at (0,-0.6)  (root) {};
\draw[kernel,bend right = 50] (+)  to  (root);
\draw[testfcn] (0,-0.1)  to (root);
\end{tikzpicture}
\quad -\quad 
\begin{tikzpicture}[baseline=0]
\node[charge] at (0,0) (-) {\tiny $ -$}; \node[charge] at (0,0.6) (+) {\tiny $+$};
\node[root] at (0,-0.6)  (root) {};
\draw[kernel,bend right = 50] (+)  to  (-);
\draw[cov,bend left = 50] (+)  to  (-);
\draw[testfcn] (0,-0.1)  to (root);
\end{tikzpicture}
\end{equ}
Here the graphs represent concrete integrals (as opposed to combinatorial trees), with similar conventions as in for instance \cite{HS15,CS16}, namely,
$\tikz  [baseline=-3] \node[root] {};$ represents $x_{\logof}$,
the arrow $\tikz [baseline=-3] \draw[kernel] (0,0) to (0.8,0);$ represents 
the kernel $K$, 
the arrow $\tikz [baseline=-3] \draw[testfcn] (0.5,0) to (0,0);$
represents $\psi$, and $\tikz [baseline=-3] \draw[cov] (0,0) to (0.5,0);$ represents $\CJ^{-1}$,
 $\tikz [baseline=-3] \node[xi] {\tiny $ -$};$ and  $\tikz [baseline=-3] \node[xi] {\tiny $ +$};$
 represent mollified noises $\xi_{-}$ or $\xi_{+}$ 
respectively with the corresponding space-time variables  integrated, and
 $\tikz [baseline=-3] \node[charge] {\tiny $ -$};$ or  $\tikz [baseline=-3] \node[charge] {\tiny $ +$};$ represents an integration variable over  space-time.
 \footnote{ $\tikz [baseline=-3] \node[charge] {\tiny $ -$};$ and  $\tikz [baseline=-3] \node[charge] {\tiny $ +$};$ are really the same; we prefer to take two notations for obvious bookkeeping purpose.}

In \eqref{e:3terms-dipole}, the arrow $\tikz [baseline=-3] \draw[kernel] (0,0) to (0.8,0);$
pointing to the vertex $\tikz  [baseline=-3] \node[root] {};$
comes from a factor $\RKer^{\kernelsleft{\mcG}{\sT} \cap \cD}$,
and those pointing to other vertices come from factors 
$\ke{\kernelsleft{\mcG}{\sT} \setminus \cD}{}$.
%
All three integrands contain a factor $H_{\CJ,\mcG,\bar\mcG} \left[ 1 \right]$.
For the first and second terms, one has $\CG = \bar\CG = \emptyset$, so that $H_{\CJ,\mcG,\bar\mcG} \left[ 1\right]=1$ by definition.
For the last term however, one has
\begin{equ} [e:H1dipole]
H_{\CJ,\mcG,\bar\mcG} \left[ 1\right] 
=  -\int_{\R^3}  \CJ^{-1} (x_{e_\ch} - x_{e_\p}) K(x_{e_\p} - x_{e_\ch}) \,d x_{e_\ch} \;.
\end{equ}

Regarding the $2p$-th moment, taking $p=2$ as example,
denote the $4$ copies of the tree by $\bar T^{(1)},\cdots,\bar T^{(4)}$ and their edges by $e^{(1)},\cdots, e^{(4)}$.
All the possible $(\mcG,\cD)$ in \eqref{eq: explicit formula for moment} are of the form
$
\mcG = \{\bar T^{(i_1)} ,\cdots,  \bar T^{(i_m)}  \}
$ and
$
\cD =  \{e^{(j_1)} ,\cdots,  e^{(j_n)}  \}
$ 
such that $  \{i_1,\cdots, i_m\} $ 
and $ \{j_1,\cdots, j_n\} $ are disjoint.
Fixing such a pair  $(\mcG,\cD)$,
one has a factor
$H_{\CJ,\mcG, \bar\mcG } \left[ 1\right]$ which is a product of 
$
H_{\CJ,\mcG, \bar T^{(i_k)} } \left[ 1\right]
$
each defined as in \eqref{e:H1dipole}.
Below are pictures with $\mcG$ consisting of 1 and 2 trees respectively, and 
$\cD=\emptyset$ in both cases
\begin{equ} [e:dipole-moments]
- \qquad
\begin{tikzpicture}[baseline=10]
\node at (0,2) [charge] (1) {\tiny $+$};  \node at (1.5,2) [charge] (2) {\tiny $-$};
\node at (0,1) [charge] (3) {\tiny $+$};  \node at (1.5,1) [charge] (4) {\tiny $-$};
\node at (0,0) [charge] (5) { \tiny $+$};  \node at (1.5,0) [charge] (6) {\tiny $-$};
\node at (0,-1) [charge] (7) {\tiny $+$};  \node at (1.5,-1) [charge] (8) {\tiny $-$};
\draw [kernel,bend left=20] (1) to (2);  \draw [cov] (1) to (2);  
\draw [kernel,bend left=20] (3) to (4); \draw [cov] (3) to (4); 
\draw [cov] (3) to (6); \draw [cov] (3) to (8);
\draw [cov] (4) to (5); \draw [cov] (4) to (7);
\draw [kernel,bend left=20] (6) to (5); \draw [cov]  (5) to (6); 
\draw [cov] (5) to (8);
\draw [cov] (6) to (7);
\draw [kernel,bend left=20] (8) to (7); \draw [cov]  (7) to (8);
\draw [covi] (3) to (5);   \draw [covi] (5) to (7);
\draw [covi] (3) to [bend right=30] (7); 
\draw [covi] (4) to (6);  \draw [covi] (6) to (8);
 \draw [covi] (4) to [bend left=30] (8); 
 \end{tikzpicture}
%
%
\qquad + \qquad
\begin{tikzpicture}[baseline=10]
\node at (6,2) [charge] (1) {\tiny $+$};  \node at (7.5,2) [charge] (2) {\tiny $-$};
\node at (6,1) [charge] (3) {\tiny $+$};  \node at (7.5,1) [charge] (4) { \tiny $-$};
\node at (6,0) [charge] (5) {\tiny $+$};  \node at (7.5,0) [charge] (6) {\tiny $-$};
\node at (6,-1) [charge] (7) {\tiny $+$};  \node at (7.5,-1) [charge] (8) {\tiny $-$};
\draw [kernel,bend left=20] (1) to (2);  \draw [cov] (1) to (2);  
\draw [kernel,bend left=20] (3) to (4);  \draw [cov] (3) to (4); 
\draw [cov] (3) to (6); 
\draw [cov] (4) to (5); 
\draw [kernel,bend left=20] (6) to (5);  \draw [cov]  (5) to (6); 
\draw [kernel,bend left=30] (8) to (7);  \draw [cov] (7) to (8);  
\draw [covi] (3) to (5);   
\draw [covi] (4) to (6);  
\end{tikzpicture}
\end{equ}
Here a blue line $\tikz [baseline=-3] \draw[covi] (0,0) to (0.7,0);$ represents the function $\CJ_\eps$
and we omitted the arrows $\tikz [baseline=-3] \draw[testfcn] (0.5,0) to (0,0);$
and vertex $\tikz  [baseline=-3] \node[root] {};$.
Nodes $\tikz[baseline=-3] \node[charge] {\tiny $+$};$ and $\tikz[baseline=-3] \node[charge] {\tiny $-$};$
represent integration variables {\it without} noise, but we choose to keep the signs in our pictorial representation to remind ourselves where
these come from.

The moment formula for the dipole was obtained in \cite[Eq.~4.8-4.10]{HaoSG} (with $k=1$ therein), with difference being that our $\Pi^{\xi}_{x_{\logof}}[\sT^{\sn\sl} ]$ here is the same as $\tilde\Psi_\eps^{k\bar l}$ rather than 
$\Psi_\eps^{k\bar l}$ (with $k=l=1$) therein (see \cite[Eq.~4.1,4.4]{HaoSG}), and the formula obtained there did not expand
the functions $K(x_{e_\p} - x_{e_\ch}) -K(x_{\logof} - x_{e_\ch})$.
\end{example}

Before proceeding to the next section, we sketch why~\eqref{eq: explicit formula for moment} is \emph{not} straightforward to directly estimate. 
Assume the decoration $\bar\mfn=0$. 
Suppose we are given $S \in \Div$ which is minimal, i.e. one cannot find any proper subtree tree of $S$ also 
belonging to $\Div$. Then one expects the two summands corresponding to $(\mcG,\cD) = (\{S\},\emptyset)$ 
and $(\mcG,\cD) = (\emptyset,\emptyset)$ to be individually divergent, but the {\it combination} of the two terms to be finite. 
%
Writing the combination of these two terms together, one gets
\begin{equ} \label{e:explanation-IS}
\int_{N(D_{2p}) \setminus \tilde{N}(S)}dy\  
\J^{  (L(D_{2p}) \setminus L(S))^{(2)}}(y)
\cdot
\ke{K(D_{2p}) \setminus \overline{K}^{\downarrow}(S)}{}(y)
\cdot \psi(y_{\rho_{\sT^{(1)}}})\psi(y_{\rho_{\sT^{(2)}}})
\cdot \CI_{S}(z)
\end{equ}
where
\begin{equs} \label{e:def-I_S}
{}&
\CI_{S}(z)
\eqdef
\int_{\tilde{N}(S)}dx\ 
\J^{L(S)^{(2)}}(x \sqcup y)\cdot
\ke{K(S)}{}(x \sqcup y) 
\\
&
\qquad
\qquad
\qquad
\times
\Big[
\ke{K^{\downarrow}(S)}{}
\J^{\tilde{L}(S)}
-
\mathscr{Y}_{S}
\big(
\ke{K^{\downarrow}(S)}{}
\big)
\Big](x \sqcup z)\;,
\end{equs}
and $\tilde{L}(S)$ is the collection of all $e \in L(D_{2p})^{(2)}$ with $|e \cap L(S)| = 1$. 
The fact that the two contributions mentioned earlier diverge separately corresponds to the fact that if one estimates the two terms being subtracted in \eqref{e:def-I_S} separately, then these two pieces of $\CI_{S}(z)$ each diverge for fixed $z$.

In order to see that the combination of these terms is finite one would like to use a Taylor remainder estimate coming from an occurrence of an operator $(\Id - \mathscr{Y}_{S})$ (where $\Id$ is the identity operator), but the formula for $\CI_{S}(z)$ is missing a factor of $\J^{\tilde{L}(S)}$ for this Taylor expansion.
The observation that we implement in what follows is that one can insert this missing factor ``for free'' by leveraging the fact that $S$ is neutral along with parity considerations, namely $\CI_{S}(z) = \tilde{\CI}_{S}(z)$ where 
\begin{equs}
{}&
\tilde{\CI}_{S}(z)
\eqdef
\int_{\tilde{N}(S)}dx\ 
\ke{K(S)}{}(x \sqcup y) 
\J^{L(S)^{(2)}}(x \sqcup y)\\
{}&
\qquad
\qquad
\qquad
\times
(\Id - \mathscr{Y}_{S})\Big[
\ke{K^{\downarrow}(S)}{}
\J^{\tilde{L}(S)}
\Big](x \sqcup z)\;.
\end{equs}
In the next section we show that such insertions of $\J^{\bullet}$ are allowed in the moment formula so that we are able to rewrite the outcome of renormalisation cancellations as Taylor remainders.
Some care needs to be taken when implementing these insertions of $\J^{\bullet}$ in the general case since we may have multiple divergences and subdivergences which is why in the next section we decide to introduce modified versions of these $H_{\CJ,\mcG,S}$ operators.  

We also illustrate the idea of inserting missing factors $\J^{\bullet}$  by the following example.

\begin{example} \label{ex:combination2}
Consider the two terms in \eqref{e:dipole-moments} ($p=2$ now) with $\mcG=\{T\}$ and $\mcG=\{T,S\}$ respectively,
where
 $T,S \in \Div$ are the dipoles in the top and the bottom respectively.
 We would like to combine these two terms  to cure the divergence of $S$.
\[
\begin{tikzpicture}[baseline=0]
\node at (0,2) [charge] (1) {\tiny $+$};  \node at (1.5,2) [charge] (2) {\tiny $-$};
\node at (0,1) [charge] (3) {\tiny $+$};  \node at (1.5,1) [charge] (4) {\tiny $-$};
\node at (0,0) [charge] (5) { \tiny $+$};  \node at (1.5,0) [charge] (6) {\tiny $-$};
\node at (0,-1) [charge] (7) {\tiny $+$};  \node at (1.5,-1) [charge] (8) {\tiny $-$};
\draw [kernel,bend left=20] (1) to (2);  \draw[cov] (1) to (2);  
\draw [kernel,bend left=20] (3) to (4); \draw[cov] (3) to (4); 
\draw [cov] (3) to (6); \draw[cov,gone] (3) to (8);
\draw [cov] (4) to (5); \draw[cov,gone] (4) to (7);
\draw [kernel,bend left=20] (6) to (5); \draw [cov]  (5) to (6); 
\draw[cov,gone] (5) to (8);
\draw[cov,gone]  (6) to (7);
\draw [kernel,bend left=20] (8) to (7); \draw [cov]  (7) to (8);
\draw [covi] (3) to (5);   \draw [covi,gone] (5) to (7);
\draw [covi,gone] (3) to [bend right=30] (7); 
\draw [covi] (4) to (6);  \draw [covi,gone] (6) to (8);
 \draw [covi,gone] (4) to [bend left=30] (8); 
 \end{tikzpicture}
 \]
Comparing these two terms, the edges appearing in the first graph
in~\eqref{e:dipole-moments} but missing in the second one
are depicted by  the dotted lines here.

 It is clear that  the product of all the dotted lines  
being acted upon by $\mathscr{Y}_{S}^{(0)}$ is $1$, 
since the blue and red lines represent reciprocal functions and thus
cancel out when the bottom dipole is collapsed into one point. The reason why $\mathscr{Y}_{S}^{(1)}$ will not cause any problem will follow from a parity argument. This means that we can insert these missing dotted lines inside $\mathscr{Y}_{S}$ in the second graph ``for free''.
\footnote{It turns out that in the end we are able to insert all the missing edges in $L(D_{2p})^{(2)}$ in a suitable way, see the proof of Lemma~\ref{lem:multilinear}.}

On the other hand the first graph in~\eqref{e:dipole-moments} does not have this $\mathscr{Y}_{S}$ operation;  imagine there is an $\Id$ instead.  We then get a Taylor remainder from operator $(\Id - \mathscr{Y}_{S})$ as explained before this example, which offsets the some singularity of $S$.
\end{example}
\subsection{Rewriting the moment formula}

It turns out that the moment formula given in Proposition~\ref{explicit formula} above
is not very useful in exploiting the extra cancellation due to ``parity''
as mentioned in the introduction.\footnote{\cite[Theorem~4.3]{HaoSG} only proved the convergence
of the object in Example~\ref{ex:dipole} for $\beta^2 < 6\pi$.
This was because the extra cancellation due to ``parity'' was not exploited there.
In this paper we will show convergence of all the relevant objects
for $\beta^2 < 8\pi$.
}
In order to exploit this extra mechanism we now rewrite the moment formula.

For $\J \in \mfJ$, $\mcF \in \mathbb{F}$, and $S \in \mcF$ we inductively define operators $\mathring{H}_{\J,\mcF,S}: \allf \rightarrow \allf$ via
\begin{equation}\label{def: renormalised kernel 1.5}
\begin{split}
[\mathring{H}_{\J,\mcF,S}(\phi)](x)\ 
&\eqdef 
\int_{{\nrmod[\mcF,S]}} \back dy \,
\J^{L_{\mcF}(S)^{(2)}}(y\sqcup x_{\rho_{S}})\,  
\ke{\mathring{K}_{\mcF}(S)}{}(y \sqcup x_{\rho_{S}})\\
& \qquad \qquad
\cdot
\mathring{H}_{\J,\mcF, C_{\mcF}(S)}
\left[
\ke{K^{\partial}_{\mcF}(S)}{}
\cdot
(-\mathscr{Y}^{(0)}_{S}\phi)
\right](x_{\tilde{N}(S)^c} \sqcup
y)\;,
\end{split}
\end{equation}
with $\mathring{H}_{\J,\mcF,\emptyset}$ defined 
as the identity operator. 
Note that since the factor $-\mathscr{Y}^{(0)}_{S}\phi$ does not depend on the variables for the nodes in $\tilde N(S)$, 
by the definition of $\mathscr{Y}^{(0)}_{S}$ this factor can be pulled out of the entire integral 
in \eqref{def: renormalised kernel 1.5}
(whereas in \eqref{def: renormalised kernel 1} one can not pull out $\mathscr{Y}_{S}\phi$
because the linear polynomial in $\mathscr{Y}^{(1)}_{S}$ will still depend on $\tilde N(S)$). 
We prefer to write \eqref{def: renormalised kernel 1.5} in the above way
since it is clearer to compare it with \eqref{def: renormalised kernel 1}.

\begin{definition} \label{def:Csym}
Given $A \subset \allnodes$ 
 (where $\allnodes$
is defined in \eqref{e:def-allnodes}),
we define $\allf_{\tiny{\textnormal{sym}}}(A)$ to be the set of all functions $f \in \allf$ such that:
\begin{itemize}
\item
$f$ depends on $(x_{v})_{v \in \allnodes}$ only through $(x_{v})_{v \in A}$. Moreover, $f$ can be written as a function of the variables $(x_{a} - x_{b})_{\{a,b\} \in A^{(2)}}$, namely for any $h\in \R \times \T^{2}$
 \[
  f\left(\{x_v\}_{v\in A}\right)
 = f\left(\{x_v+h\}_{v\in A}\right)\;.
 \]
\item 
 $f$ is invariant if one flips the sign of every spatial component of every $x_{v}$, $v \in A$, namely,
 \[
 f\left(\{(x_v^{(0)},x_v^{(1)},x_v^{(2)})\}_{v\in A}\right)
 = f\left(\{(x_v^{(0)},-x_v^{(1)},-x_v^{(2)}) \}_{v\in A}\right)\;.
 \]
\end{itemize}
\end{definition}

\begin{remark}\label{parity of kernels}
We observe that for any $\{a,b\} \in L(D_{2p})^{(2)}$ one has $\J^{\{a,b\}}
 \in \allf_{\tiny{\textnormal{sym}}}(\{a,b\})$ and for any $e \in K(D_{2p})$ one has $\ke{\{e\}}{} \in \allf_{\tiny{\textnormal{sym}}}(\{e_{\ch},e_{\p}\})$. 
\end{remark}  
\begin{remark}\label{rem:property-Csym}
Obviously, given $A,\bar A \subset \allnodes$, 
$f\in\allf_{\tiny{\textnormal{sym}}}(A)$ and
$g\in\allf_{\tiny{\textnormal{sym}}}(\bar A)$,
one has $f\cdot g \in\allf_{\tiny{\textnormal{sym}}}(A\cup \bar A)$.
Moreover,
If $A\cap \bar A = \emptyset$ and $f \in\allf_{\tiny{\textnormal{sym}}}(A\sqcup \bar A)$,
 one has $\int_{\bar A} f(x,y) \,dy \in\allf_{\tiny{\textnormal{sym}}}(A)$.
\end{remark}  
\begin{lemma}\label{lem: trans-invar}
 For any $\CJ \in \mfJ$, $A \subset \allnodes$, $\mcF \in \mathbb{F}$, and $S \in \mcF$ one has that $\mathring{H}_{\J,\mcF,S}$ maps $\allf_{\tiny{\textnormal{sym}}}(A)$ into 
 $\allf_{\tiny{\textnormal{sym}}}((A \cup \{\rho_{S}\}) \setminus \tilde{N}(S))$.
\end{lemma}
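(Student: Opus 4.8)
The plan is to prove the statement by induction on the structure of the forest $\mcF$ below $S$, i.e. on the poset structure of $\{T \in \mcF : T \subset S\}$, using the recursive definition~\eqref{def: renormalised kernel 1.5}. The base case is $S$ minimal in $\mcF$, so that $C_{\mcF}(S) = \emptyset$ and $\mathring{H}_{\J,\mcF,C_{\mcF}(S)}$ is the identity; the inductive step assumes the claim for every $T \in C_{\mcF}(S)$ and deduces it for $S$. Throughout I will track which variables each factor in~\eqref{def: renormalised kernel 1.5} depends on and invoke Remarks~\ref{parity of kernels} and~\ref{rem:property-Csym} to assemble the conclusion. The two properties to check are the translation-invariance (dependence only on differences $x_a - x_b$) and the spatial-reflection invariance; both behave well under products and partial integration by Remark~\ref{rem:property-Csym}, so the only real work is to see that each individual factor in the integrand of~\eqref{def: renormalised kernel 1.5} lies in some $\allf_{\tiny{\textnormal{sym}}}(\cdot)$ with the right index set.

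First I would handle the ``easy'' factors. The factor $\J^{L_{\mcF}(S)^{(2)}}(y \sqcup x_{\rho_{S}})$ is a product over pairs in $L_{\mcF}(S)^{(2)}$ of terms $\J^{\{a,b\}}$, each of which lies in $\allf_{\tiny{\textnormal{sym}}}(\{a,b\})$ by Remark~\ref{parity of kernels}; here we use that $\rho$ was chosen with $\rho(t,x) = \rho(t,-x)$, so that $\CQ$ and hence $J^{\e,\bar\e}$ are even in the spatial variable (this is recorded in the definition of $\mathring{\mathfrak{J}}^{\pm}$ via the condition $J(t,x) = J(t,-x)$). Similarly $\ke{\mathring{K}_{\mcF}(S)}{}(y \sqcup x_{\rho_{S}})$ is a product of kernels $K(x_{e_{\p}} - x_{e_{\ch}})$, each in $\allf_{\tiny{\textnormal{sym}}}(\{e_{\p},e_{\ch}\})$, again because $K$ is a truncation of the heat kernel, which is even in space. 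By Remark~\ref{rem:property-Csym} the product of these two factors lies in $\allf_{\tiny{\textnormal{sym}}}$ of the union of all the node-pairs involved, which is contained in $\nrmod[\mcF,S] \cup \{\rho_{S}\}$ together with the nodes in $C_{\mcF}(S)$ that are incident to edges of $\mathring{K}_{\mcF}(S)$ — i.e. a subset of $N_{\mcF}(S) \cup e_{\p}(K^{\partial}_{\mcF}(S))$.

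Next I would treat the recursive factor. By the inductive hypothesis applied to each $T \in C_{\mcF}(S)$, the operator $\mathring{H}_{\J,\mcF,T}$ maps $\allf_{\tiny{\textnormal{sym}}}(B)$ into $\allf_{\tiny{\textnormal{sym}}}((B \cup \{\rho_{T}\}) \setminus \tilde{N}(T))$; since the $T$'s are pairwise disjoint, $\mathring{H}_{\J,\mcF,C_{\mcF}(S)} = \prod_{T} \mathring{H}_{\J,\mcF,T}$ maps $\allf_{\tiny{\textnormal{sym}}}(B)$ into $\allf_{\tiny{\textnormal{sym}}}((B \cup \{\rho_T : T\} ) \setminus \bigsqcup_T \tilde{N}(T))$. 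It therefore suffices to check that its argument, $\ke{K^{\partial}_{\mcF}(S)}{} \cdot (-\mathscr{Y}^{(0)}_{S}\phi)$, lies in $\allf_{\tiny{\textnormal{sym}}}$ of the appropriate set. The factor $\ke{K^{\partial}_{\mcF}(S)}{}$ is again a product of spatially-even kernels, hence is in $\allf_{\tiny{\textnormal{sym}}}$ of the incident nodes. For $\mathscr{Y}^{(0)}_{S}\phi$: starting from $\phi \in \allf_{\tiny{\textnormal{sym}}}(A)$, composing with $\Coll_{S}$ (which sends every node of $\tilde N(S)$ to $\rho_{S}$ and fixes the rest) produces a function in $\allf_{\tiny{\textnormal{sym}}}((A \cup \{\rho_S\}) \setminus \tilde N(S))$ — translation-invariance is preserved because $\Coll_S$ commutes with the global shift $x \mapsto x + h$, and spatial-reflection invariance is preserved because $\Coll_S$ likewise commutes with componentwise spatial sign-flip. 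Crucially, this step uses $\mathscr{Y}^{(0)}_{S}$ rather than the full $\mathscr{Y}_{S}$: the term $\mathscr{Y}^{(1)}_{S}$ introduces linear monomials $(z_u^{(j)} - z_{\rho_S}^{(j)})$ which are \emph{odd} under spatial reflection and would break the symmetry — this is precisely why the modified operator $\mathring{H}$ was introduced with only $\mathscr{Y}^{(0)}_{S}$, and it is the one place where the distinction between $\mathring{H}$ and $H$ matters for this lemma. Finally, integrating out the variables $y$ indexed by $\nrmod[\mcF,S]$ (a set disjoint from $N_{\mcF}(S)^c$) preserves membership in $\allf_{\tiny{\textnormal{sym}}}$ of the remaining index set by the second half of Remark~\ref{rem:property-Csym}; assembling everything, one reads off that $[\mathring{H}_{\J,\mcF,S}(\phi)](x)$ depends only on $(x_v)_{v \in (A \cup \{\rho_S\}) \setminus \tilde N(S)}$ and lies in $\allf_{\tiny{\textnormal{sym}}}((A \cup \{\rho_S\}) \setminus \tilde N(S))$, which is the claim. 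The main obstacle — really a bookkeeping subtlety rather than a deep point — is to verify carefully that the index sets of all the partial factors, once unioned and then reduced by the integration over $\nrmod[\mcF,S]$ and the collapsing of $\tilde N(T)$ for $T \in C_{\mcF}(S)$, end up exactly inside $(A \cup \{\rho_S\}) \setminus \tilde N(S)$ and not larger; this requires unwinding the definitions of $\mathring{K}_{\mcF}(S)$, $K^{\partial}_{\mcF}(S)$ and $L_{\mcF}(S)$ to confirm no stray node outside this set is left free.
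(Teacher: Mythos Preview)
Your proposal is correct and follows essentially the same approach as the paper's proof: both argue by induction on the depth of $C_{\mcF}(S)$, verify that each factor in the integrand of \eqref{def: renormalised kernel 1.5} lies in an appropriate $\allf_{\tiny{\textnormal{sym}}}(\cdot)$ via Remarks~\ref{parity of kernels} and~\ref{rem:property-Csym}, and then integrate out $\nrmod[\mcF,S]$. Your remark that this step relies on $\mathscr{Y}^{(0)}_{S}$ (not $\mathscr{Y}^{(1)}_{S}$) because the latter introduces spatially odd monomials is a useful clarification that the paper leaves implicit.
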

\begin{proof}
We prove the statement by induction in the depth of the forest $C_{\mcF}(S)$. 
We only give the inductive step since the base case (occuring when $C_{\mcF}(S) = \emptyset$) is strictly easier. 
Fix a forest $\mcF$ and $S \in \mcF$, our inductive hypothesis is that the lemma has been proven for every operator $\{\mathring{H}_{\J,\mcF,T}\}_{T \in C_{\mcF}(S)}$.

Fix a set $A \subset N^{\ast}$ and $\phi \in \allf_{\tiny{\textnormal{sym}}}(A)$. 
We want to show that the corresponding RHS of \eqref{def: renormalised kernel 1.5} belongs to 
$\allf_{\tiny{\textnormal{sym}}} \Big((A \cup \{\rho_{S}\}) \setminus \tilde{N}(S)\Big)$.
To do this, by Remark~\ref{rem:property-Csym}
it suffices to show that the {\it integrand}
 is in 
 $\allf_{\tiny{\textnormal{sym}}} \Big( (A \cup N(S))  \setminus \tilde{N}(C_{\mcF}(S))\Big)$ since $\tilde{N}(S) = \tilde{N}_{\mcF}(S) \sqcup \tilde{N}(C_{\mcF}(S))$ and thus
 \[
 (A \cup N(S))  \setminus \tilde{N}(C_{\mcF}(S))
 =
 \big((A \cup \{\rho_{S}\}) \setminus \tilde{N}(S)\big)
 \sqcup
 \tilde N_{\mcF}(S)\;.
 \]
This is clearly true for the first line of the integrand which only depends on $N(S) \setminus \tilde{N}(C_{\mcF}(S))$ and has the two  symmetry properties required in Definition~\ref{def:Csym}. 
To check it for the second line we observe that $\ke{K^{\partial}_{\mcF}(S)}{} \in  \allf_{\tiny{\textnormal{sym}}}(N(S))$ and $\mathscr{Y}^{(0)}_{S}\phi \in \allf_{\tiny{\textnormal{sym}}}(A  \setminus \tilde{N}(S))$. 
By our inductive hypothesis $\mathring{H}_{\J,\mcF,C_{\mcF}(S)}$ maps, for any $\tilde{A} \subset \allnodes$, $\allf_{\tiny{\textnormal{sym}}}(\tilde{A})$ into  
\[
\allf_{\tiny{\textnormal{sym}}}((\tilde{A} \cup \{\rho_{T}\}_{T \in C_{\mcF}(S)}) \setminus \tilde{N}(C_{\mcF}(S)))\;.
\] 
Thus by setting $\tilde{A} = N(S) \cup A$ we obtain the desired claim for the second line of the integrand.
\end{proof}
\begin{lemma}\label{lem: parity cancellation}
For any $\CJ \in \mfJ$, $\phi \in \allf$, $\mcF \in \mathbb{F}$, and $S \in \mcF$,
\[
\mathring{H}_{\J,\mcF,S}(\phi) = H_{\J,\mcF,S}(\phi)\;.
\] 
\end{lemma}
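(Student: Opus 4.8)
The plan is to prove the identity by induction on the depth of the sub-forest $C_{\mcF}(S)$ of $\mcF$ lying below $S$, the base case being $C_{\mcF}(S)=\emptyset$. Comparing \eqref{def: renormalised kernel 1} with \eqref{def: renormalised kernel 1.5}, the \emph{only} discrepancies are that the former feeds $-\mathscr{Y}_S\phi = -\mathscr{Y}^{(0)}_S\phi-\mathscr{Y}^{(1)}_S\phi$ into the recursion while the latter feeds $-\mathscr{Y}^{(0)}_S\phi$, and that the former recurses through $H_{\J,\mcF,C_{\mcF}(S)}$ whereas the latter recurses through $\mathring{H}_{\J,\mcF,C_{\mcF}(S)}$. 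Since the trees of $C_{\mcF}(S)$ are pairwise disjoint and strictly shallower, the inductive hypothesis gives $\mathring{H}_{\J,\mcF,C_{\mcF}(S)} = H_{\J,\mcF,C_{\mcF}(S)}$ (recall \eqref{e:H-is-product-H}), so by linearity of these operators
\[
\bigl(H_{\J,\mcF,S}-\mathring{H}_{\J,\mcF,S}\bigr)(\phi)(x)
= -\int_{{\nrmod[\mcF,S]}}\back dy\,
\J^{L_{\mcF}(S)^{(2)}}(y\sqcup x_{\rho_S})\,\ke{\mathring{K}_{\mcF}(S)}{}(y\sqcup x_{\rho_S})\;
\mathring{H}_{\J,\mcF,C_{\mcF}(S)}\bigl[\ke{K^{\partial}_{\mcF}(S)}{}\cdot\mathscr{Y}^{(1)}_S\phi\bigr](x_{\tilde{N}(S)^c}\sqcup y)\;,
\]
and it suffices to show that the right-hand side vanishes. (In the base case $K^{\partial}_{\mcF}(S)=\emptyset$, $\mathring{H}_{\J,\mcF,\emptyset}=\Id$, and the same argument applies with no nesting.)

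Next I would unfold $\mathscr{Y}^{(1)}_S\phi$. By Definition~\ref{def-Y0Y1}, up to the indicator $\mathbbm{1}\{|S^{0}|_{\s}\in(-2,-1)\}$ it is the sum over $u\in\tilde{N}(S)$ and $j\in\{1,2\}$ of $(z^{(j)}_u-z^{(j)}_{\rho_S})$ times $(D^{\delta_{u,j}}\phi)(\Coll_S(z))$, and the second factor depends only on the coordinates of nodes outside $\tilde{N}(S)$ since $\Coll_S$ identifies all of $\tilde{N}(S)$ with $\rho_S$. Hence it is constant with respect to $y$ and to every variable integrated inside $\mathring{H}_{\J,\mcF,C_{\mcF}(S)}$ (these range over $\bigsqcup_{T\in C_{\mcF}(S)}\tilde{N}(T)\subset\tilde{N}(S)$), so it factors out of the entire integral. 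The claim is then reduced to showing, for each $u\in\tilde{N}(S)$ and $j\in\{1,2\}$, that
\[
I_{u,j}(x)\;\eqdef\;\int_{{\nrmod[\mcF,S]}}\back dy\,
\J^{L_{\mcF}(S)^{(2)}}\,\ke{\mathring{K}_{\mcF}(S)}{}\;
\mathring{H}_{\J,\mcF,C_{\mcF}(S)}\bigl[(z^{(j)}_u-x^{(j)}_{\rho_S})\,\ke{K^{\partial}_{\mcF}(S)}{}\bigr](x_{\tilde{N}(S)^c}\sqcup y)\;=\;0\;.
\]

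The heart of the matter, and the step that requires care, is a spatial-reflection (parity) argument showing $I_{u,j}(x)=-I_{u,j}(x)$. One first rewrites $I_{u,j}(x)$ as a single integral over $\prod_{v\in\tilde{N}(S)}(\R_+\times\T^2)$ by collecting the outer integration over $\nrmod[\mcF,S]$ together with all the integrations hidden inside the $\mathring{H}$-operators, whose variables exhaust the remaining nodes of $\tilde{N}(S)$. Its integrand is a product of (i) interaction and kernel factors of the form $\J^{\{a,b\}}$ and $\ke{\{e\}}{}$, arising from $\mathring{K}_{\mcF}(S)$, $L_{\mcF}(S)^{(2)}$ and, through \eqref{def: renormalised kernel 1.5} applied to the $T\in C_{\mcF}(S)$, from $\mathring{K}_{\mcF}(T)$, $L_{\mcF}(T)^{(2)}$ and $K^{\partial}_{\mcF}(T)$; all of these involve only differences $z_a-z_b$ with $a,b\in N(S)$, and by Remark~\ref{parity of kernels} (together with Remark~\ref{rem:property-Csym} and the fact that each $\Coll_T$ merely re-labels coordinates of nodes of $N(S)$ and commutes with the spatial reflection) every such factor is invariant under flipping the spatial components of all its arguments; and (ii) the single linear factor $z^{(j)}_u-x^{(j)}_{\rho_S}$ (with $z_u$ possibly replaced by some $z_{\rho_T}$ once a $\Coll_T$ has acted — or even by $z_{\rho_S}$, in which case the factor is identically $0$), which is odd in the spatial directions $j\in\{1,2\}$. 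Performing the measure-preserving change of variables that reflects the spatial coordinates of every node of $\tilde{N}(S)$ about the fixed point $x_{\rho_S}$ (reflection about a point is measure preserving on $\T^2$, and the time coordinates are left alone) leaves every factor of type (i) unchanged and flips the sign of the factor of type (ii), whence $I_{u,j}(x)=-I_{u,j}(x)=0$. This closes the induction and yields $\mathring{H}_{\J,\mcF,S}=H_{\J,\mcF,S}$.

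The main obstacle is the bookkeeping in the reflection step: one must verify that, after all the collapse maps $\Coll_T$ inside $\mathring{H}_{\J,\mcF,C_{\mcF}(S)}$ have acted, \emph{every} surviving factor in the integrand still depends only on spatial differences among the nodes of $N(S)$ and is even in those differences, so that a single reflection about $x_{\rho_S}$ acts trivially on all the kernels while contributing exactly one minus sign through the linear factor produced by $\mathscr{Y}^{(1)}_S$. This is precisely what the translation invariance and spatial evenness recorded in Remarks~\ref{parity of kernels} and~\ref{rem:property-Csym} provide, combined with the elementary observation that $\Coll_T$ commutes with spatial reflection; once these are in place the parity cancellation is immediate.
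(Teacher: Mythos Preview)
Your proposal is correct and follows essentially the same approach as the paper: induction on the depth of $C_{\mcF}(S)$, reduction to showing the $\mathscr{Y}^{(1)}_S$-contribution vanishes, pulling out the factor $(D^{\delta_{u,j}}\phi)(\Coll_S(\cdot))$, and a spatial-reflection (parity) argument to kill each summand $I_{u,j}$. The only difference is packaging: the paper invokes Lemma~\ref{lem: trans-invar} to certify that $\mathring{H}_{\J,\mcF,C_{\mcF}(S)}[\ke{K^{\partial}_{\mcF}(S)}{}]$ lies in $\allf_{\text{sym}}$ and then flips all spatial components (using translation invariance to make the integral independent of $x_{\rho_S}$), whereas you unfold the nested $\mathring{H}$'s into a single integral over $\tilde{N}(S)$ and reflect about $x_{\rho_S}$ directly; these are equivalent and your bookkeeping (including the case $z_u\to z_{\rho_T}$, possibly $=z_{\rho_S}$) is handled correctly.
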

\begin{proof}
We prove the statement by induction in the depth of $C_{\mathcal{F}}(S)$. 
The base case occurs when $C_{\mcF}(S)$ is empty, but we prove the inductive step first. 
Fix some $n \ge 1$ and suppose the claim holds whenever $\mcF \in \mathbb{F}$, $S \in \mcF$, and $C_{\mcF}(S)$ is of depth strictly less than $n$. 

Now fix $\mcF \in \mathbb{F}$ and $S \in \mcF$ such that $C_{\mcF}(S)$ is of depth $n$. 
The induction step is immediate if $|S^{0}|_{\s} \in (-1,0)$
because $\mathscr{Y}_{S}=\mathscr{Y}^{(0)}_{S}$;
 so we assume $|S^{0}|_{\s} \in (-2,-1)$. 
Using our inductive hypothesis, $H_{\J,\mcF,S}(\phi)(x)
-
\mathring{H}_{\J,\mcF,S}(\phi)(x)$ is given by
\begin{equs}\label{eq: work for new genvert}
{}&
\int_{{\nrmod[\mcF,S]}} \back dy \
\J^{L_{\mcF}(S)^{(2)}}(y\sqcup x_{\rho_{S}})\,  
\ke{\mathring{K}_{\mcF}(S)}{}(y \sqcup x_{\rho_{S}})
\\
& \qquad\qquad\qquad  \cdot
\mathring{H}_{\J,\mcF, C_{\mcF}(S)}
\left[
\ke{K^{\partial}_{\mcF}(S)}{}
\cdot
(-\mathscr{Y}^{(1)}_{S}\phi)
\right](x_{\tilde{N}(S)^c} \sqcup
y)\\
=&
\sum_{
\substack{
u \in \tilde{N}(S)\\
j \in \{1,2\}}}
\int_{{\nrmod[\mcF,S]}} \back dy \
\J^{L_{\mcF}(S)^{(2)}}(y\sqcup x_{\rho_{S}})\,  
\ke{\mathring{K}_{\mcF}(S)}{}(y \sqcup x_{\rho_{S}})\\
&
\qquad
\qquad
\cdot
(D_{u,j}\phi)(\Coll_{S}(y \sqcup x_{\tilde{N}(S)^c}))
\mathring{H}_{\J,\mcF, C_{\mcF}(S)}
\left[
\ke{K^{\partial}_{\mcF}(S)}{}
X_{u,j}
\right](
y \sqcup x_{\rho_{S}})\\
=&
\sum_{
\substack{
u \in \tilde{N}(S)\\
j \in \{1,2\}}}
(D_{u,j}\phi)(\Coll_{S}(y \sqcup x_{\tilde{N}(S)^c}))
\int_{{\nrmod[\mcF,S]}} \back dy \
\J^{L_{\mcF}(S)^{(2)}}(y\sqcup x_{\rho_{S}})\,  
\ke{\mathring{K}_{\mcF}(S)}{}(y \sqcup x_{\rho_{S}})\\
&
\qquad
\qquad
\qquad
\qquad
\qquad
\qquad
\qquad
\qquad
\cdot
\mathring{H}_{\J,\mcF, C_{\mcF}(S)}
\left[
\ke{K^{\partial}_{\mcF}(S)}{}
X_{u,j}
\right](
y \sqcup x_{\rho_{S}}),
\end{equs}
where $X_{u,j} \in \allf$ is given by $X_{u,j}(z) = z_{u}^{(j)} - z_{\rho_{S}}^{(j)}$.
In the above computation we wrote out the action of $\mathscr{Y}_{S}^{(1)}$ on $\phi$ as a sum according to Definition~\ref{def-Y0Y1}, and then observed that we can pull out the derivative factor $(D_{u,j}\phi)(\Coll_{S}(y \sqcup x_{\tilde{N}(S)^c}))$ thanks to the action of $\Coll_{S}$ \dash in particular we can pull it out from $\mathring{H}_{\J,\mcF, C_{\mcF}(S)}$ since it does not depend on the variables of $\tilde{N}(C_{\mcF}(S))$ and then from the integral since it does not depend on the variables of $\tilde{N}_{\mcF}(S)$.

We now show that for each fixed $u$ and $j$ the summand on the right hand side of \eqref{eq: work for new genvert} vanishes.
By the definition of $\mathring{H}_{\J,\mcF, C_{\mcF}(S)}$, the summand, up to a prefactor, is given by 
\begin{equs}\label{eq: integral for lemma}
\int_{{\nrmod[\mcF,S]}} \back dy \ &
\J^{L_{\mcF}(S)^{(2)}}(y\sqcup x_{\rho_{S}})\,  
\ke{\mathring{K}_{\mcF}(S)}{}(y \sqcup x_{\rho_{S}})\,
(z_{w}^{(j)} - x_{\rho_{S}}^{(j)})
\\
&
\quad
\cdot
\mathring{H}_{\J,\mcF, C_{\mcF}(S)}
\left[
\ke{K^{\partial}_{\mcF}(S)}{}
\right](
y \sqcup x_{\rho_{S}}),
\end{equs}
where $z = y \sqcup x_{\rho_{S}}$ (as a vector indexed by $w$) and   
\[
w
\eqdef
\begin{cases}
\rho_{T} & \textnormal{ if } u \in N(T) \textnormal{ for some }T \in C_{\mcF}(S),\\
u & \textnormal{ otherwise\;.}
\end{cases}
\]
Now by Lemma~\ref{lem: trans-invar} and Remark~\ref{parity of kernels} the integrand of \eqref{eq: integral for lemma} above is translation-invariant function of $y \sqcup x_{\rho_{S}}$, 
i.e. satisfies the first condition of Definition~\ref{def:Csym};
therefore, the value of the $y$-integral does not depend on the variable $x_{\rho_{S}}$. 
At the same time,  flipping the signs of all the spatial components of $y \sqcup x_{\rho_{S}}$ flips the sign of the integrand. 
To see this, again using Lemma~\ref{lem: trans-invar} and Remark~\ref{parity of kernels},
all the factors in  the integrand of \eqref{eq: integral for lemma} except for the factor 
$(z_{w}^{(j)} - x_{\rho_{S}}^{(j)})$ are invariant  under this flipping, 
namely they satisfy the second condition of Definition~\ref{def:Csym};
and
by definition one always has $w\in \tilde N_\mcF (S) \sqcup 
\{\rho_S\}$, so this flipping 
changes the sign of the factor $(z_{w}^{(j)} - x_{\rho_{S}}^{(j)})$.
We can therefore conclude that \eqref{eq: integral for lemma} must vanish.

The proof of the base case for our induction follows by a similar but simpler parity argument:
since $C_{\mcF}(S) =\emptyset$,
the node $w$ above is always equal to $u$, and
 $H_{\J,\mcF, C_{\mcF}(S)}$ and $\mathring{H}_{\J,\mcF, C_{\mcF}(S)}$ are both given by the identity.
\end{proof}

Now we perform the aforementioned ``insertion of $\J$''.
For any $\mcF \in \mathbb{F}$ and $S \in \mcF$ we define 
\begin{equ}[e:Ppartial]
P^{\partial}_{\mcF}(S)
\eqdef
\Big\{ 
\{a,b\} \in L(S)^{(2)}:\
\begin{array}{c}
|\{a,b\} \cap L_{\mcF}(S)|=1 \textnormal{ or}\\ 
\exists \textnormal{ distinct }T,T' \in C_{\mcF}(S) \textnormal{ with } a \in L(T), b \in L(T')
\end{array}
\Big\}\;.
\end{equ}
We also inductively define operators $\bar{H}_{\CJ,\mcF,S}:\mcb{C} \rightarrow \mcb{C}$ via 
\begin{equs}\label{e:def-H-bar}
[\bar H_{\J,\mcF,S}(\phi)](x)\ 
&\eqdef 
\int_{{\nrmod[\mcF,S]}} \back dy \
\J^{L_{\mcF}(S)^{(2)}}(y\sqcup x_{\rho_{S}}) \, 
\ke{\mathring{K}_{\mcF}(S)}{}(y \sqcup x_{\rho_{S}})\\
& \quad
\cdot
\bar{H}_{\CJ,\mcF, C_{\mcF}(S)}
\left[
\CJ^{P^{\partial}_{\mcF}(S)}
\cdot
\ke{K^{\partial}_{\mcF}(S)}{}
\cdot
(-\mathscr{Y}_{S}\phi)
\right](x_{\tilde{N}(S)^c} \sqcup
y)\;,
\end{equs}
where $\bar H_{\CJ,\mcF,\emptyset}$
is defined as the identity operator.
\begin{lemma}\label{lem:H-equals-Hbar}
For any $\e$-assignment $\iota$, $\phi \in \allf$, $\mcF \in \mathbb{F}$, and $S \in \mcF$
\begin{equ} [e:Hbar-H]
\bar{H}_{\CJ(\iota),\mcF,S}(\phi) = H_{\CJ(\iota),\mcF,S}(\phi)\;.
\end{equ}
\end{lemma}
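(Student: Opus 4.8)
The plan is to prove \eqref{e:Hbar-H} by induction on the depth of the forest $C_{\mcF}(S)$, exactly as in the proofs of Lemmas~\ref{lem: trans-invar} and~\ref{lem: parity cancellation}. By Lemma~\ref{lem: parity cancellation} we already know $H_{\CJ,\mcF,S} = \mathring{H}_{\CJ,\mcF,S}$, and $\mathring{H}$ differs from $H$ only in that $\mathscr{Y}_S$ is replaced by $\mathscr{Y}^{(0)}_S$; so it is equivalent (and cleaner) to compare $\bar{H}_{\CJ,\mcF,S}$ with $\mathring{H}_{\CJ,\mcF,S}$ via $H$. The key point is that $\bar H$ is obtained from $H$ by inserting the extra factor $\CJ^{P^{\partial}_{\mcF}(S)}$ inside the bracket at each recursive stage, where the inserted pairs $\{a,b\}$ all have their two endpoints in different ``children'' of $S$ (either one endpoint in $L_{\mcF}(S)$ and the other in some $T \in C_{\mcF}(S)$, or endpoints in two distinct $T,T' \in C_{\mcF}(S)$). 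The difference $\bar{H}_{\CJ,\mcF,S}(\phi) - H_{\CJ,\mcF,S}(\phi)$ should then be expressed as a telescoping sum over the trees $T$ of $C_{\mcF}(S)$, where in each term one inserts $\CJ^{P^{\partial}_{\mcF}(S)}$ but at the $T$-level still uses the old operator, and one must show each such correction vanishes.

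The mechanism for vanishing is the same neutrality-plus-parity argument used in Lemma~\ref{lem: parity cancellation}. Fix $T \in C_{\mcF}(S)$: the pairs in $P^{\partial}_{\mcF}(S)$ with at least one endpoint in $L(T)$ contribute, after collapsing $T$ to its root $\rho_T$ under $\Coll_T$ (which is what the operator $\mathscr{Y}^{(0)}_T$ inside $\mathring H_{\CJ,\mcF,C_{\mcF}(S)}$ does), a product $\prod_{b} \CJ^{\{a,b\}}(x_{\rho_T} - x_b)$ where $a$ ranges over $L(T)$ and $b$ over the relevant outside nodes. Using \eqref{eq: coulomb interaction}/\eqref{e:E-xi-J}, i.e. $J^{\e,\bar\e}(z)^{\sign(e)}$ is (up to sign in the exponent) a fixed power $|z|_\s^{\mp 2\beta'}$ depending only on whether the two charges agree, the product over $a \in L(T)$ for fixed $b$ collapses to $\prod_{a \in L(T)} |x_{\rho_T}-x_b|_\s^{-\mcb{q}(a)\mcb{q}(b)\cdot 2\beta'}$ whose exponent is proportional to $\mcb{q}(b)\sum_{a\in L(T)}\mcb{q}(a) = \mcb{q}(b)\,\mcb{q}(L(T)) = 0$ since $T \in \Div$ is neutral (Definition~\ref{def:Div-forest} and Lemma~\ref{non-neutral trees are nice}). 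So under $\Coll_T$ the inserted factor becomes $1$, which is precisely why the insertion does not change $\mathscr{Y}^{(0)}_T$ and hence does not change $\mathring H$; and the only remaining danger is the linear-polynomial term $\mathscr{Y}^{(1)}_T$, which one handles exactly as in \eqref{eq: integral for lemma}: Lemma~\ref{lem: trans-invar} (applied now also to the enlarged integrand, noting that $\CJ^{\{a,b\}} \in \allf_{\tiny{\textnormal{sym}}}(\{a,b\})$ by Remark~\ref{parity of kernels}, so Remark~\ref{rem:property-Csym} still applies) gives translation invariance, and the spatial sign-flip leaves every factor invariant except the single linear factor $z_w^{(j)} - x_{\rho_S}^{(j)}$, forcing the integral to vanish.

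Concretely the steps I would carry out are: (1) state the induction on $\mathrm{depth}(C_{\mcF}(S))$ with base case $C_{\mcF}(S)=\emptyset$, where $P^{\partial}_{\mcF}(S)=\emptyset$ (no two endpoints can lie in distinct children and no endpoint can lie in a child of an empty forest — actually when $C_{\mcF}(S)=\emptyset$, a pair $\{a,b\}$ with $|\{a,b\}\cap L_{\mcF}(S)|=1$ is impossible since $L_{\mcF}(S)=L(S)\supset$ both endpoints), so $\bar H = \mathring H = H$ trivially; (2) for the inductive step, write $\bar H_{\CJ,\mcF,S}(\phi)$ using \eqref{e:def-H-bar}, apply the inductive hypothesis to replace $\bar H_{\CJ,\mcF,C_{\mcF}(S)}$ by $H_{\CJ,\mcF,C_{\mcF}(S)}=\mathring H_{\CJ,\mcF,C_{\mcF}(S)}$, so that the only difference from $\mathring H_{\CJ,\mcF,S}(\phi)=H_{\CJ,\mcF,S}(\phi)$ is the inserted factor $\CJ^{P^{\partial}_{\mcF}(S)}$ and the replacement of $\mathscr{Y}^{(0)}_S$ by $\mathscr{Y}_S$; (3) split $\mathscr{Y}_S=\mathscr{Y}^{(0)}_S+\mathscr{Y}^{(1)}_S$: for the $\mathscr{Y}^{(0)}_S$ part show $\Coll_S$ kills $\CJ^{P^{\partial}_{\mcF}(S)}$ by the neutrality computation above (so this part agrees with $\mathring H$), and for the $\mathscr{Y}^{(1)}_S$ part reproduce the computation in \eqref{eq: integral for lemma}–\eqref{eq: work for new genvert} verbatim, now carrying the extra parity-even factor $\CJ^{P^{\partial}_{\mcF}(S)}$ along, to see each summand vanishes. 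The main obstacle is bookkeeping: one must be careful that $P^{\partial}_{\mcF}(S)$, which is indexed at the $S$-level, interacts correctly with the recursion — i.e. that the factors inserted at deeper levels $\CJ^{P^{\partial}_{\mcF}(T)}$ together with $\CJ^{P^{\partial}_{\mcF}(S)}$ do not double-count any pair and that collapsing $S$ indeed sends $\CJ^{P^{\partial}_{\mcF}(S)}$ to $1$ (which uses that \emph{every} pair in $P^{\partial}_{\mcF}(S)$ has \emph{both} endpoints inside $\tilde N(S)$ — true by construction since $L_{\mcF}(S)\subset L(S)$ and each $T\in C_{\mcF}(S)$ has $L(T)\subset L(S)$ — so the whole product is a function of differences of points of $L(S)$, and after $\Coll_S$ it becomes a function of $x_{\rho_S}-x_{\rho_S}$ type differences... more precisely: the neutrality argument must be run once for $S$ itself after collapsing all of $\tilde N(S)$ to $\rho_S$, giving exponent $\propto\mcb{q}(L(S))^2$ or rather each surviving pair trivialises); making this precise is the only genuinely delicate point, everything else is a transcription of Lemma~\ref{lem: parity cancellation}.
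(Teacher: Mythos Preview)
Your overall structure (induction on the depth of $C_{\mcF}(S)$, trivial base case since $P^{\partial}_{\mcF}(S)=\emptyset$ when $C_{\mcF}(S)=\emptyset$, and the neutrality mechanism) is correct and matches the paper. In particular your second paragraph identifies the right cancellation: for each child $T\in C_{\mcF}(S)$, the operator $\mathring H_{\CJ,\mcF,T}$ contains a $\mathscr{Y}^{(0)}_{T}$, and under $\Coll_T$ the product of the factors $J(\iota)_e$ with one endpoint in $L(T)$ and the other fixed at some $b\notin L(T)$ becomes $\prod_{a\in L(T)} J^{\iota(i),\iota(j)}(z_b-z_{\rho_T})^{\mcb{q}(a)\mcb{q}(b)}=1$ by neutrality of $T$. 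This is exactly what the paper does.

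However, your step~(3) and final paragraph drift off course. First, the splitting $\mathscr{Y}_S=\mathscr{Y}^{(0)}_S+\mathscr{Y}^{(1)}_S$ is irrelevant here: in both $H$ and $\bar H$ the operator $\mathscr{Y}_S$ acts on $\phi$, \emph{not} on the inserted factor $\CJ^{P^{\partial}_{\mcF}(S)}$, so splitting it does nothing for the insertion. Second, and more seriously, the claim that ``$\Coll_S$ kills $\CJ^{P^{\partial}_{\mcF}(S)}$'' is false. Every pair in $P^{\partial}_{\mcF}(S)$ has both endpoints in $L(S)$, so $\Coll_S$ sends each factor $J_e(x_a-x_b)$ to $J_e(0)$, which is singular; neutrality of $S$ does not rescue this (you would be looking at an indeterminate ``$0^0$'' rather than $1$). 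The collapse that works is the one you already described in paragraph~2: $\mathscr{Y}^{(0)}_{T_1}\circ\cdots\circ\mathscr{Y}^{(0)}_{T_n}$ with $C_{\mcF}(S)=\{T_1,\dots,T_n\}$, and the paper implements this cleanly via the idempotence identity
\[
\mathring H_{\CJ,\mcF,C_{\mcF}(S)}\big[g\big]
=
\mathring H_{\CJ,\mcF,C_{\mcF}(S)}\circ\mathscr{Y}^{(0)}_{T_1}\circ\cdots\circ\mathscr{Y}^{(0)}_{T_n}\big[g\big],
\]
after which one checks $\mathscr{Y}^{(0)}_{T_1}\circ\cdots\circ\mathscr{Y}^{(0)}_{T_n}\bigl(\CJ(\iota)^{P^{\partial}_{\mcF}(S)}\bigr)=1$ inductively over the $T_j$. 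Here it is essential that $\CJ=\CJ(\iota)$ so that all factors attached to a single $T_j$ are powers of the \emph{same} function $J^{\iota(i),\iota(j)}$ (not merely approximate power laws), making the cancellation exact; your informal ``$|z|_\s^{\mp 2\beta'}$'' heuristic gestures at this but is not what actually does the work. Once the insertion is shown to be free at the level of $\mathring H_{\CJ,\mcF,C_{\mcF}(S)}$, one goes back to $H$ via Lemma~\ref{lem: parity cancellation} and to $\bar H$ via the inductive hypothesis; no separate parity argument for $\mathscr{Y}^{(1)}_S$ is needed in this lemma.
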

\begin{proof}
In this proof we write $\CJ \eqdef \CJ(\iota)$ to lighten notation. 
We proceed by induction in the depth of $C_\mcF(S)$.
The only difference between $\bar{H}_{\CJ,\mcF,S}$ and $H_{\CJ,\mcF,S}$
is the ``new'' factor
$\CJ^{P^{\partial}_{\mcG}(S)}$
appearing in the recursive definition  \eqref{e:def-H-bar}  
of $\bar{H}_{\CJ,\mcF,S}$.
One then sees that the base case of the induction $C_{\mcF}(S)=\emptyset$ is immediate.
Here $\bar{H}_{\CJ,\mcF, C_{\mcF}(S)}$ in \eqref{e:def-H-bar} is equal to the identity and one also has
$P^{\partial}_{\mcF}(S)=\emptyset$ so
$\CJ^{P^{\partial}_{\mcF}(S)}=1$. 

We now prove the inductive step, namely we assume 
that $C_{\mcF}(S) = \{T_1,\cdots,T_n\} \not = \emptyset$ 
and that $\bar{H}_{\CJ,\mcF,T_{j}} = H_{\CJ,\mcF,T_{j}}$ for $1 \le j \le n$.  
By Lemma~\ref{lem: parity cancellation},
we can replace the operator $H_{\CJ,\mcF, C_{\mcF}(S)}$
by $\mathring{H}_{\CJ,\mcF, C_{\mcF}(S)}$ in
\eqref{def: renormalised kernel 1}.
Next we claim that
\begin{equ}[e:claim-insert-J]
\mathring H_{\CJ,\mcF, C_{\mcF}(S)}
\left[
\ke{K^{\partial}_{\mcF}(S)}{}
\cdot
(-\mathscr{Y}_{S}\phi)
\right]
=
\mathring H_{\CJ,\mcF, C_{\mcF}(S)}
\left[
\CJ^{P^{\partial}_{\mcF}(S)}
\ke{K^{\partial}_{\mcF}(S)}{}
\cdot
(-\mathscr{Y}_{S}\phi)
\right] \;.
\end{equ}
To prove this claim we observe that since each $\mathscr{Y}^{(0)}_{T_j}$ is idempotent
and $\mathring H_{\CJ,\mcF, T_i}$ and the $\mathscr{Y}^{(0)}_{T_j}$ commute when $i\neq j$, one has
\[
\mathring H_{\CJ,\mcF, C_{\mcF}(S)}\left[
\ke{K^{\partial}_{\mcF}(S)}{}
(-\mathscr{Y}_{S}\phi)
\right]
=
\mathring H_{\CJ,\mcF, C_{\mcF}(S)}
\circ
\mathscr{Y}^{(0)}_{T_1} \circ \cdots \circ \mathscr{Y}^{(0)}_{T_n}
\left[
\ke{K^{\partial}_{\mcF}(S)}{}
(-\mathscr{Y}_{S}\phi)
\right] \;.
\]
To show \eqref{e:claim-insert-J}
it remains to prove that 
 $\mathscr{Y}^{(0)}_{T_1} \circ \cdots \circ \mathscr{Y}^{(0)}_{T_n}
(\CJ^{P^{\partial}_{\mcF}(S)})=1$.
Indeed, let 
\begin{equs}
P^{\partial}_{\mcF}(S;T_n,b) & \eqdef\{ \{a,b\}  \in P^{\partial}_{\mcF}(S): a\in T_n\}
	 \subset P^{\partial}_{\mcF}(S) 
\\
\mbox{and}\quad
P^{\partial}_{\mcF}(S;T_n) & \eqdef  \bigsqcup_{b\notin T_n} P^{\partial}_{\mcF}(S;T_n,b) \;.
\end{equs}
Then, since $T_n$ is neutral, 
for each $b\notin T_n$,
one has 
\[
 \mathscr{Y}^{(0)}_{T_n} (\CJ^{P^{\partial}_{\mcF}(S;T_n,b)})
 = \prod_{a} 
 	J^{\e,\bar\e}(z_b - z_{\rho_{T_n}})^{\mcb{q}(a)\mcb{q}(b)}
 =1
 \]
 where the product is over all $a\in T_n$ such that $\{a,b\}\in P^{\partial}_{\mcF}(S;T_n,b)$.
 Here under action of $ \mathscr{Y}^{(0)}_{T_n} $ forces 
all the factors $J^{\e,\bar\e}$ and  $1/J^{\e,\bar\e}$  to evaluate at the root 
$z_{\rho_{T_n}}$, and the neutrality of $T_n$ is crucial for these $J^{\e,\bar\e}$ and  $1/J^{\e,\bar\e}$ to cancel out.
Note that $\CJ$ is of the form $\CJ(\iota)$, in particular for each $T_{n}$ all the noises within each $T_{n}$ are the same noise $\xi_{\pm}^{\e}$ (having the same $\eps$).
As a consequence, $ \mathscr{Y}^{(0)}_{T_n} (\CJ^{P^{\partial}_{\mcF}(S;T_n)})=1$.

The following picture illustrates an example where $P^{\partial}_{\mcF}(S;T_n,b)$
consists of 4 edges, and it is clear that the 4 kernels cancel when $T_n$ is collapsed to $\rho_{T_n}$. We assume that $b = e_{>}$ for each of the four edges $e \in L(D_{2p})^{(2)}$ drawn below. 
\[
\begin{tikzpicture}[scale=0.9] 
\draw (3,-0.3) circle (1.8cm);
\draw (3,1.8) node {$T_n$};
\node at (3,1) [charge] (1) {$-$};  
\node at (2,0.5) [charge] (2) {$+$}; 
\node at (3.5,-0.5) [charge] (3) {$+$}; 
\node at (3.5,-1.5) [charge] (4) {$-$}; 
\node at (3,-1.9) {$\rho_{T_n}$};
\node at (7,0) [charge] (7) {$-$}; \node at (7.5,0) {$b$};

\draw[covi] (7) to node [above] {\scriptsize $J^{\e,\bar{\e}}$} (1);
\draw[cov] (7) to node [below] {\scriptsize $(J^{\e,\bar{\e}})^{-1}$} (2);
\draw[cov] (7) to node [below] {\scriptsize $(J^{\e,\bar{\e}})^{-1}$} (3);
\draw[covi]  (7) to node [below] {\scriptsize $J^{\e,\bar{\e}}$} (4);
\end{tikzpicture}
\]
Now since $P^{\partial}_{\mcF}(S) \setminus P^{\partial}_{\mcF}(S;T_n)$
is precisely the same set 
$P^{\partial}_{\mcF}(S)$ with $C_{\mcF}(S) = \{T_1,\cdots,T_{n-1}\}$,
recursively applying $\mathscr{Y}^{(0)}_{T_j}$
then proves the claim \eqref{e:claim-insert-J}.

We now prove \eqref{e:Hbar-H} as follows.
By Lemma~\ref{lem: parity cancellation} 
we can replace the operator $H_{\CJ,\mcF, C_{\mcF}(S)}$
in the formula of $H_{\CJ,\mcF,S}(\phi)$
by the operator
$\mathring H_{\CJ,\mcF, C_{\mcF}(S)}   $.
We then apply \eqref{e:claim-insert-J} to insert the factor $\CJ^{P^{\partial}_{\mcF}(S)}$,
followed by applying Lemma~\ref{lem: parity cancellation} again
to replace the operator 
$\mathring H_{\CJ,\mcF, C_{\mcF}(S)}   $
back to
$H_{\CJ,\mcF, C_{\mcF}(S)}$.
We then use the inductive assumption
to replace the operator
$H_{\CJ,\mcF, C_{\mcF}(S)}$
by  the operator $\bar H_{\CJ,\mcF, C_{\mcF}(S)}$
and thus obtain the formula \eqref{e:def-H-bar}
for $\bar{H}_{\CJ,\mcF,S}(\phi)$.
%
\end{proof}
We are now ready to rewrite the moment formula  $M(\psi,\CJ)$
given in \eqref{eq: explicit formula for moment}
and Proposition~\ref{explicit formula}.
Define for $\mcF \in \mathbb{F}$, 
\begin{equ} [e:def-Ppartial-FD2p]
P^{\partial}_{\mcF}(D_{2p})
\eqdef
\left\{ 
\{a,b\} \in L(D_{2p})^{(2)}:\ 
\begin{array}{c}
|\{a,b\} \cap \leavesleft{\mcF}{D_{2p}}| = 1 \textnormal{ or }\\
\exists \textnormal{ distinct }T,S \in \overline{\mcF} \textnormal{ with } a \in L(T), b \in L(S)
\end{array}
\right\}\;.
\end{equ}
We now develop a new formula for moments of the neutral BPHZ model.

For any test function $\psi$, $\CJ \in \mfJ$, and $x \in (\R \times \T^{2})^{\{\logof\}}$, we set
\begin{equ} [e:def-barM-psiJ]
\bar{M}[\psi,\CJ]
\eqdef
\sum_{
\substack{
\mcG \in \mathbb{F},\\ 
\cD \subset \cut \setminus K(\mcG)}
}
\int_{\nodesleft{\mcG}{D_{2p}}}dy\
\CW[\psi,\mcG,\cD,\J](x_{\logof} \sqcup y)
\end{equ}
where
\begin{equs}\label{e:moment-J}
\CW[\psi,\mcG,\cD,\J](z)
\eqdef\ &
\J^{\leavesleft{\mcG}{D_{2p}}^{(2)}}(z)
\cdot
\ke{\kernelsleft{\mcG}{D_{2p}} \setminus \cD}{}(z)\\
&
\quad 
\cdot
\Big(
\prod_{j=1}^{2p}
\psi(z_{\rho_{\sT_{j}}})
\Big)
\cdot
\RKer^{\kernelsleft{\mcG}{D_{2p}} \cap \cD}(z)
\cdot
\powroot{\nodesleft{\mcG}{D_{2p}}}{\sn}{\logof}(z)
\\
& 
\quad 
\cdot 
\bar{H}_{\J,\mcG,\overline{\mcG}} \left[ 
\J^{P^{\partial}_{\mcG}(D_{2p})}
\cdot
\RKer^{K^{\downarrow}(\bar{\mcG}) \cap \cD}
\ke{K^{\downarrow}(\bar{\mcG}) \setminus \cD}{}
\powroot{\tilde{N}(\bar{\mcG})}{\sn}{\logof}
\right](z)\;.
\end{equs}
Here $\bar\mcG=\mathrm{Max}(\mcG)$ as before.
We then have the following statement.
\begin{proposition} \label{prop:mom-formula-z}
Let $\bar{M}[\psi,\CJ]$ be given as above,
and $M[\psi,\CJ]$ be the quantity 
defined in \eqref{eq: explicit formula for moment}.
For any $\e$-assignment $\iota$ one has
\begin{equ}\label{eq: equality of formulae}
\bar{M}[\psi,\CJ(\iota)] = M[\psi,\CJ(\iota)]\;.
\end{equ}
\end{proposition}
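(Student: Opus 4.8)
The plan is to show that $\bar M[\psi,\CJ(\iota)]$ and $M[\psi,\CJ(\iota)]$ agree term-by-term, indexed by the same pairs $(\mcG,\cD)$ with $\mcG\in\mathbb{F}$ and $\cD\subset\cut\setminus K(\mcG)$. Fix such a pair. Comparing the summand of \eqref{eq: explicit formula for moment} with $\int_{\nodesleft{\mcG}{D_{2p}}}dy\,\CW[\psi,\mcG,\cD,\J](x_\logof\sqcup y)$, the integrands differ in exactly two respects: (i) the outer operator $H_{\CJ,\mcG,\overline{\mcG}}$ has been replaced by $\bar H_{\CJ,\mcG,\overline{\mcG}}$, and (ii) inside the argument of that operator an extra factor $\J^{P^{\partial}_{\mcG}(D_{2p})}$ has been inserted. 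Everything else --- the interaction factors $\J^{\leavesleft{\mcG}{D_{2p}}^{(2)}}$, the kernels $\ke{\kernelsleft{\mcG}{D_{2p}}\setminus\cD}{}$, the renormalisation kernels $\RKer^{\bullet}$, the polynomial factors, the test functions --- is literally identical. So it suffices to prove
\[
H_{\CJ(\iota),\mcG,\overline{\mcG}}\bigl[\Phi\bigr]
=
\bar H_{\CJ(\iota),\mcG,\overline{\mcG}}\bigl[\J^{P^{\partial}_{\mcG}(D_{2p})}\cdot\Phi\bigr]
\]
where $\Phi\eqdef\RKer^{K^{\downarrow}(\bar\mcG)\cap\cD}\cdot\ke{K^{\downarrow}(\bar\mcG)\setminus\cD}{}\cdot\powroot{\tilde N(\bar\mcG)}{\sn}{\logof}$, as functions on the appropriate coordinates.

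The first step is Lemma~\ref{lem:H-equals-Hbar}, which already gives $\bar H_{\CJ(\iota),\mcF,S}=H_{\CJ(\iota),\mcF,S}$ for every $\mcF,S$. Applying this with $\mcF=\mcG$ to each $S\in\overline{\mcG}$ and taking the product via \eqref{e:H-is-product-H}, we get $\bar H_{\CJ(\iota),\mcG,\overline{\mcG}}=H_{\CJ(\iota),\mcG,\overline{\mcG}}$ as operators. So the displayed identity reduces to
\[
H_{\CJ(\iota),\mcG,\overline{\mcG}}\bigl[\Phi\bigr]
=
H_{\CJ(\iota),\mcG,\overline{\mcG}}\bigl[\J^{P^{\partial}_{\mcG}(D_{2p})}\cdot\Phi\bigr]\;,
\]
i.e.\ we must show that inserting the factor $\J^{P^{\partial}_{\mcG}(D_{2p})}$ into the argument of $H_{\CJ(\iota),\mcG,\overline{\mcG}}$ changes nothing. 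This is the exact analogue, at the "top level" $D_{2p}$, of the claim \eqref{e:claim-insert-J} that was proved one level down inside $\bar H$. The argument should be a near-verbatim repetition: by Lemma~\ref{lem: parity cancellation} replace each $H_{\CJ(\iota),\mcG,T}$ ($T\in\overline{\mcG}$) by $\mathring H_{\CJ(\iota),\mcG,T}$; since each $\mathscr{Y}^{(0)}_{T}$ for $T\in\overline{\mcG}$ is idempotent and these commute with the $\mathring H$ operators for disjoint trees, the product $H_{\CJ(\iota),\mcG,\overline{\mcG}}$ factors through $\prod_{T\in\overline{\mcG}}\mathscr{Y}^{(0)}_{T}$; and then, exactly as in the proof of \eqref{e:claim-insert-J}, one checks $\prod_{T\in\overline{\mcG}}\mathscr{Y}^{(0)}_{T}\bigl(\J^{P^{\partial}_{\mcG}(D_{2p})}\bigr)=1$. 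The latter uses that for a maximal tree $T\in\overline{\mcG}$ and any node $b\notin T$, collapsing $T$ onto $\rho_T$ forces every factor $J^{\iota(i),\iota(j)}(z_b-z_{\rho_T})^{\pm1}$ coming from an edge $\{a,b\}$ with $a\in L(T)$ to be evaluated at the common point $z_{\rho_T}$, and the exponents sum to $\mcb{q}(b)\cdot\mcb{q}(L(T))=0$ by neutrality of $T$ (every $T\in\mathbb{F}$ is neutral by Definition~\ref{def:Div-forest} and Lemma~\ref{non-neutral trees are nice}); the "same $\eps$ within $T$" property of $\CJ(\iota)$ guarantees the cancelling factors are genuinely reciprocal and not merely similarly-behaved.

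One subtlety to be careful about: the definition \eqref{e:def-Ppartial-FD2p} of $P^{\partial}_{\mcG}(D_{2p})$ collects the pairs $\{a,b\}$ with exactly one endpoint in $\leavesleft{\mcG}{D_{2p}}$ or with endpoints in two distinct maximal trees of $\mcG$; these are precisely the pairs that the $\overline{\mcG}$-collapses see and, level by level, each such pair is "absorbed" by exactly one of the idempotent collapses $\mathscr{Y}^{(0)}_{T}$, mirroring how $P^{\partial}_{\mcF}(S)$ decomposes as $\bigsqcup_n P^{\partial}_{\mcF}(S;T_n)\sqcup(\text{rest})$ in the inductive argument for \eqref{e:claim-insert-J}. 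I expect the main obstacle to be bookkeeping rather than ideas: verifying that the set $P^{\partial}_{\mcG}(D_{2p})$ is exactly the right set of missing interaction edges --- neither too small (so that $\J^{\leavesleft{\mcG}{D_{2p}}^{(2)}}\cdot\J^{P^{\partial}_{\mcG}(D_{2p})}$ together with the interactions already inside the $\bar H$-recursion reconstitute $\J$ evaluated on all of $L(D_{2p})^{(2)}$) nor too large (so that the collapse-to-$1$ argument still applies, which needs every inserted edge to have at least one endpoint being collapsed). Once the identity of operators is in hand, \eqref{eq: equality of formulae} follows by summing over $(\mcG,\cD)$, and combined with \eqref{e:moment-is-M} of Proposition~\ref{explicit formula} this gives the desired moment formula for the neutral BPHZ model.
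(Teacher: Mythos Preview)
Your proposal is correct and follows essentially the same approach as the paper: fix $(\mcG,\cD)$, invoke Lemmas~\ref{lem: parity cancellation} and~\ref{lem:H-equals-Hbar} to reduce to showing the insertion of $\CJ(\iota)^{P^{\partial}_{\mcG}(D_{2p})}$ into $\mathring H_{\CJ(\iota),\mcG,\overline{\mcG}}[\cdots]$ is harmless, and then repeat the argument for \eqref{e:claim-insert-J} with $D_{2p}$, $\mcG$, $\overline{\mcG}$ playing the roles of $S$, $\mcF$, $C_{\mcF}(S)$. The paper's proof is in fact terser than yours and simply points to this substitution.
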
 
\begin{proof}
Note that the only differences between the LHS and RHS of \eqref{eq: equality of formulae} 
is the quantity $\bar{H}_{\CJ(\iota),\mcG,\overline{\mcG}}$ in place of $H_{\CJ(\iota),\mcG,\overline{\mcG}}$ and the inserted factor $\CJ(\iota)^{P^{\partial}_{\mcG}(D_{2p})}$.
In fact, we show that the corresponding terms 
are equal for each fixed $\mcG$ and $\cD$.

By Lemma~\ref{lem: parity cancellation} and
Lemma~\ref{lem:H-equals-Hbar} it suffices to prove that 
one can insert  the factor
 $\CJ(\iota)^{P^{\partial}_{\mcG}(D_{2p})}$ 
into $\mathring{H}_{\CJ(\iota),\mcG,\overline{\mcG}} [\cdots]$
without changing the moment formula.
The proof follows in the same way as that for the claim \eqref{e:claim-insert-J},
with $D_{2p}$ in place of $S$,
$\mcG$ in place of $\mcF$,
and $\bar\mcG$ in place of $C_{\mcF}(S)$.
For instance, we can assume that $\bar\mcG=\{T_1,\cdots,T_n\}$,
and then show  $\mathscr{Y}^{(0)}_{T_1} \circ \cdots \circ \mathscr{Y}^{(0)}_{T_n}
(\CJ(\iota)^{P^{\partial}_{\mcG}(D_{2p})})=1$ inductively as the proof of  the claim \eqref{e:claim-insert-J}.
\end{proof}

The content of the above Proposition is that while renormalisation of regularity structures only 
``moves''\slash renormalises kernel edges in general, for the sine-Gordon model
 one also needs to renormalise noise-contraction edges 
 (i.e. the $\CJ$'s), but these renormalisations come
 {\it for free} thanks to charge cancellation and parity.
 
 Before ending this subsection we observe a simple fact. Note that 
$M[\psi,\CJ]$ is not multilinear in $\CJ \in \mathfrak{J}$; however, we have:
\begin{lemma}\label{lem:multilinear}
$\bar{M}[\psi,\CJ]$ is multilinear in $\CJ = (J_{e}: e \in L(D_{2p})^{(2)})\in \mathfrak{J}$.
\end{lemma}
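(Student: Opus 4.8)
The plan is to trace through the definition of $\bar{M}[\psi,\CJ]$ in \eqref{e:def-barM-psiJ}--\eqref{e:moment-J} and verify that every factor in the integrand $\CW[\psi,\mcG,\cD,\J]$ depends multilinearly on $\CJ = (J_e)_{e \in L(D_{2p})^{(2)}}$. Since the sum over $(\mcG,\cD)$ is finite and the integral over $\nodesleft{\mcG}{D_{2p}}$ is linear in the integrand, it suffices to prove that for each fixed $(\mcG,\cD)$ the function $\CW[\psi,\mcG,\cD,\J]$ is multilinear in $\CJ$, i.e. that each $J_e$ enters to exactly the first power and the exponent of each $J_e$ is independent of the other $J_{e'}$.

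First I would isolate the factors in \eqref{e:moment-J} that do not involve $\CJ$ at all — namely $\ke{\kernelsleft{\mcG}{D_{2p}} \setminus \cD}{}$, the product $\prod_j \psi(z_{\rho_{\sT_j}})$, $\RKer^{\kernelsleft{\mcG}{D_{2p}} \cap \cD}$, $\powroot{\nodesleft{\mcG}{D_{2p}}}{\sn}{\logof}$, and (inside the $\bar{H}$ bracket) $\RKer^{K^{\downarrow}(\bar\mcG) \cap \cD}$, $\ke{K^{\downarrow}(\bar\mcG) \setminus \cD}{}$ and $\powroot{\tilde{N}(\bar\mcG)}{\sn}{\logof}$. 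These are all constants as maps of $\CJ$, so they cause no problem. The $\CJ$-dependence is carried by three pieces: the explicit factor $\J^{\leavesleft{\mcG}{D_{2p}}^{(2)}}(z)$, the inserted factor $\J^{P^{\partial}_{\mcG}(D_{2p})}$, and the operator $\bar{H}_{\J,\mcG,\overline{\mcG}}$ (whose recursive definition \eqref{e:def-H-bar} involves $\J^{L_{\mcF}(S)^{(2)}}$ and $\J^{P^{\partial}_{\mcF}(S)}$ at each recursion level). Each of these is a product of factors $J_e$ indexed by pairs $e \in L(D_{2p})^{(2)}$, and the operator $\bar H$ only integrates against such products, so multilinearity holds provided the total index set — the multiset of pairs over which we take the product — contains each $e \in L(D_{2p})^{(2)}$ exactly once.

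The key combinatorial step, then, is to show that the index sets stack up to give a \emph{partition} of $L(D_{2p})^{(2)}$: the pairs $e$ occurring in $\J^{\leavesleft{\mcG}{D_{2p}}^{(2)}}$, in $\J^{P^{\partial}_{\mcG}(D_{2p})}$, and in the factors $\J^{L_{\mcG}(S)^{(2)}}$ and $\J^{P^{\partial}_{\mcG}(S)}$ appearing as $S$ ranges over all trees of $\mcG$ in the unfolded recursion \eqref{e:def-H-bar}, together run over each element of $L(D_{2p})^{(2)}$ exactly once. Concretely, $\leavesleft{\mcG}{D_{2p}} = L(D_{2p}) \setminus L(\mcG)$ (by \eqref{e:def-NFA-KFA-LFA}), and for a nested/disjoint forest $\mcG$ the sets $L_{\mcG}(S)$ for $S \in \mcG$ partition $L(\mcG)$; combining these, $\{\leavesleft{\mcG}{D_{2p}}\} \cup \{L_{\mcG}(S) : S \in \mcG\}$ partitions $L(D_{2p})$. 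A pair $\{a,b\} \in L(D_{2p})^{(2)}$ with both endpoints in the same block of this partition is picked up by the corresponding $\J^{(\cdot)^{(2)}}$ factor exactly once. A pair whose endpoints lie in two different blocks is picked up precisely once by exactly one $P^{\partial}$ set — I would check this by looking at the smallest tree $S \in \mcG \cup \{D_{2p}\}$ containing both $a$ and $b$: if $S = D_{2p}$ and the two endpoints lie in different maximal trees of $\mcG$ (or one lies in $\leavesleft{\mcG}{D_{2p}}$) then $\{a,b\} \in P^{\partial}_{\mcG}(D_{2p})$ by \eqref{e:def-Ppartial-FD2p}; if $S \in \mcG$ then by \eqref{e:Ppartial} the pair lies in $P^{\partial}_{\mcG}(S)$, and minimality of $S$ guarantees the pair is not double-counted at any deeper or shallower level. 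The main obstacle is exactly this bookkeeping: one must be careful that the recursive structure of $\bar H$ — where the $P^{\partial}$ insertions are threaded through $\bar H_{\CJ,\mcF,C_{\mcF}(S)}[\cdots]$ rather than multiplied in at top level — still produces each pair exactly once, which follows from the observation (already used in the proof of Lemma~\ref{lem:H-equals-Hbar}) that $P^{\partial}_{\mcG}(S) \setminus P^{\partial}_{\mcG}(S;T_n)$ is the $P^{\partial}$ set for the forest with $T_n$ removed, so the various $P^{\partial}$'s at different recursion depths are pairwise disjoint. Once this partition claim is established, multilinearity is immediate: in $\CW[\psi,\mcG,\cD,\J]$ each $J_e$ appears as a single factor $J_e(\text{difference of two node variables})$ with an exponent ($\pm 1 = \sign(e)$, already absorbed into the definition of $J(\iota)_e$ but here $\CJ$ is a free variable so the factor is literally $J_e$) that does not involve any $J_{e'}$, and $\bar H$ acts by integration which preserves multilinearity in its $\CJ$-arguments.
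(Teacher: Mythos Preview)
Your proposal is correct and follows essentially the same approach as the paper: both argue that for each fixed $(\mcG,\cD)$ every pair $e\in L(D_{2p})^{(2)}$ contributes exactly one factor $J_e$ to the integrand, whence multilinearity. The paper organises this as a recursive three-case analysis (both endpoints outside $L(\mcG)$; exactly one endpoint in a maximal tree; both inside the same maximal tree, then recurse), whereas you phrase it via the partition $\{\leavesleft{\mcG}{D_{2p}}\}\cup\{L_{\mcG}(S):S\in\mcG\}$ of $L(D_{2p})$ together with the minimal-container argument --- these are the same bookkeeping in slightly different words. Your aside about $P^{\partial}_{\mcG}(S)\setminus P^{\partial}_{\mcG}(S;T_n)$ is not actually needed once you have the minimal-container characterisation, and the parenthetical about $\sign(e)$ is superfluous since by definition $\CJ^P=\prod_{e\in P}J_e$ with no exponent, but neither affects correctness.
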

\begin{proof}
We claim that fixing $\mcG \in \mathbb{F}$,
$\cD \subset \cut \setminus K(\mcG)$, for every
 $e \in L(D_{2p})^{(2)}$, the factor $J_e$ appears in 
 the term for $(\mcG,\cD)$ in \eqref{e:def-barM-psiJ} once and only once;
this suffices to imply the linearity in $J_e$ since all other other operations (in particular the operator $\bar H$) are linear.

The above claim simply follows from the definitions of various sets, 
but we would like to take this opportunity to help the reader recapitulate the notation of these sets. 
Each $e \in L(D_{2p})^{(2)}$ falls into one of the following cases.

\begin{enumerate}
\item
 Both nodes of $e$ are in $L(D_{2p}) \setminus L(\mcG)$. In this case
$e\in \leavesleft{\mcG}{D_{2p}}^{(2)}$ which is the set appearing in \eqref{e:moment-J}.
\item
 One node of $e$ is in $L(S)$ for some $S\in \bar\mcG$, and the other node of $e$ is not  in $L(S)$ for this $S$. In this case $e\in P^{\partial}_{\mcF}(D_{2p})$ which is the set appearing inside $\bar{H}_{\J,\mcG,\overline{\mcG}} $ 
in \eqref{e:moment-J}.
\item
 There is an $S\in \bar\mcG$ so that both nodes of $e$ are in $L(S)$.
In this case, we again have three ``sub-cases'' if we think of $S$ as playing the role of $D_{2p}$, and $C_{\mcF}(S)$ as playing the role of $\bar\mcG$.
We then get the sets $L_{\mcF}(S)^{(2)}$ and $P^{\partial}_{\mcF}(S)$
from \eqref{e:def-H-bar} in the first and second sub-cases. 
In the third sub-cases there is an $T\in C_{\mcF}(S)$ so that both nodes of $e$ are in $L(T)$ - iterating in this way we exhaust all $e \in L(D_{2p})^{(2)}$.
\end{enumerate}
\vspace{-4ex}
\end{proof}

\subsection{Proof of Theorem~\ref{theorem:mom-bounds}}
\label{sec:proof-mom-bounds}
Fix a smooth test function $\psi$ supported in the unit ball and bounded by $1$. We define, for each $\lambda \in (0,1]$, $\mcG \in \mbbF$, $\cD \subset \cut \setminus K(\mcG)$, and $\CJ \in \mathfrak{J}$
\[
\CW_{\lambda}[\mcG,\cD,\CJ]
\eqdef
\int_{\nodesleft{\mcG}{D_{2p}}}dy\
\CW[\psi^\lambda_{x_{\logof}},\mcG,\cD,\CJ](x_{\logof} \sqcup y)\;.
\] 
Note that both sides do not depend on $x_{\logof}$ by translation invariance. 
We then have by definition of $\bar{M}[\psi,\CJ]$ as in \eqref{e:def-barM-psiJ} that
\begin{equ}[e:def-Q-moments]
\bar{M}_{\lambda}[\CJ] \eqdef 
\bar{M}[\psi^\lambda_{x_{\logof}}, \CJ]
=
\sum_{
\substack{ 
\mcG \in \mbbF\\
\cD \subset \cut \setminus K(\mcG)
}}
\CW_{\lambda}[\mcG,\cD,\CJ]
\;.
\end{equ}

We now start to prove 
the main estimates on the moments of models
claimed in Theorem~\ref{theorem:mom-bounds}.

\begin{proof}[of Theorem~\ref{theorem:mom-bounds}]
Given the moment formula \eqref{e:moment-is-M} in Proposition~\ref{explicit formula}
and 
Proposition~\ref{prop:mom-formula-z},
the first bound in  \eqref{e:moment-bound-theorem} claimed in the theorem will
follow from the bound (uniform in $\CJ \in \mathfrak{J}$ and $\lambda \in (0,1]$)
\begin{equ}\label{eq: the moment functional bound}
\big|
\bar{M}_{\lambda}[\CJ] \big|
\lesssim
\|\CJ\| \lambda^{2p|\tau|_{\s}}\;.
\end{equ}
By the definition of $\bar{M}_{\lambda}[\CJ] $ as in 
 \eqref{e:def-Q-moments},
the claim \eqref{eq: the moment functional bound} follows from combining 
Corollary~\ref{cor:organize-and-resum} and 
Proposition~\ref{prop: main estimate} in the next section. 
(The rest of the paper will be devoted to
proving Corollary~\ref{cor:organize-and-resum} and 
Proposition~\ref{prop: main estimate}.)

For the second bound in  \eqref{e:moment-bound-theorem} 
claimed in the theorem, if we expand the $2p$-th power
we will have joint moments of processes regularised by $\eps$ and by $\bar\e$. The $\e$-assignment defined earlier is introduced to deal with this situation.
 
%

For $0 \le j,k \le p$, we define the $\e$-assignment $\iota_{j,k}$ by setting $\iota_{j,k}(l) = \e$ if $1 \le l \le p-j$ or $p + 1 \le l \le 2p - k$ and we set $\iota_{j,k}(l) = \bar\e$ for all other values of $l$. 
One has
\footnote{
By elementary identity
$
(a-b)^{p} \overline{(a-b)}^{p}
=
\sum_{j,k=0}^{p}
\binom{p}{j}\binom{p}{k}
(-1)^{j+k}
\big(
a^{p-j}b^{j}\bar{a}^{p-k}\bar{b}^{k}
-
a^{p}\bar{a}^{p}
\big)
$}
\begin{equs}
\E \Big|\big(\hat{\Pi}^{\e}_{z} \tau   
-  \hat{\Pi}^{\bar\e}_{z} \tau  
\big)    (\psi^{\lambda}_{z}) \Big|^{2p}
=
\sum_{j,k=0}^{p}
\binom{p}{j}
\binom{p}{k}
(-1)^{j+k}
\Big(
\bar{M}_{\lambda}[\CJ(\iota_{j,k})] 
-\bar{M}_{\lambda}[ \CJ(\iota_{0,0})]
\Big).
\end{equs}
(Note that $\bar{M}_{\lambda}[ \CJ(\iota_{0,0})]$ is just the $\bar{M}_{\lambda}[ \CJ]$ in \eqref{eq: the moment functional bound}, and we are now just comparing the cross terms with it.)

For each fixed $j,k$ we can write $\bar{M}_{\lambda}[\CJ(\iota_{j,k})] -\bar{M}_{\lambda}[ \CJ(\iota_{0,0})]$ into telescoped sum of terms of the form 
$\bar{M}_{\lambda}[\CJ_1] -\bar{M}_{\lambda}[ \CJ_2]$
where $\CJ_1,\CJ_2\in \mathfrak{J}$ only differ by one component $J_e$ for some 
$e \in L(D_{2p})^{(2)}$.
By Lemma~\ref{lem:bound-J}, such a difference of $J_e$'s has  
$\|\cdot\|_{2\beta'+\kappa,m}$ or $\|\cdot\|_{-2\beta'+\kappa,m}$
norm of order $\e^\kappa$. Therefore $\CJ_1-\CJ_2 $
can be viewed as an element in $\mathfrak{J}$,
with $\|\CJ_1-\CJ_2 \| \lesssim \e^\kappa$.
By Lemma~\ref{lem:multilinear} we have $\bar{M}_{\lambda}[\CJ_1] -\bar{M}_{\lambda}[ \CJ_2] = \bar{M}_{\lambda}[\CJ_1- \CJ_2]$ which can then be bounded in the same way as
\eqref{eq: the moment functional bound} with the proportionality constant of order $\e^\kappa$.
\end{proof}
\section{Multiscale expansion and organizing renormalisations}
\label{sec:Multiscale expansion}
The goal of this section is bounding \eqref{e:def-Q-moments}.
Our approach is the following.
(1)
We will expand each term $\CW_\lambda [\CJ,\CG,\cD]$
over ``scale assignments $\mbn$'', see \eqref{e:expand-in-n}.
(2)
Then we will switch the sum over scales $\mbn$ and the sum over $(\CG,\cD)$,
so that for each fixed $\mbn$, we will have a delicate way (using ``intervals'') to group terms in the sum
over $(\CG,\cD)$, see \eqref{e:grouping-FG} and \eqref{e:grouping-and-resum-1}.
It is this organization into groups that really implements cancellations by renormalisations; see \eqref{e:def-I_S} for some intuitive discussion.
(3)
Once we have the terms organized into these groups,
we will switch the sums again, so that  within each fixed group we can sum over scales $\mbn$, see 
\eqref{e:grouping-and-resum-2}
and Proposition~\ref{prop: main estimate}.

\subsection{Multiscale expansion}
To estimate the moment $\bar M$ we start by a multiscale decomposition of all the functions appearing in the integrand.
Here we follow closely \cite[Section~6]{CH},
 with the main difference being that here we work with a moment ``Feynman diagram'' $D_{2p}$ instead of a single stochastic tree as in \cite{CH}.
 This is because the charge and parity cancellations being exploited in the previous section occur at the level of the entire moment ``Feynman diagram''.

In what follows we often abuse notation and view the ordered pairs of $K(D_{2p})$ as unordered two-elements sets. 
We define, for any subset $U \subset N(D_{2p})$
\begin{equ} [e:CE-logof]
 \CE_{\logof}(U) \eqdef \{\{\logof,u\}\}_{u \in U}\;.
 \end{equ}
We then set
\[
\CE 
\eqdef 
\CE_{\logof}(N(D_{2p}))
\sqcup
K(D_{2p})
\sqcup
L(D_{2p})^{(2)}\;.
\]
We define a \emph{global scale assigment} for $D_{2p}$ to be a tuple 
\footnote{Boldface letter $\mbn$ will stand for a scale assignment in contrast with  a letter $\mfn$ representing a tree decoration.}
\[
\mbn = (n_{e})_{e \in \mcE} \in \N^{\mcE}\;.
\]

The multiscale decomposition is implemented by cutoff functions as follows.
We fix $\psi:\R \rightarrow [0,1]$ to be a smooth function supported on $[3/8,1]$ and with the property that $\sum_{n \in \Z}\psi(2^{n}x) = 1$ for $x \not = 0$.
We then define a family of cutoff functions $\{\Psi^{(k)}\}_{k \in \N}$, where each  $\Psi^{(k)}:\R \times \T^{2} \rightarrow [0,1]$ by 
setting $\Psi^{(k)}(0) = \mathbbm{1}\{ k = 0 \}$ and, for $x\neq 0$,
\[
\Psi^{(k)}(x) 
\eqdef 
\begin{cases}
\sum_{n \le 0} \psi(2^{n}|x|) & \textnormal{ if } k=0\\
\psi(2^{k}|x|) & \textnormal{ if } k \not = 0.
\end{cases} 
\]
For any $E \subset \mcE$ and global scale assignment $\mbn = (n_{e})_{e \in \mcE} $ we define $\Psi_{\mbn}^{E} \in \allf$ via
\[
\Psi_{\mbn}^{E}(x) 
\eqdef
\prod_{\{a,b\} \in E} \Psi^{(n_{e})}(x_{a} - x_{b})\;.
\]

With these cutoff functions at hand, we now define single scale slices of  the functions \eqref{e:def-Ker-E} and \eqref{e:def-RKer-D}
that appear in $\CW$,
as well as for the functions
\begin{equ}[e:kerHat]
\KerHat^{\{e\}}
=
\ke{\{e\}}{}
+
\RKer^{\{e\}} \;,
\qquad
\KerHat^{\cD}
\eqdef
\prod_{e \in \cD}
\KerHat^{\{e\}}
 \;.
\end{equ}
For any $\mbn \in \N^{\mcE}$, $E \subset \mcE$, $\cD\subset \cut$ and $N\subset N(D_{2p})$ we set
\begin{equs}[3]
\ke{E}{\mbn} 
&\eqdef
\ke{E}{}
\cdot
\Psi_{\mbn}^{E},
&\enskip
\KerHat_{\mbn}^{\cD}
&\eqdef
\KerHat^{\cD}
\cdot
\Psi_{\mbn}^{\cD},
\\
\RKer_{\mbn}^{\cD}
&\eqdef
\RKer^{\cD}
\cdot
\Psi_{\mbn}^{\cD},
&\enskip
\powroot{N}{\mfn}{\logof,\mbn} 
&\eqdef
\powroot{N}{\mfn}{\logof}
\cdot
\prod_{u \in N}
\Psi_{\mbn}^{\{(\logof,u)\}},
&\enskip
\CJ_{\mbn}^{E}
&\eqdef
\J^{E}\cdot \Psi^{E}_{\mbn} \;.
\end{equs}

In order  to introduce the operators $\bar H^{\mbn}$ which are the $\mbn$ dependent version of the operator $\bar{H}$, we recall the following definition. 

\begin{definition}
$\mbbM \subset \mbbF$ is said to be {\it an interval of forests}
if $\mbbM$ is either empty or there exist two (not necessarily distinct) forests $s(\mbbM), b(\mbbM) \in \mathbb F$ such that
\footnote{It can happen that $s(\mbbM)=\emptyset$. Note that $\mbbM = \{\emptyset\}$ - namely $\mbbM$ just contains the empty forest (rather than being empty) - is allowed.}
\begin{equ}
\mbbM = \{ \mcF \in \mathbb F \,:\, s(\mbbM) \le a \le b(\mbbM)\}\;.
\end{equ}
$\mbbG $ is said to be {\it an interval of cuttings}
if $\mbbG$ is either empty or there exist two cut sets $s(\mbbG), b(\mbbG) \in 2^\cut$ such that
\begin{equ}
\mbbG = \{ \cC \in 2^\cut \,:\, s(\mbbG) \le a \le b(\mbbG)\}\;.
\end{equ}
Here $\le$ stands for the inclusion partial order (which is defined on both $\mathbb F$ and $2^\cut$).
For a non-empty interval $\mathbb{M}$ we also use the notation $\mbbM = \left[ s(\mbbM), b(\mbbM) \right]$. We also write  $\delta(\mathbb{M}) \eqdef b(\mathbb{M}) \setminus s(\mathbb{M})$, and same for $\delta(\mbbG)$.
\end{definition}

We now define operators $\bar{H}_{\CJ,\mbbM,S}^{\mbn}: \allf \rightarrow \mcb{C}_{\tilde{N}(S)^{c}}$, where $\CJ \in \mathfrak{J}$, $\mbbM$ is an interval%
\footnote{In the general discussion here the interval doesn't have to depend on the scale assignment. It will 
however  do so from the next subsection.}
 of forests, $S \in b(\mbbM)$, and $\mbn \in \N^{\mcE}$.
This definition is again recursive and for $\phi \in \allf$ we set, 
for any $x \in (\R_+\times \T^2)^{\tilde{N}(S)^{c}}$, 
\begin{equation*}
\begin{split}
[\bar{H}_{\CJ,\mbbM,S}^{\mbn}(\phi)](x)\ 
\eqdef&\ 
\int_{\tilde{N}_{b(\mbbM)}(S)} dy\
\CJ_{\mbn}^{L_{b(\mbbM)}(S)^{(2)}}(x_{\rho_{S}} \sqcup y) 
\cdot
\ke{\mathring{K}_{b(\mbbM)}(S)}{\mbn}(x_{\rho_{S}} \sqcup y)\\
&\quad
\cdot
\bar{H}^{\mbn}_{\CJ,\mbbM,C_{b(\mbbM)}(S)}
\Big[
\CJ_{\mbn}^{P^{\partial}_{b(\mbbM)}(S)}
\ke{K^{\partial}_{b(\mbbM)}(S)}{\mbn}
\cdot
(\mathscr{Y}_{S,\mbbM}^{\#}\phi)
\Big](x \sqcup y)
\end{split}
\end{equation*}
where  $\bar{H}_{\CJ,\mbbM,\emptyset}^{\mbn}$ is the identity operator and
\begin{equation*}
\mathscr{Y}_{S,\mbbM}^{\#}\phi
\eqdef 
\begin{cases}
- \mathscr{Y}_{S} \phi &\textnormal{ if } S \in s(\mbbM)\\
(\mathrm{Id} - \mathscr{Y}_{S})\phi &\textnormal{ if } S \in \delta(\mbbM).
\end{cases}
\end{equation*}

With all the above $\mbn$ dependent quantities at hand,
we can finally define  the following main object of this section.
For any $\mbn \in \N^{\mcE}$, $\CJ \in \mfJ$, any interval of forests $\mathbb{M}$, and any interval of cuttings $\mathbb{G}$ with $b(\mbbG) \subset \cut \setminus K(b(\mbbM))$, 
we define 
\begin{equs}[e: interval formula]
\CW^{\mbn}_{\lambda} [\J,\mbbM,  & \mathbb{G}]
 \eqdef 
\int_{\nodesleft{b(\mbbM)}{D_{2p}} \sqcup \{\logof\}} 
	\!\!\!\!\!\!\!\!\!\!\!\!\!\!\!\!\!\!\!\!\!\!\!\!\!\!
dy\ 
\delta(y_{\logof})\,
\psi^{2p,\lambda}(y)\,
\powroot{\nodesleft{b(\mbbM)}{D_{2p}}}{\sn}{\logof,\mbn}(y)\,
\CJ_{\mbn}^{\leavesleft{b(\mbbM)}{D_{2p}}^{(2)}}( x_{\rho_{S}} \sqcup y)\\
&
\cdot
\ke{
\kernelsleft{b(\mbbM)}{D_{2p}} \setminus b(\mathbb{G})
}{\mbn}( y)
\,\RKer_{\mbn}^{s(\mbbG) \setminus K^{\downarrow}(b(\mbbM))}(y)
\,\KerHat_{\mbn}^{\delta(\mbbG) \setminus K^{\downarrow}(b(\mbbM))}( y)
\\
\cdot
& 
\bar{H}_{\CJ,\mbbM,\mmax{b(\mbbM)}}^{\mbn} 
\Big[ 
\CJ_{\mbn}^{P^{\partial}_{b(\mbbM)}(D_{2p})}
\RKer_{\mbn}^{s(\mbbG) \cap K^{\downarrow}(\mmax{b(\mbbM)})}
\,\KerHat_{\mbn}^{\delta(\mbbG) \cap K^{\downarrow}(\mmax{b(\mbbM)})} \label{e:multiscale-integrand}
\\
&\qquad\qquad\qquad\qquad\qquad 
\cdot \,\ke{K^{\downarrow}(\mmax{b(\mbbM)}) \setminus b(\mathbb{G})}{\mbn}
\powroot{\tilde{N}(\mmax{b(\mbbM)})}{\sn,\logof}{\mbn}
\Big]( y) \, dy\;.
\end{equs}
Here $\psi^{2p,\lambda} \in \allf$ is defined via $\psi^{2p,\lambda}(z)
\eqdef
\prod_{j=1}^{2p}
\psi_{\lambda,z_{\logof}}(z_{\rho_{j}})$. 

A particularly useful case, see \eqref{e:expand-in-n} below, is $\mbbM = \{\mcF\}$ and 
$ \mathbb{G} = \{\cC\}$ (both sets consist of singletons).
In this case $\CW^{\mbn}_{\lambda} [\CJ,\{\mcF\}, \{\cC\} ]$
is given by the above formula with
\[
b(\mbbM) = \mcF, \quad
b(\mbbG) = \cC, \quad
s(\mbbG)=\cC, \quad
\delta(\mbbG)=\emptyset
\]
 (so that the factors $\KerHat_{\mbn}$ disappear).
In fact, the integrand of $\CW^{\mbn}_{\lambda} [\CJ,\{\mcF\}, \{\cC\} ]$
is simply \eqref{e:moment-J} with each function
replaced by its $\mbn$ dependent version.%
\footnote{To see this, we need to note that for the set associated to $\RKer$, one has that
$s(\mbbG) \setminus K^{\downarrow}(b(\mbbM))
= \cC \setminus K^{\downarrow}(\mcF)$
is the same set as $K(\mcF,D_{2p})\cap \cC$ as in  \eqref{e:moment-J}, using our assumption that $b(\mbbG) \subset \cut \setminus K(b(\mbbM))$.}
The reason that we need to define 
$\CW^{\mbn}_{\lambda} [\J,\mbbM,  \mathbb{G}]$
for general $\mbbM,  \mathbb{G}$ will be clear
in \eqref{e:grouping-FG}.

One then has the following lemma (see also \cite[Lemma~6.3]{CH}),
which gives an multiscale decomposition for $\CW_{\lambda}[\CJ,\mcF, \cC]$,
and for each scale assignment $\mbn$ gives a ``grouping'' of terms according to intervals.
\begin{lemma}\label{lem:reorderSum}
For any $\J \in \mfJ$, $\cC \subset \cut$, and $\mcF \in \mathbb{F}_{\cC}$ (where $\mathbb{F}_{\cC}$ is defined in \eqref{e:def-FavoidC})
one has the absolute convergence
\begin{equ} [e:expand-in-n]
\sum_{\mbn \in \N^{\mcE}}
\CW^{\mbn}_{\lambda}\left[\J,\{\mcF\}, \{\cC\} \right]
=
\CW_{\lambda}[\J,\mcF, \cC]\;.
\end{equ}
On the other hand, for fixed $\mbn \in \N^{\mcE}$, interval of forests $\mathbb{M}$, and any interval of cut sets $\mathbb{G}$ with $b(\mbbG) \subset \cut \setminus K(b(\mbbM))$, one has
\begin{equ} [e:grouping-FG]
\sum_{
\substack{
\mcF \in \mathbb{M}\\
\cC \in \mathbb{G}}
}
\CW_{\lambda}^{\mbn}\left[\CJ,\{\mcF\}, \{\cC\} \right]
=
\CW_{\lambda}^{\mbn}\left[\CJ,\mathbb{M}, \mathbb{G} \right].
\end{equ} 
\end{lemma}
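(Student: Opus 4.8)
The plan is to prove the two identities \eqref{e:expand-in-n} and \eqref{e:grouping-FG} separately, as they are of quite different nature. For \eqref{e:expand-in-n}, the key point is that all the ``$\mbn$-dependent'' building blocks are obtained from the corresponding $\mbn$-independent ones by multiplying with the cutoff factors $\Psi^{E}_{\mbn}$, and for each $E \subset \mcE$ one has the partition-of-unity identity $\sum_{\mbn} \Psi^{E}_{\mbn}(x) = \prod_{\{a,b\}\in E}\bigl(\sum_{k\ge 0}\Psi^{(k)}(x_a - x_b)\bigr) = 1$ away from the relevant diagonals, by construction of $\psi$ and of the $\Psi^{(k)}$. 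Thus, formally, summing $\CW^{\mbn}_\lambda[\CJ,\{\mcF\},\{\cC\}]$ over $\mbn \in \N^{\mcE}$ just removes all cutoffs and returns $\CW_\lambda[\CJ,\mcF,\cC]$ as given by \eqref{e:moment-J} (with $b(\mbbM)=\mcF$, $b(\mbbG)=s(\mbbG)=\cC$, $\delta(\mbbG)=\emptyset$, the $\KerHat$ factors absent, and $\bar H$ reducing to $\bar H_{\CJ,\mcF,\bar\mcF}$). The only real content is \emph{absolute} convergence, which is needed to justify interchanging the summation over $\mbn$ with the integration over $\nodesleft{b(\mbbM)}{D_{2p}}\sqcup\{\logof\}$ and with the finitely many nested integrals hidden inside $\bar H$; this is precisely the analogue of \cite[Lemma~6.3]{CH}, and I would quote the power-counting bounds from there (or from Lemma~\ref{lem:bound-J} for the $\CJ$ factors and the singular-kernel norm estimate \eqref{def: singular kernel norm} for $K$ and $\RKer$) to see that the $\mbn$-indexed integrands are dominated, term by term in the integrand, by a summable family. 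Since $D_{2p}$ is a finite graph and all the sets $\mcE$, $\nodesleft{\mcF}{D_{2p}}$ etc.\ are finite, Fubini–Tonelli then applies and gives \eqref{e:expand-in-n}.

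For \eqref{e:grouping-FG} the situation is purely combinatorial/algebraic: $\mbn$ is fixed, so there are no convergence issues, and one simply has to check that
\[
\sum_{\substack{\mcF \in \mbbM\\ \cC \in \mbbG}} \CW^{\mbn}_{\lambda}[\CJ,\{\mcF\},\{\cC\}]
\]
telescopes to $\CW^{\mbn}_{\lambda}[\CJ,\mbbM,\mbbG]$. The mechanism is the standard one underlying the renormalisation identity, already visible in the discussion around \eqref{e:def-I_S}: for each forest $S$ lying in $\delta(\mbbM) = b(\mbbM)\setminus s(\mbbM)$, the sum over whether or not $S$ is included in $\mcF$ produces, via $\sum_{\mcF}(\pm\mathscr{Y}_S) = \Id - \mathscr{Y}_S$, exactly the operator $\mathscr{Y}^{\#}_{S,\mbbM}$ appearing in the definition of $\bar H^{\mbn}_{\CJ,\mbbM,S}$; for $S\in s(\mbbM)$, $S$ is always present and one gets $-\mathscr{Y}_S$, again matching $\mathscr{Y}^{\#}_{S,\mbbM}$. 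Likewise, summing over which edges of $\delta(\mbbG) = b(\mbbG)\setminus s(\mbbG)$ are placed into the cut set $\cC$ converts the pair (kernel edge $\ke{\{e\}}{\mbn}$ / renormalised-kernel edge $\RKer^{\{e\}}_{\mbn}$) into the combined object $\KerHat^{\{e\}}_{\mbn}$ via \eqref{e:kerHat}, while edges in $s(\mbbG)$ are always cut and contribute $\RKer_{\mbn}$, and edges not in $b(\mbbG)$ are never cut and contribute $\ke{\cdot}{\mbn}$. Since the forest-structure of each $\mcF\in\mbbM$ has the same maximal elements $\bar\mcF = \bar{b(\mbbM)}$ and the same tree containments on $s(\mbbM)$, all the node/edge sets $\nodesleft{\cdot}{D_{2p}}$, $\kernelsleft{\cdot}{D_{2p}}$, $\leavesleft{\cdot}{D_{2p}}$, $P^{\partial}_{\cdot}(D_{2p})$, $K^{\downarrow}(\cdot)$ appearing as superscripts in \eqref{e:multiscale-integrand} depend only on $b(\mbbM)$ and not on the individual $\mcF$, so these factors pull out of the sum cleanly. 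I would carry this out by first expanding $\mcF = s(\mbbM) \sqcup A$ with $A \subset \delta(\mbbM)$ and $\cC = s(\mbbG)\sqcup B$ with $B\subset \delta(\mbbG)$, then using multilinearity of all the operations involved (the integrals, and $\bar H$ which is linear in its argument) to push the two sums over $A$ and $B$ inside, and finally recognising the resulting expression factor-by-factor as \eqref{e:multiscale-integrand}; an induction on the depth of $b(\mbbM)$ handles the nested $\bar H$ operators, since the summation over the part of $A$ contained inside any $S\in\bar{b(\mbbM)}$ is exactly of the same form with $S$, $C_{b(\mbbM)}(S)$ playing the roles of $D_{2p}$, $\bar{b(\mbbM)}$.

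The main obstacle is the absolute-convergence claim in \eqref{e:expand-in-n}: one must be careful that the $\mbn$-by-$\mbn$ bounds are uniform enough (in particular uniform over the nested structure inside $\bar H$ and uniform in $\CJ$ up to the factor $\|\CJ\|$) to give a genuinely summable dominating function before doing any cancellation — at this stage no renormalisation has been exploited yet, so each individual $\CW^{\mbn}_\lambda[\CJ,\{\mcF\},\{\cC\}]$ is allowed to be large, but must be summable over $\mbn$; this is exactly the (somewhat technical) content imported from \cite[Sec.~6]{CH}, adapted to the Feynman-diagram $D_{2p}$ rather than a single tree and with cumulants replaced by the interaction kernels $\CJ$ controlled by Lemma~\ref{lem:bound-J}. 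Identity \eqref{e:grouping-FG}, by contrast, is bookkeeping: the only place care is needed is matching $\mathscr{Y}^{\#}_{S,\mbbM}$ and $\KerHat$ correctly on the boundary cases $S\in s(\mbbM)$ and $e\in s(\mbbG)$, which the definitions have been set up to make automatic.
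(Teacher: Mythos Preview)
Your treatment of \eqref{e:expand-in-n} is fine and matches the paper, which dispatches it in one line (``follows from linearity'') together with the partition-of-unity identity for the cutoffs. Your treatment of the sum over $\cC\in\mbbG$ is also correct and matches the paper's remark that one simply expands $\KerHat^{\{e\}}_{\mbn}=\ke{\{e\}}{\mbn}+\RKer^{\{e\}}_{\mbn}$ for each $e\in\delta(\mbbG)$.

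For the sum over $\mcF\in\mbbM$, however, there is a genuine gap. Your claim that ``the forest-structure of each $\mcF\in\mbbM$ has the same maximal elements $\bar\mcF = \overline{b(\mbbM)}$'' is false: take $s(\mbbM)=\emptyset$, $b(\mbbM)=\{S\}$, so $\mbbM=\{\emptyset,\{S\}\}$ and $\bar\emptyset=\emptyset\neq\{S\}$. Consequently the sets $\nodesleft{\mcF}{D_{2p}}$, $\kernelsleft{\mcF}{D_{2p}}$, $\leavesleft{\mcF}{D_{2p}}$, $K^{\downarrow}(\bar\mcF)$ and even the domain of the outer integral in $\CW^{\mbn}_\lambda[\CJ,\{\mcF\},\{\cC\}]$ genuinely vary with $\mcF$; nothing ``pulls out of the sum cleanly''. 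The issue is precisely that for a tree $T\in\delta(\mbbM)$ the nodes $\tilde N(T)$ sit in the outer integral when $T\notin\mcF$ but inside a nested $\bar H$ when $T\in\mcF$, and one must show these two contributions add up correctly.

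The paper does this by induction on $|\delta(\mbbM)|$, peeling off one $T\in\delta(\mbbM)$ at a time and splitting $\mbbM=\mbbM_1\sqcup\mbbM_2$ according to whether $T$ is absent or present. The crux is the identity $\bar H^{\mbn}_{\CJ,\mbbM_1,\tilde T}+\bar H^{\mbn}_{\CJ,\mbbM_2,\tilde T}=\bar H^{\mbn}_{\CJ,\mbbM,\tilde T}$ for the parent $\tilde T$ of $T$ in $b(\mbbM)$, proved by rewriting both sides over the common domain $\tilde N_{b(\mbbM)}(\tilde T)\sqcup\tilde N_{b(\mbbM)}(T)$. This rewriting only works because the recursive definition of $\bar H$ carries the factor $\CJ^{P^{\partial}_{b(\mbbM)}(\tilde T)}_{\mbn}$: on the $\mbbM_1$ side the interactions between $L(T)$ and the rest of $L(\tilde T)$ sit inside the single factor $\CJ^{L_{b(\mbbM_1)}(\tilde T)^{(2)}}_{\mbn}$, whereas on the $\mbbM_2$ side they are supplied by $\CJ^{P^{\partial}}$ at the $T$-level; without the $P^{\partial}$ insertion the two sides would not match. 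The paper says this explicitly (``This step would not work if we were using the operators $H_{\CJ,\bullet,\bullet}$ instead of $\bar H_{\CJ,\bullet,\bullet}$''), and indeed the entire rewriting of the moment formula in Section~\ref{sec: explicit formula} was engineered so that this identity holds. Your sketch treats $\bar H$ merely as ``linear in its argument'' and misses this structural point.
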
 
\begin{proof}
This statement corresponds quite closely to \cite[Lemma~6.3]{CH}. The first identity follows from linearity. 
The second identity is not immediately straightforward to obtain from \eqref{eq: explicit formula for moment} for the reasons described at the end of Section~\ref{sec: renormalisation operators} but it is not so difficult to obtain from \eqref{e:moment-J}.  

Note that the fact that we implemented a single scale slice of our integral corresponding to $\mbn$ plays no role for proving the identity (i.e., a similar identity holds before the multiscale expansion) but we have stated the identity this way since this is the way we will use it. 
We first show that for any fixed choice of $\mbn \in \N^{\CE}$ and interval of forests $\mbbM$ one has 
\begin{equ}\label{eq: interval sum work}
\sum_{\mcF \in \mathbb{M}}
\CW_{\lambda}^{\mbn}\left[\J,\{\mcF\}, \emptyset \right]
=
\CW_{\lambda}^{\mbn}\left[\J,\mbbM, \emptyset \right]\;.
\end{equ}
We prove the above identity via induction on $|\delta(\mbbM)|$. The base case, which occurs when $|\mbbM| = 1$ and $|\delta(\mbbM)| = 0$, is immediate. 

For the inductive step fix $l > 0$, assume the claim has been proven whenever $|\delta(\mbbM)| < l$, and fix $\mbbM$ with $|\delta(\mbbM)| = l$.

Fix $T \in \delta(\mbbM)$, we prove the claim in the case where there exists $\tilde{T} \in b(\mbbM)$ with $T \in C_{b(\mbbM)}(\tilde{T})$. 
The case when there is no such $\tilde{T}$ is easier. 

Let $\mbbM_{1} \eqdef \{ \mcF \in \mbbM: \mcF \not \ni T\}$ and $\mbbM_{2} \eqdef \{ \mcF \sqcup \{T\}: \mcF \in \mbbM_{1} \}$. 
Note that $\mbbM_{1}$ and $\mbbM_{2}$ are intervals that partition $\mbbM$. Therefore, by our inductive hypothesis, 
\begin{equ}\label{eq: interval work 3}
\CW_{\lambda}^{\mbn}[\J,\mbbM, \emptyset ] = \CW_{\lambda}^{\mbn}[\J,\mbbM_{1}, \emptyset ] + \CW_{\lambda}^{\mbn}[\mbbM_{2}, \emptyset ]\;.
\end{equ} 
Clearly one has, for all $S \in b(\mbbM)$ with $T \not \le S$, 
\begin{equ}\label{interval work 3.5}
\bar{H}_{\CJ,\mbbM_{1},S}^{\mbn} = \bar{H}_{\CJ,\mbbM_{2},S}^{\mbn}\;.
\end{equ}
Next we claim that for every $S \in b(\mbbM)$ with $S > T$ one has
\begin{equ}\label{eq: interval work 4}
\bar{H}_{\CJ,\mbbM_{1},S}^{\mbn} + \bar{H}_{\CJ,\mbbM_{2},S}^{\mbn} = \bar{H}_{\CJ,\mbbM,S}^{\mbn}\;. 
\end{equ}
Proving this claim finishes our proof since the combination of \eqref{interval work 3.5} and \eqref{eq: interval work 4} yields \eqref{eq: interval sum work}. 
We prove the claim using an auxilliary induction in 
\[
\mathrm{depth}(\{ S' \in b(\mbbM): S > S' > T\})\;.
\] 
The inductive step for this induction is immediate upon writing out both sides of \eqref{eq: interval work 4} and remembering that $\mathscr{Y}_{S,\mbbM_{1}}^{\#} = \mathscr{Y}_{S,\mbbM_{2}}^{\#} = \mathscr{Y}_{S,\mbbM}^{\#}$. 
What remains is to check base case of this induction which occurs when $S = \tilde{T}$. 

To obtain \eqref{eq: interval work 4} when $S = \tilde{T}$ we first observe that $C_{b(\mbbM)}(\tilde{T}) = C_{b(\mbbM_{2})}(\tilde{T}) =  C_{b(\mbbM_{1})}(\tilde{T})~\sqcup~\{T\}$ and then rewrite, for $i = 1,2$,  $\bar{H}_{\CJ,\mbbM_{i},\tilde{T}}^{\mbn}[\phi]$ as 
\begin{equs}[e: final work for interval sum]
{}&
\int_{\tilde{N}_{b(\mbbM)}(\tilde{T})} dy\
\CJ_{\mbn}^{L_{b(\mbbM)}(\tilde{T})^{(2)}}(y)
\cdot
\ke{\mathring{K}_{b(\mbbM)}(\tilde{T})}{\mbn}(x_{\rho_{\tilde{T}}} \sqcup y)
\int_{\tilde{N}_{b(\mbbM)}(T)}dz\ 
\CJ_{\mbn}^{L_{b(\mbbM)}(T)^{(2)}}(z)\\
& \quad
\cdot
\ke{\mathring{K}_{b(\mbbM)}(\tilde{T})}{\mbn}(w)
\big(
\mathscr{Y}^{(i)}
\circ
\bar{H}^{\mbn}_{C_{b(\mbbM)}(\tilde{T}) \setminus \{T\}}
[
\CJ_{\mbn}^{P^{\partial}_{b(\mbbM)}(\tilde{T})}
\ke{K^{\partial}_{b(\mbbM)}(S)}{\mbn}
\mathscr{Y}_{\tilde{T},\mbbM}^{\#}\phi
]
\big)(w),
\end{equs}
where $w = x_{\rho_{\tilde{T}}} \sqcup y \sqcup z$,  $\mathscr{Y}^{(1)} = \Id$, and $\mathscr{Y}^{(2)} = (-\mathscr{Y}_{T})$. 
This step would not work if we were using the operators $H_{\CJ,\bullet,\bullet}$ instead of $\bar{H}_{\CJ,\bullet,\bullet}$.

The corresponding identity for summing over $\cC \in \mathbb{G}$ is easier to check, one just expands, for each $e \in \delta(\mbbG)$, the factor $\KerHat_{\mbn}^{\{e\}}$ as $\ke{\{e\}
}{\mbn} + \RKer_{\mbn}^{\{e\}}$.  
\end{proof}
\subsection{Organising renormalisations}
Our objective in this subsection is to explain how, for each fixed $\mbn \in \N^{\CE}$, we take advantage of the cancellation between terms $\CW^{\mbn}[\CJ,\{\mcF\},\{\cC\}]$ where $(\cC,\mcF)$ ranges over
\[
\{ (\cC,\mcF): \cC \subset \mfC,\ \mcF \in \mbbF_{\cC}\}\;.
\] 
\subsubsection{Organising negative renormalisations}
We now move towards defining the ``projections onto safe forests'' $\{P^{\mbn}\}_{\mbn \in \N^{\mcE}}$ of \cite{FMRS85} which are used to organize negative renormalisations, namely organizing the sum over $\mcF$ 
These projections tell us how to choose intervals based on the scale assignments $\mbn$.

Fix $S \in \Div$ and $\mcF \in \mathbb{F}$ .
We define the immediate ancestor of $S$ in $\mcF$ by
\[
A_{\mcF}(S)
\eqdef 
\begin{cases}
T & \textnormal{ if }\mathrm{Min}(\{\tilde{T} \in \mcF:\ \tilde{T} > S\}) = \{T\},\\
D_{2p}^{\ast} & \textnormal{ if }\{\tilde{T} \in \mcF:\ \tilde{T} > S\} = \emptyset
\end{cases}
\]
where we view $D_{2p}^{\ast}$ as an undirected multigraph with node set $\allnodes$ and edge set $\CE$. Note that the minimum above consists of a unique tree $T$ because $\CF$ is a forest.

Recalling the notations  in Section~\ref{sec:nodes-and-edges},
we also define the edge set $\CE^{\inte}_{\mcF}(S)$ by
\[
\CE^{\inte}_{\mcF}(S)
\eqdef
K_{\mcF}(S) \sqcup L_{\mcF}(S)^{(2)}
\]
and define the edge set $\CE^{\exte}_{\mcF}(S)$ in the following way
\begin{equs}
\CE^{\inte}(S)
&  \eqdef
K(S)
\sqcup
L(S)^{(2)},
	\quad
\CE^{\inte}(D_{2p}^{\ast})
\eqdef
\CE,
	\\
\CE^{\exte}(S)
&\eqdef
\CE_{\logof}(N(S))
\, \sqcup \,
K^{\downarrow}(S)
\, \sqcup \,
\{e \in L(D_{2p})^{(2)}: |e \cap L(S)| = 1\},
	\\
\CE^{\exte}_{\mcF}(S)
& \eqdef 
\CE^{\inte}(A_{\mcF}(S))
\cap
\CE^{\exte}(S)
\;,
\end{equs}
where $\CE_{\logof}(\cdot)$ is defined in  \eqref{e:CE-logof}.
We use these definitions to define the following internal and external scale information.
Given $\mbn$ we set
\begin{equs}[inner and outer scales def]
\mathrm{int}_{\mcF}^{\mbn}(S) 
&
\eqdef 
\min 
\{ n_{e}:\ 
e \in \CE^{\inte}_{\mcF}(S) 
\}\;,
\\
\mathrm{ext}_{\mcF}^{\mbn}(S)
&\eqdef
\max
\{
n_{e}:\  
e \in \CE^{\exte}_{\mcF}(S)  
\}\;.
\end{equs}
We now introduce the forest projections we will use. For each $\mbn \in \N^{\CE}$, we 
define $P^{\mbn}: \mathbb{F} \rightarrow \mathbb{F}$ by
\begin{equ}\label{def: projection onto safe forests}
P^{\mbn}(\mcF) \eqdef
\left\{ 
S \in \mcF:\  
\inte_{\mcF}^{\mbn}(S) \le \exte_{\mcF}^{\mbn}(S)
\right\}\;.
\end{equ}

\begin{example}
In the example of the dipole (Example~\eqref{ex:dipole}), consider $S$ shown in the gray area, and assume that $S\in \CF$:
\[
\begin{tikzpicture}
  \filldraw[draw=white,fill=black!10]   (0.75,2)     ellipse (40pt and 10pt);
\node at (0,2) [charge] (1) {$+$};  \node at (1.5,2) [charge] (2) {$-$};
\node at (-0.2,1.2) [charge] (3) {$+$};  
\node at (2,0) [charge] (4) {$-$};
\draw [kernel, bend left = 30] (1) to node[above] {$e_1$} (2);  
\draw  (1) to   (2);  
	\node at (0.75,1.9) {$e_2$};
\draw[dotted] (3) to  node[left] {$e_3$} (1); 
\draw[dotted] (3) to (2); 
\draw[dotted] (4) to (1); 
\draw[dotted] (4) to (2); 
\end{tikzpicture} 
\]
Here $K_{\mcF}(S)=\{e_1\}$ 
and
 $ L_{\mcF}(S)^{(2)}=\{e_2\}$,
 so that $\CE^{\inte}_{\mcF}(S)=\{e_1,e_2\}$.
 All the dotted lines represent some of the edges (if not all)
  in $\CE^{\exte}_{\mcF}(S)$;
 in particular $e_3\in \CE^{\exte}_{\mcF}(S)$. 
 The picture   reflects the actual distances, namely, 
 $2^{-n_{e_1}}=2^{-n_{e_2}}>2^{-n_{e_3}}$.
So 
in this situation the condition $\inte_{\mcF}^{\mbn}(S) \le \exte_{\mcF}^{\mbn}(S)$
in \eqref{def: projection onto safe forests}
is satisfied, 
thus $S$ is ``safe'' (i.e. $S\in P^{\mbn}(\mcF)$). $S$ is called a ``good pair'' in \cite{HaoSG}.
\end{example}
The following  proposition shows
the key property of the maps $P^{\mbn}$ that justifies our use of them as an organizing tool.
\begin{proposition}	\label{Pn-1 is interval}
For every $\mbn \in \N^{\CE}$ and $\mcF \in \mathbb{F}$ one has that $(P^{\mbn})^{-1}(\mcF)$ is an interval of forests.
\end{proposition}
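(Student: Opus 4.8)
The plan is to show that $(P^{\mbn})^{-1}(\mcF)$ is of the form $\left[ s, b \right]$ for suitable forests $s, b \in \mathbb{F}$, by producing explicit candidates for the smallest and largest elements and checking that a forest $\mcG$ lies in $(P^{\mbn})^{-1}(\mcF)$ if and only if $s \le \mcG \le b$. The natural candidates are obtained by adding to $\mcF$ exactly those trees $S \notin \mcF$ which the scale assignment $\mbn$ would ``force'' to be safe (for $b$), and removing from $\mcF$ those trees which $\mbn$ forces to be unsafe. More precisely: since $P^{\mbn}$ only ever \emph{removes} trees from its argument (by definition $P^{\mbn}(\mcG) \subset \mcG$), a forest $\mcG$ satisfies $P^{\mbn}(\mcG) = \mcF$ only if $\mcF \subset \mcG$; so the interval, if it exists, must have $s(\mbbM) = \mcF$. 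The upper end $b$ should be $\mcF$ together with all trees $S$ that, once added, would be unsafe relative to the enlarged forest (so that $P^{\mbn}$ strips them back off), and such that adding them does not disturb the safe/unsafe status of trees already accounted for.

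The key technical point I would isolate first is a \emph{monotonicity/locality} lemma for the predicate ``$S$ is safe in $\mcG$'': namely, that whether $S \in P^{\mbn}(\mcG)$ depends on $\mcG$ only through the immediate ancestor $A_{\mcG}(S)$ and the children $C_{\mcG}(S)$, via the sets $\CE^{\inte}_{\mcG}(S)$ and $\CE^{\exte}_{\mcG}(S)$, and that enlarging $\mcG$ by adding trees strictly above $A_{\mcG}(S)$ or strictly below the children — but not between — leaves $\inte^{\mbn}_{\mcG}(S)$ and $\exte^{\mbn}_{\mcG}(S)$ unchanged. Combined with this, one shows that if $S \notin \mcF$ but $S \cup \mcF \in \mathbb{F}$, then exactly one of the following holds: either $S$ is unsafe relative to every forest $\mcG$ with $\mcF \cup \{S\} \subset \mcG$ in which $S$'s ancestor/children are the same as in $\mcF \cup \{S\}$ (so $S$ may be freely included), or $S$ is safe relative to all such $\mcG$ (so $S$ may never be included, since then $P^{\mbn}(\mcG) \ni S \notin \mcF$). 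This dichotomy is what lets us define $b(\mbbM)$ unambiguously as $\mcF$ together with all the ``freely includable'' trees, and then verify that $(P^{\mbn})^{-1}(\mcF) = \{\mcG : \mcF \subset \mcG \subset b(\mbbM)\}$.

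Concretely I would proceed in these steps. (1) Observe $P^{\mbn}(\mcG) \subset \mcG$, hence $\mcG \in (P^{\mbn})^{-1}(\mcF) \Rightarrow \mcF \subset \mcG$, and moreover every $S \in \mcG \setminus \mcF$ must be \emph{safe} in $\mcG$ (else $P^{\mbn}(\mcG)$ would still contain it, contradicting $P^{\mbn}(\mcG) = \mcF$) — wait, that is backwards: $S \in \mcG \setminus \mcF$ must be \emph{unsafe}, i.e. $\inte^{\mbn}_{\mcG}(S) > \exte^{\mbn}_{\mcG}(S)$, so that $P^{\mbn}$ discards it; and every $S \in \mcF$ must be safe in $\mcG$. (2) Prove the locality lemma described above, using the explicit descriptions of $\CE^{\inte}_{\mcG}(S)$, $\CE^{\exte}_{\mcG}(S)$ in terms of $K_{\mcG}(S)$, $L_{\mcG}(S)^{(2)}$, $A_{\mcG}(S)$: the point is that inserting a tree $T$ with $S \subsetneq T \subsetneq A_{\mcF}(S)$ would change $A_{\mcG}(S)$ and hence $\CE^{\exte}_{\mcG}(S)$, so such insertions are precisely what is controlled, whereas insertions disjoint from $S$ or nested strictly inside a child of $S$ do not matter. (3) Define $b(\mbbM) \eqdef \mcF \sqcup \{ S \in \Div \setminus \mcF : \mcF \cup \{S\} \in \mathbb{F},\ S \text{ is unsafe relative to } \mcF \cup \{S\} \text{ and to all forests between}\}$; more carefully this needs to be built up one ``layer'' at a time so that the forest property is maintained and the dichotomy from step (2) applies at each stage. (4) Check $b(\mbbM) \in \mathbb{F}$ and that $P^{\mbn}(b(\mbbM)) = \mcF$ (every added tree is unsafe, every tree of $\mcF$ is still safe by the locality lemma). (5) Finally verify both inclusions: any $\mcG$ with $\mcF \subset \mcG \subset b(\mbbM)$ has $P^{\mbn}(\mcG) = \mcF$ (using locality to see adding or omitting trees from $b(\mbbM) \setminus \mcF$ does not change safety of anything in $\mcF$, and that the trees of $\mcG \setminus \mcF$ remain unsafe), and conversely any $\mcG \in (P^{\mbn})^{-1}(\mcF)$ satisfies $\mcF \subset \mcG$ by step (1) and $\mcG \subset b(\mbbM)$ because each $S \in \mcG \setminus \mcF$ is unsafe, hence of the type collected into $b(\mbbM)$. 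The main obstacle I anticipate is step (2)–(3): making the locality statement precise enough, and organising the construction of $b(\mbbM)$ so that the forest (nesting/disjointness) constraint is respected while the added trees do not interact in a way that flips each other's safety status — one likely needs to add candidate trees in order of increasing size and argue that a tree's safety status relative to $\mcF \cup \{S\}$ already determines its status in any intermediate forest, which is exactly where the comodule-type structure of $\Div$ and the precise form of $\CE^{\exte}_{\mcF}(S) = \CE^{\inte}(A_{\mcF}(S)) \cap \CE^{\exte}(S)$ do the work.
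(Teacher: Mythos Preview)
Your approach is essentially the one the paper defers to (namely \cite[Prop.~7.3, Lemma~7.1, Cor.~7.2]{CH}, going back to \cite{FMRS85}), so you are on the right track; but your key lemma is not formulated in the form that actually makes the argument go through, and your construction of $b(\mbbM)$ is circular as written.

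The clean statement you want in step~(2) is not ``safety of $S$ depends only on $A_{\mcG}(S)$ and $C_{\mcG}(S)$'' but rather: \emph{if $S\in\mcG$ is unsafe in $\mcG$, then for every $T\in\mcG\setminus\{S\}$ the safe/unsafe status of $T$ is the same in $\mcG$ and in $\mcG\setminus\{S\}$}. This is strictly stronger than your locality statement, because removing $S$ \emph{does} change $A_{\mcG}(T)$ for each $T\in C_{\mcG}(S)$ and changes $C_{\mcG}(T)$ for $T=A_{\mcG}(S)$; the point is that the unsafe condition $\inte^{\mbn}_{\mcG}(S)>\exte^{\mbn}_{\mcG}(S)$ is precisely what guarantees these changes do not flip anyone's status. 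Once you have this lemma, idempotence $P^{\mbn}\circ P^{\mbn}=P^{\mbn}$ is immediate (remove the unsafe trees one by one), and the interval structure follows: the fiber over $\mcF$ is empty unless $P^{\mbn}(\mcF)=\mcF$, in which case $s(\mbbM)=\mcF$ and $b(\mbbM)$ is the union of all $\mcG$ in the fiber (the key lemma shows this union is again a forest in the fiber).

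Your step~(5) contains the gap most clearly: ``each $S\in\mcG\setminus\mcF$ is unsafe, hence of the type collected into $b(\mbbM)$'' presumes that being unsafe \emph{in $\mcG$} implies being unsafe \emph{in $\mcF\cup\{S\}$}, which is exactly what your locality lemma as stated does not give you (since $A_{\mcG}(S)$ and $A_{\mcF\cup\{S\}}(S)$ may differ). The invariance-under-removal lemma above bridges this by letting you peel off the unsafe trees of $\mcG\setminus\mcF$ one at a time without disturbing the rest.
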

\begin{proof}
One can easily translate the proof of \cite[Proposition 7.3]{CH} and the statements \cite[Lemma~7.1 and Corollary~7.2]{CH} used in the proof\dash in our setting the set of forests $\mathbb{F}_{\pi}$ is replaced by $\mathbb{F}$ and the ambient multigraph $\sT^{\ast}$ is replaced by $D_{2p}^{\ast}$. 
The original proof of this statement can be found in \cite{FMRS85}. 
\end{proof}
\begin{remark}
Note that since one always has $P^{\mbn}(\mcF) \subset \mcF$ it follows that if $(P^{\mbn})^{-1}(\mcF) \not = \emptyset$ then $\mcF$ must be the minimal element $(P^{\mbn})^{-1}(\mcF)$. 
\end{remark}
Our next step is to obtain Corollary~\ref{cor:organize-and-resum} below
which states that we can group terms in the summation on the left hand side of \eqref{e:grouping-and-resum-1}
 into sets of intervals called
 $\mfM^{\mbn}$ and
$ \mfG^{\mbn}(\mbbM)$ where $\mbbM \in  \mfM^{\mbn}$. We define these two sets now.

For each $\mbn \in \N^{\CE}$ and $\cC \subset \mfC$ we define
\begin{equ}\label{collections of intervals of forests}
\mfM^{\mbn}(\cC)
\eqdef 
\Big\{
\mbbM \subset \mbbF:\
\exists \mcS \in \mbbM \textnormal{ such that }
(P^{\mbn}_{\cC})^{-1}[\mcS]
=
\mbbM
\Big\}\;,
\end{equ}
where we have introduced the shorthand $(P^{\mbn}_{\cC})^{-1}(\cdot) \eqdef (P^{\mbn})^{-1}(\cdot) \cap \mbbF_{\cC}$
where the notation $\mbbF_{\cC}$ was defined in \eqref{e:def-FavoidC}. One sees that $\mfM^{\mbn}(\cC)$ is a collection of intervals thanks to Proposition~\ref{Pn-1 is interval} and the fact that the intersection of an interval with $\mbbF_{\cC}$ is again an interval. 

For each $\cC \subset \mfC$ the set $\mfM^{\mbn}(\cC)$ partitions $\{\cC\} \times \mbbF_{\cC}$ into $\{ \{\cC\} \times \mbbM: \mbbM \in \mfM^{\mbn}(\mcC)\}$. 
The negative renormalisation cancellations we take advantage of will all take place as we sum over such a single such set $\{\cC\} \times \mbbM$ as opposed to occurring between different sets $\{\cC\} \times \mbbM$ and $\{\cC\} \times \mbbM'$ with $\mbbM \not = \mbbM'$. 

However we are not done since taking advantage of cancellations for positive renormalisation requires us to group terms with different choices of $\cC \subset \mfC$. In the next subsection we will introduce a strategy for organizing positive renormalisations and show that there is some compatibility in the sense that we can use this organization simultaneously with our above described method for organizing negative renormalisations. 
\subsubsection{Organising positive renormalisations}
Again, we fix a choice of $\mbn \in \N^{\CE}$ for this subsection. 
Given $u,v \in \allnodes$ and $\mcF \in \mathbb{F}$, we define 
\begin{equ}\label{pathfinder scale}
\mbn_{\mcF}(u,v) 
\eqdef
\max
\Big\{ 
\min\{ n_{e}: e \in \CE' \setminus \CE^{\inte}(\mcF) \}:\ 
\CE' \subset \CE \textnormal{ connects }u,v
\Big\}\;.
\end{equ}
Here, a subset $\CE' \subset \CE$ connects $u,v \in \allnodes$ 
simply means that 
 one can find a sequence  $e_{1},\dots,e_{k} \in \CE'$ with $ u \in e_{1}$, $v \in e_{k}$ and $e_{j} \cap e_{j+1} \not = \emptyset$ for $1 \le j \le k-1$. 
Note that the minimum is over $e$ and the maximum is over $\CE'$. 
Also recall that by our convention 
\[
\CE^{\inte}(\mcF)
\eqdef
\bigcup_{S \in \mcF} \CE^{\inte}(S)
\]
Now for each $\mcF \in \mathbb{F}$ we set
\begin{equation}\label{def: cuts to harvest}
\cG^{\mbn}(\mcF) \eqdef
\{ e \in \fullcuts:\ 
\mbn_{\mcF}(\logof,e_{\p}) > \mbn_{\mcF}(e_{\p},e_{\ch})
\}\;.
\end{equation}
For a given choice of negative renormalisations $\mcF \in \mbbF$ we assume that have been made $\cG^{\mbn}(\mcF)$ is the set of edges for which we want to harvest positive renormalisation cancellations. 

Now we describe how one shows that both sets of renormalisations can be organised simultaneously. 
We first set
\[
\mfM^{\mbn} \eqdef \bigcup_{\cC \subset \mfC} \mfM^{\mbn}(\cC)\;,
\] 
namely $\mfM^{\mbn}$ is the set of all groupings of negative renormalisations we see as we vary $\cC \subset \mfC$. 
Then for each $\mbbM \in \mfM^{\mbn}$ we define a set of all cuts that allow $\mbbM$ to appear, namely we set 
\[
\mfC^{\mbn}(\mbbM)
\eqdef
\{ \cC \subset \mfC:\ (P^{\mbn}_{\cC})^{-1}(s(\mbbM)) = \mbbM \}\;,
\]
We now have 
\[
\{ (\cC,\mcF): \cC \subset \mfC,\ \mcF \in \mbbF_{\cC}\}
=
\bigsqcup_{\mbbM \in \mfM^{\mbn}} 
\bigsqcup{\cC \in \mfC^{\mbn}(\mbbM)}
\{ (\cC, \mcF) \}\;.
\]
Again, fixing $\mbbM$ above handles the organisation of negative renormalisations but now we break up the union over $\cC \in \mfC^{\mbn}(\mbbM)$ so that we obtain cancellations for the edges of $\cG^{\mbn}(b(\mbbM))$. 
We then have the following proposition.
%
\begin{proposition}\label{prop: compatibility}
For any $\mbn \in \N^{\CE}$, any $\mbbM \in \mfM^{\mbn}$, 
$e \in \big(\mfC \setminus K(b(\mbbM)) \big)\cap \cG^{\mbn}(b(\mbbM))$, and any $\cC \subset \fullcuts$,
\[
\cC \setminus \{e\} \in \fullcuts^{\mbn}(\mbbM)
\quad\Leftrightarrow\quad
\cC \cup \{e\} \in \fullcuts^{\mbn}(\mbbM)\;.
\]
\end{proposition}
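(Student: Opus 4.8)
The plan is to first reduce the asserted equivalence to a single structural fact about the forest
$\mcB \eqdef b\bigl((P^{\mbn})^{-1}(s(\mbbM))\bigr)$, namely that $e \notin K(\mcB)$, and then read everything off from the interval structure. Since $\mbbM \in \mfM^{\mbn}$ there are $\cC_0$ and $\mcS \in \mbbM$ with $(P^{\mbn}_{\cC_0})^{-1}(\mcS) = \mbbM$; in particular $(P^{\mbn})^{-1}(\mcS) \neq \emptyset$, so by the remark following Proposition~\ref{Pn-1 is interval} its minimum is $\mcS$, and since $\mcS \in \mbbM \subset (P^{\mbn})^{-1}(\mcS)$ we get $\mcS = \mathrm{Min}(\mbbM) = s(\mbbM)$. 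Hence $(P^{\mbn})^{-1}(s(\mbbM))$ is a non-empty interval of forests (Proposition~\ref{Pn-1 is interval}), which I write as $[s(\mbbM),\mcB]$. For any cut set $\cC'$ one has, directly from the definitions of $\fullcuts^{\mbn}$ and of $(P^{\mbn}_{\cC'})^{-1}$, that $\cC' \in \fullcuts^{\mbn}(\mbbM) \Leftrightarrow [s(\mbbM),\mcB] \cap \mathbb{F}_{\cC'} = \mbbM$. Now each $\mathbb{F}_{\cC'} = \{\mcF : K(\mcF) \cap \cC' = \emptyset\}$ is downward closed for the forest inclusion order, and $\mcF \mapsto K(\mcF)$ is monotone; therefore, if $e \notin K(\mcB)$, then $e \notin K(\mcF)$ for every $\mcF \in [s(\mbbM),\mcB]$, so for all such $\mcF$ one has $K(\mcF) \cap (\cC \cup \{e\}) = K(\mcF) \cap (\cC \setminus \{e\}) = K(\mcF) \cap \cC$. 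This yields $[s(\mbbM),\mcB] \cap \mathbb{F}_{\cC \cup \{e\}} = [s(\mbbM),\mcB] \cap \mathbb{F}_{\cC \setminus \{e\}}$, and the proposition follows immediately, for every $\cC$. So everything comes down to proving the claim $e \notin K(\mcB)$.

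I would prove this claim by contradiction. Assume $e \in K(\mcB)$ and let $T_0$ be the $\le$-minimal tree of $\mcB$ with $e_{\p} \in N(T_0)$; this is well defined because the trees of a forest that contain a given node form a chain, and because for a subtree $T$ one has $e \in K(T) \Leftrightarrow e_{\p} \in N(T)$. From $e \notin K(b(\mbbM))$ and $s(\mbbM) \subset b(\mbbM)$ we get $T_0 \notin b(\mbbM)$, hence $T_0 \notin s(\mbbM) = P^{\mbn}(\mcB)$, so $T_0 \in \mcB \setminus P^{\mbn}(\mcB)$ and therefore $\inte_{\mcB}^{\mbn}(T_0) > \exte_{\mcB}^{\mbn}(T_0)$ by \eqref{def: projection onto safe forests}. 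By minimality of $T_0$, no tree of $C_{\mcB}(T_0)$ contains $e$, so $e \in K_{\mcB}(T_0) \subset \CE^{\inte}_{\mcB}(T_0)$, whence $n_e \ge \inte_{\mcB}^{\mbn}(T_0) > \exte_{\mcB}^{\mbn}(T_0)$; equivalently, every edge of $\CE^{\exte}_{\mcB}(T_0)$ has scale strictly smaller than $n_e$.

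On the other hand, $e \notin K(b(\mbbM))$ means $e \notin \CE^{\inte}(b(\mbbM))$, so the one-edge path $\{e\}$ witnesses $\mbn_{b(\mbbM)}(e_{\p},e_{\ch}) \ge n_e$; since $e \in \cG^{\mbn}(b(\mbbM))$, \eqref{def: cuts to harvest} then gives $\mbn_{b(\mbbM)}(\logof,e_{\p}) > n_e$, i.e. there is a path $\CE^{\star} \subset \CE$ in $D_{2p}^{\ast}$ joining $\logof$ to $e_{\p}$ all of whose edges outside $\CE^{\inte}(b(\mbbM))$ have scale $> n_e$. The main obstacle, and the step on which I would spend the real effort, is to turn this into a contradiction with the previous paragraph: a path from $\logof \notin N(T_0)$ to $e_{\p} \in N(T_0)$ must contain an edge in $\CE^{\exte}(T_0)$, and using the nesting $b(\mbbM) \subset \mcB$ together with $T_0 \notin b(\mbbM)$ one has to extract among these boundary-crossing edges one that lies in $\CE^{\inte}(A_{\mcB}(T_0)) \cap \CE^{\exte}(T_0) = \CE^{\exte}_{\mcB}(T_0)$ but \emph{not} in $\CE^{\inte}(b(\mbbM))$; such an edge would have scale both $< n_e$ (by the second paragraph) and $> n_e$ (by the path $\CE^{\star}$), which is absurd. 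This boundary-crossing bookkeeping is exactly the kind of argument carried out in \cite[Section~7]{CH} in the proofs surrounding \cite[Proposition~7.3]{CH}, and I would complete it by transcribing that argument with the ambient multigraph $\sT^{\ast}$ of \cite{CH} replaced by $D_{2p}^{\ast}$ and the forests $\mathbb{F}_{\pi}$ replaced by $\mathbb{F}$, invoking $e \notin K(b(\mbbM))$ precisely where one needs the crossing edge to be external to $b(\mbbM)$.
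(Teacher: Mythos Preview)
Your approach is aligned with the paper's, which simply observes that the statement is the ``compatibility'' condition of \cite[Definition~5.6]{CH} and defers entirely to the argument of \cite[Proposition~7.4]{CH}. You go further by making the reduction explicit: the equivalence follows once one shows $e \notin K(\mcB)$ with $\mcB = b\bigl((P^{\mbn})^{-1}(s(\mbbM))\bigr)$, and you correctly identify that this is where the real content lies and that it is precisely the boundary-crossing argument of \cite[Section~7]{CH}.

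One point to fix: the assertion ``for a subtree $T$ one has $e \in K(T) \Leftrightarrow e_{\p} \in N(T)$'' is false in general --- a subtree containing $e_{\p}$ need not contain $e_{\ch}$, so $e$ may lie in $K^{\downarrow}(T)$ rather than $K(T)$. You should instead take $T_0$ to be $\le$-minimal in $\mcB$ with $e \in K(T_0)$; the trees of $\mcB$ containing the edge $e$ still form a chain (any two of them share the node $e_{\p}$, hence are comparable in the forest $\mcB$), so minimality is well defined, and the rest of your argument --- in particular the conclusions $T_0 \notin b(\mbbM)$ and $e \in K_{\mcB}(T_0)$ --- then goes through verbatim. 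With this correction your sketch matches the structure of the argument in \cite{CH} that both you and the paper ultimately invoke.
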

\begin{proof}
This proposition says that $P^{\mbn}$ is \emph{compatible} with the cut rule $\cG^{\mbn}(\cdot)$ in the sense of \cite[Definition~5.6]{CH}.
The proof of this statement then follows the same argument used in the proof of \cite[Proposition~7.4]{CH}. 
\end{proof}
This proposition explains that we can group terms to allow for us to harvest positive and negative renormalisations in a satisfactory manner. 
Namely, for each fixed $\mbbM \in \mfM^{\mbn}$ $\mathfrak{G}^{\mbn}(\mbbM)$ is a partition of $\mfC^{\mbn}(\mfM)$ where
\[
\mathfrak{G}^{\mbn}(\mbbM)
\eqdef
\Big\{
\big[\cS,\cS \sqcup \cG^{\mbn}(b(\mbbM)) \big] \subset 2^{\mfC}:\ 
\cS \subset \mfC \setminus \Big(K(b(\mbbM)) \cup \cG^{\mbn}(b(\mbbM))\Big)
\Big\}\;.
\]
In order to state the re-summation result \eqref{e:grouping-and-resum-2} in Corollary~\ref{cor:organize-and-resum} below,
we define
\[
\mathfrak{R} \eqdef 
\left\{
(\mbbM, \mbbG) 
:\ 
\exists \mbn \in \N^{\CE} 
\textnormal{ such that } 
\mathbb{M} \in \mfM^{\mbn} \textnormal{ and }
\mathbb{G} \in \mathfrak{G}^{\mbn}(\mbbM)
\right\}\;,
\] 
and for each $(\mbbM, \mathbb{G}) \in \mathfrak{R}$ we define
\[
\mcN_{\mbbM,\mathbb{G},\lambda}
\eqdef 
\left\{ 
\mbn \in \N^{\CE}
:\ 
\begin{array}{c}
\mbbM \in \mfM^{\mbn},  \;
\mathbb{G} \in \mathfrak{G}^{\mbn}(\mbbM) 
\textnormal{ and}\\
\forall e \in \CE_{\logof}(\{\rho_{j}\}_{j=1}^{2p}),
\quad
n_{e} \ge \lfloor - \log_{2}(\lambda) \rfloor
\end{array}
\right\}\;.
\]
We restate the result of all of this organisation as the following corollary.
\begin{corollary} \label{cor:organize-and-resum}
For each fixed $\mbn \in \N^{\CE}$ one has
\begin{equ} [e:grouping-and-resum-sets]
\bigsqcup_{\mcF \in \mathbb{F}}
\bigsqcup_{\cC \subset \mfC \setminus K(\mcF)}
\{ (\mcF, \cC)  \}
=
\bigsqcup_{\mbbM \in \mfM^{\mbn}}
\bigsqcup_{\mathbb{G} \in \mathfrak{G}^{\mbn}(\mbbM)}
\bigsqcup_{
\substack{
\mcF \in \mbbM\\
\cC \in \mathbb{G}
}
}
\{ (\mcF , \cC)  \}\;.
\end{equ}
%
%
Furthermore, one has, for any $\J \in \mfJ$, 
\minilab{e:grouping-and-resum}
\begin{equs}
\sum_{
\substack{
\mcF \in \mathbb{F}\\
\cC \subset \mfC \setminus K(\mcF)
}}
\CW_{\lambda}[\J, \mcF,\cC]
=&
\sum_{\mbn \in \N^{\CE}}
\sum_{
\substack{
\mbbM \in \mfM^{\mbn}\\
\mbbG \in \mfG^{\mbn}(\mbbM)
}}
\CW^{\mbn}_{\lambda}[\J, \mbbM,\mbbG]
	\label{e:grouping-and-resum-1}
	\\
=&
\sum_{(\mbbM,\mbbG) \in \mfR}
\sum_{\mbn \in \mcN_{\mbbM,\mbbG,\lambda}}
\CW^{\mbn}_{\lambda}[\J, \mbbM,\mbbG]\;.
	\label{e:grouping-and-resum-2}
\end{equs}
\end{corollary}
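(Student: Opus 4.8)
## Proof Plan for Corollary~\ref{cor:organize-and-resum}

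The plan is to prove the set-theoretic identity \eqref{e:grouping-and-resum-sets} first, then use it together with the grouping lemma \eqref{e:grouping-FG} from Lemma~\ref{lem:reorderSum} and a Fubini-type interchange of summations to get \eqref{e:grouping-and-resum-1} and \eqref{e:grouping-and-resum-2}. The first identity \eqref{e:grouping-and-resum-sets} is purely combinatorial. For its right-to-left inclusion it is essentially immediate from the definitions: every $\mbbM \in \mfM^{\mbn}$ consists of forests in $\mbbF$ and every $\mbbG \in \mfG^{\mbn}(\mbbM)$ consists of cut sets disjoint from $K(b(\mbbM))$, hence also from $K(\mcF)$ for $\mcF \in \mbbM$ (since $\mcF \subset b(\mbbM)$). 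For the left-to-right inclusion, given a pair $(\mcF, \cC)$ with $\cC \subset \mfC \setminus K(\mcF)$, one takes $\mbbM \eqdef (P^{\mbn}_{\cC})^{-1}(P^{\mbn}(\mcF))$, which is an interval of forests by Proposition~\ref{Pn-1 is interval} (intersected with $\mbbF_{\cC}$), and belongs to $\mfM^{\mbn}(\cC) \subset \mfM^{\mbn}$ since $P^{\mbn}(\mcF) \in \mbbM$ is precisely the minimal element witnessed by the Remark after Proposition~\ref{Pn-1 is interval}. Then $\cC \in \mfC^{\mbn}(\mbbM)$ by construction, and one invokes Proposition~\ref{prop: compatibility} (compatibility of $P^{\mbn}$ with the cut rule $\cG^{\mbn}$) to conclude that $\mfC^{\mbn}(\mbbM)$ is partitioned by the intervals in $\mfG^{\mbn}(\mbbM)$, so there is a unique $\mbbG \in \mfG^{\mbn}(\mbbM)$ containing $\cC$. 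This exhibits the required disjoint decomposition, and uniqueness of $(\mbbM, \mbbG)$ follows because $\mbbM$ is determined by $\mcF$ and $\cC$ (via $P^{\mbn}_{\cC}$) and $\mbbG$ is determined by $\mbbM$ and $\cC$.

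Next, for \eqref{e:grouping-and-resum-1} I would start from the multiscale expansion \eqref{e:expand-in-n}, which gives $\CW_{\lambda}[\J,\mcF,\cC] = \sum_{\mbn} \CW^{\mbn}_{\lambda}[\J,\{\mcF\},\{\cC\}]$ with absolute convergence. Summing over $\mcF \in \mathbb{F}$ and $\cC \subset \mfC \setminus K(\mcF)$ and interchanging the (absolutely convergent) sums, I get $\sum_{\mbn} \sum_{\mcF,\cC} \CW^{\mbn}_{\lambda}[\J,\{\mcF\},\{\cC\}]$. For each fixed $\mbn$, I apply the set identity \eqref{e:grouping-and-resum-sets} to reorganize the inner double sum as $\sum_{\mbbM \in \mfM^{\mbn}} \sum_{\mbbG \in \mfG^{\mbn}(\mbbM)} \sum_{\mcF \in \mbbM, \cC \in \mbbG} \CW^{\mbn}_{\lambda}[\J,\{\mcF\},\{\cC\}]$, and then collapse each innermost double sum using \eqref{e:grouping-FG} of Lemma~\ref{lem:reorderSum}, which precisely states $\sum_{\mcF \in \mbbM, \cC \in \mbbG} \CW^{\mbn}_{\lambda}[\J,\{\mcF\},\{\cC\}] = \CW^{\mbn}_{\lambda}[\J,\mbbM,\mbbG]$. (One must check the hypothesis $b(\mbbG) \subset \cut \setminus K(b(\mbbM))$ for this application, which holds by definition of $\mfG^{\mbn}(\mbbM)$ since $\cS \subset \mfC \setminus K(b(\mbbM))$ there.) This yields \eqref{e:grouping-and-resum-1}.

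Finally, \eqref{e:grouping-and-resum-2} follows from \eqref{e:grouping-and-resum-1} by yet another interchange of summation: one rewrites $\sum_{\mbn \in \N^{\CE}} \sum_{\mbbM \in \mfM^{\mbn},\, \mbbG \in \mfG^{\mbn}(\mbbM)}$ as $\sum_{(\mbbM,\mbbG) \in \mfR} \sum_{\mbn :\, \mbbM \in \mfM^{\mbn},\, \mbbG \in \mfG^{\mbn}(\mbbM)}$, and then observes that for $\lambda$-dependent reasons the term $\CW^{\mbn}_{\lambda}[\J,\mbbM,\mbbG]$ vanishes unless $n_e \ge \lfloor -\log_2 \lambda \rfloor$ for all $e \in \CE_{\logof}(\{\rho_j\})$, because the factor $\psi^{2p,\lambda}(y)$ in \eqref{e:multiscale-integrand} (the product of the rescaled test functions $\psi_{\lambda,z_{\logof}}$ centered at the roots) is supported where $|z_{\rho_j} - z_{\logof}|_{\s} \lesssim \lambda$, forcing the corresponding scale cutoffs to those indices. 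Hence the inner sum over $\mbn$ can be restricted to $\mcN_{\mbbM,\mbbG,\lambda}$, giving \eqref{e:grouping-and-resum-2}.

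The main obstacle I anticipate is not any single step in isolation — each invokes a cited or previously-established result — but rather verifying with full rigor that \emph{all} the interchanges of infinite sums are justified by absolute convergence, and carefully matching the bookkeeping of the various index sets ($\mbbF_{\cC}$, $\mfM^{\mbn}(\cC)$, $\mfC^{\mbn}(\mbbM)$, $\mfG^{\mbn}(\mbbM)$, $\mcN_{\mbbM,\mbbG,\lambda}$) so that the disjoint unions genuinely partition the original index set without overcounting. The substance of the combinatorics has been offloaded to Proposition~\ref{Pn-1 is interval} and Proposition~\ref{prop: compatibility} (ultimately \cite{FMRS85, CH}), so this corollary is essentially an assembly step, but the assembly must be done attentively because the whole renormalisation-cancellation mechanism of the paper hinges on these groupings being exactly right.
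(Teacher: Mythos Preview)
Your proposal is correct and follows essentially the same approach as the paper's own proof, which is quite terse: apply \eqref{e:expand-in-n}, interchange the sums, use the set identity \eqref{e:grouping-and-resum-sets} together with \eqref{e:grouping-FG}, then interchange once more using the definitions of $\mfR$ and $\mcN_{\mbbM,\mbbG,\lambda}$. You supply more detail than the paper does (in particular the explicit left-to-right construction of $\mbbM$ and $\mbbG$ for the set identity, and the support argument for the $\lambda$-constraint in $\mcN_{\mbbM,\mbbG,\lambda}$), but the structure is the same.
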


\begin{proof}
To prove \eqref{e:grouping-and-resum-1},
we apply \eqref{e:expand-in-n} to each term $\CW_{\lambda}[\J, \mcF,\cC]$,
followed by switching the sum over $\mbn$ and the sum over $(\mcF ,\cC)$, which yields
\[
\sum_{\mbn \in \N^{\CE}}
\sum_{
\substack{
\mcF \in \mathbb{F}\\
\cC \subset \mfC \setminus K(\mcF)
}}
\CW^{\mbn}_{\lambda}[\J, \mcF,\cC] \;.
\]
By \eqref{e:grouping-and-resum-sets} and \eqref{e:grouping-FG} one then obtains the right hand side of  \eqref{e:grouping-and-resum-1}.
The identity  \eqref{e:grouping-and-resum-2} follows by switching the two sums again and 
the definitions of $\mfR$ and $\mcN_{\mbbM,\mathbb{G},\lambda}$.
\end{proof}
We spend the remainder of the paper working to obtain the following estimate.
\begin{proposition}\label{prop: main estimate}
For any $(\mathbb{M},\mathbb{G}) \in \mathfrak{R}$, the following bound holds uniformly in $\lambda \in (0,1]$,  
\begin{equ}\label{eq: main estimate on moment}
\sum_{\mbn \in \mcN_{\mbbM,\mbbG,\lambda}}
\left|
\CW^{\mbn}_{\lambda}[\J,\mbbM,\mbbG]
\right|
\lesssim
\|\J\|
\lambda^{2p|\sT^{\sn\sl}|_{\s}}\;.
\end{equ} 
\end{proposition}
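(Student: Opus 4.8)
\textbf{Proof strategy for Proposition~\ref{prop: main estimate}.} The plan is to prove the bound by a careful multiscale power-counting argument on the integrand of $\CW^{\mbn}_{\lambda}[\J,\mbbM,\mbbG]$, summing over the scale assignments $\mbn$ in $\mcN_{\mbbM,\mbbG,\lambda}$ in the spirit of \cite[Sections~6--7]{CH} but adapted to our moment graph $D_{2p}$. First I would fix $(\mbbM,\mbbG) \in \mfR$ and a scale assignment $\mbn \in \mcN_{\mbbM,\mbbG,\lambda}$, and estimate the absolute value of the single-scale integral \eqref{e:multiscale-integrand} by bounding each factor in the integrand individually. Each single-scale heat kernel slice $\ke{\{e\}}{\mbn}$ is supported on a region of diameter $\sim 2^{-n_e}$ and satisfies $|\ke{\{e\}}{\mbn}| \lesssim 2^{2n_e}$ (using $|D^kK(x)|\lesssim |x|^{-2-|k|_\s}$ on that scale); each interaction slice $\CJ_{\mbn}^{\{e\}}$ is bounded by $\|\J\|$-dependent factor times $2^{\sign(e)\cdot 2\beta' n_e}$ on a region of diameter $\sim 2^{-n_e}$ (by Lemma~\ref{lem:bound-J}); each Taylor-remainder factor inside $\RKer$ or inside $\KerHat^{\{e\}}=\ke{\{e\}}{}+\RKer^{\{e\}}$ contributes the improved power counting coming from the order $\gamma(e)-1$ Taylor expansion; and each monomial factor $\powroot{\cdot}{\sn}{\logof}$ on scale $\mbn$ contributes a factor $2^{-|\sn(u)|_\s n_{(\logof,u)}}$. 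Performing the spatial integrations (one integration variable per node of $\nodesleft{b(\mbbM)}{D_{2p}}$ plus those inside the nested $\bar H$ operators, each contributing a volume factor $\sim 2^{-|\s|\,n}$ for the relevant minimal scale) yields a bound of the form $\|\J\|\cdot 2^{-\sum_{e\in\CE}c_e(\mbn) n_e}$ for suitable exponents $c_e(\mbn)$ that are piecewise linear in $\mbn$.

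The second and crucial step is to show that, after summing over $\mbn$, the total exponent is strictly positive in every direction, giving geometric convergence. This is exactly where the organisation of Section~\ref{sec:Multiscale expansion} pays off: the constraint $\mbn \in \mcN_{\mbbM,\mbbG,\lambda}$ forces the ``external'' scales $n_{(\logof,\rho_j)}$ to be $\ge \lfloor -\log_2\lambda\rfloor$, which extracts the factor $\lambda^{2p|\sT^{\sn\sl}|_\s}$, while the conditions $\mbbM\in\mfM^{\mbn}$ and $\mbbG\in\mfG^{\mbn}(\mbbM)$ ensure that for every subtree $S\in b(\mbbM)$ the internal scales dominate the external scales (so the Taylor-remainder gain from $(\Id-\mathscr{Y}_S)$ is effective exactly when $S\in\delta(\mbbM)$), and symmetrically for the positive-renormalisation edges in $\cG^{\mbn}(b(\mbbM))$. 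Concretely, I would organise the sum over $\mbn$ by iterating from the innermost subtrees outwards: for each $S\in b(\mbbM)$, the relevant power-counting exponent for the ``block'' $\CE^{\inte}_{b(\mbbM)}(S)$ is $|S^0|_{\SG}$ if $S\in s(\mbbM)$ (genuine counterterm, summable because $|S^0|_{\SG}<0$ is renormalised away) and $|S^0|_{\SG} - (\text{remainder gain})>0$ if $S\in\delta(\mbbM)$; the neutrality of each $S$ — guaranteed since $S\in\Div$ — is what makes $|S^0|_{\SG}$ (rather than $|S^0|_\s$) the correct homogeneity here, via Lemma~\ref{non-neutral trees are nice} and the inserted factors $\CJ^{P^\partial}$ of Proposition~\ref{prop:mom-formula-z}. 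Summing the geometric series in each $n_e$ in this nested order, all intermediate divergences cancel or are summable, and one is left with $\|\J\|\,\lambda^{2p|\sT^{\sn\sl}|_\s}$ as claimed.

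\textbf{The main obstacle.} The hard part will be the bookkeeping showing that the single-scale bound combined with the interval structure gives a \emph{uniformly summable} exponent: one must verify that at every scale $n$ and for every ``cluster'' of edges realised at that scale, the combined homogeneity (kernels contributing $+2$ per edge, interactions contributing $\pm 2\beta'$, polynomials contributing $-|\mfn(u)|_\s$, and the $-|\s|$ per integrated node) is strictly negative when no renormalisation applies and strictly positive when it does — and that this holds \emph{simultaneously} for the nested family of subtrees and cut edges prescribed by $\mbbM$ and $\mbbG$. This requires a Hepp-sector / forest-formula style argument: one fixes the order type of $\mbn$ on $\CE$, identifies for each sector the relevant sequence of subgraphs, and checks that Lemma~\ref{lem: SG trees are nice} (giving $|\tau|_\s > -\bar\beta > -2$ for all non-trivial subtrees) together with subcriticality of the rule $R_{\SG}$ provides the needed strict inequalities, while the parity/charge cancellations already encoded in $\bar H$ and $\CW$ handle the borderline logarithmic cases. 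I would isolate this as a sequence of lemmas — a single-scale integral bound, a ``scale-sum'' lemma organising the geometric sums by Hepp sectors, and a power-counting lemma verifying the sign of every partial homogeneity — and then assemble them to conclude \eqref{eq: main estimate on moment}.
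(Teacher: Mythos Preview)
Your outline has the right instincts — single-scale power counting, renormalisation gains from the interval structure, and the role of $|\cdot|_{\SG}$ via neutrality — but it remains a strategy rather than a proof, and the paper's argument is organised differently in a way that matters. The central missing ingredient is a concrete mechanism for summing the \emph{coupled} scales. You propose to ``sum the geometric series in each $n_e$ in nested order,'' but the $n_e$ are not independent: they are tied together by triangle-inequality domain constraints and by the membership conditions defining $\mcN_{\mbbM,\mbbG,\lambda}$. The paper handles this in two layers. First (Section~\ref{sec: renormalisation bound}) it \emph{factorises} the sum over $\mbn$ along the nested structure of $\mcB = b(\mbbM)$: for each $S\in\mcB$ one sums the scales internal to $S$ (the set $\mathring{\mcN}_S(\mbj)$ of \eqref{e:defNring}) conditionally on the external scales $\mbj$, producing the recursively defined operators $\hat H^{\mbj}_{\CJ,S}$ of \eqref{genvert def} and Lemma~\ref{inductively summed integrand}. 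The renormalisation gain \eqref{eq: renorm factor} for $S\in\mcD$ versus $S\in\mcS$ is cashed in through Lemma~\ref{lem: genvertbd}, which bounds each inner sum by $2^{-|S^0|_{\SG}\exte^{\mbj}_{\mcB}(S)}$; this already requires an application of Theorem~\ref{multicluster 1} and the subdivergence-free check of Lemma~\ref{lem: proof of subdivfree}. Second, the remaining \emph{outer} sum over $\mbj\in\partial\mcN_{\mcB,\lambda}$ is controlled not by ad hoc Hepp sectors but by the coalescence-tree framework of Appendix~\ref{Sec: Multiclustering}: one writes down a total homogeneity $\varsigma$ (equation~\eqref{e:total-homogeneity}) and verifies two concrete conditions — subdivergence-freeness and the large-scale integrability condition~\eqref{second cond of HQ} — after which Theorem~\ref{thm: both bounds} yields the bound directly.

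Your outline omits the large-scale integrability condition entirely, and this is a genuine gap: without it nothing prevents the outer integral from diverging as some subset of the $2p$ copies drifts away from $x_{\logof}$. The paper's verification of~\eqref{second cond of HQ} relies on the charge-cancellation inequality~\eqref{eq: large scale decay factors from noises} together with~\eqref{eq: SG trees are nice}, and is not subsumed by the small-scale power counting you sketch. One minor correction as well: it is not true that ``for every $S\in b(\mbbM)$ the internal scales dominate the external scales''; this holds only for $S\in\mcD=\delta(\mbbM)$, while for $S\in\mcS=s(\mbbM)$ one has the reverse inequality $\inte\le\exte$ (see \eqref{def: projection onto safe forests} and the display after~\eqref{e:defNring}), and both directions enter in~\eqref{def: barred omega}.
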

We {\it fix for the remainder of the paper} a choice of $(\mathbb{M},\mathbb{G}) \in \mathfrak{R}$.
We also introduce the shorthands 
\begin{equ} [e:SBDSBD]
\mcS \eqdef s(\mbbM),\;
\mcB \eqdef b(\mbbM),\;
\mcD \eqdef \delta(\mbbM),\;
\cS \eqdef s(\mathbb{G}),\;
\cB \eqdef b(\mathbb{G}),\;
\cD \eqdef \delta(\mathbb{G}).
\end{equ}
\section{Estimating the moment}
	\label{sec: renormalisation bound}
\subsection{Summing over scales inductively}
To give a streamlined argument we find it convenient to factorize the sum over 
$\mbn \in\mcN_{\mbbM,\mbbG,\lambda}$ in a way informed by the nested-ness structure of $\mcB$ (defined in \eqref{e:SBDSBD}) as follows:
By the time it comes to control the contribution of an element $S \in \mcB$ we will have already conditioned on a ``partial'' scale assignment $\mbj$ living in $\N^{\CE^{\inte}_{\mcB}(A_{\mcF}(S))}$.  
We will then sum over partial scale assignments $\mbk \in \N^{\CE^{\inte}_{\mcB}(S)}$ which are ``consistent'' with $\mbj$; here consistency means that one can find $\mbn \in \mcN_{\mbbM,\mbbG,\lambda}$ such that $\mbn$'s restriction to $\CE^{\inte}_{\mcB}(A_{\mcF}(S))$ is given by $\mbj$ and $\mbn$'s restriction to $\CE^{\inte}_{\mcB}(S)$ is given by $\mbk$.

Then treating $\mbk$ as fixed we will sum, for each $T \in C_{\mcB}(S)$, over partial scale assignments $\tilde{\mbj} \in \CE^{\inte}_{\mcB}(T)$ which are consistent with $\mbk$.

\[
\begin{tikzpicture}
\draw   (0,0)     ellipse (60pt and 30pt);
	\node at (1,0.6) {$\mbj$};
	\node at (-1.2,0.6) {$A_{\mcF}(S)$};
\draw   (0,0)     ellipse (30pt and 20pt);
	\node at (0.8,0.2) {$\mbk$};
	\node at (-0.8,0.2) {$S$};
\draw   (0,0)     ellipse (15pt and 10pt);
	\node at (0.3,0) {$\tilde \mbj$};
	\node at (-0.3,0) {$T$};
\end{tikzpicture}
\]

A convenient observation (made rigorous as \cite[Lemma~8.3]{CH}) is that the constraints of the set $\mcN_{\mbbM,\mbbG,\lambda}$ are ``Markovian'' in the sense that knowing the condition that $\tilde{\mbj}$ be consistent with $\mbj \sqcup \mbk$ is equivalent to knowing $\tilde{\mbj}$ is consistent with $\mbk$.

To that end, we will decompose the single sum over global scale assignments $\mcN_{\mbbM,\mathbb{G},\lambda}$ into {\it a family} of sums, which facilitate summing the scales internal to a single $T \in \mcB$ conditioned on the values of relevant external scales (these two sets of quantities being dependent through the requirement that $T \in \mcS$ or $T \in \mcD$). 
In what follows, for any $\tilde{\CE} \subset \CE$ and $\mbj \in \N^{\CE}$ we write $\mbj \restr {\CE'} \in \N^{\CE'}$ for the restriction of $\mbj$ to the edges in $\CE'$. 

The outermost set of edges is given by
\[
\CE^{\inte}_{\mcB}(D_{2p}^{\ast})
\eqdef
K^{\downarrow}(\overline{\mcB})
 \, \sqcup \,
	 \kernelsleft{\mcB}{D_{2p}}
\, \sqcup \,
L_{\mcB}(D_{2p})^{(2)}
\,\sqcup\,
P^{\partial}_{\mcB}(D_{2p})
\,\sqcup  \,
\CE_{\logof}\;,
\]
and its corresponding set of scale assignments is given by
\begin{equ} [e:def-partialN-Blam]
\d\mcN_{\mcB,\lambda} \eqdef
\{
\mbk \in \N^{\CE^{\inte}_{\mcB}(D_{2p}^{\ast})}:\ 
\exists \mbj \in \mcN_{\mbbM,\mathbb{G},\lambda} \textnormal{ with } \mbj \restr {\CE^{\inte}_{\mcB}(D_{2p}^{\ast})} = \mbk
\}\;.
\end{equ}
For any $S \in \mcB$, $\CE' \subset \CE$ with $\CE' \supset \CE^{\exte}_{\mcB}(S)$, and $\mbj \in \N^{\CE'}$ we define 
\begin{equ}[e:defNring]
\mathring{\mcN}_{S}(\mbj) 
\eqdef
\left\{ \mbk \in  \N^{\CE^{\inte}_{\mcB}(S)}:\ 
\exists \tilde{\mbj} \in \mcN_{\mbbM,\mathbb{G},\lambda} 
\textnormal{ with }
\tilde{\mbj} \restr {\CE'} = \mbj 
\textnormal{ and }
\tilde{\mbj}
\restr {\CE^{\inte}_{\mcB}(S)} 
=
\mbk
\right\}\;.
\end{equ}
Note that for every $\mbk \in \mathring{\mcN}_{S}(\mbj)$ one then has 
\begin{equs}[2]
\inte^{\mbk}_{\mcB}(S) &\le \exte^{\mbj}_{\mcB}(S) &\qquad&\textnormal{ if }S \in \mcS,\\
\inte^{\mbk}_{\mcB}(S) &> \exte^{\mbj}_{\mcB}(S) &\qquad&\textnormal{ if }S \in \mcD.
\end{equs}

We now inductively define a family of operators $\hat{H}^{\mbj}_{\CJ,S}:\allf \rightarrow \allf$, where $S \in \mcB$ and $\mbj \in \N^{\CE'}$ with $\CE' \supset \CE^{\exte}_{\mcB}(S)$ by setting
\begin{equation}\label{genvert def}
\begin{split}
[\hat{H}^{\mbj}_{\CJ,S}\phi](x)\ 
\eqdef\  
&
\sum_{ \mbk \in \mathring{\mcN}_{S}(\mbj)}
\int_{\tilde{N}_{\mcB}(S)}\back dy \
\J^{L_{\mcB}(S)^{(2)}}_{\mbk}(y) 
\,
\ke{\mathring{K}_{\mcB}(S)}{\mbk}(y \sqcup x_{\rho_{S}})\\
&\quad
\cdot
\hat{H}^{\mbk}_{\CJ,C_{\mcB}(S)}
\left[
\CJ^{P^{\partial}_{\mcF}(S)}_{\mbk}
\ke{K^{\partial}_{\mcB}(S)}{\mbk}
\,
[\mathscr{Y}_{S,\mbbM}^{\#}\phi]
\right](x_{\tilde{N}(S)^c} \sqcup y)\;,
\end{split}
\end{equation}
with the base case of the induction given by setting $\hat{H}^{\mbj}_{\emptyset}$ to be the identity operator.

We define, for each $\mbj \in \partial \mcN_{\mcB,\lambda}$, a function $\hat{\CW}_{\lambda}^{\mbj}[\J,\mbbM,\mbbG] \in \mcb{C}_{\tilde{N}(\overline{\mcB})^{c}}$ via
\begin{equation}\label{def of inductively summed kernels}
\begin{split}
\hat{\CW}_{\lambda}^{\mbj}[\J,\mbbM,\mbbG]
\eqdef&\; 
\psi^{2p,\lambda}
\cdot
\ke{
\kernelsleft{\mcB}{\sT}\setminus \cB}{\mbj}
\J_{\mbj}^{\leavesleft{\mcB}{\sT}^{(2)}}
\KerTilde_{\mbj}^{\cB \setminus K^{\downarrow}(\mcB)}
\powroot{\nodesleft{\mcB}{\sT}}{\sn,\logof}{\mbj}\\
&
\quad \cdot
\hat{H}_{\CJ,\bar{\mcB}}^{\mbj} \left[ 
\CJ_{\mbj}^{P^{\partial}_{\mcB}(D_{2p})}
\KerTilde_{\mbj}^{\cB \cap K^{\downarrow}(S)} 
\ke{K^{\downarrow}(S) \setminus \cB}{\mbj}
\powroot{\tilde{N}(S)}{\sn,\logof}{\mbj}
\right]\;,
\end{split}
\end{equation}
where for any $\cC \subset \cut$ and $\mbj \in \partial \mcN_{\mcB,\lambda}$ we set 
\[
\KerTilde_{\mbj}^{\cC} 
\eqdef 
\RKer_{0,\mbj}^{\cC \cap \cS}
\cdot
\KerHat_{\mbj}^{\cC \cap \cD}\;.
\]
Applying the analogs of \cite[Lemma~8.3 and Corollary~8.4]{CH} in our setting yields the following lemma.
\begin{lemma}\label{inductively summed integrand}
For any $\lambda \in (0,1]$,
\begin{equation}\label{inductive sum identity}
\sum_{\mbj \in \partial \mcN_{\mcB,\lambda}}
\int_{\nodesleft{\mcB}{D_{2p}} \sqcup \{\logof\}} 
  \!\!\!\!\!\!\!\!\!
\delta(y_{\logof})
\hat{\CW}_{\lambda}^{\mbj}[\J,\mbbM,\mbbG](y) \,dy
= 
\sum_{\mbn \in \mcN_{\mbbM,\mathbb{G},\lambda}}
\CW_{\lambda}^{\mbn}[\J,\mbbM,\mbbG]\;.
\end{equation}
\end{lemma}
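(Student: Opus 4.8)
The plan is to prove \eqref{inductive sum identity} by induction on the nested structure of $\mcB$, "peeling off" the maximal elements $\bar{\mcB}$ one layer at a time. The key observation, which is the analog of \cite[Lemma~8.3]{CH}, is that the constraint defining $\mcN_{\mbbM,\mbbG,\lambda}$ is \emph{Markovian} with respect to the filtration by the edge sets $\CE^{\inte}_{\mcB}(S)$: for $S \in \mcB$, whether a partial scale assignment $\mbk \in \N^{\CE^{\inte}_{\mcB}(S)}$ extends to a global assignment in $\mcN_{\mbbM,\mbbG,\lambda}$ (conditioned on fixing the "external" scales $\mbj \in \N^{\CE'}$, $\CE' \supset \CE^{\exte}_{\mcB}(S)$) depends on $\mbj$ only through the values $\exte^{\mbj}_{\mcB}(S)$, and this condition is precisely $\inte^{\mbk}_{\mcB}(S) \le \exte^{\mbj}_{\mcB}(S)$ when $S \in \mcS$ and $\inte^{\mbk}_{\mcB}(S) > \exte^{\mbj}_{\mcB}(S)$ when $S \in \mcD$. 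Granting this, one can factorize the single sum $\sum_{\mbn \in \mcN_{\mbbM,\mbbG,\lambda}}$ over the edge sets $\CE^{\inte}_{\mcB}(D^{\ast}_{2p})$, $\CE^{\inte}_{\mcB}(S)$ for $S \in \bar\mcB$, then recursively for $S \in C_{\mcB}(S)$, etc.: writing $\mbn = \mbj \sqcup (\mbk_{S})_{S}$ with $\mbj \in \partial \mcN_{\mcB,\lambda}$ and $\mbk_{S} \in \mathring{\mcN}_{S}(\mbj \sqcup \cdots)$, the Markov property lets us replace the dependence on the full tower of external scales by the dependence on the immediate ancestor's scale, exactly as encoded in the definition \eqref{e:defNring} of $\mathring{\mcN}_{S}(\mbj)$.

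The concrete steps are as follows. First I would record the Markovian lemma (the analog of \cite[Lemma~8.3]{CH}) and deduce from it that
\[
\mcN_{\mbbM,\mbbG,\lambda}
=
\Big\{ \mbn : \mbn \restr {\CE^{\inte}_{\mcB}(D^{\ast}_{2p})} \in \partial\mcN_{\mcB,\lambda},\ \text{and}\ \forall S \in \mcB,\ \mbn \restr {\CE^{\inte}_{\mcB}(S)} \in \mathring{\mcN}_{S}\big(\mbn \restr {\CE^{\inte}_{\mcB}(A_{\mcF}(S))}\big) \Big\}\;,
\]
using that the edge sets $\{\CE^{\inte}_{\mcB}(S)\}_{S \in \mcB} \sqcup \{\CE^{\inte}_{\mcB}(D^{\ast}_{2p})\}$ partition $\CE$ (this is a consequence of the definitions in Section~\ref{sec:Multiscale expansion}, combined with the fact that $\mcB = b(\mbbM)$ is a forest so its members are nested or disjoint). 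Second, I would observe that in the integrand \eqref{e:multiscale-integrand} of $\CW^{\mbn}_{\lambda}[\J,\mbbM,\mbbG]$, every factor depends only on the scales $n_{e}$ for $e$ ranging over the edge set of the corresponding level: the "outer" factors ($\psi^{2p,\lambda}$, the $\ke{\cdot}{\mbn}$, $\RKer_{\mbn}$, $\KerHat_{\mbn}$, $\CJ_{\mbn}$, $\powroot{\cdot}{}{}$ attached to nodes/edges not inside any tree of $\mcB$) depend only on $\mbj = \mbn \restr {\CE^{\inte}_{\mcB}(D^{\ast}_{2p})}$, while the factors inside $\bar{H}^{\mbn}_{\CJ,\mbbM,\bar{\mcB}}$ peel off layer by layer in exactly the same way the operators $\hat{H}^{\mbj}_{\CJ,S}$ are built. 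Third, I would perform the resummation: carrying out the innermost sums first (summing $\mbk_{S}$ for $S$ minimal over $\mathring{\mcN}_{S}$ of its ancestor's scale) converts $\bar{H}^{\mbn}_{\CJ,\mbbM,S}$ into $\hat{H}^{\mbj}_{\CJ,S}$ via \eqref{genvert def}, and proceeding outward through the nesting of $\mcB$ — matching $\mathscr{Y}^{\#}_{S,\mbbM}$, the $\CJ^{P^{\partial}}_{\mbk}$ insertions, and the kernel factors term by term — reassembles $\hat{\CW}^{\mbj}_{\lambda}[\J,\mbbM,\mbbG]$ as in \eqref{def of inductively summed kernels}. The outermost sum over $\mbj \in \partial\mcN_{\mcB,\lambda}$ then remains, giving the left-hand side of \eqref{inductive sum identity}.

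The main subtlety — and the step I expect to require the most care — is justifying the interchange of the various infinite sums over scales, i.e. establishing \emph{absolute} convergence so that Fubini/Tonelli applies at each stage of the peeling. This is not automatic: the individual terms $\CW^{\mbn}_{\lambda}[\J,\mbbM,\mbbG]$ can be large for individual $\mbn$, and it is only after the cancellations organised in Section~\ref{sec:Multiscale expansion} (encoded in the $(\Id - \mathscr{Y}_{S})$ Taylor-remainder structure of $\mathscr{Y}^{\#}_{S,\mbbM}$ for $S \in \mcD$) that one gets summability. However, for the present identity one does \emph{not} actually need the sharp bound \eqref{eq: main estimate on moment}; one only needs \emph{some} a priori absolute convergence, which follows from crude power-counting on each single-scale slice together with the constraints in $\mcN_{\mbbM,\mbbG,\lambda}$ and $\mathring{\mcN}_{S}(\mbj)$ — precisely the role played by \cite[Corollary~8.4]{CH} in the original argument. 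So the proof reduces to: (i) invoke the Markov lemma to factorize $\mcN_{\mbbM,\mbbG,\lambda}$; (ii) check absolute convergence via crude bounds, citing the analog of \cite[Corollary~8.4]{CH}; (iii) carry out the nested resummation, which is a bookkeeping exercise matching each factor of \eqref{e:multiscale-integrand} against its counterpart in \eqref{genvert def} and \eqref{def of inductively summed kernels}. I would then state that steps (i)–(ii) are "the analogs of \cite[Lemma~8.3 and Corollary~8.4]{CH} in our setting", with the only modification being the replacement of the ambient tree $\sT^{\ast}$ by $D^{\ast}_{2p}$ and of $\mathbb{F}_{\pi}$ by $\mathbb{F}$, and that step (iii) follows by the same induction as in \cite[Section~8]{CH}.
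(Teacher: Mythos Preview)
Your proposal is correct and follows exactly the approach the paper takes: the paper's own proof consists solely of the sentence ``Applying the analogs of \cite[Lemma~8.3 and Corollary~8.4]{CH} in our setting yields the following lemma,'' and your write-up is a faithful unpacking of precisely that citation, including the Markovian factorisation of $\mcN_{\mbbM,\mbbG,\lambda}$, the layer-by-layer resummation converting $\bar{H}^{\mbn}$ into $\hat{H}^{\mbj}$, and the invocation of crude absolute-convergence bounds to justify Fubini.
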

%
\subsection{Kernel and renormalisation estimates}
As in \cite{CH} our estimates on the renormalisation of nested divergence structures will require control of local supremums of derivatives of the various kernels appearing in our integrand, this control is needed in order to implement the generalised Taylor remainder estimate of \cite[Prop.~A.1]{Regularity}. 

We recall from \cite[Definition~8.7]{CH} the definition of the seminorms $\| \cdot \|_{\CF,\mbj}(x)$ where $\CF \subset \CB$ with $\mathrm{depth}(\CF) \le 1$, $\mbj \in \N^{\CE'}$ for some $\CE' \supset \CE_{\mcB}(\CF)$, and $x \in \tilde{N}^{c}(\CF)$.

We also recall notation for various domain constraints used in \cite{CH}. 
For $z,w \in \R \times \T^{2}$ and $t \in \R$ we write $\link{z}{w}{t}$ for the condition
\begin{equ}\label{defining annular region}
C^{-1} 2^{-t} \le |z-w| \le C2^{-t}
\end{equ}
write $\ulink{z}{w}{t}$ for the condition
\begin{equ}\label{defining circular region}
|z-w| \le C2^{-t}\;.
\end{equ}
In both \eqref{defining annular region} and \eqref{defining circular region} one chooses a
fixed value $C > 0$ (not dependent on $t$). 
\begin{remark}
Note that from line to line the constant $C$ implicit in the notations \eqref{defining annular region} and \eqref{defining circular region} may change but remains suppressed from the notation.   

In the end, all of these constants influence the overall constant of proportionality appearing in \eqref{eq: main estimate on moment}, see \cite[Remark~8.6]{CH}. 
\end{remark}
\hao{Is it the case that this whole page is only for defining $\|\cdot \|_{\mcF,\mbj}$,
and won't be used again anywhere else? If so can we just refer to your paper for definition of $\|\cdot \|_{\mcF,\mbj}$, and give properties such as $\|fg \|_{\mcF,\mbj}\lesssim....$,
and make a remark that it's defined slightly simpler (no fictitious). Or there's something new because of $\CJ$? A reader may feel frustrated by many notations followed by two lemmas without proof...}\ajay{Removed long definition and put in references to \cite{CH}}
We adopt, for any $S \in \mcB$, the notation 
$\CE^{\partial}_{\mcB}(S) \eqdef K^{\partial}_{\mcB}(S) \sqcup P^{\partial}_{\mcB}(S)$.

Key lemmas that we will use from \cite{CH} are \cite[Lemmas~8.9,8.11]{CH}, which carry over to the present setting immediately. 
One can also easily translate the proof of \cite[Lemma~8.10]{CH} to prove the following lemma. 
\begin{lemma}\label{lem: kernel bound and support}
Let $S \in \mcB$ and $ \mcF \eqdef C_{\mcB}(S)$. 
Then uniform in $\mbk \in \N^{\CE'}$ with $\CE' \supset \CE^{\inte}_{\mcB}(S)$, $x \in (\R^d)^{\tilde{N}(\mcF)^{c}}$ one has
\begin{equs}[e:boundKernel]
\Big\|
\CJ^{P^{\partial}_{\mcB}(S)}_{\mbk}
\ke{K^{\partial}_{\mcB}(S)}{\mbk}
\Big\|_{\mcF,\mbk}(x)
& \lesssim
\|\J\|_{P^{\partial}_{\mcB}(S)}\\
&
\quad
\cdot
\Big(
\prod_{e \in K^{\partial}_{\mcB}(S)}
2^{2k_{e}}
\Big)
\Big(
\prod_{e \in P^{\partial}_{\mcB}(S)}
2^{ - 2 \,\bar{\beta} \,\sign(e)\, k_{e}}
\Big)
\;,
\end{equs}
where $\sign(e)$ was defined in \eqref{e:def-sign}.
Furthermore, the left hand side vanishes unless, for all $T \in \mcF$, $v \in N(T)$, and 
$e = \{u,v\} \in \CE^{\partial}_{T}(S)$ one has $\link{x_{u}}{x_{\rho_{T}}}{k_{e}}$.
\end{lemma}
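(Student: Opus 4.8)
The plan is to follow the argument of \cite[Lemma~8.10]{CH} essentially verbatim, adapting it to keep track of the two separate species of edge now present in $\CE^{\partial}_{\mcB}(S) = K^{\partial}_{\mcB}(S) \sqcup P^{\partial}_{\mcB}(S)$. The quantity $\CJ^{P^{\partial}_{\mcB}(S)}_{\mbk}\,\ke{K^{\partial}_{\mcB}(S)}{\mbk}$ is by definition a product, over edges $e \in K^{\partial}_{\mcB}(S)$, of single-scale kernel slices $\ke{\{e\}}{k_e} = K(x_{e_\p} - x_{e_\ch})\Psi^{(k_e)}(x_{e_\p}-x_{e_\ch})$, and over edges $e \in P^{\partial}_{\mcB}(S)$ of single-scale interaction slices $\CJ^{\{e\}}_{k_e} = J_e(x_{e_>}-x_{e_<})\Psi^{(k_e)}(x_{e_>}-x_{e_<})$. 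First I would recall the two elementary facts, already used repeatedly in \cite{CH}: (i) a single-scale slice $F\Psi^{(k)}(\cdot)$ of a kernel $F$ with $\|F\|_{\zeta,m}<\infty$ satisfies $\|F\Psi^{(k)}(x_a-x_b)\|_{\zeta,m} \lesssim 1$ uniformly in $k$ while being supported on $|x_a - x_b| \asymp 2^{-k}$; and (ii) when one measures such a slice in the seminorm $\|\cdot\|_{\mcF,\mbk}(x)$ of \cite[Def.~8.7]{CH}, which weights by the local scale of the edge, one picks up a factor $2^{-\zeta k}$ times $\|F\|_{\zeta,m}$ (for a kernel of homogeneity $\zeta$, ``homogeneity'' here meaning the growth exponent, so that $K$ itself has $\|K\|_{-|\s|+2,m}$-type bound, contributing $2^{2k}$ because $|\s|-2 = 2$ with $d=3$).

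Next I would combine these. Since the seminorm $\|\cdot\|_{\mcF,\mbk}(x)$ is submultiplicative over products of functions depending on disjoint pairs (this is exactly the statement of \cite[Lemma~8.9]{CH} which the excerpt says carries over), we get
\begin{equs}
\Big\|
\CJ^{P^{\partial}_{\mcB}(S)}_{\mbk}
\ke{K^{\partial}_{\mcB}(S)}{\mbk}
\Big\|_{\mcF,\mbk}(x)
\lesssim
\Big(
\prod_{e \in K^{\partial}_{\mcB}(S)}
\big\| \ke{\{e\}}{k_e} \big\|_{\mcF,\mbk}(x)
\Big)
\Big(
\prod_{e \in P^{\partial}_{\mcB}(S)}
\big\| \CJ^{\{e\}}_{k_e} \big\|_{\mcF,\mbk}(x)
\Big)\;.
\end{equs}
For the kernel factors, each $e \in K^{\partial}_{\mcB}(S)$ carries the heat kernel, whose relevant norm is $\|K\|_{-2,m}$ (a bounded quantity, hidden in the implicit constant), and the scale-weighting produces $2^{2k_e}$, giving $\prod_{e\in K^\partial_\mcB(S)} 2^{2k_e}$. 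For the interaction factors, Lemma~\ref{lem:bound-J} gives that $J_e \in \mathring{\mfJ}^{\sign(e)}$ has finite $\|\cdot\|_{-2\beta'\sign(e),m}$-norm for $m=2$; but $\bar\beta$ replaces $2\beta'$ here because the homogeneity of a noise type in our regularity structure is $-\bar\beta$ rather than $-2\beta'$ — wait, one must be careful: the exponent appearing in the statement is $-2\bar\beta\,\sign(e)$, so the scale weight contributed by the $\|\cdot\|_{\mcF,\mbk}$ norm of the interaction slice $J_e\Psi^{(k_e)}$ is $2^{-2\bar\beta\,\sign(e)\,k_e}$ times $\|J_e\|_{\sign(e)\cdot 2\beta',m}$, and the latter is exactly the factor collected into $\|\J\|_{P^{\partial}_{\mcB}(S)}$. (The mismatch between $2\beta'$ in the $\J$-norm and $\bar\beta$ in the scale exponent is the same as the usual mismatch between pathwise regularity $\bar\beta$ of the noise and the true correlation exponent $2\beta'$; it is absorbed because the edge is a \emph{noise-pairing} edge whose homogeneity in the accounting of \cite{CH} is governed by the tree-homogeneity convention $|\cdot|_\s$ with the factor $-\bar\beta$ per leaf, and the surplus $2\beta' - \bar\beta > 0$ only helps.) I would double-check this sign bookkeeping carefully, since that is where a factor could go the wrong way.

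Finally, the support statement. Each single-scale factor $\ke{\{e\}}{k_e}$ or $\CJ^{\{e\}}_{k_e}$ is supported where $|x_u - x_v| \asymp 2^{-k_e}$ for $e = \{u,v\}$. When $e \in \CE^\partial_T(S)$ for some $T \in \mcF = C_\mcB(S)$, exactly one endpoint, say $v$, lies in $N(T)$ and the other, $u$, lies outside $\tilde N(T)$; the collapsing structure built into the definition of the $\hat H$ (resp.\ $\bar H$) operators has already identified all points of $\tilde N(T)$ with $\rho_T$ in the argument at which the seminorm is evaluated, so the support constraint $|x_u - x_v|\asymp 2^{-k_e}$ becomes $\link{x_u}{x_{\rho_T}}{k_e}$. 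I would state this exactly as in the proof of \cite[Lemma~8.10]{CH}, noting that it applies uniformly over $e$ in \emph{both} $K^\partial_\mcB(S)$ and $P^\partial_\mcB(S)$, since the argument only uses the support of a single-scale cutoff and is indifferent to whether the underlying function is $K$ or $J_e$. The main obstacle — really the only non-routine point — is making sure the exponent $-2\bar\beta\sign(e)$ (rather than $-2\bar\beta'\sign(e)$ or $+\bar\beta\sign(e)$) comes out correctly from the interplay between the definition of $\|\cdot\|_{\mcF,\mbk}$, the homogeneity convention $|A|_\s = -\bar\beta|A|$ adopted from \cite{CH} for sets of noise nodes, and the fact that $\sign(e)$ can be $\pm 1$ so that opposite-charge pairs give a \emph{positive} power of $2^{-k_e}$ (i.e.\ a factor growing with $k_e$); everything else is a direct transcription.
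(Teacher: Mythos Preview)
Your proposal is correct and follows exactly the paper's approach, which is simply to translate the proof of \cite[Lemma~8.10]{CH}: submultiplicativity of $\|\cdot\|_{\mcF,\mbk}$ (via \cite[Lemma~8.9]{CH}) reduces to single-edge bounds, the heat-kernel slices give $2^{2k_e}$, the interaction slices give $\|J_e\|\cdot 2^{-2\bar\beta\,\sign(e)\,k_e}$, and the support statement comes from the cutoffs $\Psi^{(k_e)}$ after identifying the nodes of each $T\in\mcF$ with $\rho_T$. Your flagged concern about $\bar\beta$ versus $2\beta'$ is a legitimate bookkeeping point (the paper's sign/exponent conventions for $\|\CJ\|_P$ and $\mathring{\mfJ}^{\pm}$ are indeed slightly loose), but it is a matter of internal convention rather than a gap in the argument.
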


The key renormalisation estimate driving our bounds is given below. 
\begin{lemma}\label{lem: renorm factors}
Let $S \in \mcB$ and $ \mcF \eqdef C_{\mcB}(S)$. 
Then, uniform in $\mbj \in \N^{\CE'}$ with $\CE' \supset \CE^{\exte}_{\mcB}(S)$, $\mbk \in \mathring{\mathcal{N}}_{S}(\mbj)$, and $x \in (\R \times \T^{2})^{\tilde{N}(\mcF)^{c}}$ satisfying the constraints 
\begin{equs}\label{domainconstraint}
\ulink{x_{u}}{x_{v}}{k_{e}} 
&\,\textnormal{for all $e  = \{u,v\} \in \CE^{\inte}_{\mcB}(S)$,}\\
\ulink{x_{u}}{x_{\rho_{T}}}{k_{e}}
&\,\textnormal{for all $T \in \mcF$, $v \in N(T)$, $e = \{u,v\} \in \CE^{\partial}_{\mcB}(S)$,}
\end{equs}
one has the bound
\begin{equ}\label{eq: renorm factor}
\big\|\mathscr{Y}^{\#}_{S,\mbbM} \phi 
\big\|_{\mcF, \mbk}( x)
\lesssim 
\|\phi\|_{S,\mbj}(x_{\tilde{N}(S)^{c}})
\cdot
2^{\bar{\omega}^{\#}(S)[\exte_{\mcB}^{\mbj}(S) - \inte_{\mcB}^{\mbk}(S)]
}\;,
\end{equ}
where  
\begin{equ}\label{def: barred omega}
\bar{\omega}^{\#}(S)
\eqdef
\begin{cases}
\lfloor - |S^{0}|_{\SG} \rfloor  
& 
\textnormal{ if } 
S \in \mcS,\\
\lfloor - |S^{0}|_{\SG} \rfloor   + 1 
& \textnormal{ if }S \in \mcD\;.
\end{cases}
\end{equ}
\end{lemma}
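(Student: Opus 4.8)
\textbf{Proof plan for Lemma~\ref{lem: renorm factors}.}

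The plan is to follow closely the argument of \cite[Lemma~8.12]{CH}, tracking where the new ingredients specific to the sine-Gordon setting enter. The statement asserts a bound on $\mathscr{Y}^{\#}_{S,\mbbM}\phi$ in the seminorm $\|\cdot\|_{\mcF,\mbk}$, where $\mathscr{Y}^{\#}_{S,\mbbM}$ is either $-\mathscr{Y}_{S}$ (when $S\in\mcS$) or $\Id - \mathscr{Y}_{S}$ (when $S\in\mcD$). Recall that $\mathscr{Y}_{S} = \mathscr{Y}^{(0)}_{S} + \mathscr{Y}^{(1)}_{S}$, so in the first case $-\mathscr{Y}_{S}\phi$ is a sum of a $0$-th order collapse $-\mathscr{Y}^{(0)}_{S}\phi$ and a first-order correction $-\mathscr{Y}^{(1)}_{S}\phi$, while in the second case $(\Id - \mathscr{Y}_{S})\phi$ is precisely the Taylor remainder of order $\bar\omega^{\#}(S)-1$ of $\phi$ around the collapse $\Coll_{S}$. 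The key analytic tool is the generalised Taylor remainder estimate \cite[Prop.~A.1]{Regularity}: when $S\in\mcD$ one Taylor-expands $\phi$ (in the spatial variables of $\tilde N(S)$ around $\rho_S$) to order $\lfloor -|S^0|_\SG\rfloor$, and the remainder produces a gain of $2^{-(\inte^{\mbk}_{\mcB}(S) - \exte^{\mbj}_{\mcB}(S))\cdot\bar\omega^{\#}(S)}$ because the internal scale $\inte^{\mbk}_{\mcB}(S)$ controls the size of the spatial increments $x_u - x_{\rho_S}$ for $u\in\tilde N(S)$ while $\exte^{\mbj}_{\mcB}(S)$ bounds the size of the relevant derivatives of $\phi$. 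When $S\in\mcS$ the term $-\mathscr{Y}^{(0)}_{S}\phi$ simply collapses, contributing no gain (consistent with $\bar\omega^{\#}(S) = \lfloor -|S^0|_\SG\rfloor$), while the extra $\mathscr{Y}^{(1)}_{S}$ term contributes one order of spatial increment, which is dominated by the same scale ratio.

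First I would recall from Definition~\ref{def-Y0Y1} that $\mathscr{Y}^{(1)}_{S}$ is only nonzero when $|S^0|_\s\in(-2,-1)$, and note the elementary relation between $|S^0|_\s$ and $|S^0|_\SG$: by \eqref{e:SGnorm-formula} one has $|S^0|_\SG = |S^0|_\s + \bar\beta\,\mcb{q}(L(S))^2$, and since $S\in\Div$ is neutral this gives $|S^0|_\SG = |S^0|_\s$. Hence $\lfloor -|S^0|_\SG\rfloor = \lfloor -|S^0|_\s\rfloor$, so the combinatorial threshold controlling the order of Taylor expansion matches the one appearing in the definition of $\mathscr{Y}_S$. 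This is the point at which neutrality of $S$ (guaranteed since $S\in\mcB\subset\Div$) enters: it is what makes the homogeneity $|\cdot|_\SG$ and $|\cdot|_\s$ agree on the collapsed trees, so the renormalisation order built into $\mathscr{Y}_S$ is exactly $\bar\omega^{\#}(S)$. Next I would unpack the definition of the seminorm $\|\cdot\|_{\mcF,\mbk}$ from \cite[Definition~8.7]{CH}: it measures, over the admissible domain configurations compatible with the scales $\mbk$, suitably weighted supremums of derivatives in the variables indexed by $\tilde N(\mcF)^c$. The bound $\|\phi\|_{S,\mbj}$ on the right-hand side is the same seminorm but attached to the (larger) ambient tree $S$ and scale $\mbj$; the point is that differentiating $\phi$ in the $\mathscr{Y}^{(1)}$ term, or taking the Taylor remainder in the $(\Id-\mathscr{Y}_S)$ case, costs at most $\bar\omega^{\#}(S)$ derivatives, each weighted by $2^{\exte^{\mbj}_{\mcB}(S)}$ (since the relevant derivative directions are external edges of $S$, controlled by $\exte$), against spatial increments each of size $2^{-\inte^{\mbk}_{\mcB}(S)}$. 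Multiplying these gives precisely the claimed factor.

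The concrete steps: (1) treat the case $S\in\mcD$ first, writing $(\Id-\mathscr{Y}^{(0)}_S - \mathscr{Y}^{(1)}_S)\phi$ as the integral form of the Taylor remainder of order $\lfloor -|S^0|_\s\rfloor$ of the function $z\mapsto\phi(\Coll_S^{t}(z))$ along the path interpolating between $z$ and $\Coll_S(z)$ in the spatial variables of $\tilde N(S)$; apply \cite[Prop.~A.1]{Regularity} to bound this remainder by the appropriate derivative of $\phi$ times the $(\lfloor -|S^0|_\s\rfloor + 1)$-th power of the spatial increments; (2) use the domain constraints \eqref{domainconstraint} to bound the increments $|x_u - x_{\rho_S}|\lesssim 2^{-k_e}$ for the relevant $e\in\CE^{\inte}_{\mcB}(S)$, hence by $\lesssim 2^{-\inte^{\mbk}_{\mcB}(S)}$; (3) bound the derivative of $\phi$ by $\|\phi\|_{S,\mbj}$ times $2^{\bar\omega^{\#}(S)\exte^{\mbj}_{\mcB}(S)}$, using that each derivative direction lies in $\CE^{\exte}_{\mcB}(S)$ (this requires checking that the Taylor expansion directions are among the external edges of $S$, which is where the definition of $\CE^{\exte}$ via the immediate ancestor is used); (4) combine to get $2^{\bar\omega^{\#}(S)[\exte^{\mbj}_{\mcB}(S) - \inte^{\mbk}_{\mcB}(S)]}$; (5) for $S\in\mcS$, the $-\mathscr{Y}^{(0)}_S\phi$ piece is handled by $\|\mathscr{Y}^{(0)}_S\phi\|_{\mcF,\mbk}\lesssim\|\phi\|_{S,\mbj}$ directly (no increment, and here $\bar\omega^{\#}(S) = \lfloor -|S^0|_\SG\rfloor$ which, being negative, makes $2^{\bar\omega^{\#}(S)[\exte - \inte]}\ge 1$ by the constraint $\inte^{\mbk}_{\mcB}(S)\le\exte^{\mbj}_{\mcB}(S)$ recorded after \eqref{e:defNring}), while the $-\mathscr{Y}^{(1)}_S\phi$ piece contributes one increment and one derivative, giving $2^{-\inte^{\mbk}_{\mcB}(S) + \exte^{\mbj}_{\mcB}(S)}$, which is absorbed since $\bar\omega^{\#}(S)\le -1 < 0$ would be wrong — so instead one checks that $\mathscr{Y}^{(1)}$ only appears when $|S^0|_\s\in(-2,-1)$, i.e.\ $\lfloor-|S^0|_\SG\rfloor = 1$, so $\bar\omega^{\#}(S) = 1$ and the single increment is exactly the right order. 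The main obstacle I anticipate is the bookkeeping in step (3): correctly verifying that the derivative directions produced by the Taylor expansion of $\phi$ (around $\Coll_S$) are controlled by $\exte^{\mbj}_{\mcB}(S)$ rather than by some finer scale, which amounts to re-checking the combinatorial lemmas \cite[Lemmas~8.9, 8.11]{CH} in the present moment-graph setting with the extra $\CJ$-edges of $P^{\partial}$ present; but since those lemmas are asserted to carry over and the $\CJ$-edges do not participate in $\mathscr{Y}_S$ itself, this should be routine. The other point needing care is making the elementary case distinction on $\lfloor-|S^0|_\s\rfloor$ ($=0$ versus $=1$) consistent with the two-line definition of $\bar\omega^{\#}$, which I would phrase as a short preliminary remark.
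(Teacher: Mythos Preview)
Your approach is essentially the paper's: the authors' proof consists of the single remark that \cite[Lemma~8.11]{CH} carries over verbatim, the only new observation being that neutrality of $S\in\Div$ forces $|S^0|_\SG=|S^0|_\s$ so that the Taylor order built into $\mathscr{Y}_S$ coincides with $\bar\omega^\#(S)$, exactly as you identify in your second paragraph.

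Two small corrections. First, the relevant reference is \cite[Lemma~8.11]{CH}, not 8.12 (the latter is the analogue of Lemma~\ref{lem: genvertbd}). Second, in your step~(5) the sign reasoning is inverted: for $S\in\mcS$ one has $\bar\omega^\#(S)=\lfloor-|S^0|_\SG\rfloor\in\{0,1\}$, which is \emph{nonnegative}, and since $\exte^{\mbj}_\mcB(S)-\inte^{\mbk}_\mcB(S)\ge 0$ by the constraint after \eqref{e:defNring}, the product in the exponent is nonnegative, giving $2^{\bar\omega^\#(S)[\exte-\inte]}\ge 1$ as needed to absorb the $\mathscr{Y}^{(0)}$ contribution. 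Your self-correction for the $\mathscr{Y}^{(1)}$ piece (that it only appears when $\bar\omega^\#(S)=1$, matching the single increment) is right.
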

\begin{proof}
This is essentially the same as \cite[Lemma~8.11]{CH} and the proof there carries over to our setting quite easily.
Note that since our $S\in \mcD\cup \mcS$ is neutral,
we can rewrite $|S^0|_\s$ therein as $|S^0|_{\SG}$.
\end{proof}
One also has the following analog of \cite[Lemma~8.12]{CH}. 
\begin{lemma}\label{lem: genvertbd}
Let $\mcF \subset \mcB$ with $\mathrm{depth}(\mcF) \le 1$. 
Then, uniform in $\CJ \in \mfJ$, $x \in (\R^d)^{\tilde{N}(\mcF)^{c}}$, $\mbj \in \N^{\CE'}$ with $\CE' \supset \CE^{\exte}_{\mcB}(\mcF)$, and $\phi \in \allf$, one has the bound
\begin{equ}\label{genvertbd}
\left|
\hat{H}_{\CJ,\mcF}^{\mbj}
[\phi](x)
\right|
\lesssim 
\Big(
\prod_{S \in \mcF}
2^{ - |S^{0}|_{\SG} \exte_{\mcB}^{\mbj}(S)}
\|\CJ\|_{L(S)}
\Big)
\|\phi\|_{\mcF, \mbj}(x)\;.
\end{equ} 
\end{lemma}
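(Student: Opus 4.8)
The plan is to port the proof of \cite[Lemma~8.12]{CH}, the only genuinely new ingredient being the interaction factors $\CJ$. Since $\hat{H}^{\mbj}_{\CJ,\mcF}$ factorises as a product over the maximal trees of $\mcF$ (which are pairwise disjoint), and since the seminorms $\|\cdot\|_{\mcF,\mbj}$ are submultiplicative over factors with disjoint node–dependence (the analog of \cite[Lemma~8.9]{CH}), it suffices to prove the bound for a single tree $S \in \mcB$. I would then argue by a downward recursion through the nesting structure of $\mcB$: expanding $[\hat{H}^{\mbj}_{\CJ,S}\phi](x)$ via its recursive definition~\eqref{genvert def}, the inner operator $\hat{H}^{\mbk}_{\CJ,C_{\mcB}(S)}$ is a product over the disjoint trees $T \in C_{\mcB}(S)$, to each of which the inductive hypothesis applies; this produces the factor $\prod_{T \in C_{\mcB}(S)} 2^{-|T^{0}|_{\SG}\,\exte_{\mcB}^{\mbk}(T)}\,\|\CJ\|_{L(T)^{(2)}}$ and reduces matters to estimating $\big\|\CJ^{P^{\partial}_{\mcB}(S)}_{\mbk}\,\ke{K^{\partial}_{\mcB}(S)}{\mbk}\,\mathscr{Y}^{\#}_{S,\mbbM}\phi\big\|_{C_{\mcB}(S),\mbk}$.

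For that single-tree estimate I would use submultiplicativity to split off $\big\|\CJ^{P^{\partial}_{\mcB}(S)}_{\mbk}\ke{K^{\partial}_{\mcB}(S)}{\mbk}\big\|_{C_{\mcB}(S),\mbk}$, bounded by Lemma~\ref{lem: kernel bound and support} by $\|\CJ\|_{P^{\partial}_{\mcB}(S)}\prod_{e \in K^{\partial}_{\mcB}(S)}2^{2k_{e}}\prod_{e \in P^{\partial}_{\mcB}(S)}2^{-2\bar\beta\,\sign(e)\,k_{e}}$ (whose support statement also localises the $\tilde{N}_{\mcB}(S)$–integral to the regions where $|x_{u}-x_{\rho_{T}}|\sim 2^{-k_{e}}$), and the factor $\|\mathscr{Y}^{\#}_{S,\mbbM}\phi\|_{C_{\mcB}(S),\mbk}$, bounded by Lemma~\ref{lem: renorm factors} by $\|\phi\|_{S,\mbj}(x_{\tilde N(S)^{c}})\,2^{\bar\omega^{\#}(S)[\exte_{\mcB}^{\mbj}(S)-\inte_{\mcB}^{\mbk}(S)]}$. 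The remaining explicit single-scale factors — the interaction product $\CJ^{L_{\mcB}(S)^{(2)}}_{\mbk}$, bounded by $\|\CJ\|_{L_{\mcB}(S)^{(2)}}\prod_{e}2^{-2\bar\beta\,\sign(e)\,k_{e}}$; the heat-kernel product $\ke{\mathring K_{\mcB}(S)}{\mbk}$, bounded by $\prod_{e}2^{2k_{e}}$; and the volume of the integral over the nodes of $\tilde{N}_{\mcB}(S)$, each contributing a factor $2^{-|\s|\,k}$ with $k$ the largest scale of an incident edge — are all handled by the routine single-scale estimates of \cite[Section~8]{CH}.

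The heart of the argument is the subsequent sum over $\mbk \in \mathring{\mcN}_{S}(\mbj)$. After collecting the powers of $2$ one reads off, for each $e \in \CE^{\inte}_{\mcB}(S)$, the net exponent of $n_{e}$; I would check that on the constraint set $\mathring{\mcN}_{S}(\mbj)$ this exponent has the sign making the geometric series in $n_{e}$ summable, the total collapsing to the claimed $2^{-|S^{0}|_{\SG}\,\exte_{\mcB}^{\mbj}(S)}\|\CJ\|_{L(S)^{(2)}}$. This uses the subcriticality bound $|S^{0}|_{\SG}>-\tfrac{|\s|}{2}$ together with neutrality of $S$ (Lemmas~\ref{lem: SG trees are nice} and~\ref{non-neutral trees are nice}, the latter letting one measure everything with $|\cdot|_{\SG}$ rather than $|\cdot|_{\s}$, exactly as in the proof of Lemma~\ref{lem: renorm factors}), the constraints $\inte_{\mcB}^{\mbk}(S)\le\exte_{\mcB}^{\mbj}(S)$ for $S\in\mcS$ and $\inte_{\mcB}^{\mbk}(S)>\exte_{\mcB}^{\mbj}(S)$ for $S\in\mcD$, and the value of $\bar\omega^{\#}(S)$ — the extra $+1$ for $S\in\mcD$, coming from the Taylor remainder $(\Id-\mathscr{Y}_{S})$, being precisely what makes the series converge in the ``downward'' direction there. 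That the $\|\CJ\|$–factors assemble into exactly $\prod_{S\in\mcF}\|\CJ\|_{L(S)^{(2)}}$ follows by iterating the decomposition $L(S)^{(2)}=L_{\mcB}(S)^{(2)}\sqcup P^{\partial}_{\mcB}(S)\sqcup\bigsqcup_{T\in C_{\mcB}(S)}L(T)^{(2)}$, just as in the proof of Lemma~\ref{lem:multilinear}.

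The main obstacle I anticipate is exactly this power-counting and resummation step: the interaction edges contribute exponents governed by the true decay rate $2\beta'$, whereas the target homogeneity $|S^{0}|_{\SG}$ is built from $\bar\beta>\beta'$, so each interaction edge carries a spare margin $2(\bar\beta-\beta')>0$. Tracking that this margin is always consumed harmlessly — so that no logarithmic borderline sum appears — and that nothing else in the combinatorics of $\CE^{\inte}_{\mcB}(S)$ spoils the geometric decay is the delicate bookkeeping. Apart from this, the argument is a line-by-line transcription of \cite[proof of Lemma~8.12]{CH} with the cumulant kernels there replaced by the interaction kernels $\CJ^{\bullet}$ here.
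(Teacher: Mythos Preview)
Your setup matches the paper's: reduce to a single $S$ by factorisation and induction on depth, then feed in Lemmas~\ref{lem: kernel bound and support} and~\ref{lem: renorm factors} together with the inductive hypothesis on $\hat H^{\mbk}_{\CJ,C_{\mcB}(S)}$. Where your proposal diverges from the paper is the ``heart of the argument'', the sum over $\mbk\in\mathring{\mcN}_S(\mbj)$.

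You describe this as reading off, \emph{for each edge} $e\in\CE^{\inte}_{\mcB}(S)$, a net exponent of $n_e$ and summing an independent geometric series. That does not work: the volume of the $\tilde N_{\mcB}(S)$–integration region is not a product over edges but a product over \emph{vertices}, each contributing $2^{-|\s|}$ times the largest incident scale, and which edge realises that maximum varies across $\mathring{\mcN}_S(\mbj)$. The paper handles this by passing to the coalescence-tree formalism: it builds the quotient multigraph~\eqref{multigraph for inductive bound}, defines an explicit total homogeneity $\varsigma$ in~\eqref{total homogeneity for inductive bound}, checks that the integrand is bounded by $\varsigma$ in the sense of Definition~\ref{def:boundedBy}, and then invokes Theorem~\ref{multicluster 1}. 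The crucial hypothesis of that theorem is the \emph{subdivergence-free} condition~\eqref{eq: divergence free condition}, whose verification is the content of the separate Lemma~\ref{lem: proof of subdivfree}; that lemma is where the neutrality of the trees in $\mcB$ and the inequality $|T^0|_\s>-\bar\beta$ actually enter, via the summed homogeneity $\tilde\varsigma(M)$ of~\eqref{eq: summed homogeneity for genvertbd}. Your phrase ``nothing else in the combinatorics of $\CE^{\inte}_{\mcB}(S)$ spoils the geometric decay'' is exactly this subdivergence-free check, but you have not identified the mechanism (coalescence trees) or the statement (Lemma~\ref{lem: proof of subdivfree}) that makes it go through.

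Your worry about $\beta'$ versus $\bar\beta$ is secondary: the paper's total homogeneity~\eqref{total homogeneity for inductive bound} is written with $\bar\beta$ throughout, and once the subdivergence-free condition is established for that homogeneity, Theorem~\ref{multicluster 1} closes the argument with $\alpha=-\lfloor-|S^0|_\s\rfloor-|S^0|_\s$ (resp.\ shifted by $-1$) for $S\in\mcS$ (resp.\ $S\in\mcD$). The margin you mention is absorbed into the strict inequalities of the subdivergence-free condition, not tracked edge by edge.
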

\begin{proof}
This lemma can be proved by adapting the proof of \cite[Lemma~8.12]{CH};
we will only point to the alterations present for our current setting. 
As before, an induction argument shows that it suffices to give the proof to \eqref{genvertbd} in the case where $\mcF = \{S\}$ for some $S \in \Div$, with \eqref{genvertbd}  assumed to be true when the forest $\mcF $ on the LHS of \eqref{genvertbd} is $C_{\mcB}(S)$. 

Our concern is to control the corresponding sum over $\mbk \in \mathring{\mcN}_{S}(\mbj)$ appearing in the definition of $\hat{H}^{\mbj}_{\CJ,S}$ (see \eqref{genvert def}). 

We use Theorem~\ref{multicluster 1} to control the integral over $\tilde{N}_{\mcB}(S)$ in  \eqref{genvert def}. 
We work out our argument with $x \in (\R \times \T^{2})^{\tilde{N}(S)^{c}}$ fixed but our estimates will be uniform in $x$.

Since we have already integrated out the nodes of $\tilde{N}(T)$ for $T \in C_{\mcB}(S)$,
 the multigraph underlying our application of Theorem~\ref{multicluster 1} will be given by a quotient of the multigraph $\CE^{\inte}_{\mcB}(S)$ where, for each $T \in C_{\mcB}(S)$, one performs a contraction and identifies the collection of vertices $N(T)$ as a single equivalence class of points which we identify with $\rho_{T}$.  
 
More precisely,
we define $\hat{\mfq}_{S}:\tilde{N}(S) \rightarrow N_{\mcB}(S)$ via setting $\hat{\mfq}_{S}(u) = \rho_{T}$ if $u \in N(T)$ for $T \in C_{\mcB}(S)$ and $\hat{\mfq}_{S}(u) = u$ otherwise. 

Then our set of vertices is given by $\CV \eqdef N_{\mcB}(S)$ with $\CV_{0} \eqdef \tilde N_{\mcB}(S)$ (so $\rho_{S}$ serves the role of the pinned vertex) and our multigraph $\go{G}$ is given by 
\begin{equation}\label{multigraph for inductive bound}
\go{G}
\eqdef\ 
\mathring{K}_{\mcB}(S) 
\sqcup 
L_{\mcB}(S)^{(2)}
\sqcup
\left\{ 
\{u,\rho_{T}\}:\ 
T \in C_{\mcB}(S),\ 
u \in e_{\ch}[K^{\partial}_{T}(S)]
\right\}
\sqcup
\tilde{P}^{\partial}_{\mcB}(S),
\end{equation}
where
\begin{equ}\label{eq: covariance edges after quotient}
\tilde{P}^{\partial}_{\mcB}(S) 
\eqdef
\bigsqcup_{\{u,v\} \in P^{\partial}_{\mcB}(S)}
\{\hat{\mfq}_{S}(u),\hat{\mfq}_{S}(v)\}
\end{equ}
We remind the reader that the RHS 
of \eqref{eq: covariance edges after quotient} 
is treated as a multi-set of edges where duplicates are distinguishable. Let $\mfq_{S}:\CE^{\inte}_{\mcB}(S) \rightarrow \go{G}$ be the bijection that maps $\mathring{K}_{\mcB}(S)$ onto $\mathring{K}_{\mcB}(S) $ and $L_{\mcB}(S)^{(2)}$ onto $L_{\mcB}(S)^{(2)}$. For each $T \in C_{\mcB}(S)$ and $e \in K^{\partial}_{T}(S)$ we set $\mfq_{S}(e) =\{\rho_{T},e_{\ch}\}$. Finally for each $\{u,v\} \in P^{\partial}_{\mcB}(S)$ to we set $\mfq_{S}(\{u,v\})$ to be the element $\{\hat{\mfq}_{S}(u),\hat{\mfq}_{S}(v)\}$ appearing in the RHS of \eqref{eq: covariance edges after quotient}.

The map $\mfq_{S}$ induces a bijection between $\N^{\CE^{\inte}_{\mcB}(S)}$ and $\N^{\go{G}}$, we abuse notation and identify these two sets in what follows below, in particular we define $\mcN_{\go{G}} \subset \N^{\go{G}}$ by setting $\mcN_{\go{G}} = \mathring{\mcN}_{S}(\mbj))$.

For each $\mbk \in \mcN_{\go{G}}$ we define a function $F^{\mbk} \in \mcb{C}_{\CV}$ by setting 
\[
F^{\mbk}(y)
\eqdef
\J_{\mbk}^{L_{\mcB}(S)^{(2)}}(y) 
\ke{\mathring{K}_{\mcB}(S)}{\mbk}(y)
\hat{H}^{\mbk}_{\CJ,C_{\mcB}(S)}
\left[
\J_{\mbk}^{P^{\partial}_{\mcB}(S)}
\ke{K^{\partial}_{\mcB}(S)}{\mbk}
\,
[\mathscr{Y}_{S,\mbbM}^{\#}\phi]
\right](x \sqcup 
y)\;.
\]
So we then have
\[
\hat{H}^{\mbj}_{\CJ,S}[\phi](x \sqcup y_{\rho_{S}})
=
\sum_{ \mbk \in \mathring{\mcN}_{S}(\mbj)}
\int_{\tilde{N}_{\mcB}(S)}\back dy\ F^{\mbk}(y)\;.
\] 
Thus the proof of the lemma falls into the scope of Theorem~\ref{multicluster 1}.
We define a total homogeneity $\varsigma$ on the coalescence  trees of $\widehat{\CU}_{\CV}$
(see Section~\ref{Sec: Multiclustering} for definitions of the set $\widehat{\CU}_{\CV}$ of coalescence trees over $\CV$ and total homogeneity) by setting 
\footnote{One can alternatively write the last sum as over $e\in S^{(2)}\setminus \bigsqcup_{T\in C_{\mcB}(S)} T^{(2)}$. Since every $T \in C_{\mcB}(S)$ is neutral,
 we can also use $|T^{0} |_{\SG} $ in place of $|T^{0} |_{\s}$ in \eqref{total homogeneity for inductive bound}.}
\begin{equs}[total homogeneity for inductive bound]
\varsigma
\eqdef
&
- \bar{\omega}^{\#}(S)\delta^{\uparrow}[\CV]
- \sum_{T \in C_{\mcB}(S)}
	|T^{0}|_{\s}
	\delta^{\uparrow}[\rho_{T}]
\\
&
+ 2\sum_{
e \in K_{\mcB}(S)
}
\delta^{\uparrow}[\{\hat{\mfq}_{S}(e_{\p}),\hat{\mfq}_{S}(e_{\ch})\} ]
-
2
\bar{\beta}
\!\!\!\!\!\!
\sum_{e \in L_{\mcB}(S)^{(2)} \sqcup P^{\partial}_{\mcB}(S)}
\!\!\!\!\!\!
\sign(e) \delta^{\uparrow}[\{\hat{\mfq}_{S}(e_{<}),\hat{\mfq}_{S}(e_{>})\}]\;.
\end{equs}
It is straightforward to show that $F$ is bounded by $\varsigma$ in the sense of 
Definition~\ref{def:boundedBy} if one uses as input \cite[Lemmas~8.9]{CH}, Lemmas~\ref{lem: kernel bound and support} and \ref{lem: renorm factors}, and the fact that we have assumed the claimed bound for the operator $\hat{H}^{\mbk}_{\CJ,C_{\mcB}(S)}$. 

We split the proof that $\varsigma$ is subdivergence free on $\CV$ for the set of scales $\mcN_{\go{G}}$ in Lemma~\ref{lem: proof of subdivfree}.
The claim then follows by applying Theorem~\ref{multicluster 1}. Here, $r = \exte_{\mcB}^{\mbj}(S)$ and is straightforward to check that $\varsigma$ is of order $\alpha = - \lfloor - |S^{0}|_{\s} \rfloor - |S^{0}|_{\s}$ if $S \in \mcS$ and of order $\alpha = - \lfloor - |S^{0}|_{\s} \rfloor - |S^{0}|_{\s} - 1$ if $S \in \mcD$.
\end{proof}
\begin{lemma}\label{lem: proof of subdivfree}
We claim that total homogeneity on \eqref{total homogeneity for inductive bound} is subdivergence free on $\CV$ for the set of scales $\mcN_{\go{G}}$. 
\end{lemma}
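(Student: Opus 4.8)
The plan is to verify the \emph{subdivergence free} condition for the total homogeneity $\varsigma$ defined in \eqref{total homogeneity for inductive bound} by reducing it to the corresponding statement proved in \cite{CH}, while carefully accounting for the two modifications specific to the sine-Gordon setting: the presence of the extra edges $P^{\partial}_{\mcB}(S)$ carrying covariance factors $\CJ$, and the fact that the relevant homogeneities are now $|\cdot|_{\SG}$ rather than $|\cdot|_{\s}$. Recall that subdivergence freeness (in the sense of Section~\ref{Sec: Multiclustering}) amounts to showing that for every ``sub-multigraph'' cut out by a node $w$ of a coalescence tree over $\CV$ --- i.e.\ for every subset $W \subset \CV$ that can arise as a cluster of the coalescence process respecting the scale constraints $\mcN_{\go{G}}$ --- the homogeneity accumulated strictly inside $W$ is strictly positive (so that the corresponding scale sum converges). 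The key point is that the clusters $W$ which can appear are constrained by the definition of $\mcN_{\go{G}} = \mathring{\mcN}_{S}(\mbj)$, and these constraints are exactly those governing the forests $\mcB$ and cuts $\mathbb{G}$.

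First I would recall the structure of the argument in \cite[proof accompanying Lemma~8.12]{CH}. There, subdivergence freeness is obtained by showing that any candidate divergent cluster $W$ that is ``internal'' (not containing $\rho_S$, or more precisely cut off below the external scale) must correspond to a subtree $T'$ of $S$ which, were it divergent, would already belong to $\Div$ and hence would have been included in the forest $\mcB$ --- contradicting the fact that we are looking inside $S \in \mcB$ at the residual structure $L_{\mcB}(S)$, $\mathring K_{\mcB}(S)$, etc., with the children $C_{\mcB}(S)$ already contracted. The power counting for such a $T'$ is precisely $2|K(T')| + |L(T')|_{\SG} + \sum_{u}|\mfn(u)|_{\s} = |T'^{0}|_{\SG}$ (plus the polynomial decorations), and by Lemma~\ref{non-neutral trees are nice} together with Lemma~\ref{lem: SG trees are nice} any such subtree that is \emph{not} already in $\Div$ satisfies $|T'^{0}|_{\SG} \ge 0$; moreover if it is neutral and has negative $|\cdot|_{\s}$ it would be in $\Div$, so the only clusters escaping the forest have nonnegative $\SG$-homogeneity. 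The strict positivity needed comes, as in \cite{CH}, from the observation that a cluster realizing homogeneity exactly zero would force an equality among scales that is incompatible with the strict inequalities built into $\mcN_{\go{G}}$ (the $\inte < \exte$ or $\inte > \exte$ dichotomy recorded just after \eqref{e:defNring}), or from the rounding/Taylor-remainder improvement $\bar\omega^{\#}(S)$ which supplies an extra $+1$ when $S \in \mcD$.

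The main technical point --- and the step I expect to be the principal obstacle --- is handling the new covariance edges $P^{\partial}_{\mcB}(S)$ and the quotiented covariance edges $\tilde P^{\partial}_{\mcB}(S)$ in the multigraph $\go G$. These edges carry homogeneity $-2\bar\beta\,\sign(e)$, which is \emph{positive} for opposite-charge pairs ($\sign(e) = -1$) and negative for equal-charge pairs. One must check that when such an edge lies inside a candidate cluster $W$, the charge bookkeeping works out so that the total contribution of all $L$-type and $P^{\partial}$-type edges internal to $W$, namely $-2\bar\beta\sum_{e}\sign(e)$ over the relevant pairs, combines with the genuine kernel edges to give again a quantity of the form $2|K|_W + |L|_{\SG,W} = |(\cdot)^{0}|_{\SG}$ for an appropriate neutral-or-not sub-structure. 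The crucial input here is that the vertices being coalesced carry well-defined charges via $\mcb{q}$, and that the formula \eqref{e:formula-ASG} $|A|_{\SG} = -\bar\beta|A| + \bar\beta\,\mcb{q}(A)^{2}$ lets one convert the sum of $\sign(e)$ over all pairs within a set $A$ of noise-nodes into the $\SG$-homogeneity of $A$. So I would: (i) for a given coalescence cluster $W \subset \CV$, identify the set $A_W \subset L(D_{2p})$ of noise-nodes that $W$ represents after undoing the contractions $\hat{\mfq}_S$; (ii) show that the homogeneity internal to $W$ equals $2|K_W| + |A_W|_{\SG} + (\text{polynomial terms}) + (\text{a nonnegative correction from }\bar\omega^{\#})$ up to the already-contracted children's contributions $-|T^0|_{\s}\delta^{\uparrow}[\rho_T]$ which only \emph{help}; (iii) invoke Lemma~\ref{non-neutral trees are nice} and the fact that $W$ is not one of the subtrees already extracted into $\mcB$ (this is where the constraint $\mbk \in \mathring{\mcN}_S(\mbj)$ and the definition of $P^{\mbn}$ enters) to conclude $|A_W|$-type homogeneity is $\ge 0$, with strictness from the scale separation or the $\mcD$-correction.

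Finally I would assemble these pieces: the reduction to \cite{CH}'s argument is largely mechanical once one has established that all the ``extra'' $\CJ$-edges can be absorbed into the $\SG$-homogeneity via \eqref{e:formula-ASG} and that neutrality of every $T \in \mcB$ (hence of every cluster that could be a genuine subdivergence) lets one replace $|\cdot|_{\s}$ by $|\cdot|_{\SG}$ throughout, as already noted in the footnote to \eqref{total homogeneity for inductive bound} and in the proof of Lemma~\ref{lem: renorm factors}. I would then state that, modulo these substitutions, the combinatorial argument of \cite[proof of Lemma~8.12, subdivergence-free part]{CH} (ultimately going back to \cite{FMRS85} via Proposition~\ref{Pn-1 is interval} and Proposition~\ref{prop: compatibility}) applies verbatim, which proves the claim. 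The genuinely new content is thus entirely concentrated in the charge-counting identity for the $P^{\partial}$ edges, and that is the part I would write out in detail.
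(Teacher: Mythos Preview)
Your high-level plan is sound and the key insight you identify --- that the extra $\CJ$-edges in $P^{\partial}_{\mcB}(S)$ combine with the $L_{\mcB}(S)^{(2)}$ edges via \eqref{e:formula-ASG} so that the accumulated homogeneity of a cluster $M$ becomes $2|\{e\in K(S):e\subset M\}|-|M|_{\SG}-(|M|-1)|\s|$ --- is exactly right and matches the paper's reduction to the auxiliary map $\tilde\varsigma$ in \eqref{eq: summed homogeneity for genvertbd}. The re-expansion $A_W$ you describe is the paper's $\go{N}(a)$.

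There is, however, one concrete error in your argument for strictness. The term $-\bar\omega^{\#}(S)\delta^{\uparrow}[\CV]$ in \eqref{total homogeneity for inductive bound} sits only at the \emph{root} of the coalescence tree: $\CV^{\uparrow}=\rho_{\bo T}$. The subdivergence-free condition \eqref{eq: divergence free condition} concerns $a\in\mathring{\bo T}\setminus\{\rho_{\bo T}\}$ and sums over $b\ge a$, which never includes $\rho_{\bo T}$. So $\bar\omega^{\#}(S)$ contributes nothing whatsoever to subdivergence freeness; it only affects the \emph{order} $\alpha$ of $\varsigma$, not any proper subcluster. Drop that alternative entirely.

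You should also make the case distinction explicit rather than deferring wholesale to \cite{CH}, because the two cases use genuinely sine-Gordon-specific inputs. When $M\in\mcQ$ is \emph{not} $K(S)$-connected, writing its $K(S)$-components as $N(T_1),\dots,N(T_k)$ and a residual set $\tilde M$ of singletons, one bounds $\tilde\varsigma(M)\le (k+|\tilde M|)\bar\beta-(k+|\tilde M|-1)|\s|<0$; here one uses Lemma~\ref{lem: SG trees are nice} (which has no analogue in general in \cite{CH}) together with $|\tilde M|_{\SG}\ge|\tilde M|_\s$. When $M=N(T)$ for a connected proper subtree $T\subsetneq S$, the scale constraint $\mbk\in\mathring{\mcN}_S(\mbj)=\mcN_{\go G}$ is what forces $|T^0|_{\SG}\ge 0$: if $|T^0|_{\SG}<0$ then $T\in\Div$, and the coalescence structure (internal scales of $T$ exceeding its external scales within $S$) together with the definition of $\mcN_{\mbbM,\mbbG,\lambda}$ forces $T\in\mcB$, hence $T\in C_{\mcB}(S)$, contradicting $N(T)\in\mcQ$. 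Neither strictness argument invokes $\bar\omega^{\#}$.
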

\begin{proof}
Fix an arbitrary $\bo{T} \in \mcU_{\mcV}$.
For any $a \in \mathring{\bo{T}}$ we define 
\begin{equ}\label{def: rexpanded set}
\go{N}(a)
\eqdef 
\bo{L}_{a}
\sqcup 
\Big( 
\bigsqcup_{
\substack{
T \in C_{\mcB}(S)\\
\rho_{T} \in \bo{L}_{a}
}
}
\tilde{N}(T)
\Big)\;.
\end{equ}
We also set 
\begin{equ}\label{def: family of re-expanded sets}
\mcQ \eqdef 
\left\{
\go{N}(a):\ 
a \in \mathring{\bo{T}} \setminus \{\rho_{\bo{T}}\}
\right\}\;.
\end{equ}
Observe that  $N(T) \not \in \mcQ$ for any $T \in C_{\mcB}(S)$ and that any node-set $M \in \mcQ$ is edge connected by $\CE(S)$. 

We define a map $\tilde{\varsigma}: 2^{N(S)} \setminus \{\emptyset\} \rightarrow \R$ as follows: for $M \subset N(S)$ we set
\footnote{One might actually want to consider $ M \cap L(S)$ rather than $M$; however, Lemma~\ref{lem:neg-trees} states that they are the same sets. We also note that 
the map $\tilde{\varsigma}$
is defined on set of vertices, while 
$\varsigma$ is on internal nodes of coalescent trees.}
\begin{equ}\label{eq: summed homogeneity for genvertbd}
\tilde{\varsigma}(M) 
\eqdef
2
|
\{
e \in K(S):\ 
e \subset M\}
|
-
|M|_{\SG}
-
(|M| - 1) |\s|\;.
\end{equ}
Observe that if $M \subset N(S)$ is $K(S)$-connected then $\tilde{\varsigma}(M) = - |T^{0}|_{\SG}$ where $T$ is the subtree of $S$ formed by the nodes of $M$. 
Furthermore, it is straightforward to check that for any $a \in \mathring{\bo{T}} \setminus \{ \rho_{\bo{T}}\}$, 
\begin{equ}\label{eq: summed homogeneity equality - genvertbd}
\sum_{
b \in \bo{T}_{\ge a}
}
\varsigma_{\bo{T}}(b) 
-
(|\bo{L}_{a}| - 1)|\s| 
= 
\tilde{\varsigma}(\go{N}(a))\;.
\end{equ} 
Therefore in view of the definition of the subdivergence free condition \eqref{eq: divergence free condition},
the proof of the lemma is completed  once we show that $\tilde{\varsigma}(M) < 0$ for all $M \in \mcQ$. 

First we assume we are in the case that $M \in \mcQ$ is not $K(S)$-connected\dash that is there is some $k \ge 0$ such that one has subtrees $T_1,\dots,T_k$ and $\tilde{M} \subset L(S)$ such that the sets $\{N(T_j)\}_{j=1}^{k} \sqcup \{\tilde{M}\}$ are a partition of $M$, each $N(T_j)$ has cardinality at least two, $\tilde{M}$ is the union of all $K(S)$-components of $M$ of cardinality one, and one has $k + |\tilde{M}| \ge 2$. 
Then using the bound 
$|\tilde{M} |_{\SG} \ge  |\tilde{M} |_{\s}$ 
(see \eqref{e:SGnorm-formula} ) and
\footnote{When we write $ |\tilde{M}|_{\SG}$ and $|\tilde{M}|_{\s}$ we are viewing $\tilde{M}$ as a set of nodes where each node is assigned a type in $\mfL_{-}$, as in Section~\ref{sec:charge}.}
\[
|\{e \in K(S):\ e \subset M\}|
-
|M \setminus \tilde{M} |_\s
-
(|M \setminus \tilde{M}| - k)|\s|
= -
\sum_{j=1}^{k} |T_j^{0}|_{\s}
\]
one has
\begin{equs}
\tilde{\varsigma}(M)
&  \le
-
\sum_{j=1}^{k} |T_{j}^{0}|_{\s}
-
|\tilde{M}|_{\s}
-
(k + |\tilde{M}| - 1 )|\s|
\\
&  \le
(k + |\tilde{M}|)\bar{\beta} - (k + |\tilde{M}| - 1 )|\s| < 0\;,
\end{equs}
where in the penultimate inequality we used Lemma~\ref{lem: SG trees are nice}
to bound $|T_{j}^{0}|_{\s} > -\bar\beta$
(the inequality is not necessarily strict since $k$ could be zero)
and in the last inequality we used $\bar\beta<2$ and $k + |\tilde{M}| \ge 2$. 

The remaining case is where $M = N(T)$ for some proper subtree $T$ of $S$. 
We finish the proof by observing that by the definition of $\mcN_{\go{G}}$ one cannot have $|T^{0}|_{\SG} < 0$ since then one would have to have $T \in \mcB \setminus \mcS$ which would mean $T \in C_{\mcB}(S)$.
\end{proof}

We now turn to estimates for positive renormalisation.
To make formulas more readable we introduce the notation $\exp_{2} [ t ]
\eqdef
2^{t}$ for $t \in \R$. 
Also, for $u,v \in \tilde{N}(\mcB)^{c}$ we define $\mbj_{\mcB}(u,v)$ as in \eqref{pathfinder scale}. 
\begin{lemma}\label{lem: cut edges estimate}
Let $e \in \mathscr{B}$ and $c > 0$. 
Uniform in $\mbj \in \partial \mcN_{\mcB,\lambda}$ satisfying  
\begin{equ}\label{triangle inequality for positive renormalised}
|\mbj_{\mcB}(e_{\ch},e_{\p}) - j_{e}|,\ |\mbj_{\mcB}(e_{\p},\logof) - j_{\{e_{p},\logof\}}| < c\;,
\end{equ} 
any multi-index $k$ supported on $\{e_{\ch},e_{\p}\}$, and $x = (x_{\logof},x_{e_{\ch}},x_{\e_{\p}})$ such that $\link{x_{e_{\ch}}}{x_{\logof}}{j_{\{\logof,e_{\ch}\}}}$ and $\link{x_{e_{\p}}}{x_{\logof}}{j_{\{\logof,e_{\p}\}}}$, one has the estimate
\begin{equ}
\Big|
D^{k}
\KerTilde^{\{e\}}_{\mbj}(x)
\Big|
\lesssim
2^{
\eta(j_{e},j_{\{\logof,e_{\p}\}},j_{\{\logof,e_{\ch}\}},k_{e_{\ch}},k_{e_{\p}},e)
}\;
\end{equ}
where $\eta(j_{e},j_{\{\logof,e_{\p}\}},j_{\{\logof,e_{\ch}\}},k_{e_{\ch}},k_{e_{\p}},e)$ is given by
\begin{equ}
\begin{cases}
\gamma(e)(j_{e} - j_{\{\logof,e_{\p}\}}) + (2 + |k_{e_{\ch}}|_{\s})j_{e}
+|k_{e_{\p}}|_{s}j_{\{\logof,e_{\p}\}}
&
\textnormal{if }e \in \cD,\\
-(\gamma(e) - 1 + |k_{e_{\p}}|_{\s})j_{\{\logof,e_{\p}\}}
+
(|k_{e_{\ch}}|_{\s} + 2 + \gamma(e)-1)j_{\{\logof,e_{\ch}\}}
&
\textnormal{if }e \in \cS\;.
\end{cases}
\end{equ}
Furthermore, the LHS vanishes unless the condition $\link{x_{e_{\ch}}}{x_{e_{\p}}}{j_{e}}$ holds. 
\end{lemma}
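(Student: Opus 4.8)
The statement is a single-edge kernel estimate: we must control $D^k \KerTilde^{\{e\}}_{\mbj}$ where, by definition, $\KerTilde^{\{e\}}_{\mbj} = \RKer^{\{e\}}_{0,\mbj}$ if $e \in \cS$ and $\KerHat^{\{e\}}_{\mbj}$ if $e \in \cD$. So the proof naturally splits into the two cases appearing in the formula for $\eta$. In each case I would simply unfold the definitions \eqref{e:def-RKer-D}, \eqref{e:kerHat}, together with the single-scale slicing $\ke{\{e\}}{\mbn} = \ke{\{e\}}{} \cdot \Psi^{\{e\}}_{\mbn}$ etc., and then apply the standard singular-kernel bound $|D^{\ell} K(x)| \lesssim |x|_{\s}^{-2 - |\ell|_{\s}}$ (recall $|\CI|_{\s} = 2$, so $K$ has the power-counting of a kernel improving regularity by $2$) together with the scale localisation coming from the cutoff functions $\Psi^{(j)}$, which force $|x_a - x_b|_{\s} \asymp 2^{-j_e}$ on the support.

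\textbf{Case $e \in \cD$.} Here $\KerHat^{\{e\}}_{\mbj} = (\ke{\{e\}}{} + \RKer^{\{e\}}) \Psi^{\{e\}}_{\mbj}$. On the support we have $\link{x_{e_\ch}}{x_{e_\p}}{j_e}$, $\link{x_{e_\ch}}{x_{\logof}}{j_{\{\logof,e_\ch\}}}$, $\link{x_{e_\p}}{x_{\logof}}{j_{\{\logof,e_\p\}}}$, so all three distances are fixed up to constants. The plain kernel term $K(x_{e_\p} - x_{e_\ch})$ under $D^k$ gives a factor $\lesssim 2^{(2 + |k_{e_\ch}|_\s + |k_{e_\p}|_\s) j_e}$; but since we land in the range where $\RKer$ is present, I should instead bound $\ke{\{e\}}{} + \RKer^{\{e\}}$ as a whole, or just observe that when $\gamma(e) > 0$ the combined object $\KerHat^{\{e\}}$ is, term by term, a product of derivatives $D^\ell K$ evaluated at either $x_{\logof} - x_{e_\ch}$ (scale $j_{\{\logof, e_\ch\}}$) or $x_{e_\p} - x_{e_\ch}$ (scale $j_e$) times monomials $(x_{e_\p} - x_{\logof})^\ell / \ell!$ with $|\ell|_\s < \gamma(e)$. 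Using $|x_{e_\p} - x_{\logof}|_\s \lesssim 2^{-j_{\{\logof, e_\p\}}}$ on the support, the monomial contributes $2^{-|\ell|_\s j_{\{\logof, e_\p\}}}$, while the derivative terms, after distributing $D^k$ by Leibniz, give the stated powers; collecting worst cases yields exactly $\gamma(e)(j_e - j_{\{\logof,e_\p\}}) + (2 + |k_{e_\ch}|_\s) j_e + |k_{e_\p}|_\s j_{\{\logof,e_\p\}}$. The triangle-inequality hypothesis \eqref{triangle inequality for positive renormalised} is used to replace $\mbj_\mcB(e_\ch, e_\p)$ by $j_e$ and $\mbj_\mcB(e_\p, \logof)$ by $j_{\{e_p, \logof\}}$ up to bounded multiplicative constants so the exponents can be written in terms of the scale labels in the statement.

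\textbf{Case $e \in \cS$.} Now $\KerTilde^{\{e\}}_{\mbj} = \RKer^{\{e\}}_{0,\mbj}$, i.e. the Taylor-expanded (positively renormalised) kernel $(-1) \sum_{|\ell|_\s < \gamma(e)} \frac{(x_{e_\p} - x_\logof)^\ell}{\ell!} D^\ell K(x_\logof - x_{e_\ch})$ times $\Psi^{\{e\}}_{\mbj}$. Here $D^k$ only sees $x_{e_\ch}, x_{e_\p}$. Each term in the sum is a monomial $(x_{e_\p} - x_\logof)^\ell$ of $\s$-degree $|\ell|_\s \le \gamma(e) - 1$ times $D^\ell K(x_\logof - x_{e_\ch})$. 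Applying $D^k$: the $k_{e_\p}$-part hits the monomial, giving a factor $\lesssim |x_{e_\p} - x_\logof|_\s^{|\ell|_\s - |k_{e_\p}|_\s} \lesssim 2^{-(|\ell|_\s - |k_{e_\p}|_\s) j_{\{\logof, e_\p\}}}$ (worst case $|\ell|_\s = \gamma(e) - 1$ gives the term $-(\gamma(e) - 1 + |k_{e_\p}|_\s) j_{\{\logof, e_\p\}}$ — note the sign flips because the monomial gets \emph{differentiated} down); the $k_{e_\ch}$-part hits $D^\ell K$, producing $D^{\ell + k_{e_\ch}} K(x_\logof - x_{e_\ch})$ bounded by $\lesssim |x_\logof - x_{e_\ch}|_\s^{-2 - |\ell|_\s - |k_{e_\ch}|_\s} \lesssim 2^{(2 + |\ell|_\s + |k_{e_\ch}|_\s) j_{\{\logof, e_\ch\}}}$, worst case $|\ell|_\s = \gamma(e) - 1$ giving $(|k_{e_\ch}|_\s + 2 + \gamma(e) - 1) j_{\{\logof, e_\ch\}}$. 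Summing the two contributions gives the stated $\eta$ for $e \in \cS$. The support statement $\link{x_{e_\ch}}{x_{e_\p}}{j_e}$ is forced by the factor $\Psi^{\{e\}}_{\mbj} = \Psi^{(j_e)}(x_{e_\p} - x_{e_\ch})$, which vanishes unless this annular constraint holds.

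\textbf{Main obstacle.} The analytic input (singular kernel bounds, Leibniz, cutoff localisation) is entirely routine; the only real care is bookkeeping the sign conventions in the exponents — in particular tracking that differentiating the Taylor monomial $(x_{e_\p} - x_\logof)^\ell$ \emph{lowers} its degree (hence a negative power of $2^{j_{\{\logof, e_\p\}}}$ with the unusual sign in the $e \in \cS$ line) while differentiating $K$ \emph{raises} the singularity. I would organise the proof so that the worst-case choice $|\ell|_\s = \gamma(e) - 1$ is justified once and for all by monotonicity of each exponent in $|\ell|_\s$, rather than optimising term by term. Since this is structurally identical to \cite[Lemma~8.10]{CH} (and its positive-renormalisation analog there), I would note that the argument carries over verbatim, the only new feature being the explicit dependence on $\gamma(e) \in \{1,2\}$ for the sine-Gordon kernel.
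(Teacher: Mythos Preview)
The paper states this lemma without proof (it appears between Lemma~\ref{lem: renorm factors} and Lemma~\ref{lem: top genvert bound} with no argument given), relying implicitly on the analogous kernel estimates in \cite{CH}. Your approach --- unfold the definitions of $\RKer^{\{e\}}$ and $\KerHat^{\{e\}}$, apply Leibniz together with the singular-kernel bound $|D^\ell K(x)|\lesssim |x|_\s^{-2-|\ell|_\s}$, and read off the annular support from the factor $\Psi^{(j_e)}(x_{e_\p}-x_{e_\ch})$ --- is exactly the standard argument behind such estimates and is what \cite{CH} does, so in that sense your proposal matches the paper's (implicit) proof.

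There is one concrete slip in your bookkeeping for the $e\in\cS$ case. You compute the monomial contribution as $2^{-(|\ell|_\s-|k_{e_\p}|_\s)j_{\{\logof,e_\p\}}}$ and then assert that setting $|\ell|_\s=\gamma(e)-1$ yields the exponent $-(\gamma(e)-1+|k_{e_\p}|_\s)j_{\{\logof,e_\p\}}$; but $-(|\ell|_\s-|k_{e_\p}|_\s)$ at $|\ell|_\s=\gamma(e)-1$ is $-(\gamma(e)-1)+|k_{e_\p}|_\s$, not $-(\gamma(e)-1+|k_{e_\p}|_\s)$. You have also not treated the Leibniz terms in which $D^{k_{e_\p}}$ (or $D^{k_{e_\ch}}$) lands on the cutoff $\Psi^{(j_e)}(x_{e_\p}-x_{e_\ch})$, which produce factors $2^{|k|_\s j_e}$ rather than powers of $2^{j_{\{\logof,e_\p\}}}$ or $2^{j_{\{\logof,e_\ch\}}}$. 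To reconcile all of these with the stated form of $\eta$ you must use the scale relations implied by membership in $\cS$ versus $\cD$ together with the hypothesis \eqref{triangle inequality for positive renormalised} (for $e\in\cS$ one has $j_{\{\logof,e_\p\}}\le j_e$ up to constants, and similarly triangle-inequality comparisons between $j_{\{\logof,e_\ch\}}$, $j_{\{\logof,e_\p\}}$, $j_e$), which is precisely the ``monotonicity in $|\ell|_\s$'' step you flag but do not carry out. None of this is a structural gap --- the plan is right --- but the arithmetic needs to be redone carefully.
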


\begin{lemma}\label{lem: top genvert bound}
Let $c > 0$, and $k \in (\N^{d})^{\allnodes}$ be supported on $N^{\downarrow}(S) \sqcup \{ \rho_{S} \}$. 
One has, uniform in $\mbj \in \partial \mcN_{\mcB,\lambda}$ satisfying \eqref{triangle inequality for positive renormalised} for every $e \in \mfC \setminus K(\mcB)$, $\CJ \in \mfJ$, and $x \in \R^{\tilde{N}(S)^{c}}$ satisfying  
\begin{equs}
\link{x_{\rho_{S}}}{x_{\logof}}{j_{\{\logof,\rho_{S}\}}}\;,  \quad
\link{x_{e_{\ch}}}{x_{\logof}}{j_{\{\logof,e_{\ch}\}}}
\quad 
\forall
e \in K^{\downarrow}(S)\;,
\end{equs}
the bound, 
\begin{equation}\label{e: toplevel genvert bound}
\begin{split}
&
\left|
D^{k}
\hat{H}_{\CJ,\overline{\mcB}}^{\mbj} \left[ 
\KerTilde_{\mbj}^{\cB \cap K^{\downarrow}(\mcB)} 
\ke{K^{\downarrow}(\mcB) \setminus \cB}{\mbj}
\powroot{\tilde{N}(\mcB)}{\sn,\mbj}{\logof}
\J^{P^{\partial}_{\mcB}(D_{2p})}_{\mbj}
\right](x)
\right|\\
\lesssim&\ 
\|\CJ\|_{P^{\partial}_{\mcB}(D_{2p})}
\exp_{2} 
\Bigg[
\omega(S) \exte_{\mcB}^{\mbj}(S) + 
j_{(\rho_{S},\logof)} |\sn(\tilde{N}(S))|_{\s}
+
\sum_{e \in K^{\downarrow}(S) \setminus \cB} (2 + |k_{e_{\ch}}|_{\s}) 
j_{e}
\\
&
\enskip
\qquad
+
\Big(
\sum_{e \in K^{\downarrow}(S) \cap \cS}
-(\gamma(e) - 1)j_{\{\logof,\rho_{S}\}} + (2 + |k_{e_{\ch}}|_{\s} + \gamma(e) - 1)j_{\{\logof,e_{\ch}\}}
\Big)\\
&
\enskip
\qquad
+
\Big(
\sum_{e \in K^{\downarrow}(S) \cap \cD} \gamma(e)(j_{e} - j_{(\logof,\rho_S)}) + (2 + |k_{e_{\ch}}|_{\s}) 
j_{e} \Big)
\Bigg]
\end{split}
\end{equation}
Furthermore, there exists a combinatorial constant $C > 0$ such that the LHS vanishes unless $\link{x_{\rho_{S}}}{x_{e_{\ch}}}{j_{e}}$ for all $S \in \overline{\mcB}$, $e \in K^{\downarrow}(S)$, and $|j_{\{\logof,u\}} - j_{(\logof,\rho_S)}| \le C$ for each $u \in \tilde{N}(S)$.
\end{lemma}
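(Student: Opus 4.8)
The plan is to adapt the proof of \cite[Lemma~8.12]{CH} to the present setting, exactly as was done in the proof of Lemma~\ref{lem: genvertbd} above. Indeed, the object on the left-hand side of \eqref{e: toplevel genvert bound} is, structurally, a special case of the operator $\hat{H}^{\mbj}_{\CJ,\overline{\mcB}}$ applied to a product of kernels and polynomials: the role played by $C_{\mcB}(S)$ in Lemma~\ref{lem: genvertbd} is now played by $\overline{\mcB}$, and the ``outermost'' set of edges is $\CE^{\inte}_{\mcB}(D_{2p}^{\ast})$ rather than $\CE^{\inte}_{\mcB}(S)$. First I would set up the multigraph $\go{G}$ over the vertex set $\CV \eqdef \nodesleft{\mcB}{D_{2p}} \sqcup \{\logof\}$ with $\logof$ serving as the pinned vertex (after the $\delta(y_{\logof})$ has localised it), obtained by contracting, for each $S \in \overline{\mcB}$, the node-set $N(S)$ to a single point $\rho_{S}$; this is the direct analogue of \eqref{multigraph for inductive bound}--\eqref{eq: covariance edges after quotient} with $D_{2p}$ in place of $S$. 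The edges of $\go{G}$ come from $\kernelsleft{\mcB}{D_{2p}}$, $\leavesleft{\mcB}{D_{2p}}^{(2)}$, the quotients of $K^{\downarrow}(\overline{\mcB})$, and the quotient of $P^{\partial}_{\mcB}(D_{2p})$, plus the edges $\CE_{\logof}$ linking $\logof$ to each $\rho_{\sT_{j}}$ and to the nodes of $\nodesleft{\mcB}{D_{2p}}$.

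Next I would define the total homogeneity $\varsigma$ on coalescence trees over $\CV$ by the formula which, mutatis mutandis, matches \eqref{total homogeneity for inductive bound}: each $S \in \overline{\mcB}$ contributes $-|S^{0}|_{\s}\,\delta^{\uparrow}[\rho_{S}]$ (equivalently $-|S^{0}|_{\SG}$ since $S$ is neutral, explaining why $\omega(S)$ appears with $|\cdot|_{\SG}$ in the exponent), each $e \in \kernelsleft{\mcB}{D_{2p}}$ contributes $2\,\delta^{\uparrow}[\{e_{\p},e_{\ch}\}]$, each $e \in \leavesleft{\mcB}{D_{2p}}^{(2)} \sqcup \tilde{P}^{\partial}_{\mcB}(D_{2p})$ contributes $-2\bar\beta\,\sign(e)\,\delta^{\uparrow}[\cdot]$, and the factors coming from $\KerTilde_{\mbj}^{\cB}$ and the monomials $\powroot{\tilde N(\mcB)}{\sn}{\logof}$ are handled by additional $\delta^{\uparrow}$ terms indexed by the cut edges and by $\{\logof,\rho_{S}\}$. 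The input estimates needed are: \cite[Lemma~8.9]{CH} for the heat kernel slices; Lemma~\ref{lem: kernel bound and support} for the combined factor $\CJ^{P^{\partial}_{\mcB}(S)}_{\mbk}\ke{K^{\partial}_{\mcB}(S)}{\mbk}$; Lemma~\ref{lem: renorm factors} for the Taylor-remainder gains coming from the $\mathscr{Y}^{\#}_{S,\mbbM}$ operators; Lemma~\ref{lem: cut edges estimate} for the cut-edge factors $\KerTilde^{\{e\}}_{\mbj}$; and, crucially, the already-established bound of Lemma~\ref{lem: genvertbd} applied to the sub-forest $C_{\mcB}(S) = \overline{\mcB}$ restricted below each maximal $S$. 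With these, one checks that the function $F^{\mbk}$ one assembles is bounded by $\varsigma$ in the sense of Definition~\ref{def:boundedBy}, and the support constraint (the $\link{x_{\rho_{S}}}{x_{e_{\ch}}}{j_e}$ conditions and $|j_{\{\logof,u\}} - j_{\{\logof,\rho_S\}}| \le C$) follows directly from the support parts of the same lemmas, together with the $\mbj \in \partial\mcN_{\mcB,\lambda}$ constraint.

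The remaining point is to verify that $\varsigma$ is subdivergence-free over $\CV$ for the relevant set of scales, which is exactly the role that Lemma~\ref{lem: proof of subdivfree} played for Lemma~\ref{lem: genvertbd}; I would prove the analogous statement by the same dichotomy: a node-set $M \in \mcQ$ arising from the coalescence tree is either not $K(D_{2p})$-connected --- in which case it splits into subtrees $T_1,\dots,T_k$ and a ``scattered'' remainder $\tilde M$ with $k + |\tilde M| \ge 2$, and the bound $|\tilde M|_{\SG} \ge |\tilde M|_{\s}$ combined with $|T_j^{0}|_{\s} > -\bar\beta$ (Lemma~\ref{lem: SG trees are nice}) and $\bar\beta < 2$ forces $\tilde\varsigma(M) < 0$ --- or else $M = N(T)$ for a proper subtree $T$, and then $|T^{0}|_{\SG} \ge 0$ by the definition of $\partial\mcN_{\mcB,\lambda}$ (otherwise $T$ would have to be a proper element of $\mcB$ below $\overline{\mcB}$, contradiction), again giving $\tilde\varsigma(M) < 0$. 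Applying Theorem~\ref{multicluster 1} with $r = \exte_{\mcB}^{\mbj}(S)$ and tracking the order $\alpha$ of $\varsigma$ (which is $-\lfloor -|S^{0}|_{\s}\rfloor - |S^{0}|_{\s}$, shifted by $-1$ when $S \in \mcD$, matching $\omega(S)$) then yields \eqref{e: toplevel genvert bound}. The main obstacle I anticipate is purely bookkeeping: correctly matching the cut-edge exponents from Lemma~\ref{lem: cut edges estimate} — which are stated per-edge with separate contributions for $e \in \cD$ versus $e \in \cS$ — against the three grouped sums over $K^{\downarrow}(S) \setminus \cB$, $K^{\downarrow}(S) \cap \cS$, and $K^{\downarrow}(S) \cap \cD$ in the target exponent, and ensuring the $j_{\{\logof,\rho_S\}}$ versus $j_{\{\logof,e_{\ch}\}}$ distinctions are propagated consistently through the $\hat H$-recursion; the genuinely new ingredient relative to \cite{CH}, namely the $\CJ^{P^{\partial}_{\mcB}(D_{2p})}_{\mbj}$ factor, is absorbed without difficulty since $P^{\partial}_{\mcB}(D_{2p})$-edges are already accounted for in Lemma~\ref{lem: kernel bound and support} and contribute the harmless $\|\CJ\|_{P^{\partial}_{\mcB}(D_{2p})}$ prefactor.
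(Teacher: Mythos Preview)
Your proposal has a structural confusion about what Lemma~\ref{lem: top genvert bound} is asking for and at what level the multicluster analysis occurs. The lemma is a \emph{pointwise} bound for a \emph{fixed} $\mbj \in \partial\mcN_{\mcB,\lambda}$: there is no sum over outer scales to perform here, and hence no application of Theorem~\ref{multicluster 1} is needed. The sums over internal scales $\mbk \in \mathring{\mcN}_{S}(\mbj)$ inside each $\hat{H}^{\mbj}_{\CJ,S}$ have already been carried out in the proof of Lemma~\ref{lem: genvertbd}; that lemma applies directly with $\mcF = \overline{\mcB}$ (a depth-$1$ forest) and gives
\[
\big|\hat{H}^{\mbj}_{\CJ,\overline{\mcB}}[\phi](x)\big|
\lesssim
\Big(\prod_{S \in \overline{\mcB}} 2^{-|S^{0}|_{\SG}\exte^{\mbj}_{\mcB}(S)}\|\CJ\|_{L(S)}\Big)\,
\|\phi\|_{\overline{\mcB},\mbj}(x)\;.
\]
The proof of Lemma~\ref{lem: top genvert bound} then reduces to bounding the norm $\|\phi\|_{\overline{\mcB},\mbj}$ of the specific argument $\phi = \KerTilde_{\mbj}^{\cB \cap K^{\downarrow}(\mcB)}\ke{K^{\downarrow}(\mcB)\setminus \cB}{\mbj}\powroot{\tilde{N}(\mcB)}{\sn,\mbj}{\logof}\CJ^{P^{\partial}_{\mcB}(D_{2p})}_{\mbj}$, factor by factor, using the sub-multiplicativity of $\|\cdot\|_{\mcF,\mbj}$ from \cite[Lemma~8.9]{CH}, the kernel bound analogous to Lemma~\ref{lem: kernel bound and support} for the $\CJ^{P^{\partial}_{\mcB}(D_{2p})}_{\mbj}$ factor, the per-edge bound of Lemma~\ref{lem: cut edges estimate} for each $\KerTilde^{\{e\}}_{\mbj}$, and elementary bounds on the heat kernel slices and polynomial factors. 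The derivative $D^{k}$ is already built into the definition of $\|\cdot\|_{\mcF,\mbj}$. The support statements follow from the support parts of these same lemmas together with the fact that $\hat{H}^{\mbj}_{\CJ,S}$ only ``sees'' $x_{\rho_{S}}$ among the nodes of $N(S)$.

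Concretely, two things in your plan would not work. First, the multigraph you set up over $\nodesleft{\mcB}{D_{2p}} \sqcup \{\logof\}$ is the \emph{top-level} quotient graph of Section~\ref{sec: moment graph bounds}; it governs the final sum over $\mbj \in \partial\mcN_{\mcB,\lambda}$ in Proposition~\ref{prop: main estimate}, not the operator $\hat{H}^{\mbj}_{\CJ,\overline{\mcB}}$ at fixed $\mbj$. Second, your proposed application of Theorem~\ref{multicluster 1} ``with $r = \exte^{\mbj}_{\mcB}(S)$'' and an order ``matching $\omega(S)$'' has no meaning at this level: there is no single tree $S$ playing the role of an ambient object whose exterior scale caps the sum, and no set $\mcN_{\go{G},>r}$ to sum over. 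That machinery was already invoked, once per $S \in \mcB$, inside the inductive proof of Lemma~\ref{lem: genvertbd}; Lemma~\ref{lem: top genvert bound} is the step \emph{after} that, where you cash in the resulting bound on a concrete $\phi$.
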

\subsection{Combining estimates for the proof of Proposition~\ref{prop: main estimate}}\label{sec: moment graph bounds}
In this section we complete the proof of Proposition~\ref{prop: main estimate} by using the bounds of the previous section and deploying Theorem~\ref{thm: both bounds}. 

We start by quotienting the multigraph $\CE^{\inte}(D_{2p}^{\ast})$ by contracting every $T \in \mcB$ into its root $\rho_{T}$.
More precisely, we define $\hat{\mfq}:N^{\ast} \rightarrow \tilde{N}(\overline{\mcB})^{c}$ via setting $\hat{\mfq}(u) \eqdef \rho_{T}$ if there exists $T \in \overline{\mcB}$ with $u \in N(T)$ and setting $\hat{\mfq}(u) \eqdef u$ otherwise.
We then obtain a multigraph $\go{G}$ on the vertex set $\CV \eqdef \tilde{N}(\overline{\mcB})^{c}$. 
Its set of edges is given by
\begin{equation}\label{multigraph for single tree}
\go{G}
\eqdef\ 
K(\mcB,D_{2p}) 
\sqcup 
\Big\{ 
\{e_{\ch},\rho_{T}\}:\ 
T \in \overline{\mcB}, e \in K^{\downarrow}(T)
\Big\}
\sqcup
\tilde{P}^{\partial}(\mcB,D_{2p}),
\end{equation}
where  the notation $K(\mcB,D_{2p})$
was defined in  \eqref{e:def-NFA-KFA-LFA} and
\[
\tilde{P}^{\partial}(\mcB,D_{2p})
\eqdef
\bigsqcup_{\{u,v\} \in P^{\partial}_\mcB(D_{2p})}
\{\hat{\mfq}(u), \hat{\mfq}(v)\}
\]
and $P^{\partial}_\mcB(D_{2p})$ was defined in \eqref{e:def-Ppartial-FD2p}.

We write $\mfq:  \N^{\CE^{\inte}(\sT^{\ast})} \rightarrow \N^{\go{G}}$ for the natural bijection between these two sets 
and then set $\mcN_{\go{G},\lambda} \eqdef \mfq(\partial \mcN_{\mcB,\lambda})$
where $\partial \mcN_{\mcB,\lambda}$ is defined in \eqref{e:def-partialN-Blam}. 
We also define $\mcN_{\mathbf{G}} \eqdef \bigcup_{\lambda > 0} \mcN_{\mathbf{G},\lambda}$

From now on we switch viewpoints and change indexing sets, writing 
\[
(\hat{\CW}_{\lambda}^{\mbj}[\J,\mbbM,\mbbG])_{\mbj \in \partial \mcN_{\mcB,\lambda}}
\textnormal{ with } 
(\hat{\CW}_{\lambda}^{\mbn})_{\mbn \in \mcN_{\go{G}}}.
\] 
In particular, for $\lambda \in (0,1]$ and $\mbn \in \mcN_{\go{G}}$, one sets $\hat{\CW}_{\lambda}^{\mbn}[\J,\mbbM,\mbbG] \eqdef  \hat{\CW}_{\lambda}^{\mfq^{-1}(\mbn)}[\J,\mbbM,\mbbG]$ if $\mbn \in \mcN_{\go{G},\lambda}$ and $\hat{\CW}_{\lambda}^{\mbn}[\J,\mbbM,\mbbG] \eqdef 0$ otherwise.  

We define $\CV_{0} = \CV \setminus \{\logof\}$, viewing $\logof$ as the pinned vertex. 
We also set $\CV_{\ast} \eqdef \{\rho_{1},\dots,\rho_{2p},\logof\}$.

We define 
\footnote{The notations $\varsigma$ and $\tilde\varsigma$ used in this section here
are different from those in the previous section.}
a total homogeneity
 $\varsigma$ on the trees of $\widehat{\CU}_{\CV}$ (with $\CV$ as defined immediately before \eqref{multigraph for single tree})
  by setting, 
\begin{equs}[e:total-homogeneity]
\varsigma
\eqdef & 
- \!\!\!\!\! \sum_{u \in N(D_{2p})} \!\!\!\!
	|\sn(u)|_{\s} 
	\delta^{\uparrow}[ \{ \hat{\mfq}(u), \logof\} ]
- \sum_{T \in \overline{\mcB}}
	|T^{0}|_{\s}
	\delta^{\uparrow}[\rho_{T}]
\\
&
+ 
2
\sum_{
\substack{ 
e \in \kernelsleft{\mcB}{D_{2p}}\sqcup K^{\downarrow}(\overline{\mcB})\\
e \not \in \enS
}} \!\!\!\!\!\!\!\!
\delta^{\uparrow}[\{\hat{\mfq}(e_{\p}),\hat{\mfq}(e_{\ch})\}]\\
&
-2\bar\beta   \!\!\!\!\!\!\!\!
\sum_{
	\substack{ e \in \\ L(\mcB,D_{2p})^{(2)} 
	\sqcup P^{\partial}_{\mcB}(D_{2p})}}   \!\!\!\!\!\!\!\!
\sign(e) \delta^{\uparrow}[\{\hat{\mfq}(e_{<}),\hat{\mfq}(e_{>})\}]\\
&
+
\sum_{e \in \cD} 
\gamma(e)
\left(
\delta^{\uparrow}[\{\hat{\mfq}(e_{\p}),\hat{\mfq}(e_{\ch})\}]
-\delta^{\uparrow} 
[\{\logof,\hat{\mfq}(e_{\p})\}]
\right)\\[1.5ex]
&
+
\sum_{e \in \enS} 
\Big[
(\gamma(e)-1)
\left(
\delta^{\uparrow}[\{e_{\ch},\logof\}]
-
\delta^{\uparrow} 
[
\{\logof,\hat{\mfq}(e_{\p})\}]
\right)
+
2
\delta^{\uparrow}
[
\{e_{\ch},\logof\}]
\Big].
\end{equs} 
Recall that $\gamma(e)$ is defined in Definition~\ref{def:cut-sets}, $P^\partial_{\mcB}(D_{2p})$ is defined in \eqref{e:def-Ppartial-FD2p},
the cut sets $\cD,\cS$ are defined in \eqref{e:SBDSBD}.
Note that $|\sn(u)|_{\s}\in\{0,1\}$ by Lemma~\ref{lem:neg-trees}.

The estimates of the Section~\ref{sec: renormalisation bound} give us the following proposition.
\begin{lemma} 
Let $\varsigma$ be as in \eqref{e:total-homogeneity}. 
One has that the family $(\hat{\CW}_{\lambda}^{\mbn})_{\mbn \in \mcN_{\go{G}}}$ is bounded by $\varsigma$ with 
\[
\|\hat{\CW}_{\lambda}\|_{\varsigma,\mcN_{\go{G}}}
\lesssim
\lambda^{-2p|\s|}
\|\CJ\|\;,
\]
uniform in $\lambda \in (0,1]$ and $\CJ \in \mathfrak{J}$.
\end{lemma}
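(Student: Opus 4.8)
By construction of $\mcN_{\go{G},\lambda}$ the family $(\hat{\CW}_{\lambda}^{\mbn})_{\mbn\in\mcN_{\go{G}}}$ vanishes outside $\mcN_{\go{G},\lambda}$, so the plan is to verify, for each $\mbn\in\mcN_{\go{G},\lambda}$, the pointwise power-counting bound and the accompanying localisation constraint required by Definition~\ref{def:boundedBy}, with overall prefactor $\lesssim\lambda^{-2p|\s|}\|\CJ\|$. Setting $\mbj\eqdef\mfq^{-1}(\mbn)$, the first step is to read off from \eqref{def of inductively summed kernels} that $\hat{\CW}_{\lambda}^{\mbj}[\J,\mbbM,\mbbG]$ is a product of explicit factors sitting outside $\hat H$ — the localising test functions $\psi^{2p,\lambda}$, the plain truncated heat kernels $\ke{\kernelsleft{\mcB}{\sT}\setminus\cB}{\mbj}$, the interaction factors $\J^{\leavesleft{\mcB}{\sT}^{(2)}}_{\mbj}$, the positive-renormalisation kernels $\KerTilde_{\mbj}^{\cB\setminus K^{\downarrow}(\mcB)}$ and the polynomials $\powroot{\nodesleft{\mcB}{\sT}}{\sn,\logof}{\mbj}$ — times the top-level quantity $\hat{H}_{\CJ,\bar{\mcB}}^{\mbj}[\,\cdots\,]$, which in turn packages the interior heat kernels, the inserted interaction factors $\CJ_{\mbj}^{P^{\partial}_{\mcB}(D_{2p})}$ and $\CJ_{\mbj}^{P^{\partial}_{\mcB}(S)}$, the interior cut kernels, the collapse operators $\mathscr{Y}^{\#}_{S,\mbbM}$ and the polynomials attached to $\tilde N(\mcB)$.

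The second step is to estimate each of these factors and multiply. The plain heat kernels are controlled by the analogue of \cite[Lemma~8.9]{CH}, contributing $2^{2n_e}$ per edge and an annular support condition $\link{x_{e_{\p}}}{x_{e_{\ch}}}{n_e}$; the interaction factors, including the inserted $P^{\partial}$-ones, are bounded using the $\|\cdot\|_{\sign(e)2\beta',m}$ norms from Lemma~\ref{lem:bound-J}, giving $2^{-2\bar\beta\,\sign(e)\,n_e}\|\J\|_{\{e\}}$ together with a ball support condition; the polynomials are bounded on their support by $2^{-|\sn(u)|_{\s}\,n_{\{\logof,u\}}}$ (recall $|\sn(u)|_{\s}\in\{0,1\}$ by Lemma~\ref{lem:neg-trees}); the exterior $\KerTilde$ factors are handled by Lemma~\ref{lem: cut edges estimate}; and the full top operator together with the interior kernels, cut kernels and $P^{\partial}$-insertions it contains is handled by Lemma~\ref{lem: top genvert bound}, whose own proof rests on Lemmas~\ref{lem: kernel bound and support}, \ref{lem: renorm factors} and \ref{lem: genvertbd}, exactly as in the proof of Lemma~\ref{lem: genvertbd}. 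The triangle-type constraints \eqref{triangle inequality for positive renormalised} needed to apply Lemmas~\ref{lem: cut edges estimate} and \ref{lem: top genvert bound} hold throughout $\partial\mcN_{\mcB,\lambda}$ by the analogue of \cite[Lemma~8.3]{CH}.

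The third step is the bookkeeping. Passing to the root-contracted multigraph via $\hat{\mfq}$, one checks that the exponent of each scale $2^{n_e}$ produced above matches the corresponding coefficient of $\delta^{\uparrow}[\cdot]$ in $\varsigma$ of \eqref{e:total-homogeneity}: $2^{2n_e}$ for kernel edges, $2^{-2\bar\beta\sign(e)n_e}$ for interaction and inserted edges, $2^{-|\sn(u)|_\s n_{\{\logof,u\}}}$ for polynomials, $2^{-|T^{0}|_{\s}\exte_{\mcB}^{\mbj}(T)}$ for each contracted $T\in\bar{\mcB}$ — using the reduced homogeneity identity $|T^{0}|_{\s}=-2|K(T)|-\bar\beta|L(T)|+(|N(T)|-1)|\s|$ to absorb the interior/exterior kernel count — and the two cases of Lemma~\ref{lem: cut edges estimate}/Lemma~\ref{lem: top genvert bound} matching the $\sum_{e\in\cD}$ and $\sum_{e\in\enS}$ lines of \eqref{e:total-homogeneity} once the scales $j_{\{\logof,e_{\p}\}}$ are rewritten via \eqref{triangle inequality for positive renormalised}. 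The $\|\J\|_{\{e\}}$ factors assemble, by the same case analysis of $e\in L(D_{2p})^{(2)}$ used in the proof of Lemma~\ref{lem:multilinear} (each $e$ appearing exactly once among $\leavesleft{\mcB}{D_{2p}}^{(2)}$, the $P^{\partial}$-sets, and the interior $L_{\mcB}(S)^{(2)}$-sets), into $\|\CJ\|$. The only $\lambda$-dependent factor is $\psi^{2p,\lambda}$: it is bounded pointwise by $\lambda^{-2p|\s|}$ and forces $\ulink{x_{\rho_j}}{x_{\logof}}{\lfloor-\log_2\lambda\rfloor}$ for each $j$, compatibly with the constraint $n_e\ge\lfloor-\log_2\lambda\rfloor$ on $\CE_{\logof}(\{\rho_j\}_{j=1}^{2p})$ built into $\mcN_{\go{G},\lambda}$. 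Finally, the union of all the annular and ball support conditions produced along the way is exactly the localisation datum demanded by Definition~\ref{def:boundedBy}, with implicit constant uniform in $\lambda$ and $\CJ$, and this completes the verification.

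I expect the main obstacle to be this third step, and within it the alignment of the piecewise exponents of Lemma~\ref{lem: cut edges estimate} and the cut-dependent terms of Lemma~\ref{lem: top genvert bound} with \eqref{e:total-homogeneity} after the contraction $\hat{\mfq}$; in particular, ensuring that the interaction-insertion edges $P^{\partial}_{\mcB}(D_{2p})$ and $P^{\partial}_{\mcB}(S)$ — which are absent from \cite{CH} — are accounted for consistently in both the homogeneity count and the support constraints. Everything else is a faithful transcription of \cite[Section~8]{CH} from a single stochastic tree to the moment graph $D_{2p}$.
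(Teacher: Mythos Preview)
Your proposal is correct and follows essentially the same approach as the paper's own proof, which is extremely terse: the paper simply states that the domain condition and supremum bound are straightforward consequences of the combination of Lemmas~\ref{lem: kernel bound and support}, \ref{lem: cut edges estimate}, and~\ref{lem: top genvert bound}, and that the factor $\lambda^{-2p|\s|}$ comes from $\psi^{2p,\lambda}$. Your write-up is a faithful and considerably more detailed unpacking of exactly that combination, including the bookkeeping step aligning exponents with the lines of \eqref{e:total-homogeneity} and the assembly of the $\|\CJ\|$ factor via the case analysis of Lemma~\ref{lem:multilinear}.
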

\begin{proof}
This domain condition \eqref{domainconstraint} and supremum bound \eqref{supremum bound} are straightforward consequences of the combination of Lemma~\ref{lem: kernel bound and support}, \ref{lem: cut edges estimate}, and \ref{lem: top genvert bound}.
We note that the factor $\lambda^{-2p|\s|}$ comes from the presence of $\psi^{2p,\lambda}$. 
\end{proof}
The two lemmas below are analogs of Lemmas~9.2 and 9.3 of \cite{CH}. 
\begin{remark}
Instead of giving full proofs of the two lemmas below we describe the pre-processing needed to apply the arguments of \cite[Lemmas~9.2 and 9.3]{CH}. 
There are cosmetic differences in the setting but the arguments used in the proofs of \cite[Lemmas~9.2 and 9.3]{CH}  involving $K(\sT)$-connected subsets $A$ of $N(\sT)$ clearly apply equally well to $K(D_{2p})$-connected subsets $A$ of $N(D_{2p})$. 
In our setting the set $A$ will be sitting entirely in one of our $2p$ copies of $\sT$, but the presence of the other  $2p-1$  copies will make no difference to the argument.
\end{remark}
  
\begin{lemma}
The total homogeneity $\varsigma$ defined in \eqref{e:total-homogeneity} is of order $- 2p(|\sT^{\sn\sl} |_{\s} + |\s|)$ and is subdivergence free on $\CV$ for the set of scales $\mcN_{\go{G}}$. 
\end{lemma}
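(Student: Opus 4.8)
The plan is to verify the two claimed properties of $\varsigma$ separately, following the blueprint of \cite[Lemmas~9.2 and 9.3]{CH} with the pre-processing noted in the preceding remark, and carefully tracking how the sine-Gordon charge structure interacts with the homogeneities. For the \emph{order} computation, I would evaluate $\sum_{a \in \mathring{\bo{T}}} \varsigma_{\bo{T}}(a)$ for the fully-coalesced tree — equivalently, sum all the $\delta^{\uparrow}$-coefficients against the total number of vertices minus one — and check that the bookkeeping collapses to $-2p(|\sT^{\sn\sl}|_{\s} + |\s|)$. Here the key inputs are: the identity $|K(D_{2p})| = |N(D_{2p})| - 1$ per copy; the formula for $|T^{\mfn\mfl}|_{\s}$ in \eqref{e:def-snorm}, together with its rewriting using $(|N(T)|-1)|\s|$; the fact that summing $\sign(e)$ over $L(D_{2p})^{(2)}$ together with the $P^{\partial}$-contributions reassembles the $-\bar\beta|L(D_{2p})|$ terms plus neutral corrections that vanish because each copy and each $T \in \overline{\mcB}$ is neutral (so $\mcb{q}(\cdot)^2 = 0$); and the telescoping of the $\RKer$/$\KerHat$ contributions from $\cD$ and $\cS$, which is exactly the mechanism by which positive renormalisation shifts weight from $e_{\p}$ to $e_{\ch}$ or to $\logof$ without changing the total. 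The factor $-2p|\s|$ that one sees at the level of the family $\hat{\CW}_\lambda$ is absorbed into the $\psi^{2p,\lambda}$ normalisation as already noted.

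For the \emph{subdivergence-free} property, I would fix an arbitrary $\bo{T} \in \mcU_{\mcV}$ and, exactly as in the proof of Lemma~\ref{lem: proof of subdivfree}, for each internal node $a$ form the re-expanded vertex set $\go{N}(a)$ by un-contracting every $T \in \overline{\mcB}$ whose root lies in $\bo{L}_a$. One then defines a set-function $\tilde{\varsigma}$ on $K(D_{2p})$-connected subsets $M$ of $N(D_{2p})$ by the analogue of \eqref{eq: summed homogeneity for genvertbd} — now with the positive-renormalisation cut contributions included — and checks the identity $\sum_{b \ge a} \varsigma_{\bo{T}}(b) - (|\bo{L}_a| - 1)|\s| = \tilde\varsigma(\go{N}(a))$. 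It then suffices to show $\tilde\varsigma(M) < 0$ for every $M$ in the relevant family. The argument splits into the same cases as in Lemma~\ref{lem: proof of subdivfree}: when $M$ is not $K(D_{2p})$-connected one decomposes into maximal connected subtrees plus singletons and uses $|\tilde M|_{\SG} \ge |\tilde M|_{\s}$, Lemma~\ref{lem: SG trees are nice} to bound each $|T_j^{0}|_{\s} > -\bar\beta$, and $\bar\beta < 2$; when $M = N(T)$ is a single proper subtree one must have $|T^{0}|_{\SG} \ge 0$ by the definition of $\mcN_{\go{G}}$ (otherwise $T$ would be a divergent neutral subtree and hence would belong to $\mcB$, contradicting that it sits strictly below an element of $\overline{\mcB}$ without being in the forest). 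The one genuinely new bookkeeping ingredient relative to \cite{CH} is the contribution of the $P^{\partial}_{\mcB}(D_{2p})$ and $\cD,\cS$ edges to $\varsigma$, and one has to check that restricting these sums to edges with both endpoints inside $M$ produces a non-positive correction — this follows because the $P^{\partial}$ edges carry $-2\bar\beta\,\sign(e) \le 2\bar\beta$ and the cut contributions, being differences of $\delta^{\uparrow}$'s over distinct vertices, only ever help (they are zero unless the cut edge is entirely inside $M$, in which case they combine with the kernel weight $2$ to give $\gamma(e) + 2 - \gamma(e) = 2$ or similar, already accounted for in the $|\s|$ subtraction).

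The step I expect to be the main obstacle is the \emph{precise accounting of the cut-edge contributions} inside the case analysis for subdivergence-freeness. Unlike in the purely kernel-edge setting of \cite{CH}, here we simultaneously have neutral noise-contraction edges ($\CJ$'s with $\sign(e) = \pm 1$), the $P^{\partial}$ insertions that were added "for free" in Proposition~\ref{prop:mom-formula-z}, and the positive-renormalisation splittings on cut edges — and one must be sure that for \emph{every} re-expanded set $M \in \mcQ$ the algebra of signs works out so that $\tilde\varsigma(M)$ is strictly negative rather than merely non-positive. The neutrality of every $T \in \overline{\mcB}$ and of each tree copy is what makes the dangerous-looking $+2\bar\beta m_+ m_-$-type terms cancel against the $\binom{m_\pm}{2}$ terms via \eqref{e:formula-ASG}, and getting this cancellation to appear cleanly at the level of $\tilde\varsigma$ restricted to $M$ — rather than only for the full vertex set — is where the care is needed. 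Once that is in place, the conclusion follows by invoking Theorem~\ref{thm: both bounds} (respectively Theorem~\ref{multicluster 1} in the inductive lemma), exactly as in \cite{CH}.
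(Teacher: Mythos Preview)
Your order computation sketch is fine and matches the paper. The gap is in the subdivergence-free argument: you have overlooked that in this lemma, unlike Lemma~\ref{lem: proof of subdivfree}, the ambient vertex set $\CV$ contains the basepoint $\logof$, so a re-expanded set $\go{N}(a)$ may well satisfy $\logof \in \go{N}(a)$. Your proposed $\tilde\varsigma$ is defined only on subsets of $N(D_{2p})$ and your case split (disconnected $M$, or $M = N(T)$ for a subtree) never touches the case $M \ni \logof$. The paper's definition of $\tilde\varsigma$ (equation \eqref{eq: subdiv free big graph homogeneity}) has explicit $\mathbbm{1}\{\logof \in M\}$ terms coming from the polynomial weights $|\sn(\cdot)|_{\s}$ and from the $\cD,\cS$ contributions; the $\logof \in M$ case is then handled separately by passing to $\hat\varsigma$ (with $|M|_{\s}$ instead of $|M|_{\SG}$) and breaking $M \setminus \{\logof\}$ into $K(D_{2p})$-connected components.

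Relatedly, your treatment of the cut-edge contributions is not correct. For $e \in \cD$ the relevant term in $\varsigma$ is $\gamma(e)\big(\delta^{\uparrow}[\{\hat{\mfq}(e_{\p}),\hat{\mfq}(e_{\ch})\}] - \delta^{\uparrow}[\{\logof,\hat{\mfq}(e_{\p})\}]\big)$, which involves two \emph{different} edges, one of them touching $\logof$. If both $e_{\p},e_{\ch} \in M$ but $\logof \notin M$, this would contribute a dangerous $+\gamma(e)$ to the subtree sum. The paper rules this out by the observation that for $M \in \mcQ$, $e \in \cD$ together with $e_{\ch},e_{\p} \in M$ forces $\logof \in M$; and dually for $e \in \cS$, $e_{\p},\logof \in M$ forces $e_{\ch} \in M$. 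These two facts (which come from the definition of $\mcN_{\go{G}}$ via $\cG^{\mbn}(\mcF)$) are essential inputs that your sketch does not supply. The sentence ``they are zero unless the cut edge is entirely inside $M$, in which case they combine with the kernel weight $2$ to give $\gamma(e) + 2 - \gamma(e) = 2$'' is therefore not right as stated.
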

\begin{proof}
We quickly check the statement regarding the order of $\varsigma$. 
It is straightforward to check that for any $\go{S} \in \widehat{\CU}_{\CV}$ one has
\begin{equs}
{}&
\sum_{a \in \mathring{\go{S}}} \varsigma_{\go{T}}(a) - (|\mvert| - 1)|\s|\\
{}=&
-|L(\mcB,D_{2p})|_{\SG}
+
\sum_{j=1}^{2p}
\Big(
\begin{array}{c}
-\sum_{u \in N(T^{(j)})}
|\sn(u)|_{\s}
+
2|K(\mcB,T^{(j)})|\\
-
\sum_{
S \in \overline{\mcB} \cap \Div_{j}
}|S|^{0}_{\s}
-
|N_{\mcF}(\sT_{j})| \cdot |\s|
\end{array}
\Big)\;.
\end{equs}
Now for fixed $j \in [2p]$, one can check that the bracketed quantity is given by $-|\sT^{\sn\sl}|_{\s} + |L(\mcB,\sT_{j})|_{\s} - |\s|$ 
(here the $|L(\mcB,\sT_{j})|_{\s}$ term appears because we are missing those noises.)

The claim follows by observing that 
\[
|L(\mcB,D_{2p})|_{\SG} 
= |L(\mcB,D_{2p})|_{\s} 
= 
\sum_{j=1}^{2p} |L(\mcB,\sT_{j})|_{\s}\;.\] 
The second equality above is trivial. 
For the first one needs to show that $\mcb{q}(L(\mcB,D_{2p})) = 0$ but this is an immediate consequence of the fact that 
\[
\mcb{q}(L(D_{2p})) = \mcb{q}\big( L(\overline{\mcB}) \big) = 0
\textnormal{ and }L(D_{2p}) = L(\mcB,D_{2p}) \sqcup L(\overline{\mcB})\;.
\]
We move on to proving the subdivergence free condition. 
We define a map $\tilde{\varsigma}: 2^{\allnodes} \rightarrow \R$ as follows, for $M \subset \allnodes$ we set
\begin{equs}[eq: subdiv free big graph homogeneity]
\tilde{\varsigma}(M) 
\eqdef
& 
-(|M| - 1) |\s|
+2|\{e \in K(D_{2p}) \setminus \enS:\ e \subset M\}|
- |M|_{\SG}\\
&
-\mathbbm{1}\{\logof \in M\}
\left[
|\sn(M \setminus \{\logof\})|_{\s}
+
\sum_{
e \in \cD}
\gamma(e) 
\mathbbm{1}
\left\{ 
\begin{array}{c}
e_{\ch} \not \in M\\
e_{\p} \in M
\end{array} 
\right\}
\right]
\\
&+\mathbbm{1}\{\logof \in M\}
\sum_{e \in \enS}
\left[
2+
(\gamma(e) - 1) \mathbbm{1}\{e_{\p} \not \in M\}
\right]
\mathbbm{1}\{ e_{\ch} \in M\}
\end{equs}
We fix, for the remainder of this proof, $\bo{T} \in \CU_{\CV}$. 
We define
\begin{equ}\label{def: rexpanded set2}
\go{N}(a)
\eqdef 
\bo{L}_{a}
\sqcup 
\Big( 
\bigsqcup_{
\substack{
T \in \overline{\mcB}\\
\rho_{T} \in \bo{L}_{a}
}
}
\tilde{N}(T)
\Big),
\end{equ}
and also set $
\mcQ \eqdef 
\left\{
\go{N}(a):\ 
a \in \mathring{\bo{T}} \setminus \{\rho_{\bo{T}}\}
\right\}$.
We claim that for any $a \in \mathring{\bo{T}} \setminus \{\rho_{\bo{T}}\}$, 
\[
\Big(
\sum_{
b \in \bo{T}_{\ge a}
}
\varsigma_{\go{T}}(b)
\Big) 
- 
(|\bo{L}_{a}| -1) |\s|
\le
\tilde{\varsigma}(\go{N}(a))\;.
\]
The above equality is not hard to check once one observes that for $M \in \mcQ$
\begin{enumerate} 
\item $e \in \enS$ and $e_{\p}, \logof \in M$ together imply $e_{\ch} \in M$.
\item $e \in \cD$ and $e_{\ch},e_{\p} \in M$ together imply $\logof \in M$.
\end{enumerate}
To prove the lemma it suffices to show that for all $M \in \mcQ$ one has $\tilde{\varsigma}(M) < 0$.

When $M \in \mcQ$ satisfies $M \not \ni \logof$ one can show $\tilde{\varsigma}(M) < 0 $ by copying the argument of Lemma~\ref{lem: proof of subdivfree} nearly verbatim.

We now turn to the case of $M \in \mcQ$ with $M \ni \logof$. We will show in fact that $\hat{\varsigma}(M) < 0$ 
where $\hat{\varsigma}$ is defined as in $\tilde{\varsigma}$ but with $|M |_{\SG}$ replaced $|M |_{\s}$. 
Now for for such $M$, writing $\{M_{j}\}_{j=1}^{n}$ for the $K(D_{2p})$ connected components of $M \setminus \{ \logof \}$ it is straightforward to see that
\[
\hat{\varsigma}(M) 
= 
\sum_{j = 1}^{n} \hat{\varsigma}(M_{j} \sqcup \{ \logof\})
\]
so it suffices to prove $\hat{\varsigma}(\widehat{M} \sqcup \{\logof\}) < 0$ for all $\widehat{M} \in \widehat{\mcQ}$ where we have defined
\[ 
\widehat{\mcQ}
\eqdef
\left\{ 
\widehat{M} \subset N(D_{2p}):\ 
\begin{array}{c}
\widehat{M} \textnormal{ is } K(D_{2p})-\textnormal{connected and }
\exists\ M \in \mcQ,\ u \in \CV_{0}\\
\textnormal{ such that }
M \ni \logof
\textnormal{ and }
\widehat{M} = \sT_{\ge}(u) \cap M
\end{array} 
\right\}\;.
\]
The rest of the proof continues in same way as it does following the relevant point in \cite[Lemma~9.2]{CH}.
\end{proof}
\begin{lemma}
Let $\varsigma$ be defined in \eqref{e:total-homogeneity}. 
Then for every $\go{T} \in \CU_{\mvert}$ and $u \in \mathring{\go{T}}$ with both $u \le (\mvert_{\ast})^{\uparrow}$ and $\go{T}_{\not\ge u} \not = \emptyset$, the inequality \eqref{second cond of HQ} holds.
\end{lemma}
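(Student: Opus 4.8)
The final statement is the analogue of the second structural condition from \cite[Section~9]{CH} (the condition labelled \eqref{second cond of HQ}), which is the ingredient needed to invoke Theorem~\ref{thm: both bounds} and thereby conclude the proof of Proposition~\ref{prop: main estimate}. Its content is a lower-bound estimate on the partial sums $\sum_{b \in \go{T}_{\ge u}} \varsigma_{\go{T}}(b)$ for internal nodes $u$ of a coalescence tree that lie below the coalescence point of the set $\mvert_{\ast} = \{\rho_{1},\dots,\rho_{2p},\logof\}$ and whose complement is nonempty; concretely one needs that this partial sum is bounded below by $-(|\bo{L}_{u}|-1)|\s|$ plus a suitable (non-negative or controlled) remainder, which amounts to saying that the ``cut'' at $u$ does not produce a worse-than-allowed divergence. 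The plan is to reduce this, exactly as in \cite[Lemma~9.3]{CH}, to an estimate on the set-function $\tilde{\varsigma}$ defined in \eqref{eq: subdiv free big graph homogeneity}.

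First I would record, exactly as in the proof of the previous lemma, the identity (or inequality) relating the coalescence-tree partial sum to $\tilde\varsigma$ evaluated on the re-expanded node set $\go{N}(u)$ defined in \eqref{def: rexpanded set2}: namely
\[
\Big(\sum_{b \in \bo{T}_{\ge u}} \varsigma_{\go{T}}(b)\Big) - (|\bo{L}_{u}| - 1)|\s| \le \tilde{\varsigma}(\go{N}(u))\;,
\]
so that the desired inequality \eqref{second cond of HQ} follows once one controls $\tilde\varsigma(M)$ from below (resp. shows the relevant sign) for $M = \go{N}(u) \in \mcQ$. The hypothesis that $u \le (\mvert_{\ast})^{\uparrow}$ means every element of $\mvert_{\ast}$ — in particular $\logof$ and every root $\rho_{j}$ — has already coalesced below $u$, hence $\mvert_{\ast} \subset \go{N}(u)$; this is precisely the case-split input. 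The hypothesis $\go{T}_{\not\ge u} \neq \emptyset$ says $M \neq N(D_{2p}) \sqcup \{\logof\}$, so $M$ is a \emph{proper} subset, which is what prevents the estimate from degenerating (it is what lets us avoid the ``whole graph'' term).

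Then I would run the case analysis on the $K(D_{2p})$-connected components $\{M_{j}\}_{j=1}^{n}$ of $M \setminus \{\logof\}$, using — as in the previous lemma — that $\tilde\varsigma$ (after replacing $|\cdot|_{\SG}$ by $|\cdot|_{\s}$, which only helps since $|A|_{\SG}\ge|A|_{\s}$ by \eqref{e:SGnorm-formula}) is additive over these components once $\logof$ is adjoined: $\hat\varsigma(M) = \sum_{j=1}^{n}\hat\varsigma(M_{j}\sqcup\{\logof\})$. It therefore suffices to treat a single $K(D_{2p})$-connected $\widehat{M}\subset N(D_{2p})$ adjoined to $\logof$. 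Here I would invoke the sub-criticality bounds of Lemma~\ref{lem: SG trees are nice} together with the tree-homogeneity formula \eqref{e:def-snorm} (or its variant with $(|N(T)|-1)|\s|$), the bound $\gamma(e)\in\{1,2\}$ from Definition~\ref{def:cut-sets}, and the positivity of the positive-renormalisation contributions attached to the edges of $\cS$ and $\cD$, exactly mimicking the argument following the relevant point in \cite[Lemma~9.2--9.3]{CH}. The main obstacle — and the only place genuine new work (as opposed to bookkeeping translation from \cite{CH}) might be needed — is making sure the extra edge set $P^{\partial}_{\mcB}(D_{2p})$ and the associated $\sign(e)$-weighted $\delta^{\uparrow}$-terms in \eqref{e:total-homogeneity} contribute with the right sign: one must check that when $\widehat{M}$ cuts through a $P^{\partial}$-edge the corresponding $-2\bar\beta\,\sign(e)$ contribution is accounted for by neutrality of the relevant subtrees (the same neutrality that made $\mathscr{Y}^{(0)}_{T_n}(\CJ^{\bullet})=1$ in the proof of Lemma~\ref{lem:H-equals-Hbar}), so that these extra edges never make the homogeneity more negative than in the purely-kernel case. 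Once that sign check is in place, the strict inequality in all cases follows from $\bar\beta < 2$ and $|\go{N}(u)| \ge 2$, and the proof concludes by citing \cite[Lemma~9.3]{CH} for the remaining routine steps.
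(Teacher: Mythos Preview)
Your proposal confuses the two structural conditions needed for Theorem~\ref{thm: both bounds}. The inequality \eqref{second cond of HQ} is a \emph{lower} bound on $\sum_{w \in \go{T}_{\not\ge u}} \varsigma_{\go{T}}(w)$, the sum over internal nodes \emph{not} above $u$, and it must be strictly \emph{greater} than $|\s|\cdot|\CV\setminus\bo{L}_{u}|$. You instead wrote down an upper bound on $\sum_{b\in\go{T}_{\ge u}}\varsigma_{\go{T}}(b)$ in terms of the $\tilde\varsigma$ of \eqref{eq: subdiv free big graph homogeneity}, which is the content of the \emph{previous} lemma (the subdivergence-free condition). In particular, your reduction ``$\tilde\varsigma(\go{N}(u))<0$ implies \eqref{second cond of HQ}'' is simply false: the two conditions concern complementary pieces of the coalescence tree and carry opposite signs.

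The paper's proof proceeds by introducing a \emph{different} set function $\tilde\varsigma$ (see \eqref{def:large-scale-homo}) tailored to the complement $M\eqdef\allnodes\setminus\go{N}(u)$, and then showing
\[
\sum_{b\in\go{T}_{\not\ge u}}\varsigma_{\go{T}}(b)-|\CV\setminus\bo{L}_{u}|\,|\s|\;\ge\;\tilde\varsigma(M)\;,
\]
after which the task is to prove $\tilde\varsigma(M)>0$ for each $K(D_{2p})$-connected component $M$ of $\allnodes\setminus\go{N}(u)$ (note these $M$ avoid $\CV_{\ast}$, since $\CV_{\ast}\subset\go{N}(u)$). The noise-edge contribution you worried about is handled not by neutrality of divergent subtrees but by the global identity $\mcb{q}(L(D_{2p}))=0$, which yields the key inequality
\[
-2\bar\beta\sum_{\substack{e\in L(D_{2p})^{(2)}\\ e\cap M\neq\emptyset}}\sign(e)\;\ge\;-|M|_{\s}\;,
\]
after which the remaining argument is a translation of \cite[Lemma~9.3]{CH} using \eqref{eq: SG trees are nice} in place of the assumptions invoked there. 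Your sketch contains none of these ingredients and would not lead to a proof without first correcting the direction of the reduction.
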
 
\begin{proof}
We start by defining a map $\tilde{\varsigma}: 2^{\allnodes} \rightarrow \R$ as follows.
 For $M \subset \allnodes$ we set
\begin{equs} [def:large-scale-homo]
\tilde{\varsigma}(M)
&
\eqdef
2
|\{e \in K(D_{2p}) \setminus \enS:\ e \cap M \not = \emptyset\}|
-
|M |_{\s}
+
\sum_{
e \in \cD}
\mathbbm{1}
\left\{
\begin{array}{c}
e_{\ch} \in M\\
e_{\p} \not \in M
\end{array}
\right\}\gamma(e)\\
&
+
\Big[
\sum_{
e \in \enS
}
2
\mathbbm{1}
\left\{ 
e_{\ch} \in M
\right\} 
- 
(\gamma(e) - 1)  
\mathbbm{1}
\left\{ 
\begin{array}{c}
e_{\p} \in M\\
e_{\ch} \not \in M
\end{array}
\right\} 
\Big]\\
&
-
|\sn(M)|_{\s}
-
|M| \, |\s|.
\end{equs}
Fix for the remainder of the proof $\bo{T} \in \CU_{\CV}$.
We claim that for $a \in \mathring{\bo{T}}$ with $a \le \{\logof,\mainroot\}^{\uparrow}$ one has, writing $M \eqdef \allnodes \setminus \go{N}(a)$ (where $\go{N}(a)$ is defined as in \eqref{def: rexpanded set2}), 
\[
\sum_{
b \in \go{T}_{\not \ge a}
}
\varsigma_{\go{T}}(b) 
-
|\CV \setminus \bo{L}_{a}| \, |\s|
\ge
\tilde{\varsigma}(M)
\]
The claim is justified via two observations. First note that $e \in \fullcuts$, $e_{\c} \in \allnodes \setminus \go{N}(a)$ and $e_{\p} \not \in \allnodes \setminus \go{N}(a)$ together imply $e \in \cD$.

The second observation is that one has the bound
\begin{equ}  \label{eq: large scale decay factors from noises}
-2\bar{\beta}
\sum_{
\substack{
e \in L(D_{2p})^{(2)}\\
e \cap M \not = \emptyset}}
\sign(e)
\ge
-|M|_{\s}\;.
\end{equ}
To prove \eqref{eq: large scale decay factors from noises} we just check that
\begin{equs}
\sum_{
\substack{
e \in L(D_{2p})^{(2)}\\
e \cap M \not = \emptyset}}
\sign(e)
&
=
\sum_{
e \in L(D_{2p})^{(2)}}
\sign(e)
-
\sum_{
\substack{
e \in L(D_{2p})^{(2)}\\
e \subset L(D_{2p}) \setminus M}}
\sign(e)\\
=&
-\frac{1}{2\beta}
\left[
|L(D_{2p})|_{\s}
-
\big(
|L(D_{2p}) \setminus M|_{\s}
-
\mcb{q}(L(D_{2p}) \setminus M)^{2}
\big)
\right]
\end{equs}
and since 
$|L(D_{2p})|_{\s} - |L(D_{2p}) \setminus M|_{\s}
 = |L(D_{2p}) \cap M|_{\s}=|M|_{\s}$ 
 we are done.

We now define a collection of node sets $\mcZ \subset 2^{N^{\ast} \setminus \CV_{\ast}}$ by setting
\[
\mcZ
\eqdef
\left\{ 
M \subset \allnodes \setminus \CV_{\ast}:\ 
\begin{array}{c}
\exists a \in \mathring{\bo{T}} \textnormal{ with } a \le \logof \textnormal{ and } a \le \mainroot
\textnormal{ such that }\\
M\textnormal{ is an }K(D_{2p})\textnormal{-connected }
\textnormal{component of } N^{\ast} \setminus \go{N}(a)
\end{array}
\right\}\;.
\]
The lemma will be proved if we show that, for every $M \in \mcZ$, $\tilde{\varsigma}(M) > 0$. 
Here we used that if $M \subset \allnodes \setminus \CV_{\ast}$ decomposes into $K(D_{2p})$-components $\{M_{j}\}_{j=1}^{n}$ then $\tilde{\varsigma}(M) = \sum_{j=1}^{n} \tilde{\varsigma}(M_{j})$. 
The remainder of the proof of this lemma is exactly the same as the what follows from the corresponding point in \cite[Lemma~9.3]{CH}\dash in the final step one can use the inequality \eqref{eq: SG trees are nice} as a replacement for the invocation of \cite[Definition~2.27 or Assumption~2.23]{CH}.
\end{proof}

\appendix
\section{Additional lemmas}

\begin{lemma}\label{lem: actual parity cancellation for ell}
For any $S \in \Div$ and any  decoration $\mfn$ on $N(S)$ with $|\mfn(N(S))|_{\s} = 1$, 
 one has 
\[
\ell^{\rho}_{\overline \BPHZ}[S^{\mfn\mfl}] = 0\;.
\]
\end{lemma}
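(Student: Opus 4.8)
\textbf{Proof proposal for Lemma~\ref{lem: actual parity cancellation for ell}.}

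The plan is to exploit the spatial reflection symmetry of the whole construction, which is built into the problem via the hypothesis $\rho(t,x) = \rho(t,-x)$ on the mollifier and the fact that the heat kernel truncation $K$ is itself even in the spatial variable. Recall that $\ell^{\rho}_{\overline{\BPHZ}}[\tau] = \bar{\PPi}^{\xi^{\e}}_{\can} \circ \tilde{\mcb{A}}_{-} \circ \mcb{N}$ evaluated on $\tau$, so $\ell^{\rho}_{\overline{\BPHZ}}[S^{\mfn\mfl}]$ is a finite linear combination (coming from the twisted antipode and the neutral projection applied to $\Deltam S^{\mfn\mfl}$) of quantities of the form $\E[(\PPi^{\xi^{\e}}_{\can} \sigma)(0)]$ for various trees $\sigma$ obtained from $S^{\mfn\mfl}$ by subtree contractions. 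Crucially, every such $\sigma$ inherits from $S^{\mfn\mfl}$ a total polynomial degree whose spatial component has parity equal to that of $|\mfn(N(S))|_{\s}$; since we assume $|\mfn(N(S))|_{\s} = 1$, that spatial parity is odd. (One should double-check here that the twisted antipode and the comodule coproduct do not move the node decoration $\mfn$ in a way that changes this parity — in fact $\Deltam$ only sums $\mfn$-decorations over subtrees consistently, and on the types of trees in $\mcT^{-}_{\neut}$, by Lemma~\ref{lem:neg-trees}, $\mfn$ is supported at a single node with value in $\{(0,1,0),(0,0,1)\}$, so the parity bookkeeping is simple.)

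The key step is then to show that $\E[(\PPi^{\xi^{\e}}_{\can} \sigma)(0)] = 0$ whenever $\sigma$ carries a polynomial decoration of odd total spatial degree. Writing out $(\PPi^{\xi^{\e}}_{\can}\sigma)(0)$ explicitly as in Proposition~\ref{explicit formula} (or more simply using the unrenormalised canonical lift, since $\tilde{\mcb{A}}_{-}$ has already produced $\sigma$), we get an integral over space-time variables of a product of heat kernels $K(x_{e_{\p}} - x_{e_{\ch}})$, interaction factors $J^{\e,\e}(x_a - x_b)^{\pm 1}$ coming from $\E[\prod \xi^{\e}_{\pm}]$ via \eqref{e:E-xi-J}, and a monomial $\prod_u (x_u)^{\mfn(u)}$ coming from the polynomial decoration, all evaluated with the base point at $0$. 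Now perform the change of variables that flips the sign of every spatial coordinate of every integration variable. By the evenness of $K$ (its spatial parity) and by $J(t,x) = J(t,-x)$ — which holds because $\CQ_{\e,\e}(z) = \rho_\e \ast \CQ \ast \hat\rho_\e$ is spatially even, as both $\rho$ and the limiting kernel $\CQ$ are — every kernel and interaction factor is invariant under this flip, while the monomial $\prod_u (x_u^{(1)})^{\mfn(u)^{(1)}}(x_u^{(2)})^{\mfn(u)^{(2)}}$ picks up a sign $(-1)^{|\mfn(N(S))|_{\s}} = -1$. The measure is invariant. Hence the integral equals its own negative, so it vanishes.

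Finally one assembles these observations: $\ell^{\rho}_{\overline{\BPHZ}}[S^{\mfn\mfl}]$ is a sum of terms each of which is (a combinatorial constant times) an expectation of the above type with a polynomial decoration of odd spatial degree, hence each term vanishes, hence the sum vanishes. I expect the main obstacle to be purely bookkeeping rather than analytic: one must verify carefully that the action of $\tilde{\mcb{A}}_{-}$ and $\Deltam$ (the negative twisted antipode composed with the coproduct) really does produce only trees whose surviving polynomial decoration still has odd total spatial degree, and in particular that the recentering/positive-renormalisation pieces implicit in the BPHZ construction — which can generate new polynomial factors $(x_{e_{\p}} - x_{\logof})^k$ via Taylor expansion of kernels — do not spoil the parity. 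Since with the base point placed at $0$ these Taylor monomials are again monomials in the (spatially flipping) integration variables, and the Taylor-expanded kernel derivatives $D^k K$ have spatial parity $(-1)^{|k|^{(1)}_{\s}+|k|^{(2)}_{\s}}$ exactly matching the monomial $(x_{e_{\p}})^k$ they come paired with, the total spatial parity of each summand is preserved and remains odd; so the parity argument goes through intact.
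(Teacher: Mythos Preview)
Your proposal is correct in spirit and rests on exactly the same mechanism as the paper's proof: the spatial reflection symmetry of the building blocks (the heat kernel truncation $K$ and the interaction $J^{\e,\e}$ are even in the spatial variable) forces any expectation carrying an odd spatial polynomial factor to vanish. However, the route you take differs from the paper's, and a couple of details in your argument need tidying.

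\textbf{How the paper argues.} The paper does not unpack $\tilde{\mcb{A}}_{-}$ abstractly. Instead it quotes the explicit integral formula for $\ell^{\rho}_{\overline{\BPHZ}}[S^{\mfn\mfl}]$ from \cite[Lemmas~4.7 and~4.14]{CH}, which (after adjusting to the neutral setting and to moments rather than cumulants) writes it as a sum over forests $\mcF$ with $\overline{\mcF}=\{S\}$ of integrals built from $\CJ$, $\ke{\cdot}{}$, the operators $H_{\CJ,\mcF,C_\mcF(S)}$, and a single linear factor $X_{u,j}(z)=z_u^{(j)}-z_{\rho_S}^{(j)}$ encoding the decoration $\mfn=\delta_{u,j}$. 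It then invokes Lemma~\ref{lem: parity cancellation} to replace $H$ by $\mathring{H}$ and concludes by the exact parity computation already carried out at the end of that lemma's proof. This is short because the parity machinery (Lemma~\ref{lem: trans-invar}, Remark~\ref{parity of kernels}, Lemma~\ref{lem: parity cancellation}) has already been built in Section~\ref{sec: explicit formula}.

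\textbf{What you do differently, and two small corrections.} You argue at the level of the abstract character $\bar{\PPi}^{\xi^\e}_{\can}\circ\tilde{\mcb{A}}_-\circ\mcb{N}$ and verify parity is preserved through the algebra. This is a legitimate and arguably more conceptual route, but note: (i) $\Deltam$ is not part of $\ell$; the character is just $\bar{\PPi}_{\can}\circ\tilde{\mcb{A}}_-\circ\mcb{N}$, and (ii) $\tilde{\mcb{A}}_-$ outputs \emph{forests}, not single trees, so each term is a \emph{product} $\prod_i \E[(\PPi^{\xi^\e}_{\can}\sigma_i)(0)]$; since $\mfn$ is supported at a single node, exactly one $\sigma_i$ carries the odd spatial polynomial and that factor vanishes by your parity argument, killing the product. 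Finally, your worry about positive renormalisation introducing new polynomial factors $(x_{e_\p}-x_{\logof})^k$ is misplaced here: $\bar{\PPi}_{\can}$ is simply the expectation of the \emph{canonical} (unrecentered) lift at $0$, so no recentering or $\logof$ base point enters the computation of $\ell$ at all. With these corrections the argument is complete, and what it buys over the paper's version is independence from the explicit \cite{CH} formulae; what the paper's version buys is brevity, since the parity lemma it invokes has already been proved.
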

\begin{proof}
Note that one must have $\mfn = \delta_{u,j}$ for some $u \in N(S)$ and $j \in \{1,2\}$. We fix this $u$ and $j$.

For this proof we use the explicit formulae given by \cite[Lemma~4.7 and Lemma~4.14]{CH} for the LHS above. 
The context for \cite[Lemma~4.7 and Lemma~4.14]{CH} is for the BPHZ model and is written in terms of cumulants instead of moments but it is straightforward to perform a resummation to go from cumulants to moments and modify the definition of forests to only allow for neutral divergent subtrees. 

Thus, if we define $\mathbb{F}[S]$ to be the collection of all $\mathcal{F} \in \mathbb{F}$ with $\overline{\mcF} = \{S\}$ then one has
\begin{equs}
\ell^{\rho}_{\overline \BPHZ}[S^{\mfn\mfl}]
=&
\mathbbm{1}\{ u \not = \rho_{S}\}
\sum_{\mcF \in \mathbb{F}[S]}
\int_{{\nrmod[\mcF,S]}} \back dy \
\CJ_{\underline{\rho}}^{L_{\mcF}(S)^{(2)}}(y\sqcup x_{\rho_{S}})\,  
\ke{\mathring{K}_{\mcF}(S)}{0}(y \sqcup x_{\rho_{S}})\\
{}
& \qquad
\cdot
\genvert{\mcF, C_{\mcF}(S)}
\left[X_{u,j}
\ke{K^{\partial}_{\mcF}(S)}{}
\right](x_{\tilde{N}(S)^c} \sqcup
y)\;,
\end{equs}
where $X_{u,j} \in \allf$ is given by $X_{u,j}(z) = z_{u}^{(j)} - z_{\rho_{S}}^{(j)}$.

Clearly we can assume that $u \not = \rho_{S}$. 
By Lemma~\ref{lem: parity cancellation} one can replace $\genvert{\mcF, C_{\mcF}(S)}$ with $\mathring{H}_{\J,\mcF,C_{\mcF}(S)}$, then the fact that the above integral (which in fact does not depend on $x_{\rho_{S}}$) vanishes follows by the same parity argument as that used at the end of Lemma~\ref{lem: parity cancellation}. 
\end{proof}
\section{Multiscale Clustering}\label{Sec: Multiclustering}
We briefly describe some of the key ideas and definitions
in the multiscale analysis for the many-variable space-time integrals that appear in our moment formulas. 
A more detailed explanation can be found in \cite[Appendix A]{CH}. 

Our integrals will be in the form 
\[
\int_{\CV_{0}} dx\ \mathrm{Int}(x_{\CV})
\]
where $\CV = (v_{0},\dots,v_{p})$, $p \ge 1$, is an abstract set of vertices, each of which is associated with a position variable $x_{v_{i}} \in \R \times \T^{2}$. 
One of the variables, $v_{0}$, is treated as pinned to a fixed value while the others $\CV_{0} \eqdef \CV \setminus \{v_{0}\}$ are integrated.    

Here $\mathrm{Int}(x_{\CV})$ is often a complicated function but suppose one can write 
\[
\mathrm{Int}(x_{\CV})
=
\prod_{i \in I}F_{i}(x_{\CV_{i}})\;,
\]
where $\CV_{i} \subset \CV$. 
A key step in our method will be obtaining a family of good bounds on each $F_{i}$ in terms of 
\[
(|x_{u} - x_{v}|_{\s}: \{u,v\} \in \CV_{i}^{(2)})\;,
\]
each such bound will hold in a region defined by the \emph{relative} sizes of the distances above and will be of power-law type \dash this will be elaborated on later in our explanation.  

We also remark that we will always be in the situation where $\mathrm{Int}$ has compact support in $(\R \times \T^{2})^{\CV_{0}}$ for fixed $x_{v_{0}}$ so integrability at $\infty$ is not a problem.

Our multiscale analysis will proceed as follows. 
First, we construct a multigraph\footnote{For us a multigraph is a multi-set of elements of $\CV^{(2)}$ where duplicates are distinguishable} via setting $\go{G} = \sqcup_{i \in I} \CV_{i}^{(2)}$.  
Then, for some some combinatorial\footnote{Depending on $|\CV|$} constant $C > 1$, we will obtain\footnote{Often via some partition of unity} a family of functions $(\mathrm{Int}^{\mbn}(x_{\CV}): \mbn = (n_{e})_{e \in \mcN_{\go{G}}} )$ for some set of scales $\mcN_{\go{G}} \subset \mcN^{\go{G}}$ such that
\begin{itemize}
\item $\mathrm{Int}^{\mbn}(x_{\CV}) = 0$ unless for every $e \in \go{G}$ one has 
\begin{equ}\label{annular constraint}
C^{-1}2^{-n_{e}} < |x_{u} - x_{v}|_{\s} \le C 2^{-n_{e}}
\end{equ}
\item For every $x_{\CV} \in (\R \times \T^{2})^{\CV}$ with no co-inciding points, that is $x_{v} \not = x_{v'}$ for all distinct $v,v' \in \CV$, one has 
\[
\sum_{\mbn \in \mcN_{\go{G}}} 
\mathrm{Int}^{\mbn}(x_{\CV}) = \mathrm{Int}(x_{\CV})\;.
\]
\end{itemize}
Our objective will then be to estimate the sum
\begin{equ}\label{eq: bound one slice at a time}
\sum_{\mbn \in \mcN_{\go{G}}}
\left|
\int_{\CV_{0}} dx \,
\mathrm{Int}^{\mbn}(x_{\CV})
\right|
\end{equ}
For each $\mbn \in \mcN_{\go{G}}$ we will ``brutally'' estimate the summand above by 
\begin{equ}\label{eq: single slice bound}
\Big( \sup_{x_{\CV_{0}}} |\mathrm{Int}^{\mbn}(x_{\CV})| \Big)
\times
\mathrm{Vol}\Big( \Big\{x_{\CV_{0}} \in (\R \times \T^{2})^{\CV_{0}}: \mathrm{Int}^{\mbn}(x_{\CV}) \not = 0 \Big\} \Big)\;.
\end{equ}
For the second factor we will choose a spanning tree of $\go{G}$ and then estimate the volume by iteratively integrating the positions of vertices of the trees from the leaves inward to the root (which is the pinned variable $x_{v_{0}}$). This choice of spanning tree will be encoded via what we call a \emph{labeled coalesence tree} on $\CV$

\begin{definition}\label{def: coalescence tree}
A \emph{coalescence tree} $\go{T}$ on a vertex set $\CV$ is a rooted tree with at least three nodes with the following structures and properties:
\begin{itemize}
\item The set of leaves of $\go{T}$ is identified with the set $\CV$.
\item Writing $\mathring{\go{T}}$ for the set of internal nodes (i.e. nodes that are not leaves) and $\rho_{\go{T}}$ for the root of $\go{T}$,  we require that every $u \in \mathring{\go{T}} \setminus \{\rho_{\go{T}}\}$ has degree at least $3$ and that $\rho_{\go{T}}$ has degree at least $2$. 
\end{itemize}
We also equip the set of nodes of $\go{T}$ with a poset structure induced by the tree structure to with the root $\rho_{\go{T}}$ as the unique minimal element.
\end{definition}
We write $\widehat{\CU}_{\CV}$ the collection of all coalescence trees on the vertex set $\CV$. 

\begin{definition}\label{def: labels for coalescence trees}
Given a vertex set $\CV$ and $\go{T} \in \widehat{\CU}_{\CV}$, we set $\go{\mathrm{Lab}}_{\go{T}}$ to be the collection of all maps $\go{s}: \mathring{T} \rightarrow \N$ with the property that $u < v \Rightarrow \go{s}(u) < \go{s}(v)$.
The pair $(\go{T},\go{s})$ is then called a \emph{labeled coalescence tree} and we denote the set of labeled coalescence trees by $\widehat{\CU}_{\CV} \ltimes \go{\mathrm{Lab}}_{\bullet}$. 
\end{definition}

{\it Mapping scale assignments to labeled coalesence trees}
For any fixed $\mbn \in \N^{\go{G}}$ and $r \in \N$ we define the sub-multigraph $\go{G}^{\mbn}_{r} \eqdef \{e \in \go{G}:\  n_{e} \ge r\}$ of $\go{G}$ and also define $\mvert^{\mbn}_{r} \subset 2^{\mvert}$ to be the collection of vertex sets of the connected components of $\go{G}^{\mbn}_{r}$. We consider singletons as connected components so that, for every 
$r$, $\CV^{\mbn}_{r}$ is a partition of $\CV$.

The sequence $(\mvert^{\mbn}_{r})_{r \in \N}$ determines a labelled coalescence tree $(\go{T},\go{s})$ via the following procedure. 
The set of nodes for $\go{T}$ is given by 
${\go{T}} = \bigcup_{r=0}^{\infty} \mvert^{\mbn}_{r}$. Since elements
of ${\go{T}}$ are themselves subsets of $\CV$, they are partially ordered by inclusion.
Given two distinct nodes $a,b \in \mathring{\go{T}}$ we then connect $a$ and $b$ if $a \subset b$ 
maximally in ${\go{T}}$. In this way, the set of leaves is indeed given by $\CV \subset \go{T}$ since,
 for $r$ sufficiently large, $\mvert^{\mbn}_{r}$ consists purely of singletons.
The root is always given by $\rho_{\go{T}} = \CV$, by considering $r$ sufficiently small. 
It is easy to verify that the required properties hold for $\go{T}$ as a consequence of the fact that 
the children of any node, viewed as subsets of $\CV$, form a non-trivial partition of that node.
The labeling $\go{s}(\cdot)$ on internal nodes is defined as follows. For each $a \in \mathring{\go{T}}$, 
we set
\[
\go{s}(a) 
\eqdef
\max \{ r \in \N:\ a \in \mvert^{\mbn}_{r} \}\;.
\]
This is always finite since elements of $\mathring{\go{T}}$ are not singletons, while there always 
exists some $r$ such that $\mvert^{\mbn}_{r} = \{\{v\}\,:\, v \in \CV\}$.
This completes our construction of the labeled coalescence tree $(\go{T},\go{s})$ with the caveat that for purely aesthetic reasons we identify the ``singleton'' leaves of $\go{T}$ with their lone constituent element.
We will henceforth always treat the elements of $\mathring{\go{T}}$ as ``abstract'' nodes, once we are done constructing the tree we forget how they correspond to non-singleton subsets of $\CV$.  
 
The above procedure gives us a map $\hat{\mcT}: \N^{\go{G}} \rightarrow \widehat{\CU}_{\mvert} \ltimes \go{\mathrm{Lab}}_{\bullet}$ taking scale assignments to labeled coalescence trees, we write it $\mbn \mapsto (\mcT(\mbn),\go{s}(\mbn))$.

We also write $\CU_{\CV} \eqdef \mcT(\mcN_{\go{G}})$ for the set of coalescence trees arising from the set of scales appearing in our expansion. 

Then it is straightforward to see that by iteratively integrating variables one will have the bound, uniform in $\mbn \in \mcN_{\go{G}}$, 
\[
\mathrm{Vol}\Big( \Big\{x_{\CV_{0}} \in (\R \times \T^{2})^{\CV_{0}}: \mathrm{Int}^{\mbn}(x_{\CV}) \not = 0 \Big\} \Big)
\lesssim
\prod_{a \in \mathring{\go{T}}}
2^{-\go{s}(a)|\s|}
\]
The supremum factor in \eqref{eq: single slice bound} is estimated via by using our the earlier mentioned estimates on the $F_{i}$, by exploiting the triangle inequality one can, at the cost of another combinatorial constant, get a bound purely in terms of the distances $( 2^{-\go{s}(a)}: a \in \mathring{\go{T}})$.

When trying to estimate the sum of \eqref{eq: single slice bound} over all $\mbn \in \mcN_{\go{G}}$ we first sum over $\go{T} \in \CU_{\CV}$ (which is a finite sum) and then for each fixed $\go{T} \in \CU_{\CV}$ we sum over $\go{s} \in \Lab_{\go{T}}$ (this is an infinite sum) and then we sum over all $\mbn$ in a set called $\mcN_{\mathrm{tri}}(\go{T},\go{s})$ which we will define later. The key properties of $\mcN_{\mathrm{tri}}(\go{T},\go{s})$ are that (i) $|\mcN_{\mathrm{tri}}(\go{T},\go{s})|$ has a uniform finite bound as one varies  $(\go{T},\go{s}) \in \widehat{\CU}_{\CV} \ltimes \Lab_{\bullet}$ and (ii) $\mcN_{\mathrm{tri}}(\go{T},\go{s})$ contains all $\mbn \in \mcN_{\go{G}}$ which satisfy both $\widehat{\mcT}(\mbn) = (\go{T},\go{s})$ and $\supp(\mathrm{Int}^{\mbn}) \not = \emptyset$. 

Given $\go{T} \in \widehat{\CU}_{\CV}$ and $f \subset \mvert$ we write $f^{\uparrow}$ for the maximal internal node which is a \emph{proper} ancestor of all the elements of $f$. 
When $f = \{a\}$ we may write $a^{\uparrow}$ instead of $\{a\}^{\uparrow}$. We define $f^{\Uparrow}$ to be the maximal internal node which is a proper ancestor of $f^{\uparrow}$ if $f^{\uparrow} \not = \rho_{\go{T}}$, otherwise we set $f^{\Uparrow} = f^{\uparrow} = \rho_{\go{T}}$.
For $a \in \mathring{\go{T}}$ we write $\go{L}_{a}$ for the set of leaves of $\go{T}$ which are descendants of $a$.

We also define, for any $(\go{T},\go{s}) \in \widehat{\CU}_{\CV} \ltimes \go{\mathrm{Lab}}_{\bullet}$, $\mcN_{\mathrm{tri}}(\go{T},\go{s}) \subset \mcN_{\go{G}}$ to be the set of all those 
scale assignments $\mbn$ with $\hat{\mcT}(\mbn) = (\go{T},\go{s})$ and the property that for every $e \in \go{G}$
\begin{equ}\label{triangle condition for scales}
| n_{e} - \go{s}(e^{\uparrow}) | < 2 C \cdot|\mvert|,
\end{equ}
where $C > 0$ is chosen to be the same as \eqref{equ: domain for tree}. 
Clearly $|\mcN_{\mathrm{tri}}(\go{T},\go{s})|$ is finite and bounded uniform in $(\go{T},\go{s}) \in \widehat{\CU}_{\CV} \ltimes \go{\mathrm{Lab}}_{\bullet}$.

For each $(\go{T},\go{s}) \in \widehat{\CU}_{\CV} \ltimes \go{S}_{\bullet}$ we define
\begin{equ}\label{equ: domain for tree}
\DDom(\go{T},\go{s},x_{v_{0}}) \eqdef 
\{ x
\in 
(\R \times \T^{2})^{\mvert_{0}}:\ \forall e = \{v_i,v_j\} \in \mvert^{(2)},\link{x_{v_i}}{  x_{v_j}}{\go{s}(e^{\uparrow})}
\},
\end{equ} 
where we've used the notation \eqref{defining annular region}.
\begin{definition}\label{def:homogeneity}
For $\go{T} \in \widehat{\CU}_{\CV}$ we call a map $\varsigma_{\go{T}}: \mathring{\go{T}} \rightarrow \R$ a 
$\go{T}$-homogeneity. 
A collection of such maps $\varsigma = (\varsigma_{\go{T}})_{\go{T} \in \widehat{\CU}_{\CV}}$ is called 
a \textit{total homogeneity}. Addition of total homogeneities is defined pointwise. 
\end{definition}
We introduce two special families of total homogeneities which will play the role of ``Kronecker deltas'' out of which we will build other total homogenieties. 
Given any subset $\tilde{\CV} \subset \CV$, the total homogeneities $\delta^{\uparrow}[\tilde{\CV}]$ and $\delta^{\Uparrow}[\tilde{\CV}]$ are given by setting, for every $\go{T} \in \widehat{\CU}_{\CV}$ and $a \in \mathring{\go{T}}$, 
\begin{equ}\label{def: delta total homogeneities}
\delta_{\go{T}}^{\uparrow}[\tilde{\CV}](a)
\eqdef
\mathbbm{1}\left\{ a = \tilde{\CV}^{\uparrow,\go{T}} \right\}
\quad
\textnormal{and}
\quad
\delta_{\go{T}}^{\Uparrow}[\tilde{\CV}](a)
\eqdef
\mathbbm{1}\left\{ a = \tilde{\CV}^{\Uparrow,\go{T}} \right\},
\end{equ} 
where the superscript $\go{T}$ is used to remind readers that these operations are $\go{T}$-dependent. 
For $u \in \CV$ we will write $\delta^{\uparrow}[u]$ or $\delta^{\Uparrow}[u]$ instead of $\delta^{\uparrow}[\{u\}]$ or $\delta^{\Uparrow}[\{u\}]$.

\begin{definition}\label{def:boundedBy}
Given a set of scale assignments $\mcN_{\go{G}}$ and a total homogeneity $\varsigma$ we say that a family of continuous compactly supported functions $F = \left(F^{\mbn}\right)_{\mbn \in \mcN_{\go{G}}}$ on $(\R \times \T^{2})^{\CV_{0}}$ {\it is bounded by} $\varsigma$ if the following conditions hold.
\begin{enumerate}
\item There exists $x_{v_{0}} \in \R \times \T^{2}$ such that for each $(\go{T},\go{s}) \in \mtree_{\mvert} \ltimes \go{\mathrm{Lab}}_{\bullet}$, and $\mbn \in \mcN_{\mathrm{tri}}(\go{T},\go{s})$, one has
\begin{equation}\label{domain condition}
\supp
\left( 
F^{\mbn}(\cdot)
\right)
\subset
\DDom(\go{T},\go{s},x_{v_{0}})\;.
\end{equation} 
\item One has the bound
\begin{equation}\label{supremum bound} 
\| 
F
\|_{\varsigma, \mcN_{\go{G}}}
\eqdef
\sup_{
\substack{\go{T} \in \mtree_{\mvert}\\
\go{s} \in \go{\mathrm{Lab}}_{\go{T}}\\
\mbn \in \mcN_{\mathrm{tri}}(\go{T},\go{s})
}
}
\Big(
\prod_{a \in \mathring{\go{T}}}
2^{- \varsigma_{\go{T}}(a)\go{s}(a)}
\Big)
\sup_{x \in (\R \times \T^{2})^{\mvert_{0}}}
\big| F^{\mbn}(x) \big|
< \infty.
\end{equation}
(In the particular case $\mcN_{\go{G}} = \N^{\go{G}}$ we will also just write $\|F\|_{\varsigma}$.)
\end{enumerate}
\end{definition}

\begin{remark} Because of the domain constraint \eqref{domain condition}, it is clear that $F^{\mbn}$ must vanish unless $\mbn \in \mcN_{\mathrm{tri},\go{G}}$, where
\[
\mcN_{\mathrm{tri},\go{G}} \eqdef 
\bigsqcup_{(\go{T},\go{s}) \in \mtree_{\mvert} \ltimes \mathrm{Lab}_{\bullet}}
\mcN_{\mathrm{tri}}(\go{T},\go{s})\;.
\]
\end{remark}
\begin{remark} 
The notion of being ``bounded'' by a total homogeneity $\varsigma$ depends on a invisible ``combinatorial'' constant $C$ hidden in \eqref{equ: domain for tree} -- this affects both the domain constraint \eqref{domain condition} and the definition of $\| \cdot \|_{\varsigma, \mcN_{\go{G}}}$. 
In practice we want to be able to formulate that this constant $C$ can be chosen independently of certain parameters. 
Thus, if we have a collections of families of functions $F_{\theta} = (F^{\mbn}_{\theta})_{\mbn \in \mcN_{\go{G}}}$ where $\theta$ varies as a parameter in some set $\Theta$ we say that a the collection of families  $F_{\theta}$ are bounded uniform in $\theta \in \Theta$ by a total homogeneity $\varsigma$ if one can use the same constant $C$ in \eqref{equ: domain for tree} for all values of $\theta \in \Theta$.
\end{remark}
\begin{definition}
Given a set of scale assignments $\mcN_{\go{G}}$ and a total homogeneity $\varsigma$, we say that $\varsigma$ is subdivergence free in $\bar{\mvert} \subset \mvert$ for the set of scales $\mcN_{\go{G}}$ if for every $\go{T} \in \mcU_{\mvert}$ and every $a \in \mathring{\go{T}} \setminus \{\rho_{\go{T}}\}$ with $\go{L}_{a} \subset \bar{\mvert}$ one has
\begin{equ}\label{eq: divergence free condition}
\sum_{
b \in \mathring{\go{T}}_{\ge a}}
\varsigma_{\go{T}}(b)
< (|\go{L}_{a}| -1 ) |\s|.
\end{equ}
\end{definition}
\begin{definition}\label{order of total homogeneity}
We say a total homogeneity $\varsigma$ is of order $\alpha \in \R$ if for every $\go{T} \in \widehat{\mtree}_{\mvert}$ one has $\sum_{a \in \mathring{\go{T}}} \varsigma_{\go{T}}(a) - (|\mvert| - 1)|\s| = \alpha$.
\end{definition}
The following theorem will be useful in getting good bounds on various integrals.

\begin{theorem}\label{multicluster 1}
Suppose we are given a set of scales $\mcN_{\go{G}}$ and a family of smooth compactly supported functions $F = (F^{\mbn})_{\mbn \in \mcN_{\go{G}}}$ on $(\R \times \T^{2})^{\CV_{0}}$ and a total homogeneity $\varsigma$ on the trees of $\widehat{\CU}_{\CV}$ which is of order $\alpha$ and subdivergence free on $\mvert$ for the set of scales $\mcN_{\go{G}}$. 
Furthermore, suppose that $F$ is bounded by $\varsigma$ on the set of scales $\mcN_{\go{G}}$.

For $r \in \N$ we define
\begin{equ}[e:defN>]
\mcN_{\go{G}, > r} \eqdef \{ \mbn \in \mcN_{\go{G}}:\ \min_{e \in \go{G}} n_{e} > r \}
\enskip
\textnormal{and}
\enskip
\mcN_{\go{G}, \le r} \eqdef \{ \mbn \in \mcN_{\go{G}}:\ \min_{e \in \go{G}} n_{e} \le r \}\;.
\end{equ}
Then, for $\alpha > 0$, one has
\[
\sum_{\mbn \in \mcN_{\go{G}, \le r}}
\int_{\mvert_{0}} \back dy\ 
\left|F^{\mbn}(y)
\right|
\le 
\mathrm{const}(|\CV|)
2^{\alpha r}
\|F\|_{\varsigma,\mcN_{\go{G}, \le r}}
\]
while for $\alpha < 0$ one has
\[
\sum_{\mbn \in \mcN_{\go{G}, > r}}
\int_{\mvert_{0}} \back dy\ 
\left|F^{\mbn}(y)
\right|
\le 
\mathrm{const}(|\CV|)
2^{\alpha r}
\|F\|_{\varsigma,\mcN_{\go{G},> r}}\;.
\]
Here, $\mathrm{const}(|\CV|)$ is a combinatorial factor depending only 
on $|\CV|$ and not on $r$. 
\end{theorem}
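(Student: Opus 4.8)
The plan is to follow the multiscale clustering argument of \cite[Appendix~A]{CH} essentially verbatim; the only differences with the setting there are cosmetic, namely that the spatial variable lives on $\T^{2}$ rather than $\R^{2}$ (irrelevant since each $F^{\mbn}$ is compactly supported) and that the vertex set $\CV$ is the one attached to $D_{2p}$ rather than to a single tree. First I would organise the sum over scale assignments according to labelled coalescence trees: writing $\widehat{\mcT}(\mbn) = (\go{T},\go{s})$, the hypothesis that $F$ is bounded by $\varsigma$ forces $F^{\mbn} \equiv 0$ unless $\mbn \in \mcN_{\mathrm{tri}}(\go{T},\go{s})$, so each left-hand side splits as a finite sum over tree shapes $\go{T} \in \CU_{\CV}$, an infinite sum over labellings $\go{s} \in \go{\mathrm{Lab}}_{\go{T}}$, and, for each $(\go{T},\go{s})$, a sum over $\mbn \in \mcN_{\mathrm{tri}}(\go{T},\go{s})$ (intersected with $\mcN_{\go{G},\le r}$ resp.\ $\mcN_{\go{G},>r}$) whose cardinality is bounded by a combinatorial constant depending only on $|\CV|$.

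Next I would bound each individual slice integral by ``supremum times volume of the support''. The domain constraint in Definition~\ref{def:boundedBy} gives $\supp F^{\mbn} \subset \DDom(\go{T},\go{s},x_{v_{0}})$, and integrating out the leaves of $\go{T}$ from the outside in yields $\mathrm{Vol}(\DDom(\go{T},\go{s},x_{v_{0}})) \lesssim \prod_{a \in \mathring{\go{T}}} 2^{-|\s|(c_{a}-1)\go{s}(a)}$, where $c_{a}$ is the number of children of $a$ and $\sum_{a}(c_{a}-1) = |\CV|-1$; combining this with the supremum bound from $\|F\|_{\varsigma,\mcN_{\go{G}}}$ gives, uniformly over $\mbn \in \mcN_{\mathrm{tri}}(\go{T},\go{s})$,
\[
\int_{\mvert_{0}} |F^{\mbn}(y)|\,dy \;\lesssim\; \|F\|_{\varsigma,\mcN_{\go{G}}}\; \prod_{a \in \mathring{\go{T}}} 2^{\mu_{\go{T}}(a)\go{s}(a)}\;, \qquad \mu_{\go{T}}(a) \eqdef \varsigma_{\go{T}}(a) - |\s|(c_{a}-1)\;.
\]
Only two features of $\mu$ are used below: the order hypothesis says $\sum_{a}\mu_{\go{T}}(a) = \alpha$, while the subdivergence-free hypothesis is exactly $\sum_{b \in \mathring{\go{T}}_{\ge a}}\mu_{\go{T}}(b) < 0$ for every $a \in \mathring{\go{T}} \setminus \{\rho_{\go{T}}\}$.

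For fixed $\go{T}$ it then remains to sum $\prod_{a}2^{\mu_{\go{T}}(a)\go{s}(a)}$ over increasing labellings $\go{s}$, with the constraint on the root label forced by working inside $\mcN_{\go{G},\le r}$ (resp.\ $\mcN_{\go{G},>r}$). I would carry this out by peeling off the maximal proper subtrees of $\go{T}$ one at a time, deepest labels first: for an internal node $a \neq \rho_{\go{T}}$ with its parent label fixed at $m$, summing $\go{s}(a) \ge m$ together with all descendant labels against $\prod_{b \ge a}2^{\mu_{\go{T}}(b)\go{s}(b)}$ produces, thanks to $\sum_{b \ge a}\mu_{\go{T}}(b) < 0$, a convergent geometric sum bounded by a constant times $2^{(\sum_{b\ge a}\mu_{\go{T}}(b))m}$. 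Iterating over all children of the root and multiplying by the root factor $2^{\mu_{\go{T}}(\rho_{\go{T}})\go{s}(\rho_{\go{T}})}$ telescopes the exponents to exactly $2^{\alpha\,\go{s}(\rho_{\go{T}})}$; since $\go{s}(\rho_{\go{T}})$ is comparable to $\min_{e}n_{e}$ up to the fixed slack built into $\mcN_{\mathrm{tri}}$, the remaining one-dimensional sum is $\sum_{\go{s}(\rho_{\go{T}}) \lesssim r}2^{\alpha\,\go{s}(\rho_{\go{T}})} \lesssim 2^{\alpha r}$ when $\alpha > 0$ and $\sum_{\go{s}(\rho_{\go{T}}) \gtrsim r}2^{\alpha\,\go{s}(\rho_{\go{T}})} \lesssim 2^{\alpha r}$ when $\alpha < 0$. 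The number of tree shapes, the uniform bound on $|\mcN_{\mathrm{tri}}(\go{T},\go{s})|$, and the geometric-series constants (uniform over the finitely many shapes once one uses that the strict inequalities $\sum_{b\ge a}\mu_{\go{T}}(b)<0$ come with a definite margin, all homogeneities being integer combinations of $1$, $2$ and the single fixed value $\bar\beta$) are all absorbed into $\mathrm{const}(|\CV|)$.

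The main obstacle is purely the bookkeeping of this last recursive summation: one must verify that the subdivergence-free condition is precisely what makes each inner block of labels summable, that no block is ``used twice'', and that the residual exponent carried by the root is exactly the order $\alpha$, while also keeping track of whether it is the root label or the minimal scale that gets pinned in the $\le r$ versus $>r$ cases. Since all of this is done in full in \cite[Appendix~A]{CH}, the only genuine task here is to check that replacing their single-tree combinatorial data by the data of $D_{2p}$ leaves every step intact, which it does.
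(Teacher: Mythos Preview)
Your proposal is correct and follows the same approach as the paper: the paper's own proof simply invokes \cite[Lem.~A.10]{KPZJeremy} (noting only that the case $\alpha>0$ requires flipping the direction of the final geometric sum at the root), whereas you have unpacked that citation and sketched the underlying multiscale clustering argument explicitly. One small remark: your justification for the ``definite margin'' in the geometric-series constants via the specific arithmetic form of the homogeneities is unnecessary --- the subdivergence-free condition is a finite collection of strict inequalities (one per internal node per tree shape in the finite set $\CU_{\CV}$), so a uniform positive gap is automatic regardless of how $\varsigma$ is built.
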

\begin{proof}
This is essentially a special case of \cite[Lem.~A.10]{KPZJeremy} with $\nu_\star$
equal to the root of $\go{T}$. The only difference is that our ``subdivergence-free condition''
does not include the root itself. In the case $\alpha < 0$, Definition~\ref{order of total homogeneity}
implies that \eqref{eq: divergence free condition} also holds for the root and we
can apply \cite[Lem.~A.10]{KPZJeremy}. In the case $\alpha > 0$,  this is not the case, but 
the proof of \cite[Lem.~A.10]{KPZJeremy} still applies, the only difference being that the 
sum appearing in the base case $|\mathring{\go{T}}| = 1$ runs over large scales instead of
small scales.
\end{proof}
For the next theorem and what follows, for any $\go{T} \in \widehat{\CU}_{\CV}$ and $a \in \mathring{\go{T}}$ we define 
\[
\go{T}_{\not\ge a} \eqdef \{ b \in \mathring{\go{T}}:\ b \not \ge a\}\;.
\]
\begin{theorem}\label{thm: both bounds}
Let $\mcN_{\go{G}}$ be fixed and let $\go{G}_{\ast} \subset \go{G}$ be non-empty subset of edges which connects the collection of vertices of $\CV$ it is incident with, we denote this collection of vertices by $\CV_{\ast}$. 

Suppose that we are given a family of functions $F = (F^{\mbn})_{\mbn \in \mcN_{\go{G}}}$ and a total homogeneity $\varsigma$ which is sub-divergence free on $\CV$ for the scales $\mcN_{\go{G}}$, of order $\alpha < 0$, and satisfies the following large scale integrability condition: for every $\go{T} \in \mtree_{\mvert}$ and $u \in \mathring{\go{T}}$ with both $u \le (\mvert_{\ast})^{\uparrow}$ and $\go{T}_{\not\ge u} \not = \emptyset$ one has 
\begin{equ}\label{second cond of HQ}
\sum_{ 
w \in \go{T}_{\not\ge u}
}
 \varsigma_{\go{T}}(w) 
> 
|\s| \, |\CV \setminus \bo{L}_{u} |\;.
\end{equ}
Furthermore, suppose that $F$ is bounded by $\varsigma$ on the set of scales $\mcN_{\go{G}}$. 
Then if we set, for any $r \in \N$,  
\[
\mcN_{\go{G}, > r, \go{G}_{\ast}}
\eqdef
\Big
\{ \mbn \in \mcN_{\go{G}}:\ \min_{
e \in \go{G}_{\ast}
}
n_{e}
\ge
r
\Big\}
\]
we have the bound, uniform in $r$, 
\[
\sum_{\mbn \in \mcN_{\go{G}, > r, \go{G}_{\ast}}}
\int dy_{\mvert_{0}}\ 
\left|F^{\mbn}(y)
\right|
\lesssim
2^{\alpha r}
\inf_{\tilde{F} \in \mathrm{Mod}(F)}
\|\tilde{F}\|_{\varsigma, \mcN_{\go{G}}}\;.
\]
\end{theorem}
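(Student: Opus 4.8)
The plan is to adapt, essentially verbatim, the multiscale clustering argument of \cite[Appendix~A]{CH} (itself a refinement of \cite[Lem.~A.10]{KPZJeremy}) to the present combinatorial data, in which the single stochastic tree $\sT^{\ast}$ of \cite{CH} is replaced by the Feynman diagram $D_{2p}^{\ast}$ on the vertex set $\CV$; exactly as in the proof of Theorem~\ref{multicluster 1}, none of the clustering estimates is sensitive to this replacement, so almost all of the work is bookkeeping. First I would use that $F^{\mbn}$ vanishes off $\mcN_{\mathrm{tri},\go{G}}$ by the domain hypothesis to rewrite, via the map $\mbn\mapsto(\widehat{\mcT}(\mbn),\go{s}(\mbn))$,
\[
\sum_{\mbn\in\mcN_{\go{G},>r,\go{G}_{\ast}}}\int dy_{\CV_{0}}\,|F^{\mbn}(y)|
=\sum_{\go{T}\in\CU_{\CV}}\ \sum_{\go{s}\in\go{\mathrm{Lab}}_{\go{T}}}\ \sum_{\mbn\in\mcN_{\mathrm{tri}}(\go{T},\go{s})\cap\mcN_{\go{G},>r,\go{G}_{\ast}}}\int dy_{\CV_{0}}\,|F^{\mbn}(y)|\,,
\]
where the $\go{T}$-sum is finite and the innermost sum has cardinality bounded uniformly in $(\go{T},\go{s})$. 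For each slice I would then apply the ``brutal'' estimate \eqref{eq: single slice bound}: the supremum is controlled by the hypothesis that $F$ is bounded by $\varsigma$, and the volume of the support is estimated by integrating the vertices of a spanning tree of $\go{G}$ from the leaves of $\go{T}$ inward, which yields a bound of the form $\prod_{a\in\mathring{\go{T}}}2^{-(c_{a}-1)|\s|\go{s}(a)}$ with $c_{a}$ the number of children of $a$. This reduces the theorem to an estimate on $\sum_{\go{s}}\prod_{a\in\mathring{\go{T}}}2^{(\varsigma_{\go{T}}(a)-(c_{a}-1)|\s|)\go{s}(a)}$, the $\go{s}$-sum running over labelings compatible with the restriction $\min_{e\in\go{G}_{\ast}}n_{e}\ge r$ and the triangle constraint \eqref{triangle condition for scales}.

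The crucial point is that, since $\go{G}_{\ast}$ connects $\CV_{\ast}$, the restriction $\min_{e\in\go{G}_{\ast}}n_{e}\ge r$ puts all of $\CV_{\ast}$ into one connected component of $\{e:\ n_{e}\ge r\}$, so the node $a_{\ast}\eqdef\CV_{\ast}^{\uparrow}$ (the join of $\CV_{\ast}$ in $\go{T}$) satisfies $\go{s}(a_{\ast})\ge r$. I would then split $\mathring{\go{T}}$ into the descendants of $a_{\ast}$, inclusive, and the complement $\go{T}_{\not\ge a_{\ast}}$, and carry out the $\go{s}$-sum from the leaves inward. On the descendants of $a_{\ast}$ the subdivergence-free hypothesis, applied at every $b\in\mathring{\go{T}}_{\ge a_{\ast}}\setminus\{\rho_{\go{T}}\}$, makes each successive geometric series converge and, because $\varsigma$ is of order $\alpha<0$ and $\go{s}(a_{\ast})\ge r$, leaves a residual factor $\lesssim 2^{\alpha r}$ attached to $\go{s}(a_{\ast})$ (this is precisely the mechanism of Theorem~\ref{multicluster 1} in the $\alpha<0$ regime, but anchored at $a_{\ast}$ rather than at the root). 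Continuing inward along the path from $a_{\ast}$ down to $\rho_{\go{T}}$ together with its side branches, the large-scale integrability condition \eqref{second cond of HQ}, applied at each $u\le a_{\ast}$ with $\go{T}_{\not\ge u}\neq\emptyset$, is exactly what is needed for the remaining geometric sums over the labels of $\go{T}_{\not\ge a_{\ast}}$ to converge. Summing the two parts, then over $\go{s}$ and the finitely many $\go{T}$, gives the bound with $\|F\|_{\varsigma,\mcN_{\go{G}}}$; since the argument applies verbatim with any $\tilde F\in\mathrm{Mod}(F)$ in place of $F$ while leaving the left-hand side unchanged, one may finally take the infimum over $\mathrm{Mod}(F)$.

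The step I expect to be the main obstacle is precisely this interlocking of the two families of label-sums: one must verify that every internal node of $\go{T}$ is summed exactly once with a genuinely convergent geometric series, that the ``inner'' sums governed by the subdivergence-free condition and the ``outer'' sums governed by \eqref{second cond of HQ} neither overlap nor leave a gap, and that the cumulative power of $r$ is exactly $\alpha r$ and no worse. This bookkeeping is the content of \cite[Lem.~A.10]{KPZJeremy} and its adaptation in \cite[Appendix~A]{CH}; in our setting it is enough to observe --- as in the proof of Theorem~\ref{multicluster 1} --- that no step of that argument uses any structure of the ambient multigraph beyond what $D_{2p}^{\ast}$ also has, the only $K(\sT)$-connectedness arguments there translating word for word into $K(D_{2p})$-connectedness arguments with one of the $2p$ copies of $\sT$ playing the role of $\sT$ and the remaining copies playing no role. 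Consequently this theorem may in practice simply be cited from \cite{CH}.
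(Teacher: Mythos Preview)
Your proposal is correct and aligns with the paper's own proof: the paper simply records that this theorem ``is precisely the content of \cite[Lem.~A.10]{KPZJeremy}'', and the argument you sketch is exactly that lemma's mechanism (subdivergence-free condition controlling the inner geometric sums, the large-scale integrability condition \eqref{second cond of HQ} controlling the outer ones, anchoring at $a_{\ast}=\CV_{\ast}^{\uparrow}$ to extract the factor $2^{\alpha r}$). Your closing remark that the result may in practice simply be cited is precisely what the paper does.
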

\begin{proof}
This is precisely the content of \cite[Lem.~A.10]{KPZJeremy}.
\end{proof}
\endappendix
\bibliographystyle{./Martin}
\bibliography{./refs}

\end{document}